\documentclass[11pt]{amsart}
\usepackage[margin=1in]{geometry}  
\usepackage{amscd,latexsym,amsthm,amsfonts,amssymb,amsmath,amsxtra}
\usepackage{mathrsfs}
\usepackage[mathscr]{eucal}
\usepackage{hyperref}
\usepackage{pdfsync}
\usepackage[all,cmtip]{xy}

\numberwithin{equation}{section}
\pagestyle{plain}
\setcounter{secnumdepth}{2}

\pagestyle{headings}


\newtheorem{thm}{Theorem}[section]
\newtheorem{cor}[thm]{Corollary}
\newtheorem{prop}[thm]{Proposition}
\newtheorem{lemma}[thm]{Lemma}

\newtheorem{rk}[thm]{Remark}

\newtheorem{defn}[thm]{Definition}


\newcommand{\BA}{{\mathbb {A}}}

\newcommand{\BC}{{\mathbb {C}}}

\newcommand{\BK}{{\mathbb {K}}}

\newcommand{\BQ}{{\mathbb {Q}}}
\newcommand{\BR}{{\mathbb {R}}}

\newcommand{\BZ}{{\mathbb {Z}}}


\newcommand{\CE}{{\mathcal {E}}}

\newcommand{\CG}{{\mathcal {G}}}

\newcommand{\CL}{{\mathcal {L}}}

\newcommand{\CO}{{\mathcal {O}}}
\newcommand{\CP}{{\mathcal {P}}}

\newcommand{\CS}{{\mathcal {S}}}

\newcommand{\CW}{{\mathcal {W}}}
\newcommand{\CX}{{\mathcal {X}}}


\newcommand{\RH}{{\mathrm {H}}}

\renewcommand{\Re}{{\mathrm {Re}}}


\newcommand{\fa}{\mathfrak{a}}
\newcommand{\fb}{\mathfrak{b}}
\newcommand{\fc}{\mathfrak{c}}

\newcommand{\fg}{\mathfrak{g}}
\newcommand{\fh}{\mathfrak{h}}
\newcommand{\fk}{\mathfrak{k}}
\newcommand{\fl}{\mathfrak{l}}
\newcommand{\fm}{\mathfrak{m}}

\newcommand{\fq}{\mathfrak{q}}

\newcommand{\fs}{\mathfrak{s}}
\newcommand{\ft}{\mathfrak{t}}
\newcommand{\fu}{\mathfrak{u}}

\newcommand{\fgl}{{\mathfrak{gl}}}


\newcommand{\GL}{{\mathrm{GL}}}

\newcommand{\quo}{\backslash}

\newcommand{\Hom}{{\mathrm{Hom}}}

\newcommand{\Ind}{{\mathrm{Ind}}}
\newcommand{\End}{{\mathrm{End}}}
\newcommand{\Aut}{{\mathrm{Aut}}}

\newcommand{\Ad}{{\mathrm{Ad}}}

\newcommand{\Mat}{{\mathrm{Mat}}}
\newcommand{\sgn}{{\mathrm{sgn}}}

\newcommand{\proten}{\hat{\otimes}}

\newcommand{\tr}{{\mathrm{tr}}}

\newcommand{\Sym}{{\mathrm{Sym}}}

\newcommand{\ct}{{\mathrm{ct}}}


\newcommand{\abs}[1]{\lvert#1\rvert}
\newcommand{\set}[2]{\left\{#1\,|\,#2\right\}}

\newcommand{\mtrtwo}[4]{\begin{bmatrix} #1 &#2 \\#3 &#4 \end{bmatrix}}

\renewcommand{\bar}{\overline}
\renewcommand{\tilde}{\widetilde}
\newcommand{\eps}{\epsilon}
\newcommand{\Rk}{\mathrm{k}}

\begin{document}


\title[Period Relations]{Period relations for standard $L$-functions of symplectic type}

\author{Dihua Jiang}
\address{School of Mathematics, University of Minnesota, Minneapolis, MN 55455, USA}
\email{dhjiang@math.umn.edu}

\author{Binyong Sun}
\address{Institute for Advanced Study in Mathematics \& New Cornerstone Science Laboratory, Zhejiang University,  Hangzhou, 310058, China}
\email{sunbinyong@zju.edu.cn}

\author{Fangyang Tian}
\address{School of Mathematical Sciences, Zhejiang University,  Hangzhou, 310058, China}
\email{tianfangyangmath@zju.edu.cn}

\subjclass[2010]{Primary 11F67; Secondary 11F70; 11F75; 22E45}



\keywords{Shalika model, cohomological test vector,  Friedberg-Jacquet integral, standard $L$-function, critical values. period relation, modular symbol}

\thanks{
 The research of D. Jiang is supported in part by the NSF grants DMS--1901802 and DMS-2200890; that of B. Sun is supported by the National Key R \& D Program of China (Nos. 2022YFA1005300 and 2020YFA0712600) and the New Cornerstone Investigator Program; and that of F. Tian
is supported in part by  Natural Sicence Foundation of China grants 507100-P22203.}

\begin{abstract}
This article is to establish the period relations, as conjectured by D. Blasius in \cite{Bl97}, for the critical values of the standard 
$L$-functions $L(s,\Pi\otimes \chi)$ in general with $\Pi$ an irreducible regular algebraic cuspidal automorphic representation of 
$\GL_{2n}(\BA)$ of symplectic type and $\chi$ a finite order automorphic character of $\GL_1(\BA)$, where $\BA$ is the ring of adeles of a number field $\Rk$. The main results extend the classical work of G. Shimura (\cite{Sh76}) and 
Y. Manin (\cite{Mn76}) from $\GL_2$ to $\GL_{2n}$, and complete the approach that was initially taken by H. Grobner and A. Raghuram in \cite{GrobnerRaguhramShalikaModelArithmetic}  and by F. Januszewski in \cite{Jan18}, where a few technical assumptions were taken. In this article, those technical assumptions are removed by using different arguments. 
\end{abstract}

\maketitle
\tableofcontents


\section{Introduction}\label{section: Introduction}

A pioneering work of G. Shimura (\cite{Sh59}, \cite{Sh71} and \cite{Sh76}) indicates the arithmetic structure of the critical values of $L$-functions associated to elliptic modular forms and
the relation between different critical values of those $L$-functions.
These phenomena are expected, as conjectured by D. Blasius in \cite{Bl97}, for standard $L$-functions associated to algebraic cuspidal automorphic representations of $\GL_n$ over any number field, in the
sense of L. Clozel (\cite{ClozelAutomorphicFormsMotive}). On the other hand, a celebrated conjecture of P. Deligne (\cite[Conjecture 2.8]{D79}) predicts the same phenomena for critical values of $L$-series (or $L$-functions)
attached to motives. The underlying intrinsic structure that reflects the similarity of the parallel phenomena between the theory of algebraic automorphic representations and the theory of motives is the far-reaching
conjecture of R. Langlands (\cite{L79}). As further explained in \cite[Section 4.3]{ClozelAutomorphicFormsMotive}, this conjecture asserts that there exists a one-to-one correspondence between irreducible motives over a number field with coefficients
in $\bar{\BQ}$ and irreducible algebraic cuspidal automorphic representations of general linear groups, which respects $L$-functions 
(\cite[Conjecture 4.5]{ClozelAutomorphicFormsMotive}).

 The objective of this paper is to establish the global period relations for the critical values of the twisted standard $L$-functions associated to irreducible regular algebraic cuspidal automorphic representations of $\GL_{2n}$ that are of generalized symplectic type, for any integer $n\geq 1$. 
 The approach that we are undertaking in this paper is to study the generalized modular symbols in the sense of
B. Mazur (\cite{Mz}, \cite{AshNonSquareIntegrableCohomologyArithmeticGroups} and \cite{AB90}), and generalizes the classical approach of Shimura (\cite{Sh76}) and Y. Manin (\cite{Mn76}) from $\GL_2$ to $\GL_{2n}$ for any integer $n\geq 1$. The approach was initially taken by H. Grobner and A. Raghuram in \cite{GrobnerRaguhramShalikaModelArithmetic}  and by F. Januszewski in \cite{Jan18}, where a few technical assumptions were taken. In this article, those technical assumptions are removed by using different arguments. 

Let $\BA=\BA_\Rk$ be the ring of adeles of a number field $\Rk$.
We write $\BA=\BA_\infty\times\BA_f$, with
$\BA_f$ being the ring of finite adeles of $\Rk$ and $\BA_\infty := \Rk\otimes_\BQ \BR$ being the archimedean part. For a local place $v$ of $\Rk$, we write $v\mid\infty$ if $v$ is an archimedean place and
$v\nmid\infty$ if otherwise. We also write $\Rk_v$ for the local field  at a place $v$.
Let
\begin{equation}\label{decpi}
\Pi=\widehat\otimes_v' \Pi_v:=\Pi_\infty\otimes \Pi_f
\end{equation}
be an irreducible cuspidal automorphic representation of $\GL_m(\BA)$ ($m\geq 1$),
where 
$\Pi_\infty :=\widehat \otimes_{v\mid\infty} \Pi_v$ is a completed projective tensor product
and 
$\Pi_f := \otimes_{v\nmid\infty}' \Pi_v$ is the restricted tensor product. 
Here $\Pi_v$ is an irreducible  smooth representation of $\GL_{m}(\Rk_v)$ when $v\nmid\infty$ and $\Pi_v$ is an irreducible Casselman-Wallach representation of $\GL_{m}(\Rk_v)$ when $v\mid\infty$.
We refer to \cite{CasselmanCanonicalExtensionofHarishChandraModule}, \cite[Chapter 11]{WallachRealReductiveGroups2} and \cite{BernsteinKrotzSmoothGlobalization} for the definition and
some basic properties of Casselman-Wallach representations.
Then $\Pi_\infty$ and $\Pi_f$ are representations of $\GL_{m}(\BA_\infty)$ and $\GL_{m}(\BA_f)$, respectively.
When $m=1$, we write any automorphic character $\chi \colon \Rk^\times \quo \BA^\times\rightarrow \BC^\times$ as 
$\chi=\otimes_v'\chi_v=\chi_\infty\otimes \chi_f$.

 The (twisted) standard local $L$-factor $L(s, \Pi_v\otimes\chi_v)$ is defined in \cite[Theorem 3.3]{GJ72} by R. Godement and H. Jacquet. The complete twisted standard $L$-function is given by the following Eulerian product 
$$
L(s, \Pi\otimes\chi) := L(s, \Pi_f\otimes \chi_f)\cdot L(s, \Pi_\infty\otimes \chi_\infty),
$$
where the finite part and the infinite part are defined to be 
$L(s, \Pi_f\otimes \chi_f) := \prod_{v\nmid\infty} L(s, \Pi_v\otimes \chi_v)
$
and 
$L(s, \Pi_\infty\otimes \chi_\infty) := \prod_{v\mid\infty} L(s, \Pi_v\otimes \chi_v)$, respectively.
The complete $L$-function
$L(s, \Pi\otimes\chi)$ is meromorphic (entire when $m\geq 2$) as a function in $s\in\BC$ and satisfies the functional equation
$$
L(s,\Pi\otimes\chi)=\varepsilon(s,\Pi\otimes\chi)\cdot L(1-s,\Pi^\vee\otimes\chi^{-1}).
$$
Here and henceforth $\ ^\vee$ indicates the contragredient representation.

It is expected that ``generically" irreducible motives  are essentially self-dual (\cite[Section 2.3]{ClozelMotivesandAutomorphicLfun}). Hence it is natural to consider the case when $\Pi$ is essentially self-dual, in the sense that there exists an automorphic character $\eta:\Rk^\times\quo\BA^\times\rightarrow\BC^\times$ such that $\Pi^\vee\cong\Pi\otimes\eta^{-1}$.
It is a well-known theorem that the complete Rankin-Selberg $L$-function $L(s, \Pi\otimes\Pi^\vee)$ is holomorphic for $\Re(s)>1$ and has a simple pole at $s=1$ (see \cite[Page 667]{MW89},
and also \cite[Theorem 2.4]{CPS04}, \cite[Part II, Proposition 3.3, 3.6]{JacquetShalikaEulerProduct1}).
Following a non-vanishing result of F. Shahidi (\cite[Theorem 1.1]{Sh97}), with the classification of the unitary dual of general linear group over local fields of characteristic zero (\cite{V86} and \cite{T86}),
it is easy to show that
either the twisted exterior $L$-function $L(s, \Pi, \wedge^2\otimes\eta^{-1})$ or the twisted symmetric $L$-function
$L(s, \Pi, \Sym^2\otimes\eta^{-1})$ has a simple pole, but not both. Accordingly, this classifies the essentially self-dual irreducible cuspidal automorphic representations $\Pi$ of $\GL_m(\BA)$ as (generalized) symplectic type (with $m$ being even) or
(generalized) orthogonal type (with $m$ being even or odd).

In this paper we consider cuspidal automorphic representations of $\GL_m(\BA)$ with the following three conditions:
     \begin{enumerate}
       \item $\Pi$ is regular and algebraic in the sense of L. Clozel (\cite{ClozelAutomorphicFormsMotive}), which will be recalled in Section \ref{algssec-nd};
             \item $\Pi$ is of symplectic type, see Definition \ref{def: global repn of symplectic type}, which implies that $m=2n$ must be even;
              \item $\Pi$ has a balanced coefficient system, see Definition \ref{defn: balanced coeffi system}.     \end{enumerate}
By Definition \ref{def: global critical place}, for such a $\Pi$,
the critical places of $\Pi$ are of the form $\frac{1}{2}+j$ for $j\in\BZ$. As in \cite{GrobnerRaguhramShalikaModelArithmetic}, we have a family $\{\omega^{\epsilon_\infty}(\Pi)\}_{\epsilon_\infty}$ of non-zero complex numbers, to be called the {\bf 
Shalika periods} of $\Pi$, where $\epsilon_\infty$ runs over all quadratic characters of $\BA_\infty^\times$. See  \eqref{eq: def of Omega Pi}. 
The following is the main result of this paper.

\begin{thm}[Blasius Conjecture]\label{thm: global period relation}
         Let $\Pi$ be an irreducible regular algebraic cuspidal automorphic representation of $\GL_{2n}(\BA)$ $(n\geq 1$) and let $\eta: \Rk^\times\quo\BA^\times\rightarrow \BC^\times$ be a character such that the complete twisted exterior square $L$-function $L(s, \Pi, \wedge^2\otimes \eta^{-1})$ has a pole at $s=1$. Assume that the coefficient system of $\Pi$ is balanced. Then 
 \begin{equation}\label{eq: main thm complete}  
 \frac{L(\frac{1}{2}+j,\Pi\otimes \chi)}{\mathrm{i}^{ jn\cdot  [\Rk:\BQ]}\cdot \CG(\chi)^n \cdot \omega^{\sgn^j\cdot\chi_\infty}(\Pi)}\in \BQ(\Pi, \eta, \chi),\end{equation}
 for every critical place $\frac{1}{2}+j$ of $\Pi$ and
 every finite order character $\chi=\chi_\infty\otimes \chi_f: \Rk^\times\backslash \BA^\times \rightarrow \BC^\times$. 
 Here $\mathrm{i}=\sqrt{-1}$, $\CG(\chi)$ is the Gauss sum of $\chi$ defined in \eqref{eq: gauss sum}, and $\sgn$ is the quadratic character on $\BA_\infty^\times$ that has non-trivial restriction to $\Rk_v^\times$ for every real place $v$ of $\Rk$. 
\end{thm}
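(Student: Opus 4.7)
My plan is to realize the critical $L$-values as pairings of cuspidal cohomology classes with generalized modular symbols on the locally symmetric space of $H:=\GL_n\times\GL_n\hookrightarrow\GL_{2n}$, and to deduce the stated rationality from the $\BQ(\Pi)$-rational structure on cuspidal cohomology. Because the complete exterior square $L$-function $L(s,\Pi,\wedge^2\otimes\eta^{-1})$ has a pole at $s=1$, the representation $\Pi$ admits an $(\eta,\psi)$-Shalika model, so by Friedberg-Jacquet the global zeta integral
$$Z(s,\varphi,\chi)=\int_{Z_{2n}(\BA)H(\Rk)\backslash H(\BA)}\varphi\!\begin{bmatrix}g_1 & 0\\ 0 & g_2\end{bmatrix}\chi\!\left(\frac{\det g_1}{\det g_2}\right)\left|\frac{\det g_1}{\det g_2}\right|^{s-\frac{1}{2}}dg$$
factors as an Euler product representing $L(s,\Pi\otimes\chi)$, with unramified local integrals computed by the Casselman-Shalika formula on the Shalika new vector.

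The second step is to choose cohomological test vectors. At the archimedean places I would use vectors provided by the balanced-coefficient-system hypothesis: balancedness guarantees that at each critical point $s=\frac{1}{2}+j$, the branching of $\Pi_\infty$ along $H(\BA_\infty)$ admits a non-zero $H$-equivariant functional on the minimal $K$-type realizing the cuspidal cohomology class. The archimedean zeta integral on such a vector is computable and non-vanishing, and a careful bookkeeping of the $\mathrm{i}$-factors produced at each real and complex embedding (the $\sgn^j$ twist at a real place contributing $\mathrm{i}^{jn}$, with the analogous contribution at a complex place) yields the global factor $\mathrm{i}^{jn[\Rk:\BQ]}$. At the finite places I would use the arithmetic test vector coming from the $\BQ(\Pi)$-rational structure on $\Pi_f$, modified at the finitely many places where $\chi$ ramifies by an explicit Bruhat-cell translate adapted to the Shalika model.

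The third step is to identify the period. The natural comparison between the automorphic realization of $\Pi$ and its image in cuspidal cohomology $H^{\bullet}_{\mathrm{cusp}}(\GL_{2n},\CV_\lambda)[\Pi_f]$ produces $\Omega_\Pi$, as in \eqref{eq: def of Omega Pi}, as the scalar converting a fixed cohomological avatar into the chosen rational cohomology class. Pulling this rational class back along the embedding $H\hookrightarrow\GL_{2n}$ defines a compactly supported top-degree class on the locally symmetric space of $H$; pairing it against the cohomology class built from the $\chi$-twist recovers $Z(\frac{1}{2}+j,\varphi,\chi)$ up to rational factors and the Gauss sum $\CG(\chi)^n$. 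The $n$-th power reflects that $\chi$ enters through the determinant character of $\GL_n$, so that at each finite place where $\chi$ ramifies the change from the arithmetic vector to the $\chi$-adapted Shalika vector produces one local Gauss sum per ``$\GL_1$-slot'' inside $\GL_n$.

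Finally, the $\BQ(\Pi,\eta,\chi)$-rationality is extracted by Galois equivariance. For $\sigma\in\Aut(\BC)$ the conjugate $\Pi^\sigma$ is again a regular algebraic cuspidal representation of symplectic type with respect to $\eta^\sigma$; the modular symbol construction, the transcendental period $\Omega_{\Pi^\sigma}$, and the Gauss sum $\CG(\chi^\sigma)$ all transform compatibly under $\sigma$, forcing the quotient in \eqref{eq: main thm complete} to lie in the fixed field of the stabilizer of $(\Pi,\eta,\chi)$, namely $\BQ(\Pi,\eta,\chi)$. The principal obstacle in this plan is the archimedean calculation: constructing an explicit cohomological test vector whose Friedberg-Jacquet integral is provably non-vanishing at every critical $s=\frac{1}{2}+j$, and pinning down the exact power of $\mathrm{i}$, requires a delicate analysis of the branching of the minimal $K$-type of $\Pi_\infty$ along $H(\BA_\infty)$ and its interaction with the cohomological realization, and is the technical core underpinning the global identity.
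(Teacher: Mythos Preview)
Your overall strategy---Friedberg--Jacquet integral, Shalika model, cohomological interpretation via modular symbols on $H=\GL_n\times\GL_n$, and extraction of a period from the comparison of automorphic and rational structures---matches the paper's architecture closely. Two technical points, however, are underdeveloped in your sketch, and they are precisely where the paper's work lies.

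First, at the archimedean places you frame the obstacle as finding, for each critical $\frac{1}{2}+j$, a cohomological test vector whose zeta integral is non-vanishing. That alone would only produce a period $\Omega_{\Pi,j}$ depending on $j$ (as in earlier work of Grobner--Raghuram). The theorem asserts a \emph{single} $\Omega_\Pi$ for all critical places, and for this the paper proves a substantially stronger statement (Theorem~\ref{thm: uniform coh test vector}): there is a vector $v_0$ in the minimal $K_\BK$-type such that the normalized Friedberg--Jacquet integral $Z^\circ(v_0,s,\chi_\BK)$ is identically~$1$ as a function of $s$. This uniformity is what feeds into the Archimedean Period Relations (Theorem~\ref{thm: archi period relations}), giving that $\epsilon_\BK^j\cdot\mathrm{i}^{-jn[\BK:\BR]}\cdot\CP_{\BK,j}$ is literally independent of $j$, and hence that $\Omega_\Pi$ can be defined once and for all by~\eqref{eq: def of Omega Pi}. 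Your phrase ``non-vanishing at every critical $s$'' does not capture this, and without the uniform test vector the $\mathrm{i}^{jn[\Rk:\BQ]}$ bookkeeping cannot be separated from a $j$-dependent archimedean constant.

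Second, your treatment of the finite places (``the arithmetic test vector coming from the $\BQ(\Pi)$-rational structure on $\Pi_f$'') and your final Galois-equivariance step both presuppose that the Shalika functional, and hence the local Friedberg--Jacquet integral, is compatible with the rational structure on $\Pi_\nu$. This is the content of the Non-archimedean Period Relations (Theorem~\ref{0lemma: Z circ f rational}), and it is not automatic: the paper constructs a specific $\BQ(\Pi_\BK,\eta_\BK)$-rational structure via an $\Aut(\BC/\BQ(\Pi_\BK,\eta_\BK))$-action on the Shalika model (Proposition~\ref{rationalsh}), bypassing earlier arguments that relied on unproved non-vanishing of Shalika functionals on new vectors (see Remarks~\ref{rkrs} and~\ref{rkrs2}). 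With that in hand, the paper extracts rationality not by your proposed $\Pi\mapsto\Pi^\sigma$ equivariance---which would additionally require a compatibility $\sigma(\Omega_\Pi)\sim\Omega_{\Pi^\sigma}$ not established here---but by the elementary observation (Lemma~\ref{lemma: compare rational functional}) that if two nonzero linear functionals on a space are both defined over $E$ and are scalar multiples of each other, the scalar lies in $E$. This sidesteps the need for full Galois equivariance of the periods.
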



 The number field $\BQ(\Pi, \eta, \chi)$
is called the {\bf rationality field} of $\Pi$, $\eta$ and $\chi$, and will be defined in \eqref{eq: number field Q(pi eta chi)}.
 We have more remarks in order.

    \begin{rk}
At least when $\Rk$ contains no CM field,  the coefficient system of $\Pi$ is balanced if and only if $\Pi$ has a critical place. In general, if the coefficient system of $\Pi$ is balanced, then $\Pi$ has at least one critical place. See Remark \ref{rk217} for more comments on this issue.
\end{rk}

    \begin{rk}
The automorphic character $\eta$ is automatically algebraic. It is expected that in general, the rationality field $\BQ(\eta)$ is contained in the rationality field $\BQ(\Pi)$. When $n=1$, this is clear as $\eta$ equals the central character of $\Pi$.
\end{rk}

    \begin{rk}
It is not hard to deduce the period relation for the finite part of the twisted standard $L$-functions, which can be stated as follows.
Let the notation and the assumptions be as in Theorem \ref{thm: global period relation}. Then for every critical place $\frac{1}{2}+j$ of $\Pi$ and
every finite order automorphic character $\chi=\chi_\infty\otimes \chi_f: \Rk^\times\backslash \BA^\times \rightarrow \BC^\times$  we have that
         \begin{equation}\label{eq: main thm finite}  \frac{L(\frac{1}{2}+j,\Pi_f\otimes \chi_f)}{(2\pi\mathrm{i})^{ jn\cdot  [\Rk\, :\, \BQ]}\cdot \CG(\chi)^n \cdot \omega^{\sgn^j\cdot\chi_\infty}(\Pi) \cdot \Omega_{\Pi_\infty}} \in \BQ(\Pi, \eta, \chi),\end{equation}
         where $\Omega_{\Pi_\infty} :=\prod_{v\mid \infty}\Omega_{\Pi_{\mathrm k_v}}$, and          
         $\Omega_{\Pi_{\mathrm k_v}}$ is defined as in \eqref{omegapik}. 
See  Remark \ref{rk: finite L-fun arithmetic property proof} for details.
    \end{rk}

\begin{rk}
Grobner and Raghuram study the critical $L$-values $L(\frac{1}{2}+j,\Pi_f\otimes\chi_f)$ in \cite{GrobnerRaguhramShalikaModelArithmetic} over a totally real field. One of their main results
is \cite[Theorem 7.1.2]{GrobnerRaguhramShalikaModelArithmetic}, which asserts that the critical $L$-value $L(\frac{1}{2}+j,\Pi_f\otimes\chi_f)$ is equal to
     $
    \omega^{(-1)^{j+n-1}\epsilon_{\chi}}(\Pi_f)\CG(\chi)^n\omega(\Pi_\infty,j),
     $
up to a multiple of a number belonging to rationality field $\BQ(\Pi,\eta,\chi)$. 
This case was also considered earlier in \cite{AshGinzburgPadicLfunGL(2n)}. The explicit value of $\omega(\Pi_\infty,j)$ is not calculated in \cite{GrobnerRaguhramShalikaModelArithmetic}. 

\end{rk}

\begin{rk}
In \cite{Jan18}, Januszewski studies  the case 
over a totally real number field and asserts that
$L(\frac{1}{2}+j,\Pi_f\otimes\chi_f)$ is equal to
$
(2\pi\mathrm{i})^{jn[k:\BQ]}\CG(\chi)^n\Omega_{(-1)^{j}\sgn\chi},
$
up to a multiple of a number belonging to rationality field $\BQ(\Pi,\chi)$. 
His argument towards this global period relations
assumes in \cite[Section 5.3]{Jan18} the existence of the cohomological test vector for $\Pi_\infty$ that is independent of the critical places. As the most technical ingredient of this paper,
our Theorem \ref{thm: uniform coh test vector} establishes the existence of the cohomological test vector for $\Pi_\infty$ that is independent of the critical places for general number fields.
Meanwhile, our proof involves no rational structures of Harish-Chandra modules, which is however a major ingredient in the argument developed by Januszewski.
\end{rk}

\begin{rk}
In \cite{AshGinzburgPadicLfunGL(2n)}, \cite{GrobnerRaguhramShalikaModelArithmetic} and \cite{Jan18}, the base field is assumed to be totally real so that a certain numerical coincidence called ``degree matching" is satisfied. As far as we know, the proof of Theorem \ref{thm: global period relation} is the first case in the study of special values of $L$-functions via modular symbols without the ``degree matching" condition.
\end{rk}

\begin{rk}
 For $n=1$, all irreducible cuspidal automorphic representations of $\GL_2(\BA)$ are essentially self-dual of symplectic type. In this case, the theorem was considered by Manin in \cite{Mn76}, by Shimura in \cite[Theorem 1]{Sh76} and \cite[Theorem 4.3]{Sh78}, by G. Harder in \cite[Page 84]{Hd83}, and by H. Hida in \cite[Theorem 8.1]{Hi94}.
In \cite[Section L.9.1]{Bl97}, Blasius gives an explicit discussion on the conjecture of period relations and critical values of $L$-functions attached to cuspidal automorphic forms in general.
We also refer to the survey paper by M. Harris and J. Lin in \cite{HL17} for further explanation of the recent progress in the theory.
\end{rk}

\begin{rk}
 In the case of Rankin-Selberg convolutions for $\GL_n\times \GL_{n-1}$, some partial or conditional results for the period relations are obtained in \cite{Jan19}, \cite{HR20} and \cite{GL20}. The complete results for this case were obtained in \cite{LLS22}. 
\end{rk}

The twisted standard $L$-function $L(s,\Pi\otimes\chi)$ has been represented by three different types of global zeta integrals: one by 
Godement and Jacquet (\cite[Formula 12.4]{GJ72}), one by Jacquet and J. Shalika (\cite[Part II, Page 795]{JacquetShalikaEulerProduct1}),
and one by S. Friedberg and Jacquet (\cite[Proposition 2.3]{FriedbergJacquetLinearPeriods}). It is the global zeta integral of Friedberg-Jacquet in \cite{FriedbergJacquetLinearPeriods} that leads to the proof of Theorem \ref{thm: global period relation}. Because the
global zeta integral of Friedberg-Jacquet assumes that $\Pi$ has a nonzero Shalika integral, which leads to the assumption in Theorem \ref{thm: global period relation} that $\Pi$ is of symplectic type (Proposition \ref{prop: strong transfer}). Meanwhile, the Fiedberg-Jacquet global zeta integral also represents a linear period of $\Pi$ on $\GL_{2n}(\BA)$ over the subgroup $\GL_n(\BA)\times\GL_n(\BA)$.
When $\Pi$ is cohomological, the linear period has an interpretation in terms of (generalized) modular symbols (\cite{Mz}, \cite{AshNonSquareIntegrableCohomologyArithmeticGroups} and \cite{AB90}).
Using the modular symbols, the proof of Theorem \ref{thm: global period relation} relies  on the  global rational structure of the relevant cohomological groups of $\Pi$, and on three local results
that are responsible to the appearance of $\mathrm{i}^{ jn\cdot  [\Rk\, :\, \BQ]}$, $\CG(\chi)^n$ and $\omega^{\sgn^j\cdot\chi_\infty}(\Pi)$, respectively. The global rational structure
follows  from the assumption that $\Pi$ is regular and algebraic, due to the work of Clozel (\cite{ClozelAutomorphicFormsMotive}). The three local results are: Theorem \ref{thm: archi period relations} for relations of archimedean modular symbols at different critical places;  Theorem \ref{0lemma: Z circ f rational} for the rationality of the non-archimedean   Friedberg-Jacquet zeta integrals, and
Theorem \ref{thm: Non-vanishing hypothesis} for non-vanishing of archimedean modular symbols. Theorem \ref{thm: uniform coh test vector}, which asserts the existence of  uniform cohomological test vectors for archimedean Friedberg-Jacquet zeta integrals, is vital for the proofs of Theorems \ref{thm: Non-vanishing hypothesis} and  \ref{thm: archi period relations}. 

In Section \ref{sec-PPThm}, we recall and review basic notions and known facts that are needed for the statement and the proof of Theorem \ref{thm: global period relation}. With the preparation in Section \ref{sec-PPThm}, we are able to
state four local results in Section \ref{sectionfourt}, which forms the technical core of the paper. We prove Theorem  \ref{0lemma: Z circ f rational} in Section \ref{sectionfourt}.
The three archimedean local results (Theorem \ref{thm: uniform coh test vector}, Theorem \ref{thm: Non-vanishing hypothesis}, and Theorem \ref{thm: archi period relations}) will be proved in
Sections \ref{section: Proof of Critical place and Non-vanishing Modular Symbol} and
\ref{section: Identification}. Based on Sections \ref{sec-PPThm} and  \ref{sectionfourt}, Theorem \ref{thm: global period relation} 
is proved in Section \ref{subsection: proof of global period relation}.

The research work of the paper was partly carried out during the participation of the three authors in one month Research Program: {\sl On the Langlands Program: Endoscopy and Beyond} (17 Dec 2018 - 18 Jan 2019), at Institute for Mathematical Sciences, National University of Singapore. We would like to thank the IMS, NUS, for invitation and for hospitality. Over the years, we make a few improvements on the original version based on comments from D. Blasius and L. Clozel. The paper is finalized when the first named author 
enjoys his visit in The Institute for Advanced Study in Mathematics, Zhejiang University. We would like to thank D. Blasius and L. Clozel for their helpful comments on the preliminary version of this paper.



\section{Preliminaries }\label{sec-PPThm}


We start with some relevant notions and results that are needed for Theorem \ref{thm: global period relation}. 
Let $\Rk$ be a number field and $\BA$ be its ring of adeles.  Fix a non-trivial additive character
$\psi=\otimes_v' \psi_v:\Rk\backslash \BA\rightarrow \BC^\times$, where $\psi_v$ is a character of the local completion  $\Rk_v$ of $\Rk$ at a place $v$. Note that $\psi_v$ must be unitary and non-trivial. Let $\chi=\otimes'_{v} \chi_v: \Rk^\times\backslash  \BA^\times \rightarrow \BC^\times$ be an automorphic character.
Let $\BK$ be a local field of characteristic zero. Fix a non-trivial unitary character
$
\psi_\BK: \BK\rightarrow \BC^\times
$.
 Let $\chi_\BK: \BK^\times \rightarrow \BC^\times $ be a character.


\subsection{Rationality fields} \label{subsection: Rationality Field}
We recall the notion of rationality field from \cite[Section 3]{ClozelAutomorphicFormsMotive}. Suppose that $G_0$ is a totally disconnected, locally compact Hausdorff topological group. Given an  irreducible smooth representation $\Pi_0$ of $G_0$, we define the rationality field of $\Pi_0$, denoted by $\BQ(\Pi_0)$, to be the fixed field of the group of field automorphisms $\sigma\in\Aut(\BC)$ such that
$\Pi_0\otimes_{\BC, \sigma} \BC\cong\Pi_0$ as representations of $G_0$.
Given an irreducible cuspidal automorphic representation $\Pi=\Pi_\infty\otimes \Pi_f$ of $\GL_m(\BA)$ as in the Introduction, we define its rationality field to be
$\BQ(\Pi):=\BQ(\Pi_f)$. 
In this way, we define the rationality field $\BQ(\chi_\BK)$ when $\BK$ is non-archimedean and  the rationality field $\BQ(\chi)$.   We define
   \begin{equation}\label{eq: number field Q(pi eta chi)}
\BQ(\Pi,\chi) := \BQ(\Pi)\BQ(\chi),\end{equation}
   the compositum of $\BQ(\Pi)$ and $\BQ(\chi)$. Similar notation for the compositum fields will be used without further explanation.

Let $T_0$ be an algebraic torus defined over $\BQ$. Given an algebraic character $\chi_0: T_0(\BC)\rightarrow \BC^\times$, we define its rationality field,  denoted by $\BQ(\chi_0)$, to be the fixed field of the group of field automorphisms $\sigma\in\Aut(\BC)$ such that
$\chi_0(\sigma.x)=\sigma(\chi_0(x))$ for all $x\in T_0(\BC)$. Then $\BQ(\chi_0)$ is always a number field.

Let $F_0$ be an irreducible algebraic representation of $\GL_m(\Rk\otimes_\BQ \BC)$ ($m\geq 1$).
 Suppose that $T_0$ is the Weil restriction of $(\GL_1/_\Rk)^m$ from $\Rk$ to $\BQ$. Then an extremal weight of $F_0$ is an algebraic character of $T_0(\BC)$. We define the rationality field of $F_0$, which is denoted by $\BQ(F_0)$, to be the rationality field of  an extremal weight of $F_0$. This definition is independent of the choice of the extremal weight.


  \subsection{Gauss sums}\label{subsection: gauss}

We assume that $\BK$ is non-archimedean and define the local Gauss sum $\CG(\chi_\BK)$ of the character $\chi_\BK$ as follows.  Let $\fc(\chi_\BK)$ and $\fc(\psi_\BK)$ be the conductors of $\chi_\BK$ and $\psi_\BK$, respectively. Then $\fc(\chi_\BK)$ is a nonzero ideal of the ring $\CO_\BK$ of integers of $\BK$, and $\fc(\psi_\BK)$ is a fractional ideal of $\CO_\BK$. Fix  $y_\BK\in \BK^\times$  such that
$\fc(\psi_\BK)=y_\BK\cdot \fc(\chi_\BK)$. 
The local Gauss sum is defined to be
   $$\CG(\chi_\BK):=\CG(\chi_\BK, \psi_\BK, y_\BK) := \int_{\CO_\BK^\times} \chi_\BK(x)^{-1}\cdot \psi_\BK(y_\BK x) \, d x,$$
   where $d x$ is the normalized Haar measure so that $\CO_\BK^\times$ has total volume $1$.
Note that $\CG(\chi_\BK)$ is always nonzero and equals $1$ when $\chi_\BK$ is unramified.

For the automorphic character $\chi=\otimes'_{v} \chi_v$, fix  $y = \{y_v\}_{v\nmid \infty}\in \BA_f^\times$ such that
$\fc(\psi_v)=y_v\cdot \fc(\chi_v)$ for all $v\nmid \infty$.
The Gauss sum $\CG(\chi)$  is defined to be the product of local Gauss sums:
\begin{equation}\label{eq: gauss sum}\CG(\chi) := \CG(\chi, \psi, y):=\prod_{v\nmid\infty} \CG(\chi_v, \psi_v, y_v).\end{equation}
This product is indeed a finite product and is nonzero.

   \begin{rk}
       It is clear that the definition of Gauss sum $\CG(\chi)$ may depend on the choice of $\psi$ and $y = \{y_v\}_{v\nmid \infty}$. 
       This dependency will not effect the essence of Theorem \ref{thm: global period relation}, because with different choices of $\psi$ and $y = \{y_v\}_{v\nmid \infty}$, the Gauss sums only differ by multiplication of a number in $\BQ(\chi)^\times$.
   \end{rk}

\subsection{Cuspidal representations of symplectic type}\label{ssec-nd}
As usual, write $\mathrm{GSpin}_{2n+1}$ for the split odd spin similitude group of rank $n$ ($n\geq 1$). Its complex dual group is the symplectic similitude group $\mathrm{GSp}_{2n}(\BC)$.

    \begin{defn}\label{def:LT}
       Let $\Sigma = \widehat \otimes_v' \Sigma_v$ be an irreducible generic cuspidal automorphic representation of $\mathrm{GSpin}_{2n+1}(\BA)$  and let $\Pi = \widehat \otimes_v'\Pi_v$ be an irreducible cuspidal automorphic representation of
$\GL_{2n}(\BA)$. We say that $\Pi$ is the functorial transfer of $\Sigma$ if  at all but finitely many non-archimedean local places $v$ where $\Sigma_v$ and $\Pi_v$ are unramified,
       the Langlands parameter $ \CW_{\Rk_v} \rightarrow \GL_{2n}(\BC)$ for $\Pi_v$ is equal to the composition
       $$\CW_{\Rk_v}\xrightarrow{\phi_v} \mathrm{GSp}_{2n}(\BC) \hookrightarrow \GL_{2n}(\BC),$$
       up to conjugation by $\GL_{2n}(\BC)$, where $\CW_{\Rk_v}$ is the local Weil group of $\Rk_v$, and $\phi_v$ is the Langlands parameter of $\Sigma_v$.
    \end{defn}
   \begin{defn}\label{def: global repn of symplectic type}
          An irreducible cuspidal automorphic representation $\Pi$ of $\GL_{2n}(\BA)$ $(n\geq 1)$ is said to be of symplectic type if it is the functorial transfer of an irreducible generic cuspidal automorphic representation  of $\mathrm{GSpin}_{2n+1}(\BA)$, in the sense of Definition \ref{def:LT}.
     \end{defn}

We  write $G:=\GL_{2n}$ and define the Shalika subgroup of $G$ to be
   \begin{equation}\label{eq: global shalika subgroup}
         S :=S_n:= \left\{\mtrtwo{h}{0}{0}{h}\mtrtwo{1_n}{x}{0}{1_n}\mid h\in \GL_n, x\in\Mat_n\right\}
   \end{equation}
where $\Mat_n$ is the algebra of $n\times n$ matrices, $1_n$ is the $n\times n$ identity matrix. For an automorphic  character $\eta: \Rk^\times\quo \BA^\times \rightarrow \BC^\times$, denote by $\eta\otimes\psi$  the character of $S(\BA)$ sending $\mtrtwo{h}{0}{0}{h}\mtrtwo{1_n}{x}{0}{1_n}$ to $\eta(\det(h))\psi(\mathrm{tr}(x))$.
For an irreducible cuspidal automorphic representation $\Pi$ of $G(\BA)$, we may identify $\Pi$ with its realization in the space of smooth cuspidal automorphic forms on $G(\BA)$,
because of the Multiplicity One Theorem on the cuspidal spectrum of general linear groups (\cite[Theorem 5.5]{Sl74} and \cite[Page 209]{PS79}).
   \begin{defn}
           We say that an irreducible cuspidal automorphic representation $\Pi$ of $G(\BA)$ has a nonzero $(\eta, \psi)$-Shalika integral if its central character equals $\eta^n$, and there exists $\varphi\in \Pi$ such that
           $$\int_{(S(\Rk)\BR^\times_+)\backslash S(\BA)} \varphi(g) \cdot (\eta\otimes\psi)^{-1}(g)\, dg\neq 0,$$
where $\BR^\times_+$ is viewed as a subgroup of $G(\BA_\infty)\subset G(\BA)$ via the diagonally embedding, and  $dg$ is an  $S(\BA)$-invariant positive Borel measure on  $(S(\Rk)\BR^\times_+)\backslash S(\BA)$.
   \end{defn}
   The following theorem gives some characterizations for irreducible cuspidal automorphic representations of $G(\BA)$ to be of symplectic type, which is due to a collection of works of many people
(\cite[Section 8, Theorem 1]{JacquetShalikaExteriorSquareLfun}, \cite[Theorem 1.1]{AsgariShahidiGenericTransferforGSpin} and \cite[Theorem A]{HundleySayagDescentConstructionforGSpin}, for instance). We refer to \cite[Section 5]{GanRaghramArithmeticityPeriodsAutomorphicForms} for
more detailed discussion.
     \begin{prop}\label{prop: strong transfer}
          Let $\Pi$ be an irreducible cuspidal automorphic representation of $G(\BA)$  and let $\eta$ be an automorphic character of $\BA^\times$. Then the following statements are equivalent.
          \begin{enumerate}
            \item The representation  $\Pi$ is the functorial transfer of an irreducible generic cuspidal automorphic representation $\Sigma$ of $\mathrm{GSpin}_{2n+1}(\BA)$ with central character $\omega_\Sigma = \eta$.
            \item The representation $\Pi$ has a nonzero $(\eta, \psi)$-Shalika integral.
            \item The exterior square $L$-function $L(s, \Pi, \wedge^2\otimes\eta^{-1})$ has a pole at $s=1$.
          \end{enumerate}
          Moreover, if any of the above condition holds, then the transfer from $\mathrm{GSpin}_{2n+1}(\BA)$ is strong at all archimedean local places, in the sense that it respects local $L$-parameters at
each archimedean local place.
     \end{prop}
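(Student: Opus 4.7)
The plan is to establish the proposition by assembling three known ingredients from the literature and then dealing separately with the refined archimedean statement. Throughout, I would treat this as a cycle of implications $(1)\Rightarrow(3)\Rightarrow(2)\Rightarrow(1)$, and isolate the strong archimedean transfer as a fourth step that makes genuine use of the local Langlands correspondence for $\mathrm{GSpin}_{2n+1}$.

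For $(1)\Rightarrow(3)$, I would argue via $L$-parameters at unramified places. If $\Pi$ arises as the functorial transfer of $\Sigma$, then at almost every unramified place $\nu$ the Satake parameters of $\Pi_\nu$ are the images under $\mathrm{GSp}_{2n}(\BC)\hookrightarrow\GL_{2n}(\BC)$ of those of $\Sigma_\nu$. Under the exterior square representation $\wedge^2\colon \GL_{2n}(\BC)\to \GL_{\binom{2n}{2}}(\BC)$, the image of $\mathrm{GSp}_{2n}(\BC)$ preserves the symplectic form, and hence the composition $\wedge^2\circ \phi_\nu$ contains the similitude character, which equals $\omega_{\Sigma,\nu}=\eta_\nu$. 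This produces, at almost all unramified places, a factor of $L(s,\eta_\nu^{-1}\eta_\nu)=\zeta_\nu(s)$ in $L(s,\Pi_\nu,\wedge^2\otimes\eta^{-1})$, and a standard partial-$L$-function argument combined with the Jacquet--Shalika integral representation of the exterior square $L$-function produces the pole of the complete $L$-function at $s=1$. For $(3)\Rightarrow(2)$, I would invoke Jacquet--Shalika \cite[Sec.\ 8, Thm.\ 1]{JacquetShalikaExteriorSquareLfun}, whose zeta integral for $L(s,\Pi,\wedge^2\otimes\eta^{-1})$ is, up to standard unfolding, the Shalika period integral against an Eisenstein series; the residue at $s=1$ is, up to a nonzero constant, the Shalika period of a suitable cusp form in $\Pi$. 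The residue being nonzero by assumption then forces the central character identity $\omega_\Pi=\eta^n$ (read off from the $\mtrtwo{h}{0}{0}{h}$-equivariance) and produces a $\varphi\in\Pi$ with nonzero $(\eta,\psi)$-Shalika period. For $(2)\Rightarrow(1)$, I would apply the descent construction of Hundley--Sayag \cite{HundleySayagDescentConstructionforGSpin} to produce an irreducible generic cuspidal automorphic representation $\Sigma$ of $\mathrm{GSpin}_{2n+1}(\BA)$ with central character $\eta$, and then apply Asgari--Shahidi's generic functorial transfer \cite{AsgariShahidiGenericTransferforGSpin} together with the strong multiplicity one theorem on $\GL_{2n}$ to identify the transfer of $\Sigma$ with $\Pi$.

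For the strong archimedean transfer, I would use the fact that the descent of Hundley--Sayag and the transfer of Asgari--Shahidi are both compatible with the local Langlands correspondence at every place where that correspondence is known. At archimedean places the local Langlands correspondence for $\mathrm{GSpin}_{2n+1}(\Rk_\nu)$ is available, and both the descent and the transfer preserve $L$- and $\varepsilon$-factors locally. Combined with the injectivity of the archimedean local Langlands correspondence on infinitesimal characters and the rigidity given by the fact that $\Sigma_\nu$ is generic, this forces the archimedean parameter of $\Pi_\nu$ to factor as $\mathrm{GSp}_{2n}(\BC)\hookrightarrow\GL_{2n}(\BC)$ composed with the parameter of $\Sigma_\nu$, which is precisely the statement of strong archimedean transfer.

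The hard part, and the place where most of the real content sits, is establishing $(3)\Rightarrow(1)$ with the prescribed central character, as this requires invoking the full descent machinery rather than, say, only a parameter-level argument at unramified places. The equivalence $(2)\Leftrightarrow(3)$ is by now classical via Jacquet--Shalika, and $(1)\Rightarrow(3)$ is a pure unramified computation. Assembling the strong archimedean refinement from the compatibility of descent and transfer with local Langlands is conceptually clean but bookkeeping-heavy, and I would handle it last, after the three equivalences are in place.
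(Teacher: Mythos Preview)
The paper does not give its own proof of this proposition; it simply records it as a consequence of the cited works of Jacquet--Shalika, Asgari--Shahidi, and Hundley--Sayag, and refers to \cite[Section 5]{GanRaghramArithmeticityPeriodsAutomorphicForms} for a detailed assembly. Your proposal identifies exactly these same ingredients and arranges them into the standard cycle of implications, so your approach is essentially the one the paper has in mind.

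One small organizational remark: what you call $(2)\Rightarrow(1)$ really passes through $(3)$, since the Hundley--Sayag descent takes as input the pole of the twisted exterior-square $L$-function (equivalently, via Jacquet--Shalika, the nonvanishing of the Shalika period). Also, for the strong archimedean statement, the cleanest formulation is simply that Asgari--Shahidi already prove their generic transfer is strong at archimedean places (local $L$-parameters match); you do not need the extra remarks about infinitesimal characters or rigidity of generic representations.
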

     \begin{rk}
     In the literature, the third statement in the above Proposition is often stated as `` the partial $L$-function $L^S(s, \Pi, \wedge^2\otimes\eta^{-1})$ has a pole at $s=1$".  With the classification of the unitary dual of general linear group over local fields of characteristic zero, as in \cite{V86} and \cite{T86}, and also in \cite[Theorem 7.2]{BadulescuRenardUnitaryDualJLCorrespondence}, it is not hard to show that the partial exterior square $L^S(s, \Pi, \wedge^2\otimes\eta^{-1})$ has a pole at $s=1$ if and only if so does the complete $L$-function $L(s, \Pi, \wedge^2\otimes\eta^{-1})$.
  \end{rk}

  Note that if any of the above statements holds, one must have that $\Pi^\vee \cong\Pi\otimes\eta^{-1}.$   It follows that if $\Pi = \widehat \otimes_v'\Pi_v$ is of symplectic type, then for every $v\mid\infty$, $\Pi_v$ is a Casselman-Wallach representation of $G(\Rk_v)$ of symplectic type in the following sense.
   \begin{defn}\label{def: local repn of symplectic type}
     Let $\BK$ be an archimedean local field. An irreducible Casselman-Wallach representation $\Pi_\BK$ of $G(\BK)$ is said to be of symplectic type if the image of  its Langlands parameter $W_\BK\rightarrow \GL_{2n}(\BC)$ is contained in a conjugation of $\mathrm{GSp}_{2n}(\BC)$.
    \end{defn}

For the  archimedean local Langlands correspondence, see \cite{L89} (for all real reductive groups) or \cite[Appendix]{JacquetArchimedeanRankinSelberg}, \cite[Sections 3 and 4]{Kn94} (for the general linear groups).

    \subsection{Shalika functionals  and Friedberg-Jacquet integrals}\label{subsection: Archimedean Friedberg-Jacquet Integral and Uniform Cohomological Vector}

Fix a  character $\eta_\BK: \BK^\times\rightarrow \BC^\times$.
Recall the Shalika subgroup $S$ from \eqref{eq: global shalika subgroup} and write $S_\BK:=S(\BK)$.
  We write $\eta_\BK\otimes\psi_\BK$ for the character of $S_\BK$ that sends $\mtrtwo{h}{0}{0}{h} \mtrtwo{1_n}{x}{0}{1_n}$ to $\eta_\BK(\det(h)) \psi_\BK(\mathrm{tr}(x))$.

 Write $G_\BK:=G(\BK)$. Let $\Pi_\BK$ be an irreducible Casselman-Wallach representation of $G_\BK$ when
$\BK$ is archimedean, or an irreducible smooth representation of $G_\BK$ when $\BK$ is non-archimedean.
A  linear  functional  $\lambda_\BK$ (continuous in the archimedean case) on $\Pi_\BK$ is called a
 $(\eta_\BK,\psi_\BK)$-Shalika functional if
     \begin{equation}\label{Eq00: Def of Shalika Functional}
        \langle \lambda_\BK, g.u \rangle =(\eta_\BK\otimes\psi_\BK)(g)\cdot\langle \lambda_\BK, u\rangle ,
    \end{equation}
    for all $g\in S_\BK$ and $u\in \Pi_\BK$.
   The following result, which asserts the uniqueness of Shalika functionals, plays an important role in this article.

     \begin{lemma}\label{uniquesh}
    The space of $(\eta_\BK,\psi_\BK)$-Shalika functionals is at most one-dimensional, namely
    \begin{equation}\label{eq: uniqueness of shalika}\dim \Hom_{S_\BK}(\Pi_\BK, \eta_\BK\otimes\psi_\BK)\leq 1.\end{equation}
    \end{lemma}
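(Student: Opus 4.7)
The plan is the standard Gelfand--Kazhdan strategy applied to the pair $(G_\BK, S_\BK)$ with the character $\eta_\BK \otimes \psi_\BK$. Set $w := \mtrtwo{0}{1_n}{1_n}{0}$ and define the anti-involution
\[
\tau\colon G_\BK \longrightarrow G_\BK, \qquad \tau(g) := w \cdot {}^{t}g \cdot w^{-1}.
\]
A direct matrix computation shows that $\tau$ stabilises $S_\BK$ setwise and preserves the Shalika character: if $s = \mtrtwo{h}{hx}{0}{h}$ with $h \in \GL_n(\BK)$ and $x \in \Mat_n(\BK)$, then
\[
\tau(s) = \mtrtwo{{}^{t}h}{({}^{t}x)({}^{t}h)}{0}{{}^{t}h},
\]
which lies in $S_\BK$ with Shalika data $({}^{t}h, \, ({}^{t}h)^{-1} ({}^{t}x) ({}^{t}h))$; invariance of determinant and trace under transposition and conjugation then gives $(\eta_\BK \otimes \psi_\BK) \circ \tau = \eta_\BK \otimes \psi_\BK$.

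Next I would carry out the formal Gelfand--Kazhdan deduction: \eqref{eq: uniqueness of shalika} will follow once one establishes that every $\BC$-valued generalised function $D$ on $G_\BK$ satisfying the bi-equivariance
\[
D(s_1 g s_2) = (\eta_\BK \otimes \psi_\BK)(s_1) \cdot (\eta_\BK \otimes \psi_\BK)(s_2) \cdot D(g), \qquad s_1, s_2 \in S_\BK,\ g \in G_\BK,
\]
is automatically $\tau$-invariant. The deduction from this distributional statement to the uniqueness is formal: any two $(\eta_\BK,\psi_\BK)$-Shalika functionals $\lambda_1,\lambda_2$ on $\Pi_\BK$ produce, via the matrix coefficient map $\Pi_\BK \otimes \Pi_\BK^\vee \to C^\infty(G_\BK)$, a distribution $D$ of the above type, and the $\tau$-invariance of $D$, combined with the observation that $\tau$ identifies $\Pi_\BK$ with its contragredient $\Pi_\BK^\vee$ (up to $w$-conjugation), forces the bilinear pairing $\lambda_1 \otimes \lambda_2$ to lie in the one-dimensional space of canonical pairings between $\Pi_\BK$ and $\Pi_\BK^\vee$, yielding $\lambda_1 \propto \lambda_2$.

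The main obstacle is the distributional $\tau$-invariance itself. I would stratify $G_\BK$ by the $S_\BK \times S_\BK$-orbits under $(s_1, s_2) \cdot g = s_1 g s_2^{-1}$ and argue orbit by orbit. On the open orbit, a direct calculation of the stabiliser of a distinguished representative pins down the Shalika character on this stabiliser and shows that the unique (up to scalar) equivariant distribution supported there is automatically $\tau$-invariant. On the closed strata one proceeds by induction on codimension: in the non-archimedean case, Bernstein's localisation principle reduces the question to distributions on the conormal bundle of each orbit, and a case-by-case analysis of the characters by which these normal bundles transform rules out $\tau$-anti-invariant contributions; in the archimedean case this analysis has to be supplemented by the theory of holonomic and transversal distributions to handle the normal jets. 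This geometric orbit classification, together with the accompanying bookkeeping of normal-bundle characters, is the principal technical difficulty and where all the real work lies.
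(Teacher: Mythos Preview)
The paper does not give a direct argument: it simply cites \cite[Proposition 6.1]{JR} (non-archimedean, trivial $\eta_\BK$), \cite[Theorem 1.1]{AizenbudGourevitchJacquetUniquenessShalika} (archimedean, trivial $\eta_\BK$), and \cite[Theorem A]{ChenSunUniquenessLinearModel} (general twisted case). Your outline is precisely the Gelfand--Kazhdan strategy underlying those references, so you are in effect reconstructing what the paper delegates to the literature. The anti-involution $\tau(g)=w\,{}^{t}g\,w^{-1}$ is the correct one, and your verification that it preserves $(S_\BK,\eta_\BK\otimes\psi_\BK)$ is fine.

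One point in your formal deduction deserves more care. The matrix-coefficient construction does not pair two Shalika functionals on $\Pi_\BK$; it pairs a functional $\lambda\in\Hom_{S_\BK}(\Pi_\BK,\eta_\BK\otimes\psi_\BK)$ with a functional $\lambda'\in\Hom_{S_\BK}(\Pi_\BK^\vee,(\eta_\BK\otimes\psi_\BK)^{-1})$, and the resulting distribution is equivariant by $(\eta_\BK\otimes\psi_\BK)$ on one side and its inverse on the other. The $\tau$-invariance of such distributions then gives
\[
\dim\Hom_{S_\BK}(\Pi_\BK,\eta_\BK\otimes\psi_\BK)\cdot\dim\Hom_{S_\BK}(\Pi_\BK^\vee,(\eta_\BK\otimes\psi_\BK)^{-1})\leq 1,
\]
and the final step uses that $g\mapsto\tau(g)^{-1}$ carries $\Pi_\BK$ to $\Pi_\BK^\vee$ and $(S_\BK,\eta_\BK\otimes\psi_\BK)$ to $(S_\BK,(\eta_\BK\otimes\psi_\BK)^{-1})$, so the two factors are equal. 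Apart from this bookkeeping, your sketch is correct; the orbit-by-orbit distributional analysis you flag as the ``principal technical difficulty'' is exactly where the real content of the cited papers lies, and it is substantial, particularly in the archimedean case.
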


\begin{proof}
 If $\eta_\BK$ is trivial, this is proved  in \cite[Proposition 6.1]{JR}
for the non-archimedean case and \cite[Theorem 1.1]{AizenbudGourevitchJacquetUniquenessShalika} for the archimedean case. In general, this is proved in \cite[Theorem A]{ChenSunUniquenessLinearModel}.
\end{proof}

    Suppose that $\lambda_\BK$ is a nonzero  $(\eta_\BK,\psi_\BK)$-Shalika functional on $\Pi_\BK$ so that \eqref{Eq00: Def of Shalika Functional} is satisfied.
    \begin{lemma}\label{sphshalika}
    If $\BK$ is non-archimedean, both $\Pi_\BK$ and $\eta_\BK$ are unramified, and the conductor of $\psi_\BK$ is the ring of integers in $\BK$, then
    $\lambda_\BK(v_\BK^\circ)=1$ 
    for a unique spherical vector $v_\BK^\circ\in \Pi_\BK$.
    \end{lemma}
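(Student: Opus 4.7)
The plan is to reduce the statement to a single non\-vanishing claim on the one\-dimensional space of spherical vectors, and then to establish that non\-vanishing by invoking the unramified computation of the local Friedberg\-Jacquet zeta integral.

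First, since $\Pi_\BK$ is an irreducible unramified representation of $G_\BK$, the subspace of $\GL_{2n}(\CO_\BK)$\-fixed vectors in $\Pi_\BK$ is one\-dimensional by the standard theory of unramified representations of $p$\-adic groups. Pick any nonzero element $v$ of this line. Both the existence and the uniqueness of a spherical vector $v_\BK^\circ$ satisfying $\lambda_\BK(v_\BK^\circ)=1$ are then equivalent to the single assertion $\lambda_\BK(v)\neq 0$; granted this, the normalized vector is just $v_\BK^\circ := v/\lambda_\BK(v)$. A preliminary consistency check is that the $S_\BK$\-equivariance of $\lambda_\BK$ imposes no obstruction: for $g=\mtrtwo{h}{0}{0}{h}\mtrtwo{1_n}{x}{0}{1_n}\in S(\CO_\BK)$ one has $\det h\in \CO_\BK^\times$ and $\tr x\in\CO_\BK$, so $\eta_\BK(\det h)=\psi_\BK(\tr x)=1$ under the unramifiedness hypotheses on $\eta_\BK$ and $\psi_\BK$.

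The core of the proof is to show $\lambda_\BK(v)\neq 0$. The approach is to compute the local Friedberg\-Jacquet integral on the spherical datum directly. For an auxiliary unramified character $\chi_\BK'$ of $\BK^\times$ and $\Re(s)\gg 0$, consider
$$Z(s,v,\chi_\BK'):=\int_{\GL_n(\BK)}\lambda_\BK\bigl(\diag(g,1).v\bigr)\,\chi_\BK'(\det g)\,\abs{\det g}_\BK^{s-\frac{1}{2}}\,dg,$$
with Haar measure normalized so that $\GL_n(\CO_\BK)$ has volume $1$. Realize $\Pi_\BK$ as an unramified principal series, construct $\lambda_\BK$ by the open\-orbit integration against the Shalika character over $S_\BK$, and take $v$ to be the standard spherical section; the resulting integrand $g\mapsto \lambda_\BK(\diag(g,1).v)$ is a spherical function on $\GL_n(\BK)$, and a Casselman\-Shalika\-type evaluation (for trivial $\eta_\BK$ this is the original Friedberg\-Jacquet computation, and it extends to unramified $\eta_\BK$ by a routine modification carried out in \cite{AshGinzburgPadicLfunGL(2n)} and \cite{GrobnerRaguhramShalikaModelArithmetic}) yields
$$Z(s,v,\chi_\BK')=\lambda_\BK(v)\cdot L(s,\Pi_\BK\otimes\chi_\BK').$$
Running this computation directly (rather than assuming $\lambda_\BK(v)\neq 0$ from the outset) produces an explicit Euler\-factor expression for $\lambda_\BK(v)$ as a finite product of nonzero factors of the form $(1-\alpha\, q^{-k})^{-1}$, from which the non\-vanishing $\lambda_\BK(v)\neq 0$ is immediate.

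The main obstacle will be the bookkeeping in this explicit unramified computation: matching Haar measure normalizations between $S_\BK$, $\GL_n(\BK)$, and the maximal compact subgroups, and tracking how the unramified character $\eta_\BK$ enters the open\-orbit integral and the resulting Euler product. Once this calculation is carried out (or cited from the references above), both the non\-vanishing of $\lambda_\BK$ on the spherical line and therefore the entire lemma follow.
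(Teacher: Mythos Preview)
Your proposal is correct and is essentially the content of the paper's proof, which consists solely of the citation ``See \cite[Section 2.3]{FriedbergJacquetLinearPeriods}''; that section carries out exactly the open-orbit construction of a Shalika functional on the unramified principal series and the explicit spherical evaluation you describe. One point to tidy: the lemma concerns a \emph{given} nonzero $\lambda_\BK$, so rather than ``constructing $\lambda_\BK$'' you should construct an auxiliary explicit functional, verify its nonvanishing on the spherical line by the direct computation, and then invoke the uniqueness of Shalika functionals (Lemma~\ref{uniquesh}) to transfer the conclusion to the original $\lambda_\BK$.
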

\begin{proof}
 See \cite[Section 2.3]{FriedbergJacquetLinearPeriods}.
\end{proof}
    Here and as usual, a vector in $\Pi_\BK$ is said to be spherical if it is fixed by the standard maximal compact subgroup $\GL_{2n}(\CO_\BK)$ of $G_\BK$, as $\BK$ is non-archimedean.

For the character $\chi_\BK:\BK^\times\rightarrow \BC^\times$, the local Friedberg-Jacquet integral is defined by
    \begin{equation}\label{eq: F-J integral}
        Z(v,s,\chi_\BK) := \int_{\GL_n(\BK)} \left \langle\lambda_\BK, \mtrtwo{g}{0}{0}{1_n}.v\right \rangle\cdot \abs{\det g}_\BK^{s-\frac{1}{2}}\cdot \chi_\BK(\det g)\, dg, \quad v\in \Pi_\BK,\, s\in \BC.
    \end{equation}
    Here $|\cdot|_\BK$ denotes the normalized absolute value on $\BK$, and $dg$ is a fixed Haar measure. When $\BK$ is non-archimedean,
we always  take the normalized Haar measure on $\GL_n(\BK)$ so that a maximal open compact subgroup of it has total volume $1$.

      Let us define
    \begin{equation}\label{eq: temp 24}
       H:= \left\{\mtrtwo{g_1}{0}{0}{g_2}\Big\lvert g_1,g_2\in \GL_n \right\} =\GL_n \times \GL_n\subset G.
    \end{equation}
    Write $H_\BK:=H(\BK)$.
    For each $t\in \BC$, we define a character of $H_\BK$ as follows:
    \begin{equation}\label{eq: xichi}
         \xi_{\chi_\BK, t}:=\xi_{\eta_\BK, \chi_\BK, t}:=\left( (\chi_\BK\circ\det)\cdot \abs{\det }_\BK^{t}\right)\otimes\left(((\chi_\BK^{-1}\cdot \eta^{-1}_\BK)\circ\det) \cdot \abs{\det }_\BK^{-t} \right).
    \end{equation}
The following proposition was proved in \cite[Proposition 3.1]{FriedbergJacquetLinearPeriods} and \cite[Theorem 3.1]{AizenbudGourevitchJacquetUniquenessShalika}.
    \begin{prop}\label{prop: properties of F-J int}
With the notation introduced above, the following hold.
    \begin{enumerate}
      \item When the real part of $s\in\BC$ is sufficiently large, the local Friedberg-Jacquet integral $Z(v,s,\chi_\BK)$ converges absolutely for all $v\in \Pi_\BK$.
      \item There is a unique (continuous in the archimedean case) map
      \[
       \Pi_\BK\times \BC\rightarrow \BC, \qquad (v,s)\mapsto Z^\circ(v,s,\chi_\BK)
      \]
      that is  homomorphic in the second variable such that for all $s\in \BC$ whose real part is sufficiently large,
      \[
        Z^\circ(v,s,\chi_\BK) = \frac{1}{L(s,\Pi_\BK\otimes\chi_\BK)}Z(v,s,\chi_\BK)\quad \, \textrm{  for all $v\in \Pi_\BK$}.
        \]
       \item
       For every $s\in \BC$,
       \[
          Z^\circ(\,\cdot\,,s,\chi_\BK) \in \Hom_{H_\BK}(\Pi_\BK\otimes\xi_{\chi_\BK, s-\frac{1}{2}}, \BC)\setminus \{0\}.
          \]
Here $\Pi_\BK\otimes\xi_{\chi_\BK, s-\frac{1}{2}}$ is obviously identified with $\Pi_\BK$ as a vector space so that the linear functional $ Z^\circ(\,\cdot\,,s,\chi_\BK)$ is defined on it.

     \item There exists $v\in \Pi_\BK$ such that $Z^\circ(v,s,\chi_\BK)=1$ for all $s\in \BC$.
       \item When $\BK$ is non-archimedean, for every $v\in \Pi_\BK$, the function $s\mapsto Z^\circ(v,s,\chi_\BK)$
 is a polynomial of $q^{s}, q^{-s}$, where $q$ is the cardinality of the residue field of $\BK$.
      \item If $\BK$ is non-archimedean,  $\Pi_\BK$, $\eta_\BK$ and $\chi_\BK$ are unramified, and the conductor of $\psi_\BK$ is the ring of integers in $\BK$, then
   $Z^\circ(v_\BK^\circ,s,\chi_\BK)=1$ for all $s\in \BC$, where $v_\BK^\circ\in \Pi_\BK$ is the spherical vector such that $\lambda_\BK(v_\BK^\circ)=1$.
    \end{enumerate}
    \end{prop}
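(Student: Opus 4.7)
The plan is to establish the six items in a natural order: first the $H_\BK$-equivariance (3) and convergence (1) by direct manipulation of the integral, then the existence of the normalized continuation $Z^\circ$ together with its polynomial and test-vector properties (items (2), (4), (5)) from a combination of Bernstein's continuation principle and the uniqueness Lemma \ref{uniquesh}, and finally the unramified computation (6) by an explicit Satake-parameter calculation.

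For (3) I would write $h=\mtrtwo{g_1}{0}{0}{g_2}\in H_\BK$ and exploit the identity
\[
\mtrtwo{g}{0}{0}{1_n}\mtrtwo{g_1}{0}{0}{g_2}=\mtrtwo{g_2}{0}{0}{g_2}\mtrtwo{g_2^{-1}gg_1}{0}{0}{1_n}
\]
to pull the $h$-action across the Shalika functional using \eqref{Eq00: Def of Shalika Functional}; the measure-preserving substitution $g\mapsto g_2gg_1^{-1}$ on $\GL_n(\BK)$ then produces exactly the character $\xi_{\chi_\BK, s-\frac{1}{2}}$. For (1) I would combine this equivariance with standard asymptotic bounds on the matrix-coefficient-type function $g\mapsto\langle\lambda_\BK,\mtrtwo{g}{0}{0}{1_n}.v\rangle$ along an Iwasawa decomposition of $\GL_n(\BK)$: the unipotent equivariance by $\psi_\BK$ forces rapid decay on one side of the diagonal torus, and coupled with $|\det g|_\BK^{s-\frac{1}{2}}$ this yields absolute convergence once $\Re(s)$ is sufficiently large.

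For items (2), (4), (5) I would treat both cases in parallel. In the non-archimedean setting, Bernstein's principle for rational families shows that $Z(v,s,\chi_\BK)$ is a rational function of $q^s$; after identifying $\Hom_{H_\BK}(\Pi_\BK\otimes\xi_{\chi_\BK,s-\frac{1}{2}},\BC)$ with a generically one-dimensional space via Lemma \ref{uniquesh}, the zeta integrals share a common denominator, which by the Friedberg-Jacquet definition is exactly the local $L$-factor $L(s,\Pi_\BK\otimes\chi_\BK)$. This furnishes $Z^\circ$ in (2), makes it a polynomial in $q^{\pm s}$ as in (5), and exhibits a test vector for (4) via the greatest-common-divisor characterisation of $L$. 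In the archimedean setting the analogous Aizenbud-Gourevitch-Jacquet framework provides analytic continuation, and the uniqueness Lemma \ref{uniquesh} again forces $Z^\circ(\,\cdot\,,s,\chi_\BK)$ to be jointly continuous in $v$ and holomorphic in $s$; a Dixmier-Malliavin argument applied to a suitable compactly supported section produces the test vector in (4).

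Finally, for (6) I would invoke the Iwasawa decomposition on $\GL_n(\BK)$ together with the explicit formula for the unramified Shalika functional evaluated at $\mtrtwo{a}{0}{0}{1_n}$ for $a$ in the diagonal torus in terms of the Satake parameters of $\Pi_\BK$; summing the resulting geometric series yields the Godement-Jacquet $L$-factor, hence $Z(v_\BK^\circ,s,\chi_\BK)=L(s,\Pi_\BK\otimes\chi_\BK)$ and $Z^\circ(v_\BK^\circ,s,\chi_\BK)\equiv 1$. The main obstacle is the archimedean half of (2): one must rule out higher-order poles of $Z$ beyond those absorbed by $L$ and establish continuity in $v$, and this is precisely where Bernstein's analytic continuation for holonomic families, combined with the one-dimensionality from Lemma \ref{uniquesh}, plays the decisive role.
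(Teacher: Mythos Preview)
The paper does not give its own proof of this proposition; it simply cites \cite[Proposition 3.1]{FriedbergJacquetLinearPeriods} for the non-archimedean case and \cite[Theorem 3.1]{AizenbudGourevitchJacquetUniquenessShalika} for the archimedean case and records the statement. So there is no internal argument to compare against beyond those references.

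Your overall strategy is sensible, but there is a concrete gap. In your treatment of (2), (4), (5) you repeatedly invoke Lemma \ref{uniquesh} to claim that $\Hom_{H_\BK}(\Pi_\BK\otimes\xi_{\chi_\BK,s-\frac{1}{2}},\BC)$ is generically one-dimensional. Lemma \ref{uniquesh} is the uniqueness of \emph{Shalika} functionals, i.e.\ $\dim\Hom_{S_\BK}(\Pi_\BK,\eta_\BK\otimes\psi_\BK)\le 1$; it says nothing about $H_\BK$-equivariant (twisted linear) functionals. The multiplicity-one statement you actually need is the uniqueness of twisted linear periods, proved by Jacquet--Rallis \cite{JR} in the untwisted non-archimedean case and by Chen--Sun \cite{ChenSunUniquenessLinearModel} in general. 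Without that input your Bernstein-style argument does not go through as written; and even granting it, identifying the common denominator with $L(s,\Pi_\BK\otimes\chi_\BK)$ rather than some smaller Euler factor is a separate computation, not a formal consequence of uniqueness.

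It is also worth noting that Friedberg--Jacquet's original argument does not run through Bernstein's principle and linear-period uniqueness. They work directly in the Shalika model, relate the function $g\mapsto\langle\lambda_\BK,\mtrtwo{g}{0}{0}{1_n}.v\rangle$ to Whittaker functions via restriction to the mirabolic, and then reduce to Godement--Jacquet type zeta integrals, from which the $L$-factor and the test vector drop out explicitly. Your route is a legitimate modern alternative, but it rests on a different (and harder) uniqueness theorem than the one you cite.
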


\subsection{Algebraic  representations and critical places}\label{algssec-nd}

Suppose that $\BK$ is archimedean in this subsection.  Recall that its Weil group is
       $$            W_\BK:=\begin{cases}
                   \overline \BK^\times\sqcup \mathrm j\cdot \overline \BK^\times, \quad & \textrm{if $\BK\cong\BR$}; \\
                   \BK^\times, \quad & \textrm{if $\BK\cong\BC$,}
                    \end{cases}$$
                   where $\overline \BK$ is an algebraic closure of $\BK$.
Every  completely reducible finite dimensional representation $\rho$ of $\overline \BK^\times$ has the form
\[
 \iota^{a_1}\bar \iota^{b_1}\oplus \iota^{a_2}\bar \iota^{b_2}\oplus \cdots\oplus \iota^{a_k}\bar \iota^{b_k},\quad (k \geq 0, \,a_i, b_i\in \BC,\, a_i-b_i\in \BZ),
  \]
where  $\iota, \bar \iota : \overline \BK\rightarrow \BC$ are the two topological isomorphisms,
 and  $\iota^a\bar \iota^b$ is the character
 \begin{equation}\label{ii}
 z\mapsto (\iota(z))^{a-b} \cdot (\iota(z)\bar \iota(z))^b,
 \end{equation}
  of $\overline \BK^\times$,  for all $a,b\in \BC$ with $a-b\in \BZ$.
Following \cite{ClozelAutomorphicFormsMotive}, we say that $\rho$ is {\sl algebraic} if all $a_i$'s and $b_i$'s are integers, and say that $\rho$ is {\sl regular} if $a_i$'s are pairwise distinct and  $b_i$'s are also pairwise distinct. We say a completely reducible finite dimensional representation  of the Weil group $W_\BK$ is algebraic (or regular) if so is its restriction to $\overline \BK^\times$.

\begin{defn}\label{def: arglobal critical place}
 An irreducible Casselman-Wallach representation $\Pi_\BK$ of $\GL_k(\BK)$ is said to be {\bf algebraic} (or {\bf regular}) if so is the Langlands parameter of $\Pi_\BK\otimes \abs{\det}_\BK^{\frac{1-k}{2}}$.
\end{defn}



An irreducible cuspidal automorphic representation $\Pi=\widehat \otimes_v' \Pi_v$ of $\GL_k(\BA)$ is said to be {\bf algebraic} (or {\bf regular}) if so is $\Pi_v$ for all $v\mid \infty$.
It is easy to see that all possible poles of the $L$-function $L(s, \Pi_\BK)$ are in the set $\frac{k-1}{2}+\BZ$, for every algebraic irreducible Casselman-Wallach representation $\Pi_\BK$ of $\GL_k(\BK)$.
 \begin{defn}\label{def: global critical place}
        Let $\Pi_\BK$ be an algebraic irreducible Casselman-Wallach representation  of $\GL_k(\BK)$.  A number in $\frac{k-1}{2}+\BZ$ is called a {\bf critical place} for   $\Pi_\BK$ if it is not a pole of the local L-function $L(s, \Pi_\BK)$ or $L(1-s, \Pi_\BK^\vee)$.
  \end{defn}

Given   an algebraic irreducible cuspidal automorphic representation $\Pi=\widehat \otimes_v' \Pi_v$ of $\GL_k(\BA)$, a number in $\frac{k-1}{2}+\BZ$ is called a  critical place for   $\Pi$ if it is a critical place for $\Pi_v$ for all $v\mid \infty$.

Now we suppose that $\Pi$ is a regular algebraic irreducible cuspidal automorphic representation of $\GL_{k}(\BA)$. Write $\Pi=\Pi_\infty\otimes \Pi_f$ as in \eqref{decpi}. According to \cite[Lemma 3.14, Lemma 4.9]{ClozelAutomorphicFormsMotive}, $\Pi_\infty$ is essentially tempered and cohomological. Here  ``essentially tempered" means that after twisting a character,  $\Pi_\infty$ becomes unitarizable and tempered. The notion ``cohomological" amounts to saying that  there is a unique irreducible algebraic representation $F$ of $\GL_k(\Rk\otimes_\BQ \BC)$, called the coefficient system of $\Pi$, such that the {\sl total continuous cohomology}
    \begin{equation}\label{eq: temp 23}  \RH^*_{\mathrm{ct}}(\GL_k(\BA_\infty)^0; \Pi_\infty\otimes F^\vee)\neq\{ 0\}. \end{equation}
Here and henceforth, for each Lie group $M$, we write $M^0$ for its identity connected component. 


      \begin{rk}
In this paper, we use the notion of continuous cohomology to state the relevant theorems, although in the main references
\cite{ClozelAutomorphicFormsMotive}, \cite{VoganZuckermanUnitaryRepresentationNonzeroCohomology} and \cite[Chapter 9]{WallachRealReductiveGroups1}  for cohomological representations, the authors use the notion of relative Lie algebra cohomology. It is a theorem of G. Hochschild and G. Mostow in \cite[Theorem 6.1]{HochshildMostowCohomologyofLieGroups} that the continuous cohomology agrees with the relative Lie algebra cohomology.
Hence we feel free to switch the two cohomological languages during the discussions and proofs in this paper.
    \end{rk}

\begin{lemma}
Let the notation and assumptions be as above. Then $\BQ(\Pi)$ is a number field and $\BQ(F)\subset \BQ(\Pi)$.
\end{lemma}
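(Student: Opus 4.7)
The plan is to combine Clozel's Galois-equivariance for regular algebraic cuspidal representations with Strong Multiplicity One for $\GL_k$.

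To show $\BQ(\Pi)$ is a number field, I would use the realization of $\Pi_f$ in cuspidal cohomology. Since $\Pi$ is regular algebraic, $F$ is an algebraic representation defined over the number field $\BQ(F)$, and by \eqref{eq: temp 23} combined with Matsushima's formula, for every sufficiently small open compact subgroup $K_f\subset \GL_k(\BA_f)$ the space
$$
\RH^*_{\mathrm{cusp}}\left(\GL_k(\Rk)\backslash \GL_k(\BA)/K_\infty^0 K_f;\, \widetilde{F^\vee}\right)
$$
is a finite-dimensional $\BC$-vector space that contains $(\Pi_f)^{K_f}$ as a Hecke-isotypic component. This cohomology carries a natural $\BQ(F)$-rational structure coming from the Betti rational structure on the cohomology of the locally symmetric space together with the $\BQ(F)$-structure on $F$. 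The group $\Aut(\BC/\BQ(F))$ then acts $\sigma$-semilinearly on the complexification and permutes the Hecke-isotypic components in a finite orbit bounded by the total dimension. Hence the stabilizer of the isomorphism class of $\Pi_f$ has finite index in $\Aut(\BC/\BQ(F))$, which forces $\BQ(\Pi)$ to be a finite extension of $\BQ(F)$, and in particular a number field.

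For the inclusion $\BQ(F)\subset \BQ(\Pi)$, I would verify the equivalent implication: any $\sigma\in \Aut(\BC)$ with $\Pi_f^\sigma\cong \Pi_f$ must satisfy $F^\sigma\cong F$. By Clozel's Galois-equivariance theorem \cite[Th\'eor\`eme 3.13]{ClozelAutomorphicFormsMotive}, for every $\sigma\in \Aut(\BC)$ there exists a regular algebraic cuspidal automorphic representation $\Pi^\sigma$ of $\GL_k(\BA)$ whose finite part is $(\Pi_f)^\sigma$ and whose coefficient system is $F^\sigma$. If $\sigma$ fixes $\BQ(\Pi)$, then $(\Pi^\sigma)_f = (\Pi_f)^\sigma\cong \Pi_f$, and Strong Multiplicity One on the cuspidal spectrum of $\GL_k$ forces $\Pi^\sigma\cong \Pi$ as global representations; in particular $\Pi_\infty^\sigma\cong \Pi_\infty$. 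Since the coefficient system of a regular algebraic cohomological cuspidal representation is uniquely determined by its archimedean component via the infinitesimal character, this yields $F^\sigma\cong F$, so $\sigma$ fixes $\BQ(F)$.

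The hard part will be invoking Clozel's Galois-equivariance theorem; its proof rests on the rational structure of cuspidal cohomology and on the uniqueness of the archimedean component of a cohomological cuspidal representation with prescribed infinitesimal character. Once this input is granted, both conclusions of the lemma follow from a short bookkeeping argument using Strong Multiplicity One.
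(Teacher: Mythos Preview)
Your argument is correct and follows essentially the same route as the paper's proof: both assertions are reduced to Clozel's Th\'eor\`eme~3.13 together with Strong Multiplicity One for $\GL_k$, and your write-up simply unpacks the cohomological finite-orbit argument underlying Clozel's statement for the first assertion and spells out the Galois-equivariance plus Strong Multiplicity One step for the second.
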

\begin{proof}
The first assertion is proved in  \cite[Theorem 3.13]{ClozelAutomorphicFormsMotive}. In view of the Strong Multiplicity One Theorem proved by  Piatetski-Shapiro (\cite[Page 209]{PS79}) and Jacquet-Shalika (\cite[Corollary 4.10]{JacquetShalikaEulerProduct1}), the second assertion  also follows from  \cite[Theorem 3.13]{ClozelAutomorphicFormsMotive}.
\end{proof}


    \subsection{Cohomological representations of symplectic type}\label{subsection: Cohomological representations of symplectic type}

  When $E$  is either a number field or an archimedean local field, we write $\CE_E$ for the set of all field embeddings (continuous field embeddings in the archimedean case) $\iota: E \rightarrow \BC$. For any $\iota\in\CE_E$, we write $\bar \iota$ for the composition of
    $$  E\xrightarrow{\iota} \BC\xrightarrow{\textrm{complex conjugation}} \BC.$$

   As mentioned before,  the archimedean local components of an irreducible regular algebraic cuspidal automorphic representation of $G(\BA)$ are essentially tempered and cohomological. In this subsection and the next one, we suppose that $\BK$ is archimedean, and $\Pi_\BK$ is an essentially tempered irreducible Casselman-Wallach representation of $G_\BK$ such that  the total continuous cohomology
   \begin{equation}\label{eq: archimedean coh nonzero}
          \RH^*_\mathrm{ct}(G_\BK^0,\Pi_\BK\otimes F^\vee_\BK) \ne \{0\},
    \end{equation}
    where $F_\BK$ is an irreducible algebraic representation of $G(\BK \otimes_\BR \BC)$. Then $\Pi_\BK$ is regular and algebraic. Write $F_\BK=\otimes_{\iota\in \CE_\BK} F_\iota$ and write
    \begin{equation}\label{eq2: global highest weight}\nu_\iota=(\nu^\iota_1 \geq  \nu^\iota_2\geq \cdots \geq \nu^\iota_{2n})\in \BZ^{2n}\end{equation}
    for the highest weight of $F_\iota$.  By \cite[Lemma 4.9]{ClozelAutomorphicFormsMotive}, there is an integer $w_\BK$ such that
         \begin{equation}\label{eq2: global purity lemma}
   \nu^\iota_1+ \nu^{\bar \iota}_{2n}=\nu^\iota_2+ \nu^{\bar \iota}_{2n-1}=\cdots= \nu^\iota_{2n}+ \nu^{\bar \iota}_{1}=w_\BK
    \end{equation}
    for all  $\iota\in \CE_\BK$.

Following \cite{ClozelAutomorphicFormsMotive} (and also \cite[Proposition 1.1]{JiangLinTianExplicitCohomologicalVectorReal} and \cite[Proposition 1.1]{LinTianExplicitCohomologicalComplex}), we are going to write down the local
Langlands datum for $\Pi_\BK$. To simplify our notation, throughout this paper, if $\sigma_j$ is a Casselman-Wallach representation of $\GL_{n_j}(\BK)$ $(1\leq j \leq r)$, we write
    $$\sigma_1\dot \times\sigma_2\dot \times\cdots\dot \times\sigma_r:=\mathrm{Ind}_{P_{n_1, n_2, \cdots, n_r}}^{\GL_{n_0}(\BK)}\sigma_1\widehat \otimes\sigma_2\widehat \otimes\cdots\widehat \otimes\sigma_r $$
    for the normalized smoothly induced representation of  $\GL_{n_0}(\BK)$, where
    \begin{equation}\label{eq: subsection: coh rep}
        n_0 = n_1+n_2+\cdots+n_r,
    \end{equation}
    $P_{n_1, n_2, \cdots, n_r}$ denotes the standard parabolic subgroup of  $\GL_{n_0}(\BK)$   corresponding to the partition \eqref{eq: subsection: coh rep}, and the representation
$\sigma_1\widehat \otimes\sigma_2\widehat \otimes\cdots\widehat \otimes\sigma_r $ of the Levi subgroup $\GL_{n_1}(\BK)\times\GL_{n_2}(\BK)\times\cdots\times\GL_{n_r}(\BK)$ is viewed as a representation of $P_{n_1, n_2, \cdots, n_r}$ by inflation as usual. The infinitesimal character of the algebraic representation $F_\iota$ is
     \begin{equation}\label{eq: temp 9999}
      \hat \nu_\iota := (\hat \nu^\iota_1 >  \hat \nu^\iota_2>\cdots > \hat \nu^\iota_{2n})\in (\frac{1}{2}+\BZ)^{2n},
    \end{equation}
    where
    \begin{equation}\label{eq: infinitesmal char}
\hat \nu^\iota_i=\nu^\iota_i+\frac{2n+1-2i}{2},\qquad i=1,2,\cdots, 2n.
\end{equation}

    In the real case, $\BK$ is identified with $\BR$,  $\CE_\BK=\{\iota\}$ only contains one element, and $\Pi_\BR$ is isomorphic to
    \begin{equation}\label{Eq: cohomomogical repn parabolic induction parameter real}
       D_{\hat \nu^\iota_1, \hat \nu^\iota_{2n}}\dot \times D_{\hat \nu^\iota_2, \hat \nu^\iota_{2n-1}} \dot \times \cdots\dot \times D_{\hat \nu^\iota_n, \hat \nu^\iota_{n+1}},
    \end{equation}
    where $D_{a,b}$ is the relative discrete series of $\GL_2(\BR)$ with infinitesimal character $(a, b)$, for all $a,b\in \BC$ with $a-b\in \BZ\setminus \{0\}$.
 In the complex case $\BK\cong\BC$, $\CE_\BK=\{\iota, \bar \iota\}$ has two elements, and $\Pi_\BK$ is isomorphic to a principal series
    \begin{equation}\label{Eq: cohomomogical repn parabolic induction parameter complex}
           \iota^{\hat \nu_1^{\iota}} \bar \iota^{\hat \nu_{2n}^{\bar \iota}} \dot \times \iota^{\hat \nu_2^{\iota}} \bar \iota^{\hat \nu_{2n-1}^{\bar \iota}} \dot \times\cdots\dot \times
             \iota^{\hat \nu_{2n}^{\iota}} \bar \iota^{\hat \nu_{1}^{\bar \iota}},
    \end{equation}
     where $\iota^a\bar \iota^b$ is defined as in \eqref{ii}.

     \begin{prop}\label{prop: L-function formula}
         Let the notation be as above. Then
                 \begin{equation}\label{eq: L fun formula}
             L(s,\Pi_\BK)=\prod_{\iota\in \CE_\BK} \prod_{r=1}^{n} 2\cdot(2\pi)^{-(s+\max\{\hat \nu_r^\iota, \hat \nu_{2n+1-r}^{\bar \iota}\})}\cdot \Gamma(s+\max\{\hat \nu_r^\iota, \hat \nu_{2n+1-r}^{\bar \iota}\}). 
         \end{equation}
         Here $\Gamma$ denotes the classical gamma function. 
     \end{prop}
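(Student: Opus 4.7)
The proof proposal is as follows.

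The strategy is to exploit the explicit Langlands parameters \eqref{Eq: cohomomogical repn parabolic induction parameter real} and \eqref{Eq: cohomomogical repn parabolic induction parameter complex} and the multiplicativity of archimedean $L$-factors under parabolic induction, namely
\[
L(s,\sigma_1\dot\times\sigma_2\dot\times\cdots\dot\times\sigma_r)=\prod_{j=1}^{r}L(s,\sigma_j),
\]
which is built into the Godement-Jacquet local theory and agrees with the product of $L$-factors of the corresponding Langlands parameters. The task then reduces to computing the $L$-factors of the building blocks and reorganizing the indices.

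First I would treat the real case $\BK\cong\BR$. Here $\CE_\BK=\{\iota\}$ and $\bar\iota=\iota$, and $\Pi_\BR$ is the parabolic induction of $n$ relative discrete series $D_{\hat\nu_r^\iota,\hat\nu_{2n+1-r}^\iota}$ for $r=1,\dots,n$. The standard local $L$-factor of $D_{a,b}$ (see, e.g., \cite[Section 3]{Kn94}) is
\[
L(s,D_{a,b})=\Gamma_\BC\!\left(s+\max\{a,b\}\right)=2\cdot(2\pi)^{-(s+\max\{a,b\})}\,\Gamma(s+\max\{a,b\}).
\]
Since $\hat\nu_r^\iota>\hat\nu_{2n+1-r}^\iota$ for $r\leq n$ by strict decreasingness in \eqref{eq: temp 9999}, the maximum equals $\hat\nu_r^\iota$, and multiplying the factors for $r=1,\dots,n$ yields the asserted formula (with the outer product over $\CE_\BK$ trivial).

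Next I would handle the complex case $\BK\cong\BC$. Here $\CE_\BK=\{\iota,\bar\iota\}$, and $\Pi_\BK$ is a fully induced principal series from $2n$ characters $\iota^{\hat\nu_r^\iota}\bar\iota^{\hat\nu_{2n+1-r}^{\bar\iota}}$ of $\BC^\times$, $r=1,\dots,2n$. The standard local $L$-factor of such a character is $\Gamma_\BC\!\left(s+\max\{\hat\nu_r^\iota,\hat\nu_{2n+1-r}^{\bar\iota}\}\right)$. So the total $L$-factor is the product of these $2n$ gamma factors. To match the formula, I would split the index set $\{1,\dots,2n\}$ into $\{1,\dots,n\}$ and $\{n+1,\dots,2n\}$, reindex the latter by $r'=2n+1-r\in\{1,\dots,n\}$, and observe that
\[
\max\{\hat\nu_r^\iota,\hat\nu_{2n+1-r}^{\bar\iota}\}=\max\{\hat\nu_{r'}^{\bar\iota},\hat\nu_{2n+1-r'}^{\iota}\};
\]
this reindexed contribution is precisely the $\bar\iota$-part in the outer product over $\CE_\BK$. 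Combining both halves recovers the formula.

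There is no real obstacle; the main care is in the bookkeeping for the complex case, where one has to verify that summing over $\CE_\BK=\{\iota,\bar\iota\}$ and $r=1,\dots,n$ correctly enumerates all $2n$ principal series characters without duplication. The purity relation \eqref{eq2: global purity lemma} is consistent with this reindexing (it ensures $\hat\nu_r^\iota+\hat\nu_{2n+1-r}^{\bar\iota}=w_\BK$), though it is not strictly needed for the bare $L$-factor identity; it is only the definition of the Weil-group parameter combined with multiplicativity of $L$-factors that is required.
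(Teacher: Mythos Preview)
Your proposal is correct and follows essentially the same route as the paper: both use multiplicativity of the archimedean $L$-factor under the parabolic inductions \eqref{Eq: cohomomogical repn parabolic induction parameter real} and \eqref{Eq: cohomomogical repn parabolic induction parameter complex}, insert the known $\Gamma_\BC$-factors for $D_{a,b}$ and for the characters $\iota^a\bar\iota^b$, and in the complex case reindex the range $r=n+1,\dots,2n$ via $r\mapsto 2n+1-r$ to obtain the $\bar\iota$-half of the product over $\CE_\BK$. The only cosmetic difference is that the paper cites \cite{JL70} for the building-block $L$-factors and leaves the complex reindexing as ``easily follows,'' whereas you cite \cite{Kn94} and spell the reindexing out.
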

     \begin{proof}
         First assume that  $\BK\cong\BR$. Then $\CE_\BK = \{\iota\}$ has only one element, and $\bar \iota=\iota$. It is clear from \eqref{eq: temp 9999} that  $\hat{\nu}_{2n+1-r}^{\bar \iota} = \hat{\nu}_{2n+1-r}^{\iota}< \hat{\nu}_{r}^{\iota}$ for all $r = 1,2,\cdots,n$. Hence (\cite[Theorem 5.15]{JL70})
         $$\begin{aligned}
              L(s, D_{\hat{\nu}_{r}^{\iota},\hat{\nu}_{2n+1-r}^{\iota}})
              = &2\cdot(2\pi)^{-(s+\hat{\nu}_r^\iota)}\cdot \Gamma(s+\hat{\nu}_r^\iota)\\
              = &2\cdot(2\pi)^{-(s+\max\{\hat \nu_r^\iota, \hat \nu_{2n+1-r}^{\bar \iota}\})}\cdot \Gamma(s+\max\{\hat \nu_l^\iota, \hat \nu_{2n+1-r}^{\bar \iota}\}).\end{aligned}$$
         Using the local Langlands correspondence at the archimedean place (\cite{L89}, also see \cite[Appendix]{JacquetArchimedeanRankinSelberg} and \cite{Kn94}), we have that
         $L(s, \Pi_\BK) = \prod_{r=1}^n L(s, D_{\hat{\nu}_{r}^{\iota},\hat{\nu}_{2n+1-r}^{\iota}})$, which proves the proposition 
         in the real case.

       Now assume that  $\BK\cong\BC$. Then $\CE_\BK = \{\iota,\bar \iota\}$ has two elements. For all $r=1,2,\cdots,2n$, the standard $L$-function for the character $\iota^{\hat \nu_r^{\iota}} \bar \iota^{\hat \nu_{2n+1-r}^{\bar \iota}}$ is (\cite[page 97]{JL70})
         $$L(s, \iota^{\hat \nu_r^{\iota}} \bar \iota^{\hat \nu_{2n+1-r}^{\bar \iota}}) = 2\cdot(2\pi)^{-(s+\max\{\hat \nu_r^\iota, \hat \nu_{2n+1-r}^{\bar \iota}\})}\cdot \Gamma(s+\max\{\hat \nu_r^\iota, \hat \nu_{2n+1-r}^{\bar \iota}\}).$$
As before, using the   local Langlands correspondence at the archimedean place, the proposition in the complex case easily follows.
     \end{proof}

 \begin{rk}\label{rk: finite L-fun arithmetic property proof}
       Note that $\Gamma(k)$ is a positive integer whenever $k\in \BZ$ is not a pole of the gamma function $\Gamma$. Proposition  \ref{prop: L-function formula} then easily implies that for every critical place $\frac{1}{2}+j$ of $\Pi_\BK$,
         $$ \frac{L(\frac{1}{2}+j, \Pi_\BK)}{ (2\pi)^{-jn\cdot [\BK:\BR]}\cdot\Omega_{\Pi_\BK}^{-1}} \in \BQ^\times,$$
         where  
         \begin{equation}
     \label{omegapik}
         \Omega_{\Pi_\BK}:= \prod_{\iota\in \CE_\BK} \prod_{r=1}^{n}(2\pi)^{1/2 +\max\{\hat \nu_r^\iota, \hat \nu_{2n+1-r}^{\bar \iota}\}}.    
         \end{equation}
          Hence the rationality result for $L(s,\Pi_f\otimes\chi_f)$, as in \eqref{eq: main thm finite}, follows directly from Theorem \ref{thm: global period relation}, by noting that
          $\Pi_\BK\otimes \chi_{\BK,2n}\cong\Pi_\BK$ whenever $\chi_{\BK,2n}$ is a quadratic character of $\GL_{2n}(\BK)$.
     \end{rk}


 From the structure of the induction parameters in \eqref{Eq: cohomomogical repn parabolic induction parameter real}, all irreducible essentially tempered cohomological representations of $\GL_{2n}(\BR)$ are of symplectic type (see Definition \ref{def: local repn of symplectic type}). This has been verified in \cite[Page 382]{ChenevierClozelMinimallyRamifiedNumberFields}. While in the complex case, a cohomological representation of $\GL_{2n}(\BC)$ as in \eqref{Eq: cohomomogical repn parabolic induction parameter complex} is not necessarily of symplectic type.  Recall the non-trivial unitary character $\psi_\BK$ of $\BK$ from the beginning of Section \ref{sec-PPThm}.
    \begin{prop}\label{prop: classification of coh repn with shalika model in terms of Langlands parameter}
         The following conditions are equivalent for the irreducible essentially tempered cohomological representation $\Pi_\BK$ given in \eqref{Eq: cohomomogical repn parabolic induction parameter real} and \eqref{Eq: cohomomogical repn parabolic induction parameter complex}.
        \begin{enumerate}
          \item It is of symplectic type.
          \item There exist integers $\{w_\iota\}_{\iota\in \CE_\BK}$ such that
    \begin{equation}\label{eq: def of w iota}\nu_1^\iota+\nu_{2n}^\iota=\nu^\iota_2+ \nu^{\iota}_{2n-1}=\cdots= \nu^\iota_{2n}+ \nu^{\iota}_{1} =w_\iota\qquad \textrm{for all $\iota\in \CE_\BK$}.\end{equation}
          \item For some character $\eta_\BK$ of $\BK^\times$, $\Pi_\BK$  has a nonzero $(\eta_\BK, \psi_\BK)$-Shalika functional on it.
        \end{enumerate}
    \end{prop}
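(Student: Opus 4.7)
The plan is to prove (1)$\Leftrightarrow$(2) by a direct computation with the local Langlands parameter, and then (1)$\Leftrightarrow$(3) by combining the archimedean local Langlands correspondence with existing results on Shalika functionals (notably Lemma \ref{uniquesh} and Proposition \ref{prop: strong transfer}).

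For (1)$\Leftrightarrow$(2), I would write out the Langlands parameter $\phi_{\Pi_\BK}:W_\BK \to \GL_{2n}(\BC)$ explicitly from \eqref{Eq: cohomomogical repn parabolic induction parameter real} and \eqref{Eq: cohomomogical repn parabolic induction parameter complex}, and analyze when its image, up to conjugation, falls in $\mathrm{GSp}_{2n}(\BC)$. In the real case, $\phi_{\Pi_\BR} = \bigoplus_{r=1}^{n}\phi_r$ where $\phi_r$ is the $2$-dimensional irreducible $W_\BR$-representation with $\phi_r|_{W_\BC}\cong \iota^{\hat\nu_r^\iota}\bar\iota^{\hat\nu_{2n+1-r}^\iota}\oplus \iota^{\hat\nu_{2n+1-r}^\iota}\bar\iota^{\hat\nu_r^\iota}$. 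Each $\phi_r$ is automatically of symplectic similitude type with similitude character determined by $\hat\nu_r^\iota+\hat\nu_{2n+1-r}^\iota$, so the direct sum lands in a common $\mathrm{GSp}_{2n}(\BC)$ precisely when $\hat\nu_r^\iota+\hat\nu_{2n+1-r}^\iota$ is independent of $r$; using \eqref{eq: infinitesmal char} this collapses to the constancy of $\nu_r^\iota+\nu_{2n+1-r}^\iota$, giving (2). In the complex case $W_\BC=\BC^\times$, the parameter is a sum of distinct characters $\chi_r=\iota^{\hat\nu_r^\iota}\bar\iota^{\hat\nu_{2n+1-r}^{\bar\iota}}$, and symplectic similitude type requires pairing the $\chi_r$'s into pairs with a common product $\mu=\iota^c\bar\iota^d$. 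Regularity of $\Pi_\BK$ forces the pairing to match $r$ with $2n+1-r$, yielding the two equations $\hat\nu_r^\iota+\hat\nu_{2n+1-r}^\iota=c$ and $\hat\nu_r^{\bar\iota}+\hat\nu_{2n+1-r}^{\bar\iota}=d$, which reformulated via \eqref{eq: infinitesmal char} give (2) for both $\iota$ and $\bar\iota$.

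For (3)$\Rightarrow$(1), I would argue that the existence of a nonzero $(\eta_\BK,\psi_\BK)$-Shalika functional forces $\Pi_\BK^\vee\cong \Pi_\BK\otimes \eta_\BK^{-1}$ and, moreover, distinguishes symplectic type by the local analogue of the third equivalence in Proposition \ref{prop: strong transfer}: the pole of $L(s,\Pi_\BK,\wedge^2\otimes\eta_\BK^{-1})$ at $s=1$ (as follows from the structure of the parameter just described, which is available once we know (2)). Alternatively, one may globalize $\Pi_\BK$ to an essentially self-dual cuspidal $\Pi$ of $\GL_{2n}(\BA)$ whose Shalika period is nonzero, and then invoke the strong archimedean transfer statement in Proposition \ref{prop: strong transfer} to conclude that $\Pi_\BK$ has symplectic-type $L$-parameter.

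The main obstacle is (1)$\Rightarrow$(3), namely the existence of a Shalika functional on $\Pi_\BK$ given the symplectic-type condition. The strategy is constructive: using the fact that each two-dimensional Langlands block (a discrete series $D_{\hat\nu_r^\iota,\hat\nu_{2n+1-r}^\iota}$ in the real case, or a pair $\iota^{\hat\nu_r^\iota}\bar\iota^{\hat\nu_{2n+1-r}^{\bar\iota}}\dot\times \iota^{\hat\nu_{2n+1-r}^\iota}\bar\iota^{\hat\nu_r^{\bar\iota}}$ in the complex case) carries a Shalika functional for $\GL_2(\BK)$, one builds a Shalika functional on the parabolically induced representation $\Pi_\BK$ by an open-orbit integration against the exterior tensor product of these $\GL_2$-Shalika functionals. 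The convergence of the resulting integral, meromorphic continuation, and nonvanishing are the delicate points; convergence in a region can be handled by the standard gauge estimates on matrix coefficients together with the explicit form of the induction data, and the integral is then shown to give a nonzero intertwining in $\Hom_{S_\BK}(\Pi_\BK,\eta_\BK\otimes\psi_\BK)$, which is at most one-dimensional by Lemma \ref{uniquesh}.
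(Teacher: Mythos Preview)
Your direct computation for $(1)\Leftrightarrow(2)$ is correct, and your constructive route for $(1)\Rightarrow(3)$ via parabolic induction of $\GL_2$-Shalika functionals is exactly the strategy carried out in the references the paper cites; the paper itself merely invokes \cite[Corollary~2.6]{JiangLinTianExplicitCohomologicalVectorReal} for the real case and \cite[Theorem~2.1]{LinTianExplicitCohomologicalComplex} for the complex case. Note also that in the real case there is nothing to prove for $(3)\Rightarrow(1)$: the purity relation \eqref{eq2: global purity lemma} with $\bar\iota=\iota$ already forces $(2)$, hence $(1)$, for every such $\Pi_\BR$.

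The genuine gap is $(3)\Rightarrow(1)$ in the complex case. Your first route is circular as written: you want to detect symplectic type via the pole of $L(s,\Pi_\BK,\wedge^2\otimes\eta_\BK^{-1})$, but you say this pole ``follows from the structure of the parameter \dots\ which is available once we know $(2)$''---and $(2)$ is equivalent to the conclusion you are after. Your alternative, globalization, also fails as stated: even if one can globalize $\Pi_\BK$ to a cuspidal $\Pi$, Proposition~\ref{prop: strong transfer} requires a nonzero \emph{global} Shalika period, and a local Shalika functional at one archimedean place does not manufacture one. A workable local repair is to show first that a Shalika functional forces $\Pi_\BK^\vee\cong\Pi_\BK\otimes\eta_\BK^{-1}$; then, comparing $\iota$-exponents of the inducing characters and using that the $\hat\nu_r^\iota$ are strictly decreasing, the resulting involution on $\{1,\dots,2n\}$ must be $r\mapsto 2n{+}1{-}r$, which yields $(2)$ and is automatically fixed-point-free. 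But that first step is itself not free in the archimedean setting; in \cite{LinTianExplicitCohomologicalComplex} the full equivalence with $(2)$ is obtained instead by a direct Bruhat/orbit analysis of $\Hom_{S_\BK}(\Pi_\BK,\eta_\BK\otimes\psi_\BK)$ on the principal series, which simultaneously handles existence and the necessary constraint on the inducing data.
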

    \begin{proof}
        When $\BK \cong\BR$, both $(1)$ and $(2)$ hold trivially. The third statement is exactly \cite[Corollary 2.6]{JiangLinTianExplicitCohomologicalVectorReal}. In the case of $\BK \cong \BC$, the equivalence of all the three statements is proved in \cite[Theorem 2.1]{LinTianExplicitCohomologicalComplex}.
    \end{proof}


From Proposition \ref{prop: classification of coh repn with shalika model in terms of Langlands parameter} and \cite[Theorem 2.1]{JiangLinTianExplicitCohomologicalVectorReal}, we immediately have the following.
   \begin{cor}\label{cor: coh repn symp type induced parameter}
     Suppose that $\Pi_\BK$ is an irreducible essentially tempered cohomological representation of $G_\BK$  as in \eqref{Eq: cohomomogical repn parabolic induction parameter real} or \eqref{Eq: cohomomogical repn parabolic induction parameter complex}. If $\Pi_\BK$ is  of symplectic type, then $\Pi_\BK$ is isomorphic to
       \begin{equation}\label{eq: coh repn symp type induced parameter}
             \sigma_1\dot \times\sigma_2\dot \times\cdots\dot \times\sigma_n,
       \end{equation}
      where $\sigma_j = D_{\hat \nu^\iota_j, \hat \nu^\iota_{2n+1-j}}$ if $\BK\cong\BR$ so that $\CE_\BK=\{\iota\}$; and $\sigma_j = \iota^{\hat \nu_j^{\iota}} \bar \iota^{\hat \nu_{2n+1-j}^{\bar \iota}} \dot \times
       \iota^{\hat \nu_{2n+1-j}^{\iota}} \bar \iota^{\hat \nu_{j}^{\bar \iota}}$ if $\BK\cong\BC$ so that $\CE_\BK=\{\iota, \bar \iota\}$.
Moreover, the representation  $\sigma_j$ ($j=1,2,\cdots,n$) of $\GL_2(\BK)$ has central character
\[
 \eta_\BK:=\prod_{\iota\in \CE_\BK} \iota^{w_\iota} : \BK^\times \rightarrow \BC^\times,
\]
and $\Pi_\BK$ has a nonzero $(\eta_\BK,\psi_\BK)-$Shalika functional, where $w_\iota$ is as in \eqref{eq: def of w iota}.
   \end{cor}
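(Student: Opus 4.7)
The plan is to deduce the corollary as a direct repackaging of the induction data in \eqref{Eq: cohomomogical repn parabolic induction parameter real} and \eqref{Eq: cohomomogical repn parabolic induction parameter complex}, using the weight symmetry furnished by Proposition \ref{prop: classification of coh repn with shalika model in terms of Langlands parameter}. Since $\Pi_\BK$ is of symplectic type, statement (2) of that proposition supplies integers $\{w_\iota\}_{\iota \in \CE_\BK}$ with $\nu_j^\iota + \nu_{2n+1-j}^\iota = w_\iota$ for every $\iota$ and every $1 \leq j \leq 2n$. The antisymmetry of the shifts in \eqref{eq: infinitesmal char} under $j \leftrightarrow 2n+1-j$ converts this at once to $\hat\nu_j^\iota + \hat\nu_{2n+1-j}^\iota = w_\iota$, which is the sole algebraic input I will use.

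In the real case $\CE_\BK=\{\iota\}$, and \eqref{Eq: cohomomogical repn parabolic induction parameter real} already exhibits $\Pi_\BR$ in the claimed form \eqref{eq: coh repn symp type induced parameter} with $\sigma_j = D_{\hat\nu_j^\iota,\hat\nu_{2n+1-j}^\iota}$. The central character of the relative discrete series $D_{a,b}$ of $\GL_2(\BR)$ is determined by $a+b$, so the identity above forces each $\sigma_j$ to have the common central character $\iota^{w_\iota}$; this reproduces \cite[Theorem 2.1]{JiangLinTianExplicitCohomologicalVectorReal}, which I would cite directly. In the complex case $\CE_\BK = \{\iota,\bar\iota\}$, the formula \eqref{Eq: cohomomogical repn parabolic induction parameter complex} presents $\Pi_\BK$ as a fully induced principal series from $2n$ characters of $\BC^\times$. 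Because the inducing data is regular, the induction is irreducible and insensitive, up to isomorphism, to permutations of its inducing characters; I would therefore regroup them into the $n$ pairs
$$\bigl\{\iota^{\hat\nu_j^\iota}\bar\iota^{\hat\nu_{2n+1-j}^{\bar\iota}},\ \iota^{\hat\nu_{2n+1-j}^\iota}\bar\iota^{\hat\nu_j^{\bar\iota}}\bigr\},\qquad j=1,\ldots,n,$$
induce each pair to $\GL_2(\BC)$ in two stages to recover the $\sigma_j$ of the statement, and identify the central character of $\sigma_j$ by multiplying its two inducing characters on the diagonal center, which, by the symmetry $\hat\nu_j^\iota + \hat\nu_{2n+1-j}^\iota = w_\iota$ and its $\bar\iota$-analog, equals $\iota^{w_\iota}\bar\iota^{w_{\bar\iota}} = \prod_{\iota' \in \CE_\BK}{\iota'}^{w_{\iota'}}$.

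For the Shalika functional, I would invoke condition (3) of Proposition \ref{prop: classification of coh repn with shalika model in terms of Langlands parameter} together with the explicit constructions of \cite[Theorem 2.1]{JiangLinTianExplicitCohomologicalVectorReal} and \cite[Theorem 2.1]{LinTianExplicitCohomologicalComplex}, which not only guarantee existence but identify the Shalika character as $\eta_\BK = \prod_{\iota' \in \CE_\BK}{\iota'}^{w_{\iota'}}$, in agreement with the central-character computation above (restricting the Shalika equivariance \eqref{Eq00: Def of Shalika Functional} to the center $Z_{G_\BK} = \{t\,1_{2n}\} \subset S_\BK$ shows $\eta_\BK^n$ must equal the central character of $\Pi_\BK$, which pins down $\eta_\BK$ consistently). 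The only non-clerical step in the whole argument is the reordering of inducing data in the complex case; this rests on irreducibility of the principal series, itself a consequence of the regularity of its infinitesimal character, and I anticipate no genuine obstacle.
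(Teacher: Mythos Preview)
Your proposal is correct and matches the paper's own argument, which simply notes that the corollary follows immediately from Proposition \ref{prop: classification of coh repn with shalika model in terms of Langlands parameter} together with \cite[Theorem 2.1]{JiangLinTianExplicitCohomologicalVectorReal}. You have supplied more of the mechanics than the paper does (the reordering of inducing characters in the complex case and the central-character check), but the structure is the same and your extra details are accurate.
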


   \subsection{Balanced coefficient systems}\label{subsection: Coefficient System of Regular Algebraic Cuspidal Representation}
    We introduce the notion of a balanced coefficient system of a cohomological representation of symplectic type, both locally and globally, and discuss its relation with the critical places of the representation defined in Definition \ref{def: global critical place}. Let us start with the local balanced coefficient system.

Assume that the representation $\Pi_\BK$ is of symplectic type. Thus, the integers $\{\nu^\iota_l\}_{\iota\in\CE_\BK, 1\leq l \leq 2n}$ must satisfy \eqref{eq: def of w iota}.

     \begin{defn}\label{defn: balanced coeffi system}
         For an integer $j$, we say that the coefficient system $F_\BK=\otimes_{\iota\in \CE_\BK} F_\iota$ is $j$-balanced if
         \begin{equation}\label{jbform}
          \Hom_{H_\BC}(F_\iota^\vee, {\det}^j\otimes {\det}^{-j-w_\iota})\neq \{0\}\qquad \textrm{for all $\iota\in \CE_\BK$}.
          \end{equation}
         We say the coefficient system $F_\BK$ is balanced if it is $j$-balanced for some integer $j$.
     \end{defn}
 The following proposition explains the relation between the existence of a critical place and the balanceness of the coefficient system.

 \begin{prop}\label{prop: critical place}
Let $w_\iota$ be as in \eqref{eq: def of w iota}.
     \begin{enumerate}
       \item If  $w_\iota=w_{\bar{\iota}}$  for all  $\iota\in \CE_\BK$,  then $F_\BK$ is balanced.
       \item Assume that $F_\BK$ is balanced. Then  for every $j\in \BZ$,  $\frac{1}{2}+j$ is  a critical place for $\Pi_\BK$ if and only if $F_\BK$ is $j$-balanced. Moreover, the set of critical places of $\Pi_\BK$ is
     \[
      \left\{\frac{1}{2}+j\mid j\in \BZ, -\nu^\iota_n\leq j\leq -\nu^\iota_{n+1} \ \textrm{ for all } \iota\in \CE_\BK\right\}.
     \]
       \end{enumerate}
     \end{prop}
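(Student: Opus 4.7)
The plan combines a branching calculation on the Levi $H_\BC = \GL_n(\BC) \times \GL_n(\BC)$ of the Siegel parabolic with the explicit archimedean $L$-factors supplied by Proposition~\ref{prop: L-function formula}. The first task is to show, via the Littlewood--Richardson branching rule for $\GL_{2n}(\BC) \downarrow \GL_n(\BC) \times \GL_n(\BC)$, that the one-dimensional character $\det^{p} \boxtimes \det^{q}$ appears in $F_\iota|_{H_\BC}$ precisely when $p+q = w_\iota$ and $\nu^\iota_{n+1} \leq p \leq \nu^\iota_n$. Under the symplectic identity the coefficient $c^{\nu_\iota}_{(p^n),(q^n)}$ is always $0$ or $1$, since semistandard Yamanouchi fillings of a skew shape by a content of the form $(q^n)$ admit at most one solution. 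Dualising \eqref{jbform} then converts $j$-balancedness into the statement that
\[
-\nu^\iota_n \;\leq\; j \;\leq\; -\nu^\iota_{n+1} \qquad \textrm{for every } \iota \in \CE_\BK.
\]

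\textbf{Part (1).} Summing the purity identity \eqref{eq2: global purity lemma} over $l$ and using $\sum_l \nu^\iota_l = n w_\iota$ (which follows from the symplectic identity) gives $w_\iota + w_{\bar\iota} = 2 w_\BK$. If $w_\iota = w_{\bar\iota}$ for every $\iota$, both equal $w_\BK$, and comparing purity with the symplectic identity forces $\nu^\iota = \nu^{\bar\iota}$. The intervals $[-\nu^\iota_n, -\nu^\iota_{n+1}]$ therefore coincide across $\iota$ and are nonempty, so $F_\BK$ is balanced.

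\textbf{Part (2).} Assume $F_\BK$ is balanced, so the intersection of the intervals $[-\nu^\iota_n, -\nu^\iota_{n+1}]$ is nonempty; in particular $\nu^{\bar\iota}_{n+1} \leq \nu^\iota_n$ for every $\iota$. Substituting the purity relation $\nu^{\bar\iota}_{n+1} = w_\BK - \nu^\iota_n$ (from $l = n$) yields the dichotomy $\nu^\iota_n \geq w_\BK/2 \geq \nu^\iota_{n+1}$ for every $\iota$. Combined with the purity relation $\hat\nu^\iota_r + \hat\nu^{\bar\iota}_{2n+1-r} = w_\BK$ and the strict monotonicity of $r \mapsto \hat\nu^\iota_r$, this produces, for each $r \leq n$,
\[
\hat\nu^\iota_r \;\geq\; \hat\nu^\iota_n \;>\; \tfrac{w_\BK}{2} \;>\; \hat\nu^{\bar\iota}_{n+1} \;\geq\; \hat\nu^{\bar\iota}_{2n+1-r},
\]
so the maximum in \eqref{eq: L fun formula} collapses to $\hat\nu^\iota_r$. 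Consequently $s = \tfrac{1}{2}+j$ is not a pole of $L(s, \Pi_\BK)$ iff $\tfrac{1}{2}+j + \hat\nu^\iota_r \geq 1$ for all $\iota$ and all $r \leq n$, which rearranges to $j \geq r - n - \nu^\iota_r$; since $r - \nu^\iota_r$ is strictly increasing in $r$ (as $\nu^\iota$ is weakly decreasing), the tightest bound occurs at $r = n$ and reads $j \geq -\nu^\iota_n$. The representation $\Pi_\BK^\vee$ is again cohomological of symplectic type with a balanced coefficient system (highest weights reversed and negated, central parameter $-w_\iota$, purity parameter $-w_\BK$), and the same analysis applied to $L(1-s, \Pi_\BK^\vee)$ at $s = \tfrac{1}{2}+j$ yields the dual constraint $j \leq -\nu^\iota_{n+1}$ for every $\iota$. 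Intersecting both constraints produces the stated set of critical places, which by the first paragraph coincides with the set of $j$-balanced values.

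The crux is the collapse of the maximum in \eqref{eq: L fun formula}: this is where the balanced hypothesis, purity, and the symplectic identity conspire cleanly via the dichotomy $\nu^\iota_n \geq w_\BK/2 \geq \nu^\iota_{n+1}$. Once it is in hand, the critical-place count reduces to a monotone optimisation over $r$, and the branching step of the first paragraph closes the equivalence with $j$-balancedness.
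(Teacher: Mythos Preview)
Your proof is correct and follows the same overall architecture as the paper: first characterise $j$-balancedness by the inequalities $-\nu^\iota_n \le j \le -\nu^\iota_{n+1}$, then simplify the archimedean $L$-factors by collapsing the maximum in \eqref{eq: L fun formula}, and read off the critical places. The logical flow for both parts matches the paper's.

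The only substantive methodological difference lies in the branching step. The paper invokes Schlichtkrull's generalisation of the Cartan--Helgason theorem as a black box to obtain \eqref{eq: temp 9998}, whereas you derive the same range for $p$ via the Littlewood--Richardson rule for $\GL_{2n}\downarrow \GL_n\times\GL_n$. Your route is more hands-on but, as written, is slightly sketchy: you justify multiplicity $\le 1$ but do not spell out why the coefficient is nonzero precisely on $\nu^\iota_{n+1}\le p\le \nu^\iota_n$ (one clean way is to use the symmetry $c^{\nu_\iota}_{(p^n),(q^n)}=c^{\nu_\iota}_{(q^n),(p^n)}$ together with $p+q=w_\iota=\nu^\iota_n+\nu^\iota_{n+1}$). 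For Part~(2) you compute $L(1-s,\Pi_\BK^\vee)$ by passing to the dual and rerunning the argument, while the paper writes out the Langlands parameters of $\Pi_\BK^\vee$ directly and obtains \eqref{eq: L fun formula balanced contra repn}; the two computations are equivalent and yield the same bound $j\le -\nu^\iota_{n+1}$. Your ``dichotomy'' $\nu^\iota_n\ge w_\BK/2\ge \nu^\iota_{n+1}$ is a neat repackaging of the paper's inequality chain \eqref{eq: temp 9997} and makes the collapse of the maximum transparent.
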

     \begin{proof}
      As an instance of H. Schlichtkrull's generalization of the Cartan-Helgason Theorem (\cite[Theorem 7.2]{Sch}, see also \cite[Theorem 2.1]{KnappBranchingRuleCptSymmetricSpace}), we know that for all $\iota\in \CE_\BK$,
     \begin{equation}\label{eq: temp 9998}
       \Hom_{H_\BC}(F_\iota^\vee, {\det}^j\otimes {\det}^{-j-w_\iota})\neq \{0\}\quad\textrm{ if and only if }\quad -\nu_n^\iota\leq j\leq -\nu^\iota_{n+1}.
     \end{equation}
If $w_\iota=w_{\bar{\iota}}$ for all  $\iota\in \CE_\BK$, then  \eqref{eq2: global purity lemma} and \eqref{eq: def of w iota} imply that
$\nu_i^\iota=\nu_i^{\bar \iota}$ for all $\iota\in \CE_\BK$ with $i=1,2, \cdots, 2n$.
Hence the first assertion of the proposition easily follows.

Now we assume that $F_\BK$ is balanced. Then it is clear from \eqref{eq: temp 9998} that
$\nu_n^\iota\geq \nu^{\bar \iota}_{n+1}$ for all $\iota\in \CE_\BK$.
    Therefore, for all $\iota\in\CE_\BK$, we have that
    \begin{equation}\label{eq: temp 9997}
       \hat \nu^\iota_{1}>\hat \nu^\iota_{2}>\cdots>\hat \nu^\iota_{n}>\hat \nu^{\bar{\iota}}_{n+1}>\hat \nu^{\bar{\iota}}_{n+2}>\cdots>\hat \nu^{\bar{\iota}}_{2n}.
    \end{equation}
    Note that the contragredient representation $\Pi_\BK^\vee$ is isomorphic to
\[
D_{-\hat \nu^\iota_{2n}, -\hat \nu^\iota_{1}}\dot \times D_{-\hat \nu^\iota_{2n-1}, -\hat \nu^\iota_{2}} \dot \times \cdots\dot \times D_{-\hat \nu^\iota_{n+1}, -\hat \nu^\iota_{n}}
\]
in the real case, and is isomorphic to
$\iota^{-\hat \nu_{2n}^{\iota}} \bar \iota^{-\hat \nu_{1}^{\bar \iota}} \dot \times \iota^{-\hat \nu_{2n-1}^{\iota}} \bar \iota^{-\hat \nu_{2}^{\bar \iota}} \dot \times\cdots\dot \times
             \iota^{-\hat \nu_{1}^{\iota}} \bar \iota^{-\hat \nu_{2n}^{\bar \iota}}$
    in the complex case.
Using \eqref{eq: temp 9997} and Proposition \ref{prop: L-function formula}, we find that

\begin{equation}\label{eq: L fun formula balanced repn}
             L(s,\Pi_\BK)= \prod_{\iota\in \CE_\BK} \prod_{r=1}^{n} 2\cdot(2\pi)^{-(s+\hat \nu_r^\iota)}\cdot \Gamma(s+\hat \nu_r^\iota)
         \end{equation}
         and
         \begin{equation}\label{eq: L fun formula balanced contra repn}
             L(1-s,\Pi_\BK^\vee)= \prod_{\iota\in \CE_\BK} \prod_{r=1}^{n} 2\cdot(2\pi)^{-(1-s-\hat \nu_{2n+1-r}^\iota)}\cdot \Gamma(1-s-\hat \nu_{2n+1-r}^\iota).
         \end{equation}
These imply that the set of critical places of $\Pi_\BK$ is
     $\left\{\frac{1}{2}+j\mid j\in \BZ, -\nu^\iota_n\leq j\leq -\nu^\iota_{n+1} \ \textrm{ for all } \iota\in \CE_\BK\right\}$.
     Hence the second assertion of the proposition also follows.
     \end{proof}

\begin{rk}
If $\BK\cong\BR$, then $\iota=\bar{\iota}$ and hence the first assertion of Proposition \ref{prop: critical place} says that $F_\BK$ is always balanced. In this case, the Proposition also  follows from \cite[Propositions 6.1.1 and 6.3.1]{GrobnerRaguhramShalikaModelArithmetic}.
\end{rk}

    Globally, we continue to assume that $\Pi$ is an irreducible regular algebraic cuspidal automorphic representation of $G(\BA)$ of symplectic type, with coefficient system $F=\otimes_{\iota\in \CE_\Rk} F_\iota$. As before, write
    \begin{equation}\label{eq2: 22global highest weight}
\nu_\iota=(\nu^\iota_1 \geq  \nu^\iota_2\geq \cdots \geq \nu^\iota_{2n})\in \BZ^{2n}\end{equation}
    for the highest weight of $F_\iota$.
By \cite[Lemma 4.9]{ClozelAutomorphicFormsMotive}, there is an integer $w$ such that
         \begin{equation}\label{eq2: 22global purity lemma}
   \nu^\iota_1+ \nu^{\bar \iota}_{2n}=\nu^\iota_2+ \nu^{\bar \iota}_{2n-1}=\cdots= \nu^\iota_{2n}+ \nu^{\bar \iota}_{1}=w
    \end{equation}
    for all  $\iota\in \CE_\Rk$. By Propositions \ref{prop: strong transfer} and  \ref{prop: classification of coh repn with shalika model in terms of Langlands parameter}, there are integers $\{w_\iota\}_{\iota\in \CE_\Rk}$ such that
    \begin{equation}\label{eq3: def of w iota}\nu_1^\iota+\nu_{2n}^\iota=\nu^\iota_2+ \nu^{\iota}_{2n-1}=\cdots= \nu^\iota_{2n}+ \nu^{\iota}_{1} =w_\iota\qquad \textrm{for all $\iota\in \CE_\Rk$}.\end{equation}
Similar to Definition \ref{defn: balanced coeffi system}, we make the following definition.
   \begin{defn}\label{defn: gbalanced coeffi system}
         For an integer $j$, we say that the coefficient system $F=\otimes_{\iota\in \CE_\Rk} F_\iota$ is $j$-balanced if
         $\Hom_{H_\BC}(F_\iota^\vee, {\det}^j\otimes {\det}^{-j-w_\iota})\neq \{0\}$ for all $\iota\in \CE_\Rk$.
         We say that the coefficient system $F$ is balanced if it is $j$-balanced for some integer $j$.
     \end{defn}

        \begin{rk}\label{rk217}
           If the number field $\Rk$ does not contain a $CM$-field, for example, if $\Rk$ has at least one real place, then by the classification of algebraic automorphic characters, all $w_\iota$'s defined in \eqref{eq3: def of w iota} are equal (see \cite{SerreAbelianladicRepnandEllipticCurves} or \cite[Proposition 1.12]{Far}). In this case, by Proposition \ref{prop: critical place}, if $\Pi$ has a critical place, then the coefficient system $F$ for $\Pi$ is automatically balanced. Hence Theorem \ref{thm: global period relation} holds without the assumption of a balanced coefficient system of $\Pi$.
    \end{rk}

\section{Four Local Theorems}\label{sectionfourt}

We discuss four local results that are essential to the proof of Theorem \ref{thm: global period relation}.
The first one  establishes the rationality of the Friedberg-Jacquet integrals in the non-archimedean case (Theorem \ref{0lemma: Z circ f rational}). The other three  are all archimedean results that establish the existence of uniform
cohomological test vectors (Theorem \ref{thm: uniform coh test vector}), the non-vanishing of archimedean modular symbols (Theorem \ref{thm: Non-vanishing hypothesis}), and
relations of archimedean modular symbols at different critical places (Theorem \ref{thm: archi period relations}).
We will prove the first result in this section, and leave the proofs of the other three results to Sections \ref{section: Proof of Critical place and Non-vanishing Modular Symbol} and
\ref{section: Identification}.

\subsection{Non-archimedean period relations}\label{nonpr}

Suppose that $\BK$ is non-archimedean. Recall the character $\eta_\BK\otimes \psi_\BK$ of $S_\BK$ from the beginning of Section \ref{subsection: Archimedean Friedberg-Jacquet Integral and Uniform Cohomological Vector}.
Suppose that $\Pi_\BK$ is an irreducible  smooth representation of $G_\BK$ with a  nonzero $(\eta_\BK,\psi_\BK)$-Shalika functional. 
We fix such a nonzero functional
$\lambda_\BK\in \Hom_{S_\BK}(\Pi_\BK, \eta_\BK\otimes \psi_\BK)$.
The rationality property that will be discussed below depends on this fixed $\lambda_\BK$.

By \cite[Proposition 3.1]{ClozelAutomorphicFormsMotive},   the representation $\Pi_\BK$ has a $\BQ(\Pi_\BK)$-rational structure.
In this subsection, we assume further that $\BQ(\Pi_\BK)$ is a number field.  Note that the character $\eta_\BK^n$ equals the central character of $\Pi_\BK$. Thus $\BQ(\eta_\BK)$ is also a number field.

For the character   $\chi_\BK: \BK^\times \rightarrow \BC^\times$, we recall  the nonzero functional
 \[
       Z^\circ(\,\cdot\,,s,\chi_\BK) \in \Hom_{H_\BK}(\Pi_\BK\otimes\xi_{\chi_\BK, s-\frac{1}{2}}, \BC)
        \]
from Proposition \ref{prop: properties of F-J int}, which depends on $\lambda_\BK$. Note that if $s_0\in \frac{1}{2}+\BZ$, then $\xi_{\chi_\BK, s_0-\frac{1}{2}}$ is defined over $\BQ(\eta_\BK, \chi_\BK)$.
  \begin{thm}[Non-archimedean Period Relation]\label{0lemma: Z circ f rational}
  There exists a unique $\BQ(\Pi_\BK, \eta_\BK)$-rational structure on the $G_\BK$-module $\Pi_\BK$ with the following property: for all $s_0\in \frac{1}{2}+\BZ$ and
   all characters  $\chi_\BK: \BK^\times \rightarrow \BC^\times$ such that $\BQ(\chi_\BK)$ is a number field, the linear functional
    \[
          \CG(\chi_\BK)^n\cdot Z^\circ(\,\cdot\,,s_0,\chi_\BK) \in \Hom_{H_\BK}(\Pi_\BK\otimes  \xi_{\chi_\BK,s_0-\frac{1}{2}},\BC)
   \]
       is defined over  $\BQ(\Pi_\BK, \eta_\BK,\chi_\BK)$. Here $\CG(\chi_\BK)$ is the local Gauss sum defined in Section \ref{subsection: gauss}. Moreover, if $\Pi_\BK$ and $\eta_\BK$ are unramified, the conductor of $\psi_\BK$ is the ring of integers in $\BK$, and $v_\BK^\circ\in \Pi_\BK$ is the spherical vector such that $\lambda_\BK(v_\BK^\circ)=1$, then
  $v_\BK^\circ$ is defined over $\BQ(\Pi_\BK, \eta_\BK)$ with this rational structure.
   \end{thm}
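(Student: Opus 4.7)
The strategy has two stages: first, construct the $\BQ(\Pi_\BK,\eta_\BK)$-rational structure by normalizing the Shalika functional, then verify the rationality of $\CG(\chi_\BK)^n\cdot Z^\circ$ via an explicit Gauss sum calculation on an adapted test vector. By \cite[Proposition 3.1]{ClozelAutomorphicFormsMotive}, $\Pi_\BK$ admits a $\BQ(\Pi_\BK)$-rational structure; since $\eta_\BK^n$ equals the central character of $\Pi_\BK$, the compositum $\BQ(\Pi_\BK,\eta_\BK)$ is a number field, and extending scalars yields a $\BQ(\Pi_\BK,\eta_\BK)$-rational structure $\Pi_\BK^0$. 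For each $\sigma\in\Aut(\BC/\BQ(\Pi_\BK,\eta_\BK))$, transporting $\lambda_\BK$ across the canonical identification $\Pi_\BK^0\otimes_{\sigma}\BC\cong\Pi_\BK$ produces another $(\eta_\BK,\psi_\BK)$-Shalika functional. By the uniqueness result (Lemma \ref{uniquesh}), these differ by a scalar $c_\sigma\in\BC^\times$, and $\sigma\mapsto c_\sigma$ is a $1$-cocycle. Factoring through a finite Galois extension of $\BQ(\Pi_\BK,\eta_\BK)$ generated by finitely many values of $\lambda_\BK$ and applying Hilbert 90, the cocycle is a coboundary, and rescaling $\Pi_\BK^0$ by the corresponding scalar allows one to assume that $\lambda_\BK$ itself is defined over $\BQ(\Pi_\BK,\eta_\BK)$.

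Second, I would carry out the core local computation. The key identity
\[
  \chi_\BK(a)\cdot\CG(\chi_\BK) = \int_{\CO_\BK^\times}\chi_\BK(x)^{-1}\psi_\BK(y_\BK x a)\,dx, \qquad a\in\CO_\BK^\times,
\]
obtained by the substitution $x\mapsto x a^{-1}$, shows that the character $\chi_\BK$ on the diagonal torus of $\GL_n(\BK)$ can be exchanged for an $n$-fold product of $\psi_\BK$-twisted characters, at the price of $\CG(\chi_\BK)^n$. I would choose a $\BQ(\Pi_\BK,\eta_\BK)$-rational vector $v_0\in\Pi_\BK^0$ (for instance, a new vector relative to a suitable congruence subgroup of $\GL_{2n}(\CO_\BK)$) and an element $\kappa_{\chi_\BK}\in S_\BK$ constructed from the conductor of $\chi_\BK$ and from $y_\BK$. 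Applying the Shalika transformation law $\langle\lambda_\BK,g\cdot v\rangle=(\eta_\BK\otimes\psi_\BK)(g)\langle\lambda_\BK,v\rangle$ for $g\in S_\BK$ and unfolding $Z(\kappa_{\chi_\BK}\cdot v_0,s,\chi_\BK)$ via the Iwasawa decomposition of $\GL_n(\BK)$, the integral reduces to an $n$-fold product of one-dimensional Gauss integrals times a polynomial in $q^{\pm s}$ with $\BQ(\Pi_\BK,\eta_\BK)$-coefficients; this produces the desired factor $\CG(\chi_\BK)^{-n}$. Combining with the polynomial form of $Z^\circ$ from Proposition \ref{prop: properties of F-J int}(5), one obtains the rationality of $\CG(\chi_\BK)^n\cdot Z^\circ(\kappa_{\chi_\BK}\cdot v_0,s_0,\chi_\BK)$ over $\BQ(\Pi_\BK,\eta_\BK,\chi_\BK)$ at each $s_0\in\frac{1}{2}+\BZ$.

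To propagate from the single test vector to all rational vectors, I would invoke the $H_\BK$-equivariance of $Z^\circ(\cdot,s_0,\chi_\BK)$ from Proposition \ref{prop: properties of F-J int}(3): since $\Pi_\BK$ is irreducible, the $H_\BK$-orbit of $\kappa_{\chi_\BK}\cdot v_0$ spans $\Pi_\BK$, and the twist character $\xi_{\chi_\BK,s_0-\frac{1}{2}}$ is defined over $\BQ(\eta_\BK,\chi_\BK)$ when $s_0\in\frac{1}{2}+\BZ$, so Galois invariance propagates across the whole representation. Uniqueness of the rational structure follows because any two candidates differ by an overall $\BC^\times$-scalar (by irreducibility and Schur's lemma), and the rationality condition on $\CG(\chi_\BK)^n\cdot Z^\circ$ pins the scalar down to lie in $\BQ(\Pi_\BK,\eta_\BK)^\times$. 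In the unramified case, Lemma \ref{sphshalika} characterizes $v_\BK^\circ$ by the Galois-stable conditions of being spherical and satisfying $\lambda_\BK(v_\BK^\circ)=1$, forcing $v_\BK^\circ\in\Pi_\BK^0$. The main obstacle is the combinatorial unfolding in the Gauss sum calculation: one must choose $\kappa_{\chi_\BK}$ so that $\langle\lambda_\BK,\mtrtwo{g}{0}{0}{1_n}\kappa_{\chi_\BK}\cdot v_0\rangle$ is supported on a double coset where the Shalika character and the $\chi_\BK$-twist combine cleanly into exactly $n$ independent copies of the one-dimensional Gauss integral, and verify that no spurious ramification-dependent powers of $q^{\pm 1/2}$ survive besides those already incorporated into $\CG(\chi_\BK)$ and $Z^\circ$.
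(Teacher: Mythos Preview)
Your argument has a gap at the very first step. You claim that transporting $\lambda_\BK$ via $\sigma\in\Aut(\BC/\BQ(\Pi_\BK,\eta_\BK))$ yields another $(\eta_\BK,\psi_\BK)$-Shalika functional, so that uniqueness (Lemma~\ref{uniquesh}) applies. But $\sigma$ does not fix $\psi_\BK$: the values of $\psi_\BK$ are $p$-power roots of unity, and $\sigma$ permutes them via the cyclotomic character, i.e.\ $\sigma(\psi_\BK(x))=\psi_\BK(t_{\sigma,\BK}x)$ for the corresponding element $t_{\sigma,\BK}\in\BZ_p^\times\subset\BK^\times$. Thus the transported functional is $(\eta_\BK,\psi_\BK(t_{\sigma,\BK}\,\cdot\,))$-Shalika, not $(\eta_\BK,\psi_\BK)$-Shalika, and you cannot directly compare via Lemma~\ref{uniquesh}. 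The paper handles this by building the correction into the Galois action from the start: one acts on $\Pi_\BK^*$ by $\,^\sigma\lambda := \sigma\circ\lambda\circ\beta_\sigma^{-1}\circ\mathbf{t}_{\sigma,\BK}^{-1}$ where $\mathbf{t}_{\sigma,\BK}=\mathrm{diag}(t_{\sigma,\BK}1_n,1_n)\in H_\BK$; this extra $H_\BK$-translate exactly undoes the twist on $\psi_\BK$, so that $\,^\sigma\lambda_\BK$ genuinely lands back in $\Hom_{S_\BK}(\Pi_\BK,\eta_\BK\otimes\psi_\BK)$. The resulting rational structure is $\Pi_\BK^{\Aut(\BC/\BQ(\Pi_\BK,\eta_\BK))}$ for the induced action on $\Pi_\BK$, and it is precisely the appearance of $t_{\sigma,\BK}$ in the defining identity that later produces the Gauss sum factor: the paper computes directly
\[
\sigma.\bigl(Z^\circ(v,s,\chi_\BK)\bigr)=\chi_\BK(t_{\sigma,\BK})^{-n}\,Z^\circ(v,s,\chi_\BK)
\]
for $v$ rational, and combines this with the standard identity $\sigma(\CG(\chi_\BK))=\chi_\BK(t_{\sigma,\BK})\,\CG(\chi_\BK)$. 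No explicit test vector or unfolding is ever needed.

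Your later steps also have problems. You propose a new vector $v_0$ as test vector, but the non-vanishing of the Shalika functional on new vectors is not known for general ramified $\eta_\BK$; the paper flags exactly this difficulty in Remark~\ref{rkrs} and designs its argument to avoid new vectors altogether. And your propagation step invokes $H_\BK$-equivariance together with ``$\Pi_\BK$ is irreducible'' to spread rationality from one vector to all, but $\Pi_\BK$ is irreducible only as a $G_\BK$-module, not as an $H_\BK$-module, so the $H_\BK$-orbit of a single vector has no reason to span $\Pi_\BK$.
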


We give a proof of Theorem \ref{0lemma: Z circ f rational} as follows. Let $\overline \BQ\subset \BC$ denote the field of the algebraic numbers.
Unless otherwise specified,  a subscript group or a superscript group respectively indicates the coinvariant space or the invariant space of a group representation over any field.
\begin{lemma}\label{lemus}
There is a $\BQ(\Pi_\BK)$-rational structure $\Pi_\BK^\circ$ of the representation $\Pi_\BK$ such that
$\lambda_{\BK}(\Pi_\BK^\circ)\subset \overline \BQ$.
\end{lemma}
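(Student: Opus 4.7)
The plan is to start from any $\BQ(\Pi_\BK)$-rational structure on $\Pi_\BK$ and then rescale it by a single complex scalar. By \cite[Proposition 3.1]{ClozelAutomorphicFormsMotive}, there exists such a rational structure $V \subset \Pi_\BK$; since $V \otimes_{\BQ(\Pi_\BK)} \BC = \Pi_\BK$ and $\lambda_\BK$ is nonzero, we can pick $v_0 \in V$ with $c := \lambda_\BK(v_0) \neq 0$ and set $\lambda_\BK' := c^{-1} \lambda_\BK$, which is again a nonzero Shalika functional with $\lambda_\BK'(v_0)=1$.

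The central step is to show that $\lambda_\BK'(V) \subset \overline\BQ$ via a standard Galois-descent argument built on the uniqueness of the Shalika functional (Lemma \ref{uniquesh}). Because $\BK$ is non-archimedean, $\psi_\BK$ takes values in roots of unity; together with the hypothesis that $\BQ(\Pi_\BK)$ and $\BQ(\eta_\BK)$ are number fields, this places all the relevant rationality data inside $\overline\BQ$. For every $\sigma \in \Aut(\BC)$ that fixes $\BQ(\Pi_\BK)$ pointwise and preserves each of $\eta_\BK$ and $\psi_\BK$ as functions, the $\sigma$-semilinear, $G_\BK$-equivariant bijection $\tilde\sigma$ of $\Pi_\BK = V\otimes_{\BQ(\Pi_\BK)} \BC$ defined by $v \otimes z \mapsto v \otimes \sigma(z)$ fixes $V$ pointwise, and the twisted functional ${}^\sigma\lambda_\BK'(w) := \sigma\bigl(\lambda_\BK'(\tilde\sigma^{-1} w)\bigr)$ is again an $(\eta_\BK,\psi_\BK)$-Shalika functional. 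Since $\tilde \sigma$ fixes $v_0$, we have ${}^\sigma\lambda_\BK'(v_0) = \sigma(1) = 1 = \lambda_\BK'(v_0)$, and Lemma \ref{uniquesh} forces ${}^\sigma\lambda_\BK' = \lambda_\BK'$. Evaluating on an arbitrary $v \in V$ yields $\sigma(\lambda_\BK'(v))=\lambda_\BK'(v)$, and since the collection of such $\sigma$ has common fixed field inside $\overline\BQ$, we conclude $\lambda_\BK'(V)\subset \overline\BQ$.

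To finish, I would set $\Pi_\BK^\circ := c^{-1} V$. Scaling a $\BQ(\Pi_\BK)$-rational structure by a nonzero complex scalar produces another $\BQ(\Pi_\BK)$-rational structure, because multiplication by $c^{-1}$ is a $\BQ(\Pi_\BK)$-linear, $G_\BK$-equivariant bijection of $\Pi_\BK$. Then $\lambda_\BK(\Pi_\BK^\circ) = c^{-1}\lambda_\BK(V) = \lambda_\BK'(V) \subset \overline\BQ$, which is the desired statement. The only delicate point is arranging the Galois-stabilizer cleanly: one needs the pointwise stabilizer of $\BQ(\Pi_\BK)$, $\eta_\BK$, and $\psi_\BK$ in $\Aut(\BC)$ to have fixed field inside $\overline\BQ$, and to verify that ${}^\sigma\lambda_\BK'$ is still $(\eta_\BK,\psi_\BK)$-equivariant rather than equivariant for a Galois-twisted character; both requirements are ensured by the standing assumption that all three rationality fields are number fields.
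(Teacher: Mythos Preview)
Your proof is correct and, like the paper's, rests on the uniqueness of Shalika functionals (Lemma~\ref{uniquesh}), but the two arguments are organized differently. The paper proceeds by base change of coinvariants: tensoring a $\BQ(\Pi_\BK)$-rational structure $\Pi_\BK'$ up to $\overline\BQ$ and using
\[
\bigl((\Pi_\BK' \otimes_{\BQ(\Pi_\BK)} \overline\BQ) \otimes (\eta_\BK^{-1}\otimes\psi_\BK^{-1})_{\overline\BQ}\bigr)_{S_\BK} \otimes_{\overline\BQ} \BC \;=\; \bigl(\Pi_\BK \otimes (\eta_\BK^{-1}\otimes\psi_\BK^{-1})\bigr)_{S_\BK},
\]
one sees immediately that a nonzero $\overline\BQ$-valued Shalika functional $\lambda_\BK'$ exists on $\Pi_\BK'$; uniqueness gives $\lambda_\BK' = c\cdot\lambda_\BK|_{\Pi_\BK'}$ for some $c\in\BC^\times$, and $\Pi_\BK^\circ := c\cdot\Pi_\BK'$ finishes. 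Your Galois-descent argument instead normalizes $\lambda_\BK$ at a rational vector and shows that the resulting functional is fixed by every $\sigma\in\Aut(\BC)$ stabilizing $\BQ(\Pi_\BK)$, $\eta_\BK$, and $\psi_\BK$; since the common fixed field of these $\sigma$ lies in $\overline\BQ$, so do the values. The paper's route is shorter and sidesteps the bookkeeping of which automorphisms preserve $\eta_\BK$ and $\psi_\BK$; on the other hand, your method is precisely the technique the paper deploys in the lemmas that immediately follow (an $\Aut(\BC/\BQ(\Pi_\BK,\eta_\BK))$-action on functionals, with uniqueness forcing the twist to be a scalar), so it dovetails naturally with the rest of the section.
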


\begin{proof}
Let $\Pi_\BK'$ be a $\BQ(\Pi_\BK)$-rational structure of $\Pi_\BK$. Then $\Pi_\BK'\otimes_{\BQ(\Pi_\BK)} {\overline \BQ}$ is a $\overline \BQ$-rational structure of $\Pi_\BK$. The character $\eta_\BK^{-1}\otimes\psi_\BK^{-1}$ takes value in $\overline \BQ^\times$ and we write $(\eta_\BK^{-1}\otimes\psi_\BK^{-1})_{\overline \BQ}$ for $\overline \BQ^\times$-valued character of the Shalika subgroup $S_\BK$ corresponding to the character $\eta_\BK^{-1}\otimes\psi_\BK^{-1}$. Then, we have an identification
\begin{equation}\label{coinvs}
  ((\Pi_\BK'\otimes_{\BQ(\Pi_\BK)} \overline \BQ)\otimes (\eta_\BK^{-1}\otimes \psi_\BK^{-1})_{\overline \BQ})_{S_\BK}\otimes_{\overline \BQ} \BC= (\Pi_\BK\otimes (\eta_\BK^{-1}\otimes \psi_\BK^{-1}))_{S_\BK}.
\end{equation}
 The right-hand side of \eqref{coinvs} is nonzero, so is the left-hand side. Thus, there exists a nonzero linear functional
$\lambda_\BK' \in \Hom_{S_\BK}(\Pi_\BK'\otimes_{\BQ(\Pi_\BK)}(\eta^{-1}_\BK\otimes\psi^{-1}_\BK)_{\overline \BQ}, \overline \BQ)$. 
By the uniqueness of Shalika functionals (Lemma \ref{uniquesh}), there exists a nonzero complex number $c$ such that
$\lambda_\BK' = c\cdot\lambda_\BK\lvert_{\Pi_\BK'}$. 
Then $\Pi_\BK^\circ := c \cdot \Pi_\BK'$ is the desired $\BQ(\Pi_\BK)$-rational structure of $\Pi_\BK$.
\end{proof}

Write $p$ for the residue characteristic of $\BK$.
Let $\mu_p^\infty\subset \BC^\times$ denote the subgroup consisting of all the $p$-th power roots of unity and $\BQ(\mu_p^\infty)\subset \BC$ be the subfield generated  by  $\mu_p^\infty$. Given a field $E_1$ and a subfield $E_0$ of it, we write $\mathrm{Aut}(E_1/E_0)$ for the group of all field automorphisms of $E_1$ fixing all elements of $E_0$.

Recall the cyclotomic character
\[
  \mathrm{Aut}(\BQ(\mu_p^\infty)/\BQ) \rightarrow \BZ_p^\times,\quad \sigma\mapsto t_{\sigma,p},
\]
which is a topological isomorphism specified by requiring that
$\sigma(x)=x^{t_{\sigma,p}}$, for all $x\in \mu_p^\infty$.
Write $\sigma\mapsto t_{\sigma,\BK}$ for the composition of
 \[
 \Aut(\BC/\BQ)\xrightarrow{\textrm{restriction} }\mathrm{Aut}(\BQ(\mu_p^\infty)/\BQ) \xrightarrow{\sigma\mapsto t_{\sigma,p}} \BZ_p^\times \subset \BK^\times.
 \]
Denote
$
\mathbf{t}_{\sigma,\BK}:= \left[
                      \begin{array}{cc}
                        t_{\sigma,\BK} \cdot 1_n & 0 \\
                        0 & 1_n \\
                      \end{array}
                    \right]\in H_\BK\subset G_\BK.
                    $
The Galois group $\Aut(\overline \BQ/\BQ(\Pi_\BK))$ is a topological group as usual. We equip $\Aut(\BC/\BQ(\Pi_\BK))$ with the coarsest topology such that the natural map
   $\Aut(\BC/\BQ(\Pi_\BK))\rightarrow  \Aut(\overline \BQ/\BQ(\Pi_\BK))$ is continuous.

Let $\Pi_\BK^\circ$ be a $\BQ(\Pi_\BK)$-rational structure of $\Pi_\BK$ as in Lemma \ref{lemus}. Write $\Pi_\BK^*$ for the space of all linear functionals on $\Pi_\BK$. For each $\sigma\in \Aut(\BC/\BQ(\Pi_\BK))$ and $\lambda\in \Pi_\BK^*$, write $\,^\sigma\lambda$ for the composition of
\[
  \Pi_\BK\xrightarrow{\mathbf{t}_{\sigma,\BK}^{-1}}\Pi_\BK\xrightarrow{\beta_\sigma^{-1}} \Pi_\BK\xrightarrow{\lambda}\BC\xrightarrow{\sigma} \BC,
\]
where $\beta_\sigma: \Pi_\BK\rightarrow \Pi_\BK$ is the $\sigma$-sesquilinear map specified by
$\beta_\sigma(au)=\sigma(a)u$, for $u\in \Pi_\BK^\circ,\, a\in \BC$.
This defines a sesquilinear action of   $\Aut(\BC/\BQ(\Pi_\BK))$ on $\Pi_\BK^*$.

\begin{lemma}\label{fixsf}
Under the above action, the Shalika functional $\lambda_\BK$ is fixed by an open subgroup of   $\Aut(\BC/\BQ(\Pi_\BK))$.
\end{lemma}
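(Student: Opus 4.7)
The plan is to exploit the uniqueness of Shalika functionals (Lemma \ref{uniquesh}) together with the smoothness of $\Pi_\BK$ and the arithmeticity of $\lambda_\BK$ on $\Pi_\BK^\circ$.

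First I would verify that for every $\sigma\in \Aut(\BC/\BQ(\Pi_\BK,\eta_\BK))$, the linear functional $\,^\sigma\lambda_\BK$ is again a $(\eta_\BK,\psi_\BK)$-Shalika functional. The sesquilinear map $\beta_\sigma$ is $G_\BK$-equivariant because $\Pi_\BK^\circ$ is $G_\BK$-stable; therefore for $g\in S_\BK$ and $v\in \Pi_\BK$ we may pull the group element through $\beta_\sigma^{-1}$. Writing $g$ in its Shalika factorization $(h,x)$, the conjugate $g':=\mathbf{t}_{\sigma,\BK}^{-1} g \mathbf{t}_{\sigma,\BK}$ corresponds to $(h, t_{\sigma,\BK}^{-1}x)$ and still lies in $S_\BK$. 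The Shalika equivariance of $\lambda_\BK$ applied to $g'$, combined with the defining property $\sigma(\psi_\BK(y))=\psi_\BK(t_{\sigma,\BK}\,y)$ of the cyclotomic character (valid because $\psi_\BK$ takes values in $p$-power roots of unity), makes the factor $t_{\sigma,\BK}^{-1}$ cancel. What remains is the equivariance $\,^\sigma\lambda_\BK(gv)=(\sigma\circ\eta_\BK)(\det h)\,\psi_\BK(\tr x)\cdot {}^\sigma\lambda_\BK(v)$, which equals $(\eta_\BK\otimes\psi_\BK)(g)\cdot{}^\sigma\lambda_\BK(v)$ precisely when $\sigma$ fixes $\eta_\BK$, i.e. $\sigma\in\Aut(\BC/\BQ(\Pi_\BK,\eta_\BK))$.

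By Lemma \ref{uniquesh}, then, there is a scalar $c_\sigma\in\BC$ with $\,^\sigma\lambda_\BK=c_\sigma\lambda_\BK$ for every such $\sigma$. It suffices to show $c_\sigma=1$ on an open subgroup. To do this, I would pick any $u\in\Pi_\BK^\circ$ with $a:=\lambda_\BK(u)\neq 0$; by Lemma \ref{lemus}, $a\in\overline\BQ^\times$. Since $\Pi_\BK$ is smooth, the stabilizer of $u$ in $G_\BK$ contains an open compact subgroup, so the set of $t\in\BZ_p^\times$ for which $\mathbf{t}:=\diag(t\cdot 1_n,1_n)$ fixes $u$ is an open neighborhood of $1$ in $\BZ_p^\times$. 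Using the continuity of $\sigma\mapsto t_{\sigma,p}$, this pulls back to an open subgroup of $\Aut(\BC/\BQ(\Pi_\BK))$. Intersecting with $\Aut(\BC/\BQ(\Pi_\BK,\eta_\BK,a))$ — also open, since $\BQ(\Pi_\BK,\eta_\BK,a)$ is a number field — yields the desired open subgroup $U$.

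For $\sigma\in U$, we have $\mathbf{t}_{\sigma,\BK}^{-1}u=u$, and since $u\in\Pi_\BK^\circ$, also $\beta_\sigma^{-1}(u)=u$. Therefore
\[
c_\sigma\cdot a = \,^\sigma\lambda_\BK(u)=\sigma\bigl(\lambda_\BK(\beta_\sigma^{-1}(\mathbf{t}_{\sigma,\BK}^{-1}u))\bigr)=\sigma(\lambda_\BK(u))=\sigma(a)=a,
\]
so $c_\sigma=1$ and $\,^\sigma\lambda_\BK=\lambda_\BK$ on $U$. The main subtle point I expect to have to argue carefully is the first step: identifying $\,^\sigma\lambda_\BK$ as a Shalika functional for the correct character, which requires cleanly accounting for how the twist by $\mathbf{t}_{\sigma,\BK}$ exactly compensates for the $\sigma$-action on $\psi_\BK$ via the cyclotomic character, and why one must enlarge from $\BQ(\Pi_\BK)$ to $\BQ(\Pi_\BK,\eta_\BK)$ (which is still a finite extension and so yields an open subgroup). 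Everything else is then a direct consequence of smoothness and the rationality statement in Lemma \ref{lemus}.
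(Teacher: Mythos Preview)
Your proposal is correct and follows essentially the same approach as the paper's proof: verify that $^\sigma\lambda_\BK$ is again an $(\eta_\BK,\psi_\BK)$-Shalika functional for $\sigma\in\Aut(\BC/\BQ(\Pi_\BK,\eta_\BK))$, invoke uniqueness to get a scalar $c_\sigma$, then use smoothness of a rational test vector together with algebraicity of its $\lambda_\BK$-value to force $c_\sigma=1$ on an open subgroup. The only difference is that you spell out in detail the verification step (conjugation by $\mathbf{t}_{\sigma,\BK}$ compensating the $\sigma$-twist of $\psi_\BK$ via the cyclotomic character), which the paper declares ``routine to check''.
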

\begin{proof}
Fix an element $v_0\in \Pi_\BK^\circ$ such that $\langle \lambda_\BK, v_0\rangle\neq 0$. By Lemma \ref{lemus}, there is an open subgroup $\Gamma_0$ of  $\Aut(\BC/\BQ(\Pi_\BK))$ such that for all $\sigma\in \Gamma_0$, 
$\sigma(\langle \lambda_\BK, v_0\rangle)=\langle \lambda_\BK, v_0\rangle$ and $\mathbf{t}_{\sigma,\BK}.v_0=v_0$ hold. 
It is routine to check that
${^\sigma \lambda}_\BK\in \Hom_{S_\BK}(\Pi_\BK, \eta_\BK\otimes \psi_\BK)$ whenever $\sigma\in \Aut(\BC/\BQ(\Pi_\BK, \eta_\BK))$.
By the uniqueness of  Shalika functionals (Lemma \ref{uniquesh}),  this implies that
${^\sigma \lambda}_\BK=a_\sigma \lambda_\BK$ for some $a_\sigma\in \BC$.

Now assume that $\sigma\in \Gamma_0\cap\Aut(\BC/\BQ(\Pi_\BK, \eta_\BK))$. Then
$$
  a_\sigma\langle \lambda_\BK, v_0\rangle
  = \langle \,^\sigma \lambda_\BK, v_0\rangle
 = \sigma(\langle  \lambda_\BK, \beta_\sigma^{-1} (\mathbf{t}_{\sigma,\BK}^{-1}. v_0)\rangle)
  = \langle  \lambda_\BK,  v_0\rangle.
$$
Thus $a_\sigma=1$ and the lemma follows.
\end{proof}

 \begin{lemma}\label{actgamma}
There is a unique  action of $\Aut(\BC/\BQ(\eta_\BK, \Pi_\BK))$ on $\Pi_\BK$ such that
   \begin{equation}\label{eqacts}
   \langle \lambda_\BK, g.(\sigma.v)\rangle=\sigma(\langle \lambda_\BK, (\mathbf t_{\sigma, \BK}^{-1} g).v\rangle),
   \end{equation}
   for all $v\in \Pi_\BK$, $g\in G_\BK$ and  $\sigma\in \Aut(\BC/\BQ(\eta_\BK, \Pi_\BK))$.
   \end{lemma}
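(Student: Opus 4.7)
The plan is to reduce existence to the uniqueness of Shalika functionals (Lemma \ref{uniquesh}) together with the sesquilinear setup already introduced in the proof of Lemma \ref{fixsf}. First, uniqueness of $\sigma.v$ is the easy direction: since $\Pi_\BK$ is irreducible and $\lambda_\BK \neq 0$, the matrix-coefficient map
$$w \mapsto \bigl(g \mapsto \langle \lambda_\BK, g.w\rangle\bigr)$$
from $\Pi_\BK$ to the space of $\BC$-valued functions on $G_\BK$ is injective, as its kernel is a $G_\BK$-subrepresentation that cannot be all of $\Pi_\BK$. Hence equation \eqref{eqacts}, read as a prescription for the matrix coefficient of $\sigma.v$, determines $\sigma.v$ uniquely once $v$ and $\sigma$ are fixed; this also ensures \emph{a posteriori} that the construction below does not depend on the auxiliary rational structure used to build it.

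For existence, fix a $\BQ(\Pi_\BK)$-rational structure $\Pi_\BK^\circ$ of $\Pi_\BK$ as in Lemma \ref{lemus}, and for each $\sigma$ let $\beta_\sigma$ be the associated $\sigma$-sesquilinear bijection of $\Pi_\BK$; since $\Pi_\BK^\circ$ is $G_\BK$-stable, $\beta_\sigma$ commutes with the $G_\BK$-action. The key observation, already recorded in the proof of Lemma \ref{fixsf}, is that for every $\sigma \in \Aut(\BC/\BQ(\eta_\BK,\Pi_\BK))$ the sesquilinear translate $\,^\sigma \lambda_\BK$ remains a nonzero $(\eta_\BK,\psi_\BK)$-Shalika functional: conjugation by $\mathbf{t}_{\sigma,\BK}^{-1}$ carries $\mtrtwo{1_n}{x}{0}{1_n}$ to $\mtrtwo{1_n}{t_{\sigma,\BK}^{-1}x}{0}{1_n}$, which precisely cancels the $t_{\sigma,\BK}$-twist of $\psi_\BK$ produced by the outer $\sigma$, while $\sigma$ fixes $\eta_\BK$ tautologically. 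Lemma \ref{uniquesh} therefore yields a unique scalar $a_\sigma \in \BC^\times$ with $\,^\sigma \lambda_\BK = a_\sigma \lambda_\BK$, and I set
$$\sigma.v := a_\sigma \cdot \beta_\sigma(v).$$
The identity \eqref{eqacts} is then a short computation, unwinding the definition of $\,^\sigma\lambda_\BK$ and using $\beta_\sigma(g.v) = g.\beta_\sigma(v)$:
$$\langle \lambda_\BK, g.(\sigma.v)\rangle = a_\sigma \langle \lambda_\BK, \beta_\sigma(g.v)\rangle = \langle \,^\sigma \lambda_\BK, \beta_\sigma(g.v)\rangle = \sigma\bigl(\langle \lambda_\BK, (\mathbf{t}_{\sigma,\BK}^{-1}g).v\rangle\bigr).$$

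To confirm that $\sigma \mapsto \sigma.(\cdot)$ is a group action, I will invoke the uniqueness established above: it suffices to check that $\sigma.(\tau.v)$ satisfies the defining identity \eqref{eqacts} for $(\sigma\tau).v$. Applying \eqref{eqacts} twice, and using the multiplicativity of the cyclotomic character $t_{\sigma\tau,\BK} = t_{\sigma,\BK}\, t_{\tau,\BK}$, which forces $\mathbf{t}_{\sigma\tau,\BK} = \mathbf{t}_{\sigma,\BK}\mathbf{t}_{\tau,\BK}$, gives
$$\langle \lambda_\BK, g.(\sigma.(\tau.v))\rangle = \sigma\tau\bigl(\langle \lambda_\BK, (\mathbf{t}_{\tau,\BK}^{-1}\mathbf{t}_{\sigma,\BK}^{-1}g).v\rangle\bigr) = \sigma\tau\bigl(\langle \lambda_\BK, (\mathbf{t}_{\sigma\tau,\BK}^{-1}g).v\rangle\bigr),$$
as required. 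The main subtlety is the compatibility between the Galois twist of $\psi_\BK$ under $\sigma$ and the conjugation by $\mathbf{t}_{\sigma,\BK}^{-1}$; this is exactly why the element $\mathbf{t}_{\sigma,\BK}$ must appear in \eqref{eqacts}, and it is what permits $a_\sigma$ to be produced for every $\sigma$ in the full group $\Aut(\BC/\BQ(\eta_\BK,\Pi_\BK))$ rather than merely on an open subgroup as in Lemma \ref{fixsf}.
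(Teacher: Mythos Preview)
Your proof is correct. The paper takes a slightly different, more functorial route: it defines an action of $\Aut(\BC/\BQ(\eta_\BK))$ directly on the whole induced representation $\Ind_{S_\BK}^{G_\BK}(\eta_\BK\otimes\psi_\BK)$ by $(\sigma.f)(x):=\sigma(f(\mathbf t_{\sigma,\BK}^{-1}x))$, and then uses uniqueness of Shalika functionals (via Frobenius reciprocity) to see that the image of the embedding $v\mapsto(g\mapsto\langle\lambda_\BK,g.v\rangle)$ is preserved, so the action descends to $\Pi_\BK$. You instead work entirely inside $\Pi_\BK$: you use the auxiliary rational structure $\Pi_\BK^\circ$ to build $\beta_\sigma$, invoke Lemma~\ref{uniquesh} to extract the scalar $a_\sigma$, and set $\sigma.v=a_\sigma\,\beta_\sigma(v)$. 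Both arguments rest on the same key input (Lemma~\ref{uniquesh}); the paper's avoids any auxiliary rational structure and makes the group-action property automatic from the explicit formula on the induced space, whereas your approach is more explicit and exhibits the action concretely as a scalar multiple of $\beta_\sigma$, which is exactly what is needed in Lemma~\ref{fixpicirc}.
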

  \begin{proof} The uniqueness is clear. For the proof of the existence, let $\Aut(\BC/\BQ(\eta_\BK))$  act on the induced representation $\Ind_{S_\BK}^{G_\BK} (\eta_\BK\otimes \psi_\BK)$ by
$(\sigma.f)(x):=\sigma(f(\mathbf{t}_{\sigma,\BK}^{-1}. x))$, for $\sigma\in \Aut(\BC/\BQ(\eta_\BK))$, $f\in \Ind_{S_\BK}^{G_\BK} (\eta_\BK\otimes \psi_\BK)$ and $x\in G_\BK$.
 By the uniqueness of  Shalika functionals (Lemma \ref{uniquesh}), and using the embedding $\Pi_\BK\hookrightarrow \Ind_{S_\BK}^{G_\BK} (\eta_\BK\otimes \psi_\BK)$ induced by $\lambda_\BK$, this action
  induces  an action as required in the lemma.
   \end{proof}

\begin{lemma}\label{fixpicirc}
If $\sigma\in \Aut(\BC/\BQ(\eta_\BK, \Pi_\BK))$ fixes $\lambda_\BK$, then $\sigma$ fixes all elements in  $\Pi_\BK^\circ$ 
under the action of $\Aut(\BC/\BQ(\eta_\BK, \Pi_\BK))$ on $\Pi_\BK$ as in Lemma \ref{actgamma}.
\end{lemma}

\begin{proof}
Suppose that $\sigma\in \Aut(\BC/\BQ(\eta_\BK, \Pi_\BK))$ fixes $\lambda_\BK$. For every $v\in \Pi_\BK^\circ$,  \eqref{eqacts} 
implies that
$\langle \lambda_\BK, g.(\sigma.v)\rangle=\langle \lambda_\BK, g.v\rangle$ for all $g\in G_\BK$.
This implies $\sigma.v=v$ by the irreducibility of $\Pi_\BK$.
\end{proof}

The following is the key ingredient of the proof of Theorem  \ref{0lemma: Z circ f rational}.

 \begin{prop}\label{rationalsh}
 The space $ \Pi_\BK^{\Aut(\BC/\BQ(\eta_\BK, \Pi_\BK))}$ is a $\BQ(\eta_\BK, \Pi_\BK)$-rational structure of $\Pi_\BK$, that is,
$\Pi_\BK^{\Aut(\BC/\BQ(\eta_\BK, \Pi_\BK))}\otimes_{\BQ(\eta_\BK, \Pi_\BK)} \BC=\Pi_\BK$.
 \end{prop}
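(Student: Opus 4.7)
The plan is to carry out Galois descent, reducing to the classical Hilbert~90 for finite Galois extensions. First, combining Lemmas~\ref{lemus}, \ref{fixsf}, and \ref{fixpicirc}, I would fix a $\BQ(\Pi_\BK)$-rational structure $\Pi_\BK^\circ$ and produce a number field $F_0$, which after enlargement may be taken to be finite Galois over $\BQ$ and to contain $E := \BQ(\eta_\BK,\Pi_\BK)$, with the property that every $\sigma \in \Aut(\BC/F_0)$ fixes $\Pi_\BK^\circ$ pointwise. Using the smoothness of $\Pi_\BK$, the proposition reduces to showing that $(\Pi_\BK^K)^{\Aut(\BC/E)}$ is an $E$-form of $\Pi_\BK^K$ for every open compact subgroup $K \subset G_\BK$. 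The defining identity \eqref{eqacts} makes it immediate that $\Pi_\BK^K$ is preserved by the $\Aut(\BC/E)$-action, since $k.v = v$ implies $(\mathbf{t}_{\sigma,\BK}^{-1} g k).v = (\mathbf{t}_{\sigma,\BK}^{-1} g).v$ for all $g \in G_\BK$.

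The key step is to exhibit an $F_0$-form of $\Pi_\BK^K$ on which $\Aut(\BC/E)$ acts through the finite Galois group $\mathrm{Gal}(F_0/E)$. Set $V_K := \Pi_\BK^{\circ,K}$, which is a $\BQ(\Pi_\BK)$-form of $\Pi_\BK^K$. Since $F_0/E$ is Galois, the subgroup $\Aut(\BC/F_0)$ is normal in $\Aut(\BC/E)$. Given $v \in V_K$, $\sigma \in \Aut(\BC/E)$, and $\tau \in \Aut(\BC/F_0)$, the element $\sigma^{-1}\tau\sigma$ again lies in $\Aut(\BC/F_0)$ and hence fixes $v$, so $\tau.(\sigma.v) = \sigma.v$. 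Expanding $\sigma.v$ in a $\BQ(\Pi_\BK)$-basis of $V_K$ and exploiting the sesquilinearity of $\tau$, one concludes that the expansion coefficients lie in $F_0$. Therefore $W_K := F_0 \otimes_{\BQ(\Pi_\BK)} V_K \subset \Pi_\BK^K$ is $\Aut(\BC/E)$-stable, and the induced action factors through an $E$-semilinear action of $\mathrm{Gal}(F_0/E)$ on the $F_0$-vector space $W_K$.

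Finally, classical Hilbert~90 applied to $W_K$ under the finite Galois extension $F_0/E$ yields an $E$-form $W_K^{\mathrm{Gal}(F_0/E)}$ of $W_K$, hence of $\Pi_\BK^K$ after extending scalars to $\BC$. The identification $(\Pi_\BK^K)^{\Aut(\BC/E)} = W_K^{\mathrm{Gal}(F_0/E)}$ is straightforward: any $\Aut(\BC/E)$-fixed vector is in particular $\Aut(\BC/F_0)$-fixed and so lies in $W_K$ by the coefficient argument, and is then $\mathrm{Gal}(F_0/E)$-fixed; conversely the $\Aut(\BC/E)$-action on $W_K$ factors through $\mathrm{Gal}(F_0/E)$, so any $\mathrm{Gal}(F_0/E)$-fixed vector in $W_K$ is automatically $\Aut(\BC/E)$-fixed. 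Taking the union over $K$ delivers the proposition. I expect the main obstacle to be precisely the normality-and-coefficient step that manufactures the finite-dimensional descent datum $W_K$; once $W_K$ is in hand, the remainder is a formal application of Galois descent.
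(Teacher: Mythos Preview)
Your proof is correct. Both your argument and the paper's are Galois descent, but the packaging differs. The paper first passes to $\overline{\BQ}$: from Lemmas~\ref{fixsf} and~\ref{fixpicirc} it deduces $\Pi_\BK^{\Aut(\BC/\overline{\BQ})}=\Pi_\BK^\circ\otimes_{\BQ(\Pi_\BK)}\overline{\BQ}$, observes that every element of this $\overline{\BQ}$-form is fixed by an open subgroup of $\Aut(\overline{\BQ}/E)$, and then invokes \cite[Proposition~11.1.6]{TASpringerLinearAlgebraicGroups} once to descend from $\overline{\BQ}$ to $E$. You instead pin down a single finite Galois extension $F_0/E$, restrict to the finite-dimensional pieces $\Pi_\BK^K$ (using admissibility), and apply classical Hilbert~90 for the finite group $\mathrm{Gal}(F_0/E)$. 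Your normality trick $\sigma^{-1}\tau\sigma\in\Aut(\BC/F_0)$ is exactly what forces $\sigma.V_K\subset W_K$ and replaces the paper's passage through $\overline{\BQ}$. The paper's route is shorter because Springer's proposition absorbs both the infinite-dimensionality and the profinite Galois action in one citation; your route is more self-contained and makes the finite descent datum explicit, at the cost of the extra bookkeeping with $K$ and $F_0$.
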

 
\begin{proof}
By Lemmas \ref{fixsf} and \ref{fixpicirc}, an open subgroup of $\Aut(\BC/\BQ(\Pi_\BK))$ fixes $\Pi_\BK^\circ$ pointwise. In particular
$\Pi_\BK^{\Aut(\BC/\overline \BQ)}\supset\Pi_\BK^\circ$. 
This easily implies that
\begin{equation}\label{rpi1}
   \Pi_\BK^{\Aut(\BC/\overline \BQ)}=\Pi_\BK^\circ \otimes_{\BQ(\Pi_\BK)} \overline \BQ,\quad \textrm{and it is a $\overline \BQ$-rational form of $\Pi_\BK$}.
\end{equation}
Hence every element of $\Pi_\BK^{\Aut(\BC/\overline \BQ)}$ is fixed by an open subgroup of $\Aut(\overline \BQ/\BQ(\Pi_\BK, \eta_\BK))$. This implies that
$\Pi_\BK^{\Aut(\BC/\BQ(\eta_\BK, \Pi_\BK))}=\left(\Pi_\BK^{\Aut(\BC/\overline \BQ)}\right)^{\Aut(\overline \BQ/\BQ(\eta_\BK, \Pi_\BK))}$
is a $\BQ(\eta_\BK, \Pi_\BK)$-rational sturcture of $\Pi_\BK^{\Aut(\BC/\overline \BQ)}$, by \cite[Proposition 11.1.6]{TASpringerLinearAlgebraicGroups}. With \eqref{rpi1}, this proves the proposition.
\end{proof}

\begin{rk}\label{rkrs}
 The corresponding result of Proposition \ref{rationalsh} for Whittaker models has been proved by Harder for the $\GL_2$-case in \cite[page 80]{Hd83} and by
J. Mahnkopf for the general case in \cite[page 592]{M}. See also \cite[Lemma 3.2]{RS08}. Their proof depends crucially on the fact that the Whittaker functional does not vanish on new vectors.
It is suggested in the published version of  \cite{GrobnerRaguhramShalikaModelArithmetic} that the same idea may be applied to the Shalika functionals and
prove the statement of our Proposition 3.6.
However, to the best of our knowledge, the non-vanishing of the Shalika functional on new
vectors was not known before, and has only been established recently in \cite{GM23} for some unramified characters $\eta_\BK$, which is still not enough
for general ramified situations. Our proof here does not use the theory of new vectors.

\end{rk}

Now we come to the proof of Theorem \ref{0lemma: Z circ f rational}.
By \cite[Proposition 3.1]{ClozelAutomorphicFormsMotive},  a $\BQ(\Pi_\BK,\eta_\BK)$-rational structure on the $G_\BK$-module $\Pi_\BK$ is unique up to scalar multiplication. Then the uniqueness assertion follows by taking $\chi_\BK$ to be the trivial character and $s=\frac{1}{2}$. For the proof of the existence,  we equip  $\Pi_\BK$ with the   $\BQ(\Pi_\BK, \eta_\BK)$-rational structure given by Proposition \ref{rationalsh}. Assume that $\BQ(\chi_\BK)$ is a number field in the rest of this subsection.

By Proposition \ref{prop: properties of F-J int}, for every $v\in \Pi_\BK$, the map $s\mapsto Z^\circ(v, s, \chi_\BK)$ is an element of the ring $\BC[q^{s-\frac{1}{2}}, q^{\frac{1}{2}-s}]$. This ring is defined over $\BQ$ with the obvious $\BQ$-rational structure $\BQ[q^{s-\frac{1}{2}}, q^{\frac{1}{2}-s}]$. By the theory of Godement-Jacquet $L$-functions, it is easy to see that
\[
  \frac{1}{L(s, \Pi_\BK\otimes\chi_{\BK})}\in \BC[q^{s-\frac{1}{2}}, q^{\frac{1}{2}-s}]
\]
is defined over $\BQ(\Pi_\BK, \chi_\BK)$. Hence the fractional ideal
$L(s, \Pi_\BK\otimes\chi_{\BK}) \cdot \BC[q^{s-\frac{1}{2}}, q^{\frac{1}{2}-s}]$ 
also has a natural $\BQ(\Pi_\BK, \chi_\BK)$-rational structure and carries an action of  $\Aut(\BC/\BQ(\Pi_\BK,  \chi_\BK))$ corresponding to this rational structure.
Using this action, for each $\sigma\in \Aut(\BC/\BQ(\Pi_\BK, \eta_\BK, \chi_\BK))$ and each $v\otimes 1\in \Pi_\BK\otimes \xi_{\chi_\BK,j}$ that is defined over $\BQ(\Pi_\BK, \eta_\BK, \chi_\BK)$, we have that
\begin{eqnarray*}
&&\sigma. (Z^\circ(v, s,\chi_\BK) )\\
&=& \frac{1}{L(s, \Pi_\BK\otimes\chi_{\BK})}\int_{\GL_n(\BK)} \sigma\left(\left \langle \lambda_\BK, \mtrtwo{g}{0}{0}{1_n}.v \right \rangle\right) \cdot \chi_\BK(\det g) \cdot \abs{\det g}_{\BK}^{s-\frac{1}{2}} \, dg\\
&=& \frac{1}{L(s, \Pi_\BK\otimes\chi_{\BK})}\int_{\GL_n(\BK)} \left \langle \lambda_\BK, \mtrtwo{t_{\sigma,\BK}\cdot g}{0}{0}{1_n}.(\sigma.v) \right \rangle\cdot \chi_\BK(\det g) \cdot \abs{\det g}_{\BK}^{s-\frac{1}{2}} \, dg\quad(\textrm{by \eqref{eqacts}})\\
&=&  \chi_\BK(t_{\sigma,\BK}^{-n})\cdot \frac{1}{L(s, \Pi_\BK\otimes\chi_{\BK})}\int_{\GL_n(\BK)} \left \langle \lambda_\BK, \mtrtwo{g}{0}{0}{1_n,}.v \right \rangle\cdot \chi_\BK(\det g) \cdot \abs{\det g}_{\BK}^{s-\frac{1}{2}}\, dg\\
&=&  \chi_\BK(t_{\sigma,\BK}^{-n})\cdot Z^\circ(v, s,\chi_\BK).
\end{eqnarray*}
Since $\sigma(\CG(\chi_\BK)) = \chi_\BK(t_{\sigma,\BK})\cdot \CG(\chi_\BK)$, we obtain 
$\sigma. ( \CG(\chi_\BK)^n\cdot Z^\circ(v, s,\chi_\BK) )= \CG(\chi_\BK)^n\cdot Z^\circ(v, s,\chi_\BK)$. 
Therefore, $\CG(\chi_\BK)^n\cdot Z^\circ(v, s,\chi_\BK)\in \BC[q^{s-\frac{1}{2}}, q^{\frac{1}{2}-s}]$ is defined over $\BQ(\Pi_\BK, \eta_\BK, \chi_\BK)$.
 Specifying $s$ to $s_0\in \frac{1}{2}+\BZ$, we have that
  $\CG(\chi_\BK)^n\cdot Z^\circ(v, s_0,\chi_\BK)\in \BQ(\Pi_\BK, \eta_\BK, \chi_\BK).$ This proves the existence assertion of  Theorem  \ref{0lemma: Z circ f rational}.
 Since the group $\Aut(\BC/\BQ(\Pi_\BK, \eta_\BK))$ stabilizes the space of spherical vectors in $\Pi_\BK$,  the last assertion easily follows from Lemma \ref{actgamma}.
This completes the proof of Theorem \ref{0lemma: Z circ f rational}.

\begin{rk}\label{rkrs2}
Using Proposition \ref{rationalsh}, a variant of Theorem \ref{0lemma: Z circ f rational} is proved in \cite[Section 3.9]{GrobnerRaguhramShalikaModelArithmetic}. But  the authors are not quite able to follow the argument in Section 3.9.2 of that paper. 
\end{rk}


    \subsection{Uniform cohomological test vectors}\label{subsection: Uniform Cohomological Vector}

  In the rest of this section, we assume that $\BK$ is archimedean. We retain the notation in
Sections \ref{subsection: Archimedean Friedberg-Jacquet Integral and Uniform Cohomological Vector}--\ref{subsection: Coefficient System of Regular Algebraic Cuspidal Representation}. In particular, $\Pi_\BK$ is an essentially tempered cohomological representation of symplectic type as in \eqref{eq: coh repn symp type induced parameter}.  From now on, we assume that $\eta_\BK$ is the following character of $\BK^\times$:
     \begin{equation}\label{eq: choice of eta}
             \eta_\BK=\prod_{\iota \in \CE_\BK} (\iota|_{\BK^\times})^{w_\iota}.
     \end{equation}
By Corollary \ref{cor: coh repn symp type induced parameter}, $\Pi_\BK$ has a nonzero $(\eta_\BK, \psi_\BK)$-Shalika functional.
As in Section \ref{subsection: Archimedean Friedberg-Jacquet Integral and Uniform Cohomological Vector}, fix a nonzero Shalika functional
$\lambda_{\BK}\in \Hom_{S_\BK}(\Pi_\BK, \eta_\BK\otimes\psi_\BK)$.
Recall the character $\xi_{\chi_\BK,t}$ of $H_\BK$ from \eqref{eq: xichi}. Using the Shalika functional $\lambda_{\BK}$,  the normalized  Friedberg-Jacquet integral produces a nonzero element
\begin{equation}\label{nz-lf}
Z^\circ(\,\cdot\,,s, \chi_\BK)\in \Hom_{H_\BK}(\Pi_\BK\otimes\xi_{\chi_\BK, s-\frac{1}{2}}, \BC), \quad \forall s\in \BC.
\end{equation}

Write $K_\BK$ for the standard maximal compact subgroup of $G_\BK$, namely
\begin{equation}\label{mkk}
  K_\BK:=\begin{cases}
            \mathrm O(2n):=\{g\in G_\BK\mid g^{\mathrm t} g=1_{2n}\} , & \hbox{if $\BK\cong \BR$;}\\
            \mathrm U(2n):=\{g\in G_\BK\mid \bar g^{\mathrm t} g=1_{2n}\} , & \hbox{if $\BK\cong \BC$.} 
           \end{cases}
\end{equation}
where $g^{\mathrm t}$ is the transpose of $g$, and $\bar g^{\mathrm t}$ is the conjugate transpose of $g$. Recall from \cite[Theorem 4.9]{V86} that every irreducible Casselman-Wallach representation of $G_\BK$ has a unique minimal $K_\BK$-type, which is denoted by $\tau_\BK\subset \Pi_\BK$. Write $C_\BK:=K_\BK\cap H_\BK$, which is a maximal compact subgroup of $H_\BK$.


The following theorem is the most technical ingredient
in the proof of Theorem \ref{thm: global period relation}. Its proof is left to Section \ref{section: Identification}.
   \begin{thm}[Uniform Cohomological Test Vector]\label{thm: uniform coh test vector}
      Let the notation and assumptions be as above.   Assume further that
        \begin{equation}\label{btau}
        \Hom_{C_\BK}( \tau_\BK\otimes \xi_{\chi_\BK,0}, \BC)\neq \{0\}.
        \end{equation}
        Then  there exists a vector $v_0\in \tau_\BK$ such that  $Z^\circ(v_0, s, \chi_\BK)=1$ for all $s\in \BC$.
   \end{thm}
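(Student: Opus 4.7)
The plan is to exploit a multiplicity-one principle on the minimal $K_\BK$-type $\tau_\BK$, reducing the construction of a uniform test vector to the assertion that the restriction of $Z^\circ(\cdot,s,\chi_\BK)$ to $\tau_\BK$, as a family in $s$, is essentially independent of $s$. First I would observe that because $|\det|_\BK$ is trivial on $C_\BK$ and $\chi_\BK \cdot (\chi_\BK \eta_\BK)^{-1}$ does not depend on $s$, the restriction
\[
   \lambda_s := Z^\circ(\,\cdot\,,s,\chi_\BK)\big|_{\tau_\BK}
\]
lies in the same space $\Hom_{C_\BK}(\tau_\BK \otimes \xi_{\chi_\BK,0},\BC)$ for every $s\in\BC$, and that by construction $\lambda_s$ depends holomorphically on $s$.

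Next I would upgrade the hypothesis \eqref{btau} to the multiplicity-one statement $\dim \Hom_{C_\BK}(\tau_\BK \otimes \xi_{\chi_\BK,0},\BC) = 1$, using an explicit description of the minimal $K_\BK$-type of the induced representation in Corollary \ref{cor: coh repn symp type induced parameter} (from \cite{JiangLinTianExplicitCohomologicalVectorReal} in the real case and \cite{LinTianExplicitCohomologicalComplex} in the complex case) together with the Cartan--Helgason type branching already used in Proposition \ref{prop: critical place}. Once this is in hand, there is a fixed nonzero $\ell_0 \in \Hom_{C_\BK}(\tau_\BK \otimes \xi_{\chi_\BK,0},\BC)$ and a holomorphic scalar function $c(s)$ such that $\lambda_s = c(s)\cdot \ell_0$.

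The crux, and what I expect to be the main obstacle, is to prove that $c(s)$ is a nonzero constant. My plan here is twofold: (i) use the parabolic-induction model $\Pi_\BK \cong \sigma_1 \dot\times\cdots\dot\times\sigma_n$ to realize $\tau_\BK$ by concrete functions on $G_\BK$, and (ii) combine Proposition \ref{prop: properties of F-J int}(4) with Lemma \ref{uniquesh} to deduce non-vanishing of $\lambda_s$ at one value of $s$, so that $c(s)\not\equiv 0$. For constancy, I would exploit the factorisation of the Friedberg--Jacquet integral along the Levi decomposition of the parabolic used to induce $\Pi_\BK$: the Shalika functional $\lambda_\BK$ and the embedding of $\tau_\BK$ are compatible with this structure, so the $|\det g|_\BK^{s-\frac{1}{2}}$-factor in \eqref{eq: F-J integral} can be absorbed into a uniquely determined $C_\BK$-branching on each $\sigma_j$, producing an $s$-independent value after normalisation by the archimedean $L$-factor. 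This is the step where the explicit cohomological vectors constructed in \cite{JiangLinTianExplicitCohomologicalVectorReal,LinTianExplicitCohomologicalComplex} are most delicate, and it is the technical heart of the argument.

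Finally, with $c(s)\equiv c\in \BC^\times$ established, I choose $v_0\in \tau_\BK$ with $\ell_0(v_0) = 1/c$; then $Z^\circ(v_0,s,\chi_\BK) = c\cdot \ell_0(v_0) = 1$ for every $s\in\BC$, which is exactly the conclusion of Theorem \ref{thm: uniform coh test vector}. Throughout, the non-vanishing assumption \eqref{btau} is used only to guarantee that the 1-dimensional branching space is actually realised, so that $\ell_0$ exists; the argument above then shows that among the one-parameter family $\{\lambda_s\}_{s\in\BC}$ of $s$-dependent functionals, the dependence on $s$ is entirely absorbed by the normalising factor $L(s,\Pi_\BK\otimes \chi_\BK)$ in Proposition \ref{prop: properties of F-J int}(2).
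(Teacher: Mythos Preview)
Your reduction via multiplicity one is sound: the restriction $\lambda_s:=Z^\circ(\cdot,s,\chi_\BK)|_{\tau_\BK}$ does land in the fixed one-dimensional space $\Hom_{C_\BK}(\tau_\BK\otimes\xi_{\chi_\BK,0},\BC)$, and writing $\lambda_s=c(s)\ell_0$ the whole problem becomes the constancy (and non-vanishing) of $c(s)$. This is in fact how the paper itself uses Theorem~\ref{thm: uniform coh test vector} afterwards (see Lemmas~\ref{lems2} and~\ref{clems2}). The gap is in your mechanism for proving that $c(s)$ is constant.

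You invoke \cite{JiangLinTianExplicitCohomologicalVectorReal,LinTianExplicitCohomologicalComplex} for ``explicit cohomological vectors'', but those papers do \emph{not} produce test vectors for the Friedberg--Jacquet functional $Z(\cdot,s,\chi_\BK)$; they produce them for a different family of $H_\BK$-equivariant functionals $\Lambda_{s,\chi_\BK,\Xi}$ built by hand from the $\GL_2$ Hecke integrals on the inducing data $\sigma_1,\dots,\sigma_n$ together with an integration over $N_n^-\times N_n^-$ (see the construction around \eqref{eq:LF-Lambda} and Lemma~\ref{lem: coh repn symp type induced parameter}). The main content of the paper's proof is Proposition~\ref{thm: FJ-L}: there exists $c_0\in\BC^\times$, \emph{independent of $s$}, with $Z_\Xi(\cdot,s,\chi_\BK)=c_0\cdot\Lambda_{s,\chi_\BK,\Xi}$. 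This is established not by a branching argument on $\tau_\BK$ but by an induction on $n$ showing that both families satisfy the same transitivity law under parabolic induction $\Xi_1\dot\times\Xi_2$ (Propositions~\ref{parli} and~\ref{parafj}); the constants picked up are Haar-measure constants, hence $s$-independent. Your phrase ``the $|\det g|_\BK^{s-1/2}$-factor can be absorbed into a uniquely determined $C_\BK$-branching on each $\sigma_j$'' does not describe this mechanism; the $s$-dependence is absorbed by the $\GL_2$ Hecke integrals (base case $n=1$), not by $K$-type branching, and the passage from $n=1$ to general $n$ requires constructing the parabolic induction of Shalika functionals (Section~6.3, Lemmas~\ref{lemshw} and~\ref{lfi}) and matching it with the analogous induction of $Z$ and of $\Lambda$.

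In short: your outline identifies the correct reduction, but the step you label ``the crux'' needs precisely the auxiliary functional $\Lambda$ and the comparison $Z=c_0\Lambda$ that the paper carries out; without that comparison the results of \cite{JiangLinTianExplicitCohomologicalVectorReal,LinTianExplicitCohomologicalComplex} cannot be applied to $Z$.
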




    \begin{rk}
We point out some delicate features of the vector $v_0\in \tau_\BK$ in Theorem \ref{thm: uniform coh test vector}.
        \begin{enumerate}
          \item Based on the proof of \cite[Theorem 3.1]{AizenbudGourevitchJacquetUniquenessShalika}, it is hard to tell if the test vector $v$ in Part (4) of Proposition \ref{prop: properties of F-J int}
is $K_\BK$-finite or not, and hence it seems impossible to deduce that it lives in the minimal $K_\BK$-type.
          \item  For $\BK=\BR$, it was first proved in \cite[Theorem 5.1]{SunCohomologicalDistinguishedRepn} that for every $s\in \BC$, there is a vector $v_{0,s}\in \tau_\BK$ such that  $Z^\circ(v_{0,s}, s, \chi_\BK)=1$. Such a vector is called a
{\bf cohomological test vector}.  Since the cohomological test vector $v_0$ in Theorem \ref{thm: uniform coh test vector} is uniform for all $s$, it seems appropriate to call it a
{\bf uniform cohomological test vector}. As far as the authors know, Theorem \ref{thm: uniform coh test vector} establishes the existence of such a uniform cohomological test vector for high rank groups for the
first time.
\item Theorem \ref{thm: uniform coh test vector} is more general than that is needed for the proof of Theorem \ref{thm: global period relation}. In fact, Theorem \ref{thm: global period relation} assumes that
the coefficient system of $\Pi$ is balanced. This assumption will imply the branching condition \eqref{btau} needed in Theorem \ref{thm: uniform coh test vector}.  More detailed discussion on this issue can be found in the proof of Theorem \ref{thm: Non-vanishing hypothesis} in Section \ref{section: Proof of Critical place and Non-vanishing Modular Symbol}.
        \end{enumerate}
    \end{rk}


     \subsection{Non-vanishing hypothesis at infinity}\label{subsection: Critical Place and The Non-Vanishing Hypothesis at Infinity}
 In the proof of Theorem \ref{thm: global period relation}, we also need another technical result: a {\sl non-vanishing result of archimedean modular symbols} for the case under consideration.
Such a result was already obtained in \cite[Theorem A.3]{SunCohomologicalDistinguishedRepn} when the local field $\BK$ is real.
In the literature, the modular symbols are only studied when  a numerical coincidence holds: the real dimension of the modular symbol is either the top or bottom non-vanishing degree of cohomology. Although such a numerical coincidence is no longer valid when $\BK$ is complex, our work in this paper shows that such a {\sl non-vanishing} result can still be
established. Consequently, we are able to attack the global period relation problem for any number field under a mild assumption that the coefficient system of $\Pi$ is balanced.


Recall that $\Pi_\BK$ is assumed to be of symplectic type. In the \textit{rest of this subsection}, we  assume that the coefficient system $F_\BK=\otimes_{\iota\in \CE_\BK} F_\iota$ of  $\Pi_\BK$ is balanced.
Let $\frac{1}{2}+j\in \frac{1}{2}+\BZ$ be a critical place of $\Pi_\BK$. Write
\begin{equation}\label{xikj}
\xi_{\BK,j}:=\otimes_{\iota\in \CE_\BK} ({\det}^j\otimes {\det}^{-j-w_\iota}),
\end{equation}
which may be viewed as an algebraic character of $H(\BK\otimes_\BR \BC)=\prod_{\iota\in \CE_\BK} H_\BC$.
Using Proposition \ref{prop: critical place}, we  fix a nonzero element
     \begin{equation}\label{eq: linear function coeffi system}
\lambda_{F_\BK, j} \in  \Hom_{H_\BK}(F_\BK^\vee\otimes\xi_{\BK, j}^\vee, \BC).
\end{equation}
Write $\sgn_\BK$ for the quadratic character of $\BK^\times$ that is non-trivial if and only if $\BK\cong\BR$.  Specifying the linear functional \eqref{nz-lf} to the case when $\chi_\BK$ equals $\sgn_\BK^j$ and $s=\frac{1}{2}+j$, we get a nonzero linear functional
\begin{equation}\label{nz-lf2}
Z^\circ(\,\cdot\,,\frac{1}{2}+j, \sgn_\BK^j)\in \Hom_{H_\BK}(\Pi_\BK\otimes\xi_{\BK,j}, \BC).
\end{equation}

Identify $\BR_+^\times$
with a subgroup of  $G_\BK$ via the diagonal embedding. Then $\BR^\times_+\subset H_\BK^0\subset G_\BK^0$. The Lie algebra of $\BR^\times_+$ is identified with $\BR$. 
Define a real vector space
$\fq_\BK:=(\fc_\BK\oplus \BR)\backslash\fh_\BK$, 
where $\fh_\BK$ is the Lie algebra of $H_\BK$ and $\fc_\BK$ is the Lie algebra of $C_\BK$.    Define
     \begin{equation}\label{eq: local dimension}
d_\BK :=\dim \fq_\BK= 
    \begin{cases}
                          n^2+n-1, & \hbox{if $\BK\simeq\BR$;} \\
                            2n^2-1, & \hbox{if $\BK\simeq\BC$.}
                        \end{cases}
                      \end{equation}
The {\sl archimedean modular symbol}, which is denoted by $\CP_{\BK,j}$, is defined to be the composition of the following maps:
\begin{eqnarray}
 \label{cpkj} 
 \RH^{d_{\BK}}_\mathrm{ct}(\BR^\times_+\backslash G_\BK^0; \Pi_\BK\otimes F_\BK^\vee)
 \nonumber &\rightarrow & \RH^{d_{\BK}}_\mathrm{ct}(\BR^\times_+\backslash H_\BK^0; \Pi_\BK\otimes F_\BK^\vee)\\
  &&=\RH^{d_{\BK}}_\mathrm{ct}(\BR^\times_+\backslash H_\BK^0; (\Pi_\BK\otimes \xi_{\BK, j})\otimes (F_\BK^\vee\otimes\xi_{\BK, j}^\vee ))\\
 \nonumber &\rightarrow & \RH^{d_{\BK}}_\mathrm{ct}(\BR^\times_+\backslash H_\BK^0;  \BC),
\end{eqnarray}
where the first arrow is the cohomology restriction map, and the last arrow is the map induced by the linear functional
\begin{equation}\label{zcirc0}
(\Pi_\BK\otimes \xi_{\BK, j})\otimes (F_\BK^\vee\otimes\xi_{\BK, j}^\vee )\xrightarrow{Z^\circ(\,\cdot\, , \frac{1}{2}+j, \sgn_\BK^j)\otimes \lambda_{F_\BK,j}}   \BC.
\end{equation}
The following non-vanishing hypothesis at infinity, which be will proved in Section \ref{section: Proof of Critical place and Non-vanishing Modular Symbol}, asserts that the archimedean modular symbol is nonzero.
     \begin{thm}[Non-vanishing Hypothesis at Infinity]\label{thm: Non-vanishing hypothesis}
          Assume that the coefficient system $F_\BK$ of $\Pi_\BK$ is balanced. Let $\frac{1}{2}+j$ be a critical place of $\Pi_\BK$. Then the archimedean modular symbol
        $$\CP_{\BK,j} : \RH^{d_{\BK}}_\mathrm{ct}(\BR^\times_+\backslash G_\BK^0; \Pi_\BK\otimes F_\BK^\vee)  \rightarrow  \RH^{d_{\BK}}_\mathrm{ct}(\BR^\times_+\backslash H_\BK^0;  \BC)$$
          defined above is nonzero.
     \end{thm}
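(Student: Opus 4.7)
My plan is to exhibit an explicit cohomology class in $\RH^{d_\BK}_{\mathrm{ct}}(\BR^\times_+\backslash G_\BK^0;\Pi_\BK\otimes F_\BK^\vee)$ whose image under $\CP_{\BK,j}$ is visibly nonzero, constructed from the uniform cohomological test vector supplied by Theorem \ref{thm: uniform coh test vector}. First I translate both sides into relative Lie algebra cohomology via the Hochschild--Mostow identification. Setting $\fp_\BK := \fg_\BK/(\fk_\BK\oplus\BR)$, the source is computed by the Vogan--Zuckerman complex $\Hom_{K_\BK}(\wedge^\bullet \fp_\BK,\Pi_\BK\otimes F_\BK^\vee)$, while the target is computed by $\Hom_{C_\BK}(\wedge^\bullet \fq_\BK,\BC)$; since $d_\BK=\dim\fq_\BK$, the latter in top degree $d_\BK$ is one-dimensional, spanned by a $C_\BK$-invariant volume form $\omega_0^*$. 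On the cochain level, $\CP_{\BK,j}$ restricts a cochain along $\wedge^{d_\BK}\fq_\BK\hookrightarrow\wedge^{d_\BK}\fp_\BK$ and then post-composes with $Z^\circ(\,\cdot\,,\tfrac12+j,\sgn_\BK^j)\otimes\lambda_{F_\BK,j}$.

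Next I verify the branching hypothesis \eqref{btau} of Theorem \ref{thm: uniform coh test vector} with $\chi_\BK=\sgn_\BK^j$. The minimal $K_\BK$-type $\tau_\BK$ of $\Pi_\BK$ can be read off from the induction parameters in Corollary \ref{cor: coh repn symp type induced parameter}, and the $j$-balanced hypothesis on $F_\BK$, combined with the Cartan--Helgason--Schlichtkrull branching rule that underlies Proposition \ref{prop: critical place}, forces $\Hom_{C_\BK}(\tau_\BK\otimes\xi_{\sgn_\BK^j,0},\BC)\neq 0$. Theorem \ref{thm: uniform coh test vector} then yields $v_0\in\tau_\BK$ with $Z^\circ(v_0,s,\sgn_\BK^j)=1$ for every $s\in\BC$. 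Picking $f_0\in F_\BK^\vee$ with $\lambda_{F_\BK,j}(f_0)\neq 0$, Frobenius reciprocity from $C_\BK$ to $K_\BK$, together with the fact that $\wedge^{d_\BK}\fq_\BK$ is a $C_\BK$-direct summand of $\wedge^{d_\BK}\fp_\BK$, produces a $K_\BK$-equivariant cochain $\Phi\in\Hom_{K_\BK}(\wedge^{d_\BK}\fp_\BK,\Pi_\BK\otimes F_\BK^\vee)$ sending a chosen nonzero $\omega_0\in(\wedge^{d_\BK}\fq_\BK)^{C_\BK}$ dual to $\omega_0^*$ to a nonzero scalar multiple of $v_0\otimes f_0$. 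That $\Phi$ is a cocycle rests on the Vogan--Zuckerman structure of the essentially tempered cohomological $\Pi_\BK$: the image of its minimal $K_\BK$-type in the relevant degree is annihilated by the Koszul differential.

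Chasing $[\Phi]$ through the cochain-level description of $\CP_{\BK,j}$ gives $\langle\CP_{\BK,j}([\Phi]),\omega_0\rangle = c\cdot Z^\circ(v_0,\tfrac12+j,\sgn_\BK^j)\cdot\lambda_{F_\BK,j}(f_0) = c\cdot\lambda_{F_\BK,j}(f_0)\neq 0$ for an explicit nonzero constant $c$ coming from the natural pairing between $\omega_0$ and $\omega_0^*$, which proves the non-vanishing. The main obstacle is the branching computation in the complex case: the minimal $\mathrm U(2n)$-type of $\Pi_\BK$ arising from the doubled induction data in \eqref{Eq: cohomomogical repn parabolic induction parameter complex} must be branched to $\mathrm U(n)\times\mathrm U(n)$, and because $d_\BK=2n^2-1$ is neither the top nor the bottom non-vanishing degree of $G_\BK^0$-cohomology, no Poincar\'e duality shortcut of the kind available in the real case of \cite[Theorem A.3]{SunCohomologicalDistinguishedRepn} can be used; one must directly translate the $j$-balanced hypothesis on $F_\BK$ into the existence of the required $C_\BK$-invariant vector in $\tau_\BK\otimes\xi_{\sgn_\BK^j,0}$, which is the technical heart of the argument.
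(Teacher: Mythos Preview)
Your outline has the right shape, but the step invoking ``Frobenius reciprocity'' is a genuine gap, and it is precisely the heart of the theorem.  You claim that Frobenius reciprocity together with the $C_\BK$-splitting $\wedge^{d_\BK}\fq_\BK \hookrightarrow \wedge^{d_\BK}\fp_\BK$ produces a $K_\BK^0$-equivariant cochain $\Phi$ sending $\omega_0$ to a prescribed vector $c\,v_0\otimes f_0$.  But Frobenius reciprocity relates $\Hom_{K_\BK}$ with a $C_\BK$-induced representation; here $\wedge^{d_\BK}\fp_\BK$ is a fixed $K_\BK$-module, not $\Ind_{C_\BK}^{K_\BK}$ of anything, so no such map is manufactured.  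Concretely, what you must prove is that the restriction
\[
  \Hom_{K_\BK^0}(\wedge^{d_\BK}\fs_\BK^\BC,\ \tau_\BK\otimes\delta_\BK^\vee)\ \longrightarrow\ \Hom_{C_\BK^0}(\wedge^{d_\BK}\fq_\BK^\BC,\ \tau_\BK\otimes\delta_\BK^\vee)
\]
is nonzero; this is exactly the nontrivial content, and it does not follow from the $C_\BK$-splitting.  Moreover, once a $K_\BK^0$-equivariant $\Phi$ exists, you cannot simply decree that $\Phi(\omega_0)$ equals $v_0\otimes f_0$: the value $\Phi(\omega_0)$ is forced (up to scalar) by the $K_\BK^0$-type structure of the source, so a separate argument is needed to show $(Z^\circ\otimes\lambda_{F_\BK,j})(\Phi(\omega_0))\neq 0$.

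The paper's proof (Section \ref{section: ncProof of Critical place and Non-vanishing Modular Symbol}) fills exactly these two holes with two explicit lemmas.  First (Lemma \ref{cprv00}), an orthogonality computation in $\wedge^{d_\BK}\fs_\BK^\BC$ using the transversality $\fb'_\BK+\fh_\BK^\BC=\fg_\BK^\BC$ shows that $\wedge^{d_\BK}\fq_\BK^\BC$ projects nontrivially onto a specific copy of the $K_\BK$-type $\tau_{\BK,0}$; this is how one actually gets a $\Phi$ with $\Phi(\omega_0)\neq 0$.  Second (Lemma \ref{lcprv}), a PRV-type statement that the Cartan product map $\tau_{\BK,0}\otimes\delta_\BK\to\tau_\BK$ kills no nonzero decomposable vector, combined with $\dim(\tau_\BK\otimes\xi_{\BK,j})^{C_\BK}=1$, forces the resulting $C_\BK$-fixed element of $\tau_\BK\otimes\xi_{\BK,j}$ to pair nontrivially with $Z^\circ$.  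Theorem \ref{thm: uniform coh test vector} enters only at this last point, to certify $Z^\circ|_{\tau_\BK}\neq 0$, not to pin down $\Phi(\omega_0)$.  Your proposal is missing both of these ingredients.
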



     \subsection{Archimedean period relations}\label{subsection: local period relations}
     Fix an embedding
          \begin{equation}\label{eq: embedding no i}
               \gamma_{\BK}: \BK^{2n}\hookrightarrow \fgl_{2n}(\BK) = \fg_\BK
           \end{equation}
     which sends $(a_1,a_2,\cdots,a_{2n})$ to the matrix
    \begin{equation}
    \left[
  \begin{array}{cccccccc}
    \frac{a_1+a_{2n}}{2} & 0 & \cdots & 0 & 0 &\cdots & 0 & \frac{a_1-a_{2n}}{2}  \\
     0&\frac{a_2+a_{2n-1}}{2}  & \cdots & 0 & 0 &\cdots & \frac{a_2-a_{2n-1}}{2}  & 0 \\
     \cdots & \cdots & \cdots & \cdots & \cdots &\cdots & \cdots &\cdots  \\
   0 & 0 & \cdots & \frac{a_n+a_{n+1}}{2} & \frac{a_n-a_{n+1}}{2} &\cdots & 0 & 0  \\
   0 & 0 & \cdots &\frac{a_n-a_{n+1}}{2}   & \frac{a_{n+1}+a_n}{2} &\cdots & 0 & 0  \\
  \cdots & \cdots & \cdots & \cdots & \cdots &\cdots & \cdots &\cdots  \\
   0& \frac{a_2-a_{2n-1}}{2} & \cdots & 0 & 0 &\cdots & \frac{a_{2n-1}+a_2}{2}  & 0 \\
   \frac{a_{1}-a_{2n}}{2} & 0 & \cdots & 0 & 0 &\cdots &  & \frac{a_{2n}+a_1}{2}  \\
  \end{array}
   \right].
   \end{equation}
   This embedding identifies $\BK^{2n}$ as a Cartan subalgebra $\ft_\BK$ of $\fg_\BK$. With the so obtained Cartan subalgebra $\ft_\BK$, the root system of $\fg_\BK$ is
$\Phi(\fg_\BK, \ft_\BK) = \set{\pm(\eps_i-\eps_j)}{1\leq i< j \leq 2n}.$ 
   Here $\eps_1, \eps_2, \cdots, \eps_{2n}$ is the standard basis of $\BK^{2n}$. The positive root system
$\Phi^+(\fg_\BK, \ft_\BK) = \set{\eps_i-\eps_j}{1\leq i< j \leq 2n}$
defines a Borel subalgebra $\fb_\BK$ of $\fg_\BK$. One checks that $\fb_\BK$ is transversal to $\fh_\BK$. More precisely, we have that
\begin{equation}\label{transvb}
     \begin{cases}
        \fb_\BK+\fh_\BK=\fg_\BK,\\
      \fb_\BK\cap \fh_\BK=\{\gamma_\BK(a_1, a_2, \cdots, a_n,a_n, \cdots, a_2, a_1)\mid a_1, a_2, \cdots, a_n\in \BK\}.
     \end{cases}
  \end{equation}
Write $\fu_\BK$ for the nilpotent radical of $\fb_\BK=\ft_\BK\ltimes \fu_\BK$. The $\fu_\BK$-invariants in 
$F_\BK^\vee$ is denoted by $(F_\BK^\vee)^{\fu_\BK}$, which is one-dimensional.
The following theorem will be proved in Section~\ref{section: Proof of Critical place and Non-vanishing Modular Symbol}.
     
\begin{thm}[Archimedean Period Relations]\label{thm: archi period relations}
        Suppose that the coefficient system $F_\BK$ is balanced, and for some $v_0\in (F_\BK^\vee)^{\fu_\BK}$, $\lambda_{F_\BK, j}(v_0\otimes 1)=1$ for all critical places $\frac{1}{2}+j$ of $\Pi_\BK$. Then  the linear map
$$
\epsilon_\BK^{j} \cdot \mathrm i^{-jn\cdot[\BK:\BR]}\cdot  \CP_{\BK,j}\ \colon\ \RH^{d_{\BK}}_\mathrm{ct}(\BR^\times_+\backslash G_\BK^0; \Pi_\BK\otimes F_\BK^\vee)  \rightarrow  \RH^{d_{\BK}}_\mathrm{ct}(\BR^\times_+\backslash H_\BK^0;  \BC)
$$
is independent of the critical place $\frac{1}{2}+j$ of $\Pi_\BK$. Here $\epsilon_\BK=1$ when $\BK\cong \BC$, and $\epsilon_\BK\in \{\pm 1\}$ is defined as in \eqref{epsilonk} when $\BK\cong \BR$. 
\end{thm}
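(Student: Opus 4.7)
The strategy hinges on Theorem \ref{thm: uniform coh test vector}. The balancedness of $F_\BK$ together with Proposition \ref{prop: critical place} ensures that for every critical place $\frac{1}{2}+j$ of $\Pi_\BK$, the character $\xi_{\sgn_\BK^j, 0}$ restricts to $C_\BK$ in a way that appears in the branching of the minimal $K_\BK$-type $\tau_\BK$, so that hypothesis \eqref{btau} is met; as in the proof of Theorem \ref{thm: Non-vanishing hypothesis}, this uses the explicit form \eqref{eq: coh repn symp type induced parameter} of $\Pi_\BK$ together with Corollary \ref{cor: coh repn symp type induced parameter}. Theorem \ref{thm: uniform coh test vector} then produces, for each critical $j$, a vector $v_{0,j}\in \tau_\BK$ with $Z^\circ(v_{0,j}, s, \sgn_\BK^j)\equiv 1$ in $s$.

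Next, I would build a distinguished cocycle for $\RH^{d_\BK}_{\mathrm{ct}}(\BR^\times_+\backslash G_\BK^0;\Pi_\BK\otimes F_\BK^\vee)$ from $v_{0,j}$ and a highest-weight vector of $F_\BK^\vee$ relative to the Borel $\fb_\BK$ of \eqref{transvb}. The transversality $\fb_\BK+\fh_\BK=\fg_\BK$ makes this cocycle particularly well-behaved under the restriction map to $H_\BK^0$: writing the cocycle as the wedge of a fixed basis of $\fq_\BK^\ast$ tensored with $v_{0,j}\otimes v_0$, the modular symbol $\CP_{\BK,j}$ of this class becomes a single pairing rather than an integral. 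The resulting expression factors as a product of three quantities: (i) $Z^\circ(v_{0,j},\frac{1}{2}+j,\sgn_\BK^j)=1$ by the choice of test vector; (ii) $\lambda_{F_\BK,j}(v_0\otimes 1)=1$ by the normalization hypothesis of the theorem; and (iii) an exterior-algebra factor coming from evaluating the $\wedge^{d_\BK}\fq_\BK^\ast$-piece of the cocycle against the classes generating $\RH^{d_\BK}_{\mathrm{ct}}(\BR^\times_+\backslash H_\BK^0;\BC)$. Since (i) and (ii) are uniformly $1$, all $j$-dependence of $\CP_{\BK,j}$ sits in factor (iii).

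The remaining task is to prove that factor (iii) equals $\epsilon_\BK^{-j}\cdot \mathrm{i}^{jn\cdot [\BK:\BR]}$ times a constant independent of $j$. As $j$ varies, the uniform test vector $v_{0,j}$ is the unique (up to scalar) element of $\tau_\BK$ on which $C_\BK$ acts by $\xi_{\BK,j}^{-1}|_{C_\BK}$ (combined with the twist by $\sgn_\BK^j$ from $\xi_{\chi_\BK,0}$); the normalization $Z^\circ(v_{0,j},\cdots)=1$ fixes $v_{0,j}$ up to a sign, which will be the source of $\epsilon_\BK\in\{\pm 1\}$ (independent of $j$, because the branching line for $\xi_{\BK,j+2}|_{C_\BK}$ compared with $\xi_{\BK,j}|_{C_\BK}$ can be tracked continuously inside $\tau_\BK$). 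The powers of $\mathrm{i}$ arise from the embedding $\gamma_\BK$ of \eqref{eq: embedding no i}: the matrix entries of $\gamma_\BK(a_1,\dots,a_{2n})$ in the off-diagonal block involve $\frac{1}{2}(a_i-a_{2n+1-i})$, and computing how $v_{0,j}$ pairs with the wedge of these complex-linear combinations contributes a factor of $\mathrm{i}^n$ per embedding $\iota\in\CE_\BK$ per unit shift in $j$, yielding precisely $\mathrm{i}^{jn\cdot [\BK:\BR]}$.

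The main obstacle is the explicit verification of the $\mathrm{i}^{jn\cdot[\BK:\BR]}$-factor, which requires careful bookkeeping in the minimal $K_\BK$-type using Corollary \ref{cor: coh repn symp type induced parameter}: one must compare, for consecutive critical places $\frac{1}{2}+j$ and $\frac{1}{2}+j+1$, the two uniform test vectors inside $\tau_\BK$, evaluate the wedge of $\gamma_\BK$-generators of $\fq_\BK$ on them, and show that the ratio is exactly $\mathrm{i}^{n\cdot[\BK:\BR]}\epsilon_\BK^{-1}$. Once this single-step comparison is done, iteration over $j$ yields the theorem. The complex case is technically heavier than the real case because $\CE_\BK=\{\iota,\bar\iota\}$ contains two embeddings and one must carefully separate the contributions of $\iota$ and $\bar\iota$ under the diagonal restriction from $H(\BK\otimes_\BR\BC)$ to $H_\BK$; this is where the use of the uniform (rather than place-dependent) test vector of Theorem \ref{thm: uniform coh test vector} is indispensable, since it lets the entire $j$-dependence be extracted from character-theoretic data.
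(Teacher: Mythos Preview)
There is a genuine gap: you have mislocated the source of the factor $\mathrm{i}^{jn[\BK:\BR]}$. Take $\BK\cong\BC$, where the issue is cleanest. Then $\sgn_\BK$ is trivial, so $Z^\circ(\cdot,s,\sgn_\BK^j)=Z^\circ(\cdot,s,1)$ is the same functional for every $j$, and the uniform test vector $v_{0,j}$ produced by Theorem~\ref{thm: uniform coh test vector} is literally the same vector for all $j$. Moreover $\xi_{\BK,j}|_{C_\BK}$ is independent of $j$ (since $|\det|=1$ on $\mathrm U(n)$), so the $C_\BK$-isotypic line you single out does not move with $j$ either. Your factors (i) and (ii) are identically $1$, the inclusion $\wedge^{d_\BK}\fq_\BK^\BC\hookrightarrow\wedge^{d_\BK}\fs_\BK^\BC$ is a fixed linear map, and your proposed cocycle is $j$-independent. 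Your scheme thus predicts that $\CP_{\BK,j}$ is already $j$-independent, leaving no room for the required $\mathrm{i}^{2jn}$; the comparison of $v_{0,j}$ with $v_{0,j+1}$ in your last paragraph is vacuous here.

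In the paper's proof the factor enters on the $F_\BK^\vee$ side, but not at the vector $v_0=v_{F_\BK}^\vee$. By Proposition~\ref{propdcoh} every cohomology class factors through $\tau_\BK\otimes\delta_\BK^\vee$, where $\delta_\BK^\vee\subset F_\BK^\vee$ is the $K_\BK$-type generated by the highest-weight line for the $\theta$-stable Borel $\fb_\BK'=\Ad_{J_{\mathrm i,\BK}}(\fb_\BK^\BC)$, namely $\BC\cdot(J_{-\mathrm i,\BK}.v_0)$, not $\BC\cdot v_0$. One then computes $\lambda_{F_\BK,j}(J_{-\mathrm i,\BK}.v_0\otimes 1)=\xi_{\BK,j}^{-1}(J_{-\mathrm i,\BK})=\mathrm{i}^{jn[\BK:\BR]}$ (Lemmas~\ref{lems0} and~\ref{clems0}); this discrepancy between the two Borels is the hidden $j$-dependence. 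Your tensor $v_{0,j}\otimes v_0$ does not lie in $\tau_\BK\otimes\delta_\BK^\vee$ and so does not define a cohomology class via \eqref{cpbkj2}; passing to $J_{-\mathrm i,\BK}.v_0$ is exactly what introduces the power of $\mathrm i$. In the real case there is a second issue: the cohomology is two-dimensional with a $\pm$-splitting (Proposition~\ref{propdcoh}), the two summands acquire opposite signs under $j\mapsto j+1$ (Lemmas~\ref{lems1} and~\ref{lems3}), and $\epsilon_\BK$ records which of $\Pi_\BK^\pm$ supports the Shalika functional $\lambda_\BK$ --- a structural invariant depending on $\psi_\BK$, not a normalization ambiguity of a single test vector.
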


\begin{rk}
When $\BK\cong\BR$, assuming the existence of uniform cohomological vectors (Theorem \ref{thm: uniform coh test vector}), a weak form of Theorem \ref{thm: archi period relations} is proved in \cite[Section 7]{Jan18}.
\end{rk}

   \section{Proof of Theorem \ref{thm: global period relation}}\label{subsection: proof of global period relation}

We introduce some more notation needed for the proof of Theorem \ref{thm: global period relation}.
We write $G_v$ and $ H_v$ for the local components at place $v$ of $G(\BA)$ and $H(\BA)$, respectively.
Write
   $G_\infty = \prod_{v\mid\infty} G_v$ and $H_\infty = \prod_{v\mid\infty} H_v.$
Write $K_\infty$ for the standard maximal compact subgroup of $G_\infty$, i.e., 
$K_\infty = \prod_{v\mid\infty} K_v,$
where $K_v$ is the  standard maximal compact subgroup of $G_v$ as in \eqref{mkk}. As before, we have a maximal compact subgroup $C_v:=H_v\cap K_v$ of $H_v$. Write 
$C_\infty := \prod_{v\mid\infty} C_v$, which is a maximal compact subgroup of $H_\infty$.

Write $\fg_\infty$, $\fh_\infty$, $\fk_\infty$ and $\fc_\infty$ for the Lie algebras of $G_\infty$, $H_\infty$, $K_\infty$ and $C_\infty$, respectively.
Recall that for $v\mid\infty$,  we identify $\BR_+^\times$
with a subgroup of $G_v$ via the diagonal embedding.
In this way, we  further identify $\BR^\times_+$ with a subgroup of  $G_\infty$  via diagonal embedding.

\subsection{Reformulation of Friedberg-Jacquet}

    Recall the character $\eta\otimes \psi$ defined right below \eqref{eq: global shalika subgroup}. Only in this subsection, let $\Pi$ be an irreducible cuspidal automorphic representation of $G(\BA)$ with a nonzero $(\eta, \psi)$-Shalika integral, but not necessarily regular or algebraic.

    Note that the group $S(\BA)$ is unimodular. Fix once and for all an $S(\BA)$-invariant positive Borel measure on  $(S(\Rk)\BR^\times_+)\backslash S(\BA)$. The nonzero $(\eta, \psi)$-Shalika integral of $\Pi$ defines a continuous $S(\BA)$-invariant linear functional
    \begin{equation}\label{gsp}
       \lambda_\BA: \Pi\otimes (\eta\otimes\psi)^{-1}\rightarrow \BC, \quad \varphi\otimes 1\mapsto \int_{(S(\Rk)\BR^\times_+)\backslash S(\BA)} \varphi(g)\cdot (\eta\otimes\psi)^{-1}(g)  \, dg.
    \end{equation}
As  before  we write $\Pi=\Pi_\infty\otimes \Pi_f$ with the infinite part $\Pi_\infty$ and the finite part $\Pi_f$. Similarly write $\eta=\eta_\infty\otimes \eta_f$ and $\psi=\psi_\infty\otimes \psi_f$.
By using the uniqueness of Shalika models (Lemma \ref{uniquesh}), we are able to factorize the global Shalika integral $\lambda_\BA$ as $\lambda_\BA=\lambda_\infty\otimes \lambda_f$, where
\begin{equation}\label{flambda}
\lambda_\infty\in \Hom_{S_\infty}(\Pi_\infty\otimes (\eta_{\infty}\otimes\psi_{\infty})^{-1}, \BC)\quad 
{\rm and}\quad 
\lambda_f\in \Hom_{S(\BA_f)}(\Pi_f\otimes (\eta_f\otimes\psi_f)^{-1}, \BC).
\end{equation}
Here $S_\infty:=\prod_{v\mid \infty} S_v$ and $S_v:=S(\mathrm k_v)$.
For the automorphic character $\chi: \Rk^\times \backslash \BA^\times\rightarrow \BC^\times$, we define a character
     \begin{equation}\label{eq: temp 01}
      \xi_{\chi,t}:=((\chi\circ\det)\cdot \abs{\det}_\BA^t)\otimes (((\chi^{-1}\cdot \eta^{-1})\circ \det)\cdot\abs{\det}_\BA^{-t}): H(\BA)\rightarrow \BC^\times,\quad \forall t\in \BC.
     \end{equation}
     Here $\abs{\,\cdot\,}_\BA$ is the normalized absolute value on $\BA^\times$. 
     We write $\abs{\,\cdot\,}_\BA=\abs{\,\cdot\,}_{\BA_\infty}\otimes \abs{\,\cdot\,}_{\BA_f}$ and $\xi_{\chi,t}=\xi_{\chi_\infty, t}\otimes \xi_{\chi_f, t}$ as usual.

We take the product at all finite local places of the normalized local Friedberg-Jacquet integrals as defined in Proposition \ref{prop: properties of F-J int} and denote it by
    \begin{equation}\label{eq: normalized finite F-J int}
    Z^\circ(\,\cdot\,,  s, \chi_f) ,\in \Hom_{H(\BA_f)}(\Pi_f\otimes  \xi_{\chi_f,s-\frac{1}{2}},\BC),\qquad s\in \BC.
       \end{equation}
When the real part of $s$ is sufficiently larger, this is explicitly given by
     \[
           Z^\circ(v, s, \chi_f) := \frac{1}{L(s, \Pi_f\otimes\chi_{f})}\int_{\GL_n(\BA_f)} \left \langle \lambda_f, \mtrtwo{g}{0}{0}{1_n}.v \right \rangle \cdot \chi_f(\det g) \cdot \abs{\det g}_{\BA_f}^{s-\frac{1}{2}} \,dg.
   \]
        Here $dg$ is the  Haar measure on $\GL_n(\BA_f)$ such that a maximal compact subgroup of it has total volume 1.

Define a real vector space
$\fq_\infty:=(\fc_\infty\oplus \BR)\backslash\fh_\infty$, where $\BR$ is identified with the Lie algebra of $\BR^\times_+$. Set
\begin{equation}\label{eq: temp 13}d_\infty := \dim \fq_\infty.\end{equation}
Unless otherwise specified, we use a superscript $\BC$ over a real vector space to indicates the complexification.
Given a $\wedge^{d_\infty}\fq_\infty^\BC$-valued Haar measure $\mu$ on $\GL_n(\BA_\infty)$, we similarly define the  normalized Friedberg-Jacquet integral at infinity
     $$Z_\mu^\circ(\,\cdot\,, s, \chi_\infty)\in \Hom_{H_\infty^0}( (\wedge^{d_\infty}\fq_\infty^\BC)^*\otimes\Pi_\infty\otimes \xi_{\chi_\infty, s-\frac{1}{2}}, \BC),\qquad s\in \BC,$$
where $\wedge^{d_\infty}\fq_\infty^\BC$ carries the trivial action  of $H_\infty^0$, and  ``$\,^*$" indicates the (linear) dual space.
When the real part of $s$ is sufficiently large, this is   explicitly given by
      \begin{equation}\label{eq: infinite place F-J differential form int}
         \begin{aligned}
         &Z_\mu^\circ(\omega\otimes v\otimes 1, s, \chi_\infty)\\
         :=& \frac{1}{L(s,\Pi_\infty\otimes \chi_{\infty})}\int_{\GL_n(\BA_\infty)} \left \langle \lambda_\infty, \mtrtwo{g}{0}{0}{1_n}.v\right \rangle \cdot \chi_\infty(\det g)\cdot \abs{\det g}_\infty^{s-\frac{1}{2}}\, d \langle \mu, \omega\rangle (g).
         \end{aligned}
               \end{equation}
Write $\CX_H := (H(\Rk)\BR^\times_+)\quo H(\BA)/C^0_\infty$.      
Fix once and for all an $H_\infty$-invariant orientation on $\BR^\times_+\backslash H_\infty/C_\infty^0$. 
Then $\CX_H/C_f $ is an oriented manifold, whenever $C_f$ is a sufficiently small open compact subgroup of $H(\BA_f)$.
We define a global linear period integral
               \begin{eqnarray}\label{eq: global linear period}
                \mathcal L_s : (\wedge^{d_\infty}\fq_\infty^\BC)^*\otimes \Pi\otimes \xi_{\chi, s-\frac{1}{2}} &\rightarrow &\BC,\nonumber\\
                    \omega\otimes  \varphi\otimes 1 &\mapsto & \frac{1}{[C_{\mathrm{max},f}: C_f]} \int_{\CX_H/C_f} [\varphi]\cdot [\omega],
                  \end{eqnarray}
where $C_f$ is sufficiently small so that both  $\varphi$ and $\xi_{s-\frac{1}{2}}$ are $C_f$-invariant, $C_{\mathrm{max},f}$ is a maximal compact subgroup of $H(\BA_f)$ containing $C_f$, $[\varphi]$ denotes the function
                \[
                 h\mapsto \int_{C_\infty^0} \varphi(h c)\cdot \xi_{\chi, s-\frac{1}{2}}(hc) dc\qquad (\textrm{$dc$ is the Haar measure on $C_\infty^0$ with total volume 1})
                \]
               on $\CX_H/C_f$, and $[\omega]$ is the differential form on  $\CX_H/C_f$ whose pull-back to $\BR^\times_+\backslash H(\BA)/(C_\infty^0 C_f) $ is the $H(\BA)$-invariant differential form corresponding to $\omega$.
The following is just a reformulation of \cite[Proposition 2.3]{FriedbergJacquetLinearPeriods}.
      \begin{prop}\label{prop: F-j int equal linear period}
               There exists a $\wedge^{d_\infty}\fq_\infty^\BC$-valued Haar measure $\mu$ on $\GL_n(\BA_\infty)$ such that for all automorphic characters $\eta, \chi: \Rk^\times \backslash \BA^\times\rightarrow \BC$, all irreducible cuspidal automorphic representations $\Pi$ of $\GL_{2n}(\BA)$ with nonzero $(\eta, \psi)$-Shalika integral, and all $s\in\BC$, the diagram
                 \[
                   \begin{CD}
          ((\wedge^{d_\infty}\fq_\infty^\BC)^*\otimes\Pi_\infty\otimes \xi_{\chi_\infty, s-\frac{1}{2}})\otimes (\Pi_f\otimes \xi_{\chi_f,s-\frac{1}{2}})  @> Z_\mu^\circ(\,\cdot\,, s, \chi_\infty)\otimes Z^\circ(\,\cdot\,, s, \chi_f)>> \BC \\
            @V= VV           @V L(s, \Pi\otimes \chi) VV\\
          (\wedge^{d_\infty}\fq_\infty)^*\otimes \Pi\otimes \xi_{\chi, s-\frac{1}{2}}  @>\mathcal L_{s} >> \BC\\
                     \end{CD}
           \]
           is commutative.
      \end{prop}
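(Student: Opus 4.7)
The plan is to derive this proposition from the Friedberg--Jacquet unfolding identity \cite[Proposition 2.3]{FriedbergJacquetLinearPeriods}; the content beyond that classical statement is (i) repackaging both sides in terms of the normalized integrals $Z_\mu^\circ$, $Z^\circ$ and the period $\CL_s$, and (ii) exhibiting a single Haar measure $\mu$ on $\GL_n(\BA_\infty)$ that works uniformly in $\Pi$, $\eta$, $\chi$ and $s$. First I would restrict to the range $\Re(s)\gg 0$ where all integrals converge absolutely. In this range, Fourier-expanding $\varphi$ along the unipotent radical of the Shalika subgroup $S$ and collapsing against $\lambda_\BA$ yields the identity
\[
\int_{(H(\Rk)\BR^\times_+)\backslash H(\BA)} \varphi(h)\,\xi_{\chi, s-\tfrac12}(h)\, dh = \int_{\GL_n(\BA)} \left\langle \lambda_\BA, \mtrtwo{g}{0}{0}{1_n}.\varphi\right\rangle \chi(\det g)\,\abs{\det g}_\BA^{s-\tfrac12}\, dg,
\]
for appropriately matched Haar measures on the two sides.

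Next I would identify $\CL_s(\omega\otimes\varphi\otimes 1)$ with the left-hand side above. The factor $\frac{1}{[C_{\mathrm{max},f}:C_f]}$ makes the level-$C_f$ integral independent of $C_f$, so by construction $\CL_s(\omega\otimes\varphi\otimes 1)$ is the integral of the cusp form $\varphi$ against $\xi_{\chi, s-\tfrac12}$ on $(H(\Rk)\BR^\times_+)\backslash H(\BA)$, with respect to the $H(\BA)$-invariant volume form built from $\omega$ at infinity and from the standard maximal compact normalization at the finite places. Plugging the decomposable vector $\varphi=\varphi_\infty\otimes \varphi_f$ and the factorizations $\lambda_\BA=\lambda_\infty\otimes \lambda_f$, $\chi=\chi_\infty\otimes\chi_f$ into the unfolded right-hand side, and splitting the integration over $\GL_n(\BA)=\GL_n(\BA_\infty)\times \GL_n(\BA_f)$, produces a product of an archimedean and a finite integral. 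The Jacobian comparing the invariant form coming from $\omega$ on $\BR^\times_+\backslash H_\infty/C_\infty^0$ with the pushed-forward measure on $\GL_n(\BA_\infty)$ determines a unique $\wedge^{d_\infty}\fq_\infty^\BC$-valued Haar measure $\mu$ on $\GL_n(\BA_\infty)$ so that the archimedean factor is exactly
\[
L(s, \Pi_\infty\otimes\chi_\infty) \cdot Z_\mu^\circ(\omega\otimes \varphi_\infty\otimes 1, s, \chi_\infty),
\]
while the finite factor is $L(s, \Pi_f\otimes\chi_f)\cdot Z^\circ(\varphi_f\otimes 1, s, \chi_f)$ by definition of the normalized non-archimedean integrals. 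Multiplying these and using $L(s,\Pi\otimes\chi)=L(s,\Pi_\infty\otimes\chi_\infty)\cdot L(s,\Pi_f\otimes\chi_f)$ gives the commutativity of the diagram for $\Re(s)\gg 0$. Analytic continuation in $s$, together with Part (2) of Proposition \ref{prop: properties of F-J int} (which ensures both $Z_\mu^\circ$ and $Z^\circ$ extend meromorphically as stated), then propagates the identity to all $s\in\BC$.

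The main point that I expect requires care is the independence of the measure $\mu$ from the data $(\Pi,\eta,\chi,s)$. This follows because the Jacobian producing $\mu$ is extracted from purely geometric ingredients: the chosen $H(\BA)$-invariant volume form on $\BR^\times_+\backslash H_\infty/C_\infty^0$ (encoded by $\omega$), the fixed measure on $(S(\Rk)\BR^\times_+)\backslash S(\BA)$ used to build $\lambda_\BA$, the additive Haar measure on the $\Mat_n(\BA_\infty)$-part of the unipotent radical of $S$ (normalized by $\psi_\infty$), and the standard Haar measure on $\GL_n(\BA_f)$. None of these involves $\Pi$, $\eta$, $\chi$ or $s$, so the same $\mu$ works in all cases. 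Once this universality is in hand, the remainder of the argument is routine bookkeeping of normalizations already implicit in \cite{FriedbergJacquetLinearPeriods}.
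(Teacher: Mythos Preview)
Your proposal is correct and follows essentially the same approach as the paper: both treat the proposition as a direct reformulation of \cite[Proposition 2.3]{FriedbergJacquetLinearPeriods}, with the paper simply citing that result while you spell out the unfolding, factorization, and measure-matching bookkeeping in detail. The paper's only additional comment is the remark that $\mu$ is determined by the fixed measure on $(S(\Rk)\BR^\times_+)\backslash S(\BA)$, which is exactly the independence-of-data observation you make at the end.
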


\begin{rk}
The measure $\mu$ in Proposition \ref{prop: F-j int equal linear period} is determined by the measure on the quotient $(S(\Rk)\BR^\times_+)\backslash S(\BA)$,
which is used to define the global Shalika integral in \eqref{gsp}.

\end{rk}
\subsection{Modular symbols}\label{subsection: sheaf cohomology}

 As in Section \ref{subsection: Coefficient System of Regular Algebraic Cuspidal Representation}, suppose that $\Pi=\Pi_\infty\otimes \Pi_f$ 
     is an irreducible regular algebraic cuspidal automorphic representation of $G(\BA)$ of symplectic type, with coefficient system $F=\otimes_{\iota\in \CE_\Rk} F_\iota$ whose highest weight satisfying \eqref{eq2: 22global purity lemma} and \eqref{eq3: def of w iota}.

Similar to $\CX_H$, we define
      $\CX_G := (G(\Rk)\BR^\times_+)\quo G(\BA) / K_\infty^0.$
     Following \cite[Section 1.1]{HarderEisensteinCohomologyGL2}, for every open compact subgroup $K_f$ of $G(\BA_f)$, the finite dimensional representation $F^\vee$ defines a sheaf on $\CX_G/K_f$, which,
by abuse of notation, is still denoted by $F^\vee$. Write $\RH^{d_\infty}_c(\CX_G/K_f, F^\vee)$ for the sheaf-cohomology group with compact support and define
\[
  \RH^{d_\infty}_c(\CX_G, F^\vee):=\varinjlim_{K_f} \RH^{d_\infty}_c(\CX_G/K_f, F^\vee),
\]
where $K_f$ runs over the directed set of all open compact subgroups of $G(\BA_f)$. Some other cohomology spaces, such as $\RH^{d_\infty}_c(\CX_H, F^\vee)$, are similarly defined without further explanation. It is an important fact for us that the sheaf-cohomology group with compact support can be calculated by the complex of the spaces of fast decreasing differential forms. See \cite[Theorem 5.2]{Bo81} for details.

The cuspidal cohomology $\RH_{\mathrm{cusp}}^{d_\infty}(\CX_G, F^\vee)$
naturally injects into $\RH^{d_\infty}_c(\CX_G, F^\vee)$ (see \cite[Section 5]{Bo81} and \cite[page 123]{ClozelAutomorphicFormsMotive}). Hence by \cite[Lemma 3.15]{ClozelAutomorphicFormsMotive}, we have an injection of cohomology groups:
      \begin{equation}\label{eq: injection into cpt supp coh}
           \begin{aligned} \iota_\Pi : \RH^{d_\infty}_\mathrm{ct}(\BR_+^\times\quo G_\infty^0; \Pi\otimes F^\vee) &\hookrightarrow \RH_{\mathrm{cusp}}^{d_\infty}(\CX_G, F^\vee) \hookrightarrow \RH_c^{d_\infty}(\CX_G, F^\vee).
           \end{aligned}
      \end{equation}
      A well-known result of A. Borel and G. Prasad (see \cite[Lemma 2.7]{AshNonSquareIntegrableCohomologyArithmeticGroups}) asserts that the natural inclusion $\CX_H\hookrightarrow \CX_G$ is a proper map, and defines a homomorphism of cohomological groups
      \begin{equation}\label{eq: iota star}
          \iota^*: \RH_\mathrm{c}^{d_\infty}(\CX_G, F^\vee) \rightarrow \RH_\mathrm{c}^{d_\infty}(\CX_H, F^\vee).
      \end{equation}

As in Theorem \ref{thm: global period relation}, suppose that $\eta$ is an automorphic character such that  $L(s, \Pi, \wedge^2\otimes \eta^{-1})$ has a pole at $s=1$.
By Proposition \ref{prop: strong transfer}, $\Pi$ has a nonzero $(\eta, \psi)$-Shalika integral.
 It was proved in \cite[Theorem 5.3]{GanRaghramArithmeticityPeriodsAutomorphicForms} (also see \cite[Lemma 3.6.1]{GrobnerRaguhramShalikaModelArithmetic}) that the archimedean component $\eta_{\infty}$ of $\eta$
 equals the composition of
 \[
    \BA^\times_\infty=(\Rk\otimes_\BQ \BR)^\times \hookrightarrow (\Rk\otimes_\BQ \BC)^\times =\prod_{\iota\in\CE_\Rk} \BC^\times\xrightarrow{\{x_\iota\}_{\iota\in\CE_\Rk}\mapsto \prod_{\iota\in\CE_\Rk} x_\iota^{w_\iota}}\BC^\times. \]
Let $\frac{1}{2}+j\in \frac{1}{2}+\BZ$ be a critical place of $\Pi$ and
assume that $\chi$ is of finite order and the coefficient system $F$ is balanced.  Recall  from \eqref{eq: temp 01} the character
     \begin{equation}\label{eq2: temp 01}
      \xi_{\chi,j}:=((\chi\circ\det)\cdot \abs{\det}_\BA^{j})\otimes (((\chi^{-1}\cdot \eta^{-1})\circ \det)\cdot\abs{\det}_\BA^{-j})
     \end{equation}
   of $H(\BA)$.  Similar to \eqref{xikj}, define a character
\begin{equation}\label{xii}
\xi_{\infty,j}:=\otimes_{\iota\in \CE_\Rk} ({\det}^j\otimes {\det}^{-j-w_\iota}): H(\Rk\otimes_\BQ \BC)=\prod_{\iota\in\CE_\Rk} H(\BC)\rightarrow \BC^\times.
\end{equation}
It is easily checked that $\xi_{\chi_\infty,j}$, which denotes the archimedean component  of $\xi_{\chi,j}$, agrees with  $\xi_{\infty,j}$ on the group $H_\infty^0$.
We have an obvious injective linear map
      \begin{equation}\label{eq: injection iota j}
          \iota_j\ \colon\ \RH^0_\mathrm{ct}(\BR^\times_+\quo H^0_\infty; \xi_{\chi,j}\otimes\xi_{\infty,j}^\vee)\hookrightarrow\RH_{\mathrm{}}^0(\CX_H, \xi_{\infty,j}^\vee).
      \end{equation}
  By Proposition \ref{prop: critical place}, there is a nonzero element
      \begin{equation}\label{lambdakj}
      \lambda_{F,j}\in\Hom_{H_\infty}(F^\vee\otimes\xi_{\infty,j}^\vee,\BC).
      \end{equation}
       It induces a linear map
      \begin{equation}\label{lcoh}
        \RH_\mathrm{c}^{d_\infty}(\CX_H, F^\vee)\otimes  \RH^0(\CX_H, \xi_{\infty,j}^\vee) \xrightarrow{\lambda_{F,j}}  \RH_{\mathrm c}^{d_\infty}(\CX_H, \BC).
      \end{equation}

Recall that with the fixed $H_\infty$-invariant orientation on $\BR^\times_+\backslash H_\infty/C_\infty^0$, 
$\CX_H/C_f$ is an oriented manifold, for every sufficiently small open compact subgroup $C_f$ of $H(\BA)$. 
The pairing with the fundamental class yields a linear map
      \[
       \int_{\CX_H/C_f} :\quad \RH_{\mathrm c}^{d_\infty}(\CX_H/C_f, \BC)\rightarrow \BC.
      \]
      Now we define a linear map
      \[
       \int_{\CX_H}:\quad \RH_{\mathrm c}^{d_\infty}(\CX_H, \BC)\rightarrow \BC,\quad {\rm with}\ 
       \int_{\CX_H} \alpha= \frac{1}{[C_{\mathrm{max},f}: C_f]} \int_{\CX_H/C_f}\alpha,
      \]
for every maximal open compact subgroup $C_{\mathrm{max},f}$ of $H(\BA_f)$, every sufficiently small open subgroup $C_f$ of $C_{\mathrm{max},f}$, and every $\alpha\in \RH_{\mathrm c}^{d_\infty}(\CX_H/C_f, \BC)\subset \RH_{\mathrm c}^{d_\infty}(\CX_H, \BC)$.

Finally, we define the {\sl modular symbol map} $\CP_{\chi,j}$ to be the composition of the following morphisms:
      \begin{eqnarray*}
   \label{cpj} \RH_\mathrm{ct}^{d_\infty}(\BR^\times_+\quo G_\infty^0; \Pi\otimes F^\vee)\otimes \RH_\mathrm{ct}^0(\BR^\times_+\quo H_\infty^0; \xi_{\chi,j}\otimes\xi_{\infty,j}^\vee)
   \nonumber  &\xrightarrow{\iota_\Pi\otimes\iota_j} & \RH^{d_\infty}_\mathrm{c}(\CX_G, F^\vee)\otimes \RH^0(\CX_H, \xi_{\infty,j}^\vee)\\
     \nonumber   &\xrightarrow{\iota^*\otimes 1} & \RH^{d_\infty}_\mathrm{c}(\CX_H, F^\vee)\otimes \RH^0(\CX_H, \xi_{\infty,j}^\vee)\\
          &\xrightarrow{\eqref{lcoh}} & \RH^{d_\infty}_\mathrm{c}(\CX_H, \BC)\\
       \nonumber   &\xrightarrow{\int_{\CX_H}} & \BC.
      \end{eqnarray*}

    \subsection{Modular symbols at infinity and a commutative diagram}
     Let $\mu$ be a $\wedge^{d_\infty}\fq_\infty^\BC$-valued Haar measure  on $\GL_n(\BA_\infty)$  as in Proposition \ref{prop: F-j int equal linear period}. Recall from \eqref{eq: infinite place F-J differential form int} 
     the normalized Friedberg-Jacquet integral
     $$
    Z_\mu^\circ(\,\cdot\,, s, \chi_\infty)\in \Hom_{H_\infty^0}( (\wedge^{d_\infty}\fq_\infty^\BC)^*\otimes\Pi_\infty\otimes \xi_{\chi_\infty, s-\frac{1}{2}}, \BC)= \Hom_{H_\infty^0}(\Pi_\infty\otimes \xi_{\chi_\infty, s-\frac{1}{2}} ,\wedge^{d_\infty}\fq_\infty^\BC).$$
With the given nonzero element $\lambda_{F,j}\in\Hom_{H_\infty}(F^\vee\otimes\xi_{\infty,j}^\vee,\BC)$, 
      analogous to the archimedean modular symbol $\CP_{\BK,j}$ as defined in Section \ref{subsection: Critical Place and The Non-Vanishing Hypothesis at Infinity}, we define the
{\sl modular symbol at infinity}, which is denoted by $\CP_{\chi_\infty,j}$,  to be the composition of the following maps:
\begin{eqnarray}\label{pinfj}
   \CP_{\chi_\infty,j} &:& \RH_\mathrm{ct}^{d_\infty}(\BR^\times_+ \quo G_\infty^0; \Pi_\infty\otimes F^\vee) \otimes \RH_\mathrm{ct}^0(\BR^\times_+\quo H_\infty^0; \xi_{\chi_\infty,j}\otimes\xi_{\infty,j}^\vee)\\ 
   \nonumber &\rightarrow & \RH_\mathrm{ct}^{d_\infty}(\BR^\times_+ \quo H_\infty^0; \Pi_\infty\otimes F^\vee) \otimes \RH_\mathrm{ct}^0(\BR^\times_+\quo H_\infty^0; \xi_{\chi_\infty,j}\otimes\xi_{\infty,j}^\vee)\\
  \nonumber &&=\RH^{d_{\infty}}_\mathrm{ct}(\BR^\times_+ \quo H_\infty^0; (\Pi_\infty\otimes \xi_{\chi_\infty,j})\otimes (F^\vee\otimes\xi_{\infty, j}^\vee )) 
  \\
  \nonumber &\rightarrow & 
    \RH^{d_{\infty}}_\mathrm{ct}(\BR^\times_+ \quo H_\infty^0; \wedge^{d_\infty}\fq_\infty^\BC)=  \BC,
\end{eqnarray}
where  the first arrow is induced by the restriction map of cohomology, and the last arrow is the map induced by the linear functional
\[
Z_\mu^\circ(\,\cdot\,, \frac{1}{2}+j, \chi_\infty) \otimes \lambda_{F,j}:(\Pi_\infty\otimes \xi_{\chi_\infty,j})\otimes (F^\vee\otimes\xi_{\infty,j}^\vee)\rightarrow \wedge^{d_\infty}\fq_\infty^\BC.
\]
Recall the linear functional $Z^\circ(\,\cdot\,, s, \chi_f)\in \Hom_{H(\BA_f)}(\Pi_f\otimes  \xi_{\chi_f,s-\frac{1}{2}},\BC)$ from \eqref{eq: normalized finite F-J int}.
     Proposition \ref{prop: F-j int equal linear period} has the following cohomological reformulation.
      \begin{prop}\label{prop: main diagram}
         Let the notation and the assumptions be as above.  Then the diagram
         $$\CD
         \RH_\mathrm{ct}^{d_\infty}(\BR^\times_+\quo G_\infty^0; \Pi_\infty\otimes F^\vee)\otimes  \RH_\mathrm{ct}^0(\BR^\times_+\quo H_\infty^0; \xi_{\chi_\infty,j}\otimes\xi_{\infty,j}^\vee) \otimes (\Pi_f\otimes\xi_{\chi_f,j}) @>\CP_{\chi_\infty, j}\otimes Z^\circ(\,\cdot\,, \frac{1}{2}+j, \chi_f)>> \BC \\
           @V = VV @V L(\frac{1}{2}+j, \Pi\otimes\chi) VV  \\
            \RH_\mathrm{ct}^{d_\infty}(\BR^\times_+\quo G_\infty^0; \Pi\otimes F^\vee)\otimes \RH_\mathrm{ct}^0(\BR^\times_+\quo H_\infty^0; \xi_{\chi,j}\otimes\xi_{\infty,j}^\vee) @>\CP_{\chi,j}>> \BC
         \endCD$$
commutes.
      \end{prop}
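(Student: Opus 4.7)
The plan is to verify commutativity on decomposable classes and reduce the identity to Proposition~\ref{prop: F-j int equal linear period}. First I would fix a $\wedge^{d_\infty}\fq_\infty^\BC$-valued Haar measure $\mu$ on $\GL_n(\BA_\infty)$ as in Proposition~\ref{prop: F-j int equal linear period}, and use the induced Haar measure on $\BR^\times_+\quo H_\infty^0$ (together with the fixed $H_\infty$-invariant orientation on $\BR^\times_+\quo H_\infty/C_\infty^0$) so that the integration on $\CX_H/C_f$ matches the one used in the definition \eqref{eq: global linear period} of $\mathcal{L}_s$. Under the factorization $\Pi = \Pi_\infty\otimes\Pi_f$, every element in the top-left corner of the diagram is a sum of tensors $\alpha\otimes (v_f\otimes 1)$, where $\alpha\in \RH^{d_\infty}_\mathrm{ct}(\BR^\times_+\quo G_\infty^0;\Pi_\infty\otimes F^\vee)$ is represented by a $K_\infty^0$-equivariant cocycle $\omega \in \Hom_{K_\infty^0}(\wedge^{d_\infty}(\fg_\infty/(\BR\oplus\fk_\infty)),\Pi_\infty\otimes F^\vee)$, and $v_f\in\Pi_f$. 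Since $\xi_{\chi_\infty,j}$ equals the restriction of $\xi_{\infty,j}$ to $H_\infty$, the archimedean action on $\xi_{\chi,j}\otimes\xi_{\infty,j}^\vee$ is trivial on $\BR^\times_+\quo H_\infty^0$; the space $\RH^0_\mathrm{ct}(\BR^\times_+\quo H_\infty^0,\xi_{\chi,j}\otimes\xi_{\infty,j}^\vee)$ is then canonically the one-dimensional $H(\BA_f)$-module $\xi_{\chi_f,j}$. This is exactly the content of the left vertical identification.

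Next I would compute $\CP_j(\alpha\otimes v_f\otimes 1)$ directly. The image $\iota_\Pi(\alpha\otimes v_f)$ is represented by the cuspidal $F^\vee$-valued differential form on $\CX_G$ obtained by evaluating $\omega$ against the automorphic realization $\varphi_\infty\otimes v_f\in\Pi$. Pulling back by $\iota^*$ gives an $F^\vee$-valued form on $\CX_H$. The class $\iota_j(1)\in\RH^0(\CX_H,\xi_{\infty,j}^\vee)$ is represented by the section sending $h\in\CX_H$ to $\xi_{\chi,j}(h)$ in the one-dimensional module. Applying the cup product by $\lambda_{F,j}:F^\vee\otimes\xi_{\infty,j}^\vee\to\BC$ and then integrating over $\CX_H$ via the normalization $[C_{\mathrm{max},f}:C_f]^{-1}$, we recognize the output as exactly
$$\mathcal{L}_{\frac{1}{2}+j}(\omega\otimes\varphi_\infty\otimes v_f\otimes 1),$$
by the definition \eqref{eq: global linear period}.

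By Proposition~\ref{prop: F-j int equal linear period}, this linear period equals
$$L(\tfrac{1}{2}+j,\Pi\otimes\chi)\cdot Z^\circ_\mu(\omega\otimes\varphi_\infty\otimes 1,\tfrac{1}{2}+j,\chi_\infty)\cdot Z^\circ(v_f\otimes 1,\tfrac{1}{2}+j,\chi_f).$$
On the other hand, unfolding the definition \eqref{pinfj} of $\CP_{\infty,j}$ and using the identification $\RH^{d_\infty}_\mathrm{ct}(\BR^\times_+\quo H_\infty^0,\wedge^{d_\infty}\fq_\infty^\BC)=\BC$ shows that
$$\CP_{\infty,j}(\alpha) = (\lambda_{F,j}\otimes Z^\circ_\mu(\cdot,\tfrac{1}{2}+j,\chi_\infty))(\omega\otimes\varphi_\infty\otimes 1).$$
Combining these identities yields $\CP_j(\alpha\otimes v_f\otimes 1) = L(\tfrac{1}{2}+j,\Pi\otimes\chi)\cdot\CP_{\infty,j}(\alpha)\cdot Z^\circ(v_f\otimes 1,\tfrac{1}{2}+j,\chi_f)$, which is the commutativity of the diagram on decomposable inputs.

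The main obstacle will be the careful normalization bookkeeping in the middle step: matching the Haar measure on $\BR^\times_+\quo H_\infty^0$ coming from $\mu$, the fixed $H_\infty$-invariant orientation, and the normalization factor $[C_{\mathrm{max},f}:C_f]^{-1}$ so that the cohomological integration $\int_{\CX_H}$ reproduces the global linear period $\mathcal{L}_s$ of \eqref{eq: global linear period}. Once these normalizations are set up as in Proposition~\ref{prop: F-j int equal linear period} (where $\mu$ is determined by the measure on $(S(\Rk)\BR^\times_+)\quo S(\BA)$ that defines the global Shalika period), the remainder is a direct unfolding.
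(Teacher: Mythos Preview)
Your proposal is correct and follows essentially the same route as the paper: identify the continuous cohomology with $K_\infty^0$-invariants in $(\wedge^{d_\infty}\fs_\infty^\BC)^*\otimes\Pi_\infty\otimes F^\vee$, unwind both $\CP_{\infty,j}$ and $\CP_j$ as compositions ending in $Z^\circ_\mu\otimes Z^\circ\otimes\lambda_{F,j}$ and $\mathcal L_{\frac{1}{2}+j}\otimes\lambda_{F,j}$ respectively, and then invoke Proposition~\ref{prop: F-j int equal linear period}. The paper handles the ``normalization bookkeeping'' you flag by citing \cite[Section 5.6]{Bo81} (fast decreasing differential forms) to identify $\CP_j$ with the $\mathcal L_{\frac{1}{2}+j}$-composition; one cosmetic point is that your symbol $\omega$ drifts between denoting the cocycle in $\Hom_{K_\infty^0}(\wedge^{d_\infty}\fs_\infty^\BC,\Pi_\infty\otimes F^\vee)$ and an element of $(\wedge^{d_\infty}\fq_\infty^\BC)^*$, and the extra $\varphi_\infty$ is redundant, but this does not affect the argument.
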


\begin{proof}
         Set $\fs_\infty:=\fg_\infty/(\BR\oplus\fk_\infty)$. 
         By identifying the continuous group cohomology with the relative Lie algebra cohomology, we get (see \cite[Proposition 9.4.3]{WallachRealReductiveGroups1})
          \begin{equation}
              \begin{aligned}
              \RH_\mathrm{ct}^{d_\infty}(\BR^\times_+\quo G_\infty^0; \Pi_\infty\otimes F^\vee) 
              = \Hom_{K_\infty^0}(\wedge^{d_\infty} \fs_\infty^\BC, \Pi_\infty\otimes F^\vee)
               = \big((\wedge^{d_\infty} \fs_\infty^\BC)^*\otimes \Pi_\infty\otimes F^\vee\big)^{K^0_\infty}.
              \end{aligned}
          \end{equation}
          By unwinding the definition of  $\CP_{\chi_\infty, j}$, the top arrow in the above diagram is identified with the composition of
          the following maps:
          \begin{eqnarray*}
&&\big((\wedge^{d_\infty} \fs_\infty^\BC)^*\otimes \Pi_\infty\otimes F^\vee\big)^{K^0_\infty}\otimes(\xi_{\chi_\infty,j}\otimes\xi_{\infty,j}^\vee)\otimes (\Pi_f\otimes\xi_{\chi_f,j})\\
           &\xrightarrow{\textrm{restriction}} & (\wedge^{d_\infty} \fq_\infty^\BC)^*\otimes \Pi_\infty\otimes F^\vee \otimes(\xi_{\chi_\infty,j}\otimes\xi_{\infty,j}^\vee) \otimes (\Pi_f\otimes\xi_{\chi_f, j})\\
         &&= (\wedge^{d_\infty} \fq_\infty^\BC)^*\otimes (\Pi_\infty\otimes \xi_{\chi_\infty,j})\otimes (\Pi_f\otimes\xi_{\chi_f,j}) \otimes (F^\vee\otimes\xi_{\infty,j}^\vee) \\
         &\xrightarrow{ Z_\mu^\circ(\,\cdot\,, \frac{1}{2}+j, \chi_\infty) \otimes Z^\circ(\,\cdot\,,\frac{1}{2}+j,\chi_f)\otimes \lambda_{F,j}} & \BC.\\
                  \end{eqnarray*}
     By the arguments of \cite[Section 5.6]{Bo81}, using fast decreasing differential forms, it is routine to check that the bottom arrow in the above diagram is identified with the composition of
          the following maps:
          \begin{eqnarray*}
\big((\wedge^{d_\infty} \fs_\infty^\BC)^*\otimes \Pi\otimes F^\vee\big)^{K^0_\infty}\otimes (\xi_{\chi,j}\otimes \xi_{\infty,j}^\vee)
           &\xrightarrow{\textrm{restriction}} & (\wedge^{d_\infty} \fq_\infty^\BC)^*\otimes \Pi\otimes F^\vee\otimes (\xi_{\chi,j}\otimes \xi_{\infty,j}^\vee) \\
         &&= (\wedge^{d_\infty} \fq_\infty^\BC)^*\otimes (\Pi\otimes \xi_{\chi,j}) \otimes (F^\vee\otimes\xi_{\infty,j}^\vee) \\
         &\xrightarrow{\mathcal L_{\frac{1}{2}+j}\otimes \lambda_{F,j}} & \BC.\\
                  \end{eqnarray*}
Finally the Proposition follows directly from Proposition \ref{prop: F-j int equal linear period}.
      \end{proof}

\subsection{Rationality of cohomology groups and the Shalika periods of $\Pi$}

Recall the non-archimedean Shalika functional
$\lambda_f\in \Hom_{S(\BA_f)}(\Pi_f\otimes (\eta_f\otimes\psi_f)^{-1}, \BC)$ 
from \eqref{flambda}, and  the normalized Friedberg-Jacquet integrals
$Z^\circ(\,\cdot\,,  s, \chi_f) ,\in \Hom_{H(\BA_f)}(\Pi_f\otimes  \xi_{\chi_f,s-\frac{1}{2}},\BC)$ with $s\in \BC$, 
from \eqref{eq: normalized finite F-J int}.
\begin{prop}\label{rpif}
 There is a unique  $\BQ(\Pi_f,\eta_f)$-rational structure on the $G(\BA_f)$-module $\Pi_f$ such that
    $\CG(\chi'_f)^n\cdot Z^\circ(\,\cdot\, ,s_0,\chi'_f)\in \Hom_{H(\BA_f)}(\Pi_f\otimes  \xi_{\chi'_f,s_0-\frac{1}{2}},\BC)$
    is defined over $\BQ(\Pi_f,\eta_f, \chi'_f)$, for all algebraic  automorphic characters $\chi': \Rk^\times\backslash \BA^\times\rightarrow \BC^\times$ and all $s_0\in \frac{1}{2}+\BZ$.
\end{prop}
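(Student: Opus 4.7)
The plan is to globalize Theorem \ref{0lemma: Z circ f rational} by patching the local rational structures across all finite places. Factor the non-archimedean Shalika functional from \eqref{flambda} as $\lambda_f = \otimes'_{\nu\nmid\infty} \lambda_\nu$ using the local uniqueness of Shalika functionals (Lemma \ref{uniquesh}), normalized so that $\lambda_\nu(v_\nu^\circ) = 1$ at every unramified place (Lemma \ref{sphshalika}). Uniqueness of the asserted rational structure is immediate: $\Pi_f$ is $G(\BA_f)$-irreducible and $\BQ(\Pi_f, \eta_f)$ is a number field, so any two $\BQ(\Pi_f, \eta_f)$-rational structures differ by a scalar in $\BC^\times$, which is pinned down up to $\BQ(\Pi_f, \eta_f)^\times$ by specializing the rationality requirement to $\chi'=1$ at $s_0 = \frac{1}{2}$.

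For existence, I would apply Theorem \ref{0lemma: Z circ f rational} at each finite place $\nu$ (with respect to $\lambda_\nu$) to produce a distinguished $\BQ(\Pi_\nu, \eta_\nu)$-rational structure $\Pi_\nu^\circ \subset \Pi_\nu$. At each of the cofinitely many places where $\Pi_\nu$, $\eta_\nu$ and $\psi_\nu$ are unramified, the final assertion of that theorem ensures $v_\nu^\circ \in \Pi_\nu^\circ$, so the restricted tensor product
$$
\Pi_f^\natural := {\otimes}'_{\nu \nmid \infty}\, \Pi_\nu^\circ
$$
is a $G(\BA_f)$-stable $\overline{\BQ}$-form of $\Pi_f$. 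Galois descent from $\overline{\BQ}$ to $\BQ(\Pi_f, \eta_f)$ follows by an adelic version of the argument proving Proposition \ref{rationalsh}: the global Shalika functional $\lambda_f$ and the idelic cyclotomic character $\sigma \mapsto \mathbf{t}_\sigma = (t_{\sigma,\nu})_{\nu\nmid\infty} \in \BA_f^\times$ define a sesquilinear action of $\Aut(\BC/\BQ(\Pi_f, \eta_f))$ on $\Pi_f$ in the manner of Lemma \ref{actgamma}. The $\Aut(\BC/\overline{\BQ})$-invariants of $\Pi_f^\natural$ form a $\overline{\BQ}$-form, and a further application of \cite[Proposition 11.1.6]{TASpringerLinearAlgebraicGroups} to the residual $\Aut(\overline{\BQ}/\BQ(\Pi_f, \eta_f))$-action yields the desired $\BQ(\Pi_f, \eta_f)$-rational structure $\Pi_f^\circ$. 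The rationality of $\CG(\chi'_f)^n \cdot Z^\circ(\,\cdot\,, s_0, \chi'_f)$ over $\BQ(\Pi_f, \eta_f, \chi'_f)$ then drops out of the Euler-product factorizations $\CG(\chi'_f) = \prod_\nu \CG(\chi'_\nu)$ and $Z^\circ(\,\cdot\,, s_0, \chi'_f) = \prod_\nu Z^\circ(\,\cdot\,, s_0, \chi'_\nu)$ combined with the local rationality of each factor given by Theorem \ref{0lemma: Z circ f rational}, essentially all of which equal $1$ by the spherical normalization, so the product is well defined.

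The main obstacle is the descent step. The local rationality fields $\BQ(\Pi_\nu, \eta_\nu)$ can be strictly larger than the global field $\BQ(\Pi_f, \eta_f)$, so $\Pi_f^\natural$ is \emph{a priori} only defined over the compositum of the local rationality fields, which may be much larger than $\BQ(\Pi_f, \eta_f)$. The key is to verify that the global sesquilinear $\Aut(\BC/\BQ(\Pi_f, \eta_f))$-action is compatible at every place with the local actions of Lemma \ref{actgamma} and hence stabilizes $\Pi_f^\natural$, so that the local structures assemble Galois-equivariantly into a single global structure. This parallels the core of the proof of Theorem \ref{0lemma: Z circ f rational} carried out adelically, with $\lambda_f$ playing the role of $\lambda_\BK$ and the idelic cyclotomic character playing the role of $t_{\sigma,\BK}$; the irreducibility of $\Pi_f$ and the uniqueness of Shalika functionals again underpin the argument.
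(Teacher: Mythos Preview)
Your overall strategy matches the paper's: factor $\lambda_f=\otimes'_\nu\lambda_\nu$, normalize so that $\lambda_\nu(v_\nu^\circ)=1$ at unramified places, apply Theorem \ref{0lemma: Z circ f rational} at each finite place, and patch via the restricted tensor product. The uniqueness argument is also the same.

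However, the ``main obstacle'' you identify is illusory. You write that $\BQ(\Pi_\nu,\eta_\nu)$ can be strictly larger than $\BQ(\Pi_f,\eta_f)$, but the opposite inclusion $\BQ(\Pi_\nu,\eta_\nu)\subset\BQ(\Pi_f,\eta_f)$ always holds. Indeed, if $\sigma\in\Aut(\BC)$ satisfies $\Pi_f\otimes_{\BC,\sigma}\BC\cong\Pi_f$, then restricting the $G(\BA_f)$-action to the factor $G_\nu$ shows that $\Pi_f|_{G_\nu}$ is $\Pi_\nu$-isotypic, forcing $\Pi_\nu\otimes_{\BC,\sigma}\BC\cong\Pi_\nu$; the same argument applies to $\eta$. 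Consequently each local rational structure $\Pi_\nu^\circ$ from Theorem \ref{0lemma: Z circ f rational}, after extending scalars from $\BQ(\Pi_\nu,\eta_\nu)$ to $\BQ(\Pi_f,\eta_f)$, is already a $\BQ(\Pi_f,\eta_f)$-form of $\Pi_\nu$, and the restricted tensor product $\otimes'_\nu\bigl(\Pi_\nu^\circ\otimes_{\BQ(\Pi_\nu,\eta_\nu)}\BQ(\Pi_f,\eta_f)\bigr)$ is directly a $\BQ(\Pi_f,\eta_f)$-rational structure on $\Pi_f$. No adelic descent argument in the style of Proposition \ref{rationalsh} is needed.

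This is exactly what the paper does in two lines: ``Applying Theorem \ref{0lemma: Z circ f rational} to all $\Rk_\nu$ with $\nu\nmid\infty$, we obtain a $\BQ(\Pi_f,\eta_f)$-rational structure on $\Pi_f=\otimes'_{\nu\nmid\infty}\Pi_\nu$.'' Your extra machinery (global sesquilinear action, idelic cyclotomic character, \cite[Proposition 11.1.6]{TASpringerLinearAlgebraicGroups}) would work, but it reproves what is already free.
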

\begin{proof}
The uniqueness is proved as in the proof of Theorem \ref{0lemma: Z circ f rational}. For the proof of  the existence, we write
$\Pi_f=\otimes'_{v\nmid \infty} \Pi_v$, $\psi_f=\otimes_{v\nmid \infty} \psi_v$, and $\eta_f=\otimes_{v\nmid \infty} \eta_v$.
   Using the uniqueness of Shalika functionals (Lemma \ref{uniquesh}), we write $\lambda_f=\otimes_{v\nmid \infty} \lambda_v$, where $\lambda_v\in \Hom_{S(\Rk_v)}(\Pi_v\otimes (\eta_v\otimes\psi_v)^{-1}, \BC)$. Using Lemma \ref{sphshalika}, we assume that
$\lambda_v(u_v^\circ)=1$ for almost all $v\nmid \infty$ such that $\Pi_v$ is unramified, 
   where $u_v^\circ\in \Pi_v$ is the spherical vector that is implicitly fixed for the definition of the restricted tensor product $\otimes'_{v\nmid \infty} \Pi_v$.

    Applying Theorem \ref{0lemma: Z circ f rational} to all $\Rk_v$ with $v\nmid \infty$, we obtain a $\BQ(\Pi_f, \eta_f)$-rational structure on  the representation $\Pi_f=\otimes'_{v\nmid \infty} \Pi_v$.
Then Theorem \ref{0lemma: Z circ f rational} and Part (6) of Proposition \ref{prop: properties of F-J int} imply that this rational structure has the desired property.
\end{proof}

The following elementary result is important to our proof. For completeness, we include a proof below. 

\begin{lemma}\label{uniqr}
Let $E$ be a subfield of $\BC$ and let $R$ be an associative $E$-algebra with identity element. Let $F_0$ be an irreducible $R\otimes_E \BC$-module that is finite dimensional as a $\BC$-vector space. Then up to scalar multiplication by a number in $\BC^\times$, there exists at most one $E$-rational structure of $F_0$ which is $R$-stable.
\end{lemma}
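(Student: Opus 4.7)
My plan is to exploit a Schur-type argument after base change. Let $V_1, V_2 \subset F_0$ be two $R$-stable $E$-rational structures of $F_0$, so each $V_i$ is an $R$-submodule for the restricted $R$-action and the natural map $V_i \otimes_E \BC \to F_0$ is a $\BC$-linear isomorphism. Setting $n := \dim_\BC F_0$, this forces $\dim_E V_i = n$. Since $F_0$ is irreducible over $R \otimes_E \BC$ and finite-dimensional over $\BC$, Schur's lemma yields $\End_{R \otimes_E \BC}(F_0) = \BC$, and the goal is to descend this statement to a relation between $V_1$ and $V_2$.

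The key step is the base change identity
\[
  \Hom_R(V_2, V_1) \otimes_E \BC \;\xrightarrow{\sim}\; \Hom_{R \otimes_E \BC}\!\left(V_2 \otimes_E \BC,\, V_1 \otimes_E \BC\right) = \End_{R \otimes_E \BC}(F_0) = \BC.
\]
This holds because $V_2$ is finite-dimensional over $E$, so $\Hom_E(V_2, V_1)$ base-changes correctly to $\Hom_\BC$, and $R$-equivariance is cut out as the intersection of kernels of $E$-linear maps (one for each element of $R$), past which the flat extension $\BC/E$ commutes. Reading off $E$-dimensions, I conclude $\dim_E \Hom_R(V_2, V_1) = 1$; in particular, there exists a nonzero $R$-equivariant $E$-linear map $\phi : V_2 \to V_1$.

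To convert $\phi$ into a scalar relating $V_1$ and $V_2$, I extend $\phi$ by $\BC$-linearity to $\tilde\phi : F_0 \to F_0$. The map $\tilde\phi$ commutes with $R$ (since $\phi$ does) and with $\BC$ (by construction), hence with all of $R \otimes_E \BC$. Schur then forces $\tilde\phi = c \cdot \id_{F_0}$ for some $c \in \BC$, and $c \neq 0$ since $\phi \neq 0$. Consequently $cV_2 = \tilde\phi(V_2) = \phi(V_2) \subseteq V_1$, and comparing $E$-dimensions ($\dim_E(cV_2) = n = \dim_E V_1$) upgrades this inclusion to the equality $V_1 = cV_2$, which is exactly the uniqueness claim.

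The only delicate point worth flagging is the base change isomorphism for $\Hom_R$; everything else is routine once it is in place. I expect no real obstacle there, since it follows from the two standard facts that $\Hom_E(V_2, -)$ is preserved under the flat extension $E \hookrightarrow \BC$ when $V_2$ is finite-dimensional over $E$, and that taking $R$-equivariants is the kernel of an $E$-linear diagram that flat base change carries through.
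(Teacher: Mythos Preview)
Your proof is correct. The base change identity for $\Hom_R$ is justified exactly as you say: $\Hom_E(V_2,V_1)$ is finite dimensional over $E$, and the $R$-equivariance condition is the intersection of the kernels of the $E$-linear maps $f\mapsto rf-fr$; since this is a finite-dimensional situation, arbitrary intersections of kernels commute with the flat base change $E\hookrightarrow\BC$. The rest is routine.

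Your route differs from the paper's in how a nonzero $R$-map $V_2\to V_1$ is produced. The paper argues structurally: it introduces the annihilator $I\subset R$ of $F_0$, shows via the Jacobson Density Theorem that $(R/I)\otimes_E\BC\cong\End_\BC(F_0)$, deduces by a dimension count that $R/I\cong\End_E(F_E)\cong M_n(E)$, and then uses the classification of modules over a matrix algebra to conclude that any two $R$-stable $E$-rational structures are isomorphic as $R/I$-modules. Only then does it extend the isomorphism to $F_0$ and invoke Schur. Your argument bypasses the density theorem and the structure of $R/I$ entirely, going straight from Schur over $\BC$ to $\dim_E\Hom_R(V_2,V_1)=1$ via base change. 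This is more elementary and shorter; the paper's approach, on the other hand, yields the stronger intermediate fact that $R/I$ is a full matrix algebra over $E$, which is of some independent interest even if not needed here.
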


\begin{proof}
 Write $I$ for the annihilator of $F_0$ in $R$. Suppose that $F_E$ is a $R$-stable $E$-rational structure of $F_0$. Then  $I$ is also  the annihilator of $F_E$ in $R$ and  we have an injective homomorphism
    \begin{equation}\label{homat1}
  R/I\hookrightarrow \End(F_E).
  \end{equation}
  Tensoring wiht $\BC$ over $E$, we get an injective homomorphism
    \begin{equation}\label{homat2}
   (R/I)\otimes_E\BC \hookrightarrow \End(F_0).
  \end{equation}
  Note that $F_0$ is a faithful irreducible $(R/I)\otimes_E\BC$-module. Then it follows from Schur's Lemma (\cite[Section 3.5]{JacobsonBasicAlgebraBook1}) and the Jacobson Density Theorem for completely reducible modules (\cite[Section 4.3]{JacobsonBasicAlgebraBook2}) that the homomorphism \eqref{homat2} is an isomorphism. Thus \eqref{homat1} is also an isomorphism, and $R/I$ is isomorphic to a matrix algebra over $E$. Hence $R/I$ has only one irreducible module up to isomorphism.

  Suppose that we have another $R$-stable $E$-rational structure $F_E'$ of $F_0$. Then the previous argument shows that both $F_E$ and $F_E'$ are irreducible $R/I$-modules. Hence there is an isomorphism $\varphi: F_E\rightarrow F_E'$ of $R/I$-modules. This uniquely extends to a $(R/I)\otimes \BC$-module isomorphism $\varphi_\BC: F_0\rightarrow F_0$. By Schur's Lemma
  $\varphi_\BC$ is a scalar multiplication in $\BC^\times$. This proves the lemma.
\end{proof}

Using Proposition \ref{rpif}, we view $\Pi_f$ as a representation defined over $\BQ(\Pi_f,\eta_f)$.
Similar to the argument of Section \ref{subsection: local period relations}, we define a Borel sualgebra $\mathfrak b_\Rk=\ft_\Rk\ltimes \fu_\Rk$ of $\fg\fl_{2n}(\Rk)$. Then the space
 $(F^\vee)^{\mathfrak u_\Rk}$
     is one-dimensional. Fix a nonzero element $v_F^\vee$ in this space.
     \begin{lemma}\label{raff}
     There exists a unique $\fg\fl_{2n}(\Rk)$-stable $\BQ(F)$-rational structure on $F^\vee$ such that $v_F^\vee$ is defined over $\BQ(F)$. Moreover, this rational structure is $\GL_{2n}(\Rk)$-stable.
     \end{lemma}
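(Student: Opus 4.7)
The plan is to establish existence via descent of algebraic representations, then deduce uniqueness from Lemma \ref{uniqr}, and finally read off the $\GL_{2n}(\Rk)$-stability from the construction.

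First, I view $F^\vee$ as an irreducible algebraic representation of the complexification of the $\BQ$-algebraic group $G_0:=\Res_{\Rk/\BQ}\GL_{2n}$. Its extremal weights are negatives of those of $F$, and negation does not change the rationality field (as is clear from the definition in Section \ref{subsection: Rationality Field}), so $\BQ(F^\vee)=\BQ(F)$. By the general descent theorem for irreducible algebraic representations of connected reductive groups (Tits' theorem; see \cite[Chapter 12]{TASpringerLinearAlgebraicGroups}), $F^\vee$ descends to an algebraic representation of $G_0\times_\BQ \BQ(F)$. Taking $\BQ(F)$-points of this model produces a $\BQ(F)$-subspace $V_0\subset F^\vee$ which is a $\BQ(F)$-rational structure stable under $G_0(\BQ(F))=\GL_{2n}(\Rk\otimes_\BQ \BQ(F))$, and hence in particular stable under $\GL_{2n}(\Rk)$ and $\fg\fl_{2n}(\Rk)$.

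Next, I rescale to arrange that $v_F^\vee\in V_0$. The highest weight space $(F^\vee)^{\fu_\Rk}$ is one-dimensional; it is a weight subspace for a character of $T_0$ defined over $\BQ(F)$, so its intersection with $V_0$ is a $\BQ(F)$-line. Pick a nonzero element $v_0$ of this line; then $v_F^\vee=c\cdot v_0$ for some $c\in\BC^\times$, and replacing $V_0$ by $c\cdot V_0$ yields a $\GL_{2n}(\Rk)$-stable $\BQ(F)$-rational structure containing $v_F^\vee$. This proves existence and the ``moreover'' clause simultaneously.

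For uniqueness, I apply Lemma \ref{uniqr} with $E=\BQ(F)$, $R=U(\fg\fl_{2n}(\Rk))\otimes_\BQ \BQ(F)$, and $F_0=F^\vee$. Here $R\otimes_E\BC=U(\fg\fl_{2n}(\Rk)\otimes_\BQ\BC)=U(\fg\fl_{2n}(\Rk\otimes_\BQ\BC))$, which acts irreducibly on $F^\vee$ since $F^\vee$ is irreducible as an algebraic representation of the connected group $\GL_{2n}(\Rk\otimes_\BQ\BC)$. Lemma \ref{uniqr} therefore gives that any two $\fg\fl_{2n}(\Rk)$-stable $\BQ(F)$-rational structures on $F^\vee$ differ by a scalar in $\BC^\times$, and the requirement that the one-dimensional $\BQ(F)$-rational highest weight line contain the prescribed vector $v_F^\vee$ pins this scalar down up to $\BQ(F)^\times$, which is exactly the ambiguity intrinsic to a rational structure. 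This proves uniqueness.

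The main obstacle I expect is the descent step: for the possibly non-split reductive group $G_0=\Res_{\Rk/\BQ}\GL_{2n}$, one needs that the field of definition of the extremal weights (as characters of the Weil-restricted torus, as in Section \ref{subsection: Rationality Field}) coincides with the field of definition of the irreducible algebraic representation itself. This goes beyond Chevalley's construction for split groups and requires Tits' rationality theorem, but it is standard and explicitly available in the form we need.
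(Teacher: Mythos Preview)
Your argument is correct and shares the paper's overall skeleton (Lemma~\ref{uniqr} for uniqueness; construct a rational model and then rescale to make $v_F^\vee$ rational). The difference is in the existence step. You appeal to Tits' general descent theorem and flag the non-split nature of $\mathrm{Res}_{\Rk/\BQ}\GL_{2n}$ as the main obstacle. The paper sidesteps this entirely: since $\mathrm{Res}_{\Rk/\BQ}\GL_{2n}$ is quasi-split over $\BQ$, the Borel subalgebra $\fb_\Rk$ integrates to a $\BQ$-rational Borel subgroup $B_\Rk$, and the highest-weight character $\chi_{F^\vee}$ of $B_\Rk(\BC)$ determined by $v_F^\vee$ is defined over $\BQ(F)$ by the very definition of $\BQ(F)$. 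The algebraically induced representation $\Ind_{B_\Rk(\BC)}^{\GL_{2n}(\Rk\otimes_\BQ\BC)}\chi_{F^\vee}^{-1}$ is then manifestly defined over $\BQ(F)$, and the matrix-coefficient map $u\mapsto(g\mapsto\langle v_F^\vee,g.u\rangle)$ identifies $F$ with it. This is just Borel--Weil; no Brauer obstruction can arise for a quasi-split group, so Tits' theorem is unnecessary here. Your route would apply verbatim to inner forms with no rational Borel, a generality not needed, while the paper's is more elementary and entirely self-contained. (Also, Tits' theorem is not actually in \cite{TASpringerLinearAlgebraicGroups}; the standard reference is Tits, \emph{J.~Reine Angew.~Math.} \textbf{247} (1971).)
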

    \begin{proof}This may be known to experts. We sketch a proof for the convenience of the reader. The uniqueness assertion is implied by Lemma \ref{uniqr}. For the existence assertion,
consider the Weil restriction $\mathrm{Res}_{\Rk/\BQ} G$, which is an algebraic group over $\BQ$. The Borel sualgebra $\mathfrak b_\Rk$ corresponds a Borel subgroup of $\mathrm{Res}_{\Rk/\BQ} G$,
which is denoted by $B_\Rk$. Then $v_F^\vee$ determines an algebraic character $\chi_{F^\vee}$ of $B_\Rk(\BC)$, and induces an isomorphism
    \[
          F\xrightarrow{\sim} \Ind_{B_\Rk(\BC)}^{\GL_{2n}(\Rk\otimes_\BQ \BC)} \chi_{F^\vee}^{-1}, \quad u\mapsto (g\mapsto \langle v_F^\vee, g.u\rangle),
       \]
       where $ \Ind_{B_\Rk(\BC)}^{\GL_{2n}(\Rk\otimes_\BQ \BC)}\chi_{F^\vee}$ denotes the algebraically induced representation. Note that the algebraic character $\chi_{F^\vee}$ is defined over $\BQ(F)$.  Hence  $\Ind_{B_\Rk(\BC)}^{\GL_{2n}(\Rk\otimes_\BQ \BC)} \chi_{F^\vee}^{-1}$ has a natural $\BQ(F)$-rational structure. This induces a $\BQ(F)$-rational structure on $F$, which further induces a $\BQ(F)$-rational structure on $F^\vee$ with the desired properties.

\end{proof}

Using the rational structure of Lemma \ref{raff}, we view $F^\vee$  as a representation of $\GL_{2n}(\Rk)$ defined over $\BQ(F)$.  This yields a $\BQ(F)$-rational structure on $\RH^{d_\infty}_c(\CX_G, F^\vee)$ and $\RH^{d_\infty}_c(\CX_H, F^\vee)$ (see \cite[pages 122-123]{ClozelAutomorphicFormsMotive}).
     By using \cite[Theorem 3.19]{ClozelAutomorphicFormsMotive} and the Strong Multiplicity One Theorem  (\cite[Page 209]{PS79} and \cite[Corollary 4.10]{JacquetShalikaEulerProduct1}), we know that the cohomology group $\RH^{d_\infty}_\mathrm{ct}(\BR_+^\times\quo G_\infty^0; \Pi\otimes F^\vee)$ has a unique $\BQ(\Pi)$-rational structure such that the injection map $\iota_\Pi$, as in \eqref{eq: injection into cpt supp coh} is defined over $\BQ(\Pi)$.

   \begin{lemma}\label{ratcoh}
   On the vector space $\RH_\mathrm{ct}^{d_\infty}(\BR^\times_+\quo G_\infty^0; \Pi_\infty\otimes F^\vee)$, 
       there exists a unique $\BQ(\Pi, \eta)$-rational structure such that the natural isomorphism
       \begin{equation}
       \label{raco}\RH_\mathrm{ct}^{d_\infty}(\BR^\times_+\quo G_\infty^0; \Pi_\infty\otimes F^\vee)\otimes \Pi_{f} \xrightarrow{\sim} \RH_\mathrm{ct}^{d_\infty}(\BR^\times_+\quo G_\infty^0; \Pi\otimes F^\vee)
       \end{equation}
       is defined over $\BQ(\Pi, \eta)$.
   \end{lemma}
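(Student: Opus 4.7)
The plan is to transport the $\BQ(\Pi)$-rational structure on the global cohomology $\RH_\mathrm{ct}^{d_\infty}(\BR^\times_+\quo G_\infty^0; \Pi\otimes F^\vee)$ to the archimedean multiplicity space by taking the $G(\BA_f)$-equivariant $\Hom$ from $\Pi_f$, exploiting absolute irreducibility of $\Pi_f$ in the spirit of Schur's lemma. Let $E:=\BQ(\Pi,\eta)$, and set
\[
V:=\RH_\mathrm{ct}^{d_\infty}(\BR^\times_+\quo G_\infty^0; \Pi_\infty\otimes F^\vee),\qquad W:=\RH_\mathrm{ct}^{d_\infty}(\BR^\times_+\quo G_\infty^0; \Pi\otimes F^\vee),
\]
so that the natural map $V\otimes \Pi_f\xrightarrow{\sim} W$ is $G(\BA_f)$-equivariant, with the action on $V$ being trivial. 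From the discussion preceding the lemma, $W$ inherits a $\BQ(\Pi)$-rational structure from $\RH^{d_\infty}_c(\CX_G, F^\vee)$; write $W^E$ for its base change to $E$. By Proposition \ref{rpif} (and since $\BQ(\Pi_f,\eta_f)\subset E$), $\Pi_f$ admits an $E$-rational structure, which I denote by $\Pi_f^E$.

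For existence, I would define
\[
V^E:=\Hom_{E[G(\BA_f)]}(\Pi_f^E, W^E)
\]
and check that (i) $V^E$ is an $E$-rational structure of $V$, i.e.\ $V^E\otimes_E \BC=V$, and (ii) the natural evaluation map induces an isomorphism $V^E\otimes_E\Pi_f^E\xrightarrow{\sim} W^E$ of $E$-vector spaces. Both claims reduce to the pair of identities $\End_{G(\BA_f)}(\Pi_f)=\BC$ (Schur's lemma for irreducible smooth representations) and its $E$-rational counterpart $\End_{E[G(\BA_f)]}(\Pi_f^E)=E$. The latter follows from \cite[Proposition 3.1]{ClozelAutomorphicFormsMotive}: since the $E$-rational structure $\Pi_f^E$ is unique up to multiplication by a scalar in $\BC^\times$, every $E$-linear $G(\BA_f)$-equivariant self-map of $\Pi_f^E$ becomes a scalar in $\BC$ after base change to $\Pi_f$, and this scalar must lie in $E$ in order to preserve $\Pi_f^E$. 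Granting these two identities, the usual argument (identical in spirit to that of Lemma \ref{uniqr}) yields both (i) and (ii).

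For uniqueness, suppose that $\tilde V^E$ is another $E$-rational structure on $V$ for which the natural isomorphism $V\otimes\Pi_f\xrightarrow{\sim} W$ is defined over $E$; equivalently, $\tilde V^E\otimes_E\Pi_f^E = W^E$ inside $W$. Applying $\Hom_{E[G(\BA_f)]}(\Pi_f^E,-)$ to both sides of this equality and invoking $\End_{E[G(\BA_f)]}(\Pi_f^E)=E$ once more yields $\tilde V^E=\Hom_{E[G(\BA_f)]}(\Pi_f^E,W^E)=V^E$. The one genuine obstacle I foresee is verifying $\End_{E[G(\BA_f)]}(\Pi_f^E)=E$, since $\Pi_f^E$ is infinite-dimensional and Lemma \ref{uniqr} does not apply directly; however, this is exactly what the scalar-uniqueness of the rational structure from \cite[Proposition 3.1]{ClozelAutomorphicFormsMotive} provides, as explained above.
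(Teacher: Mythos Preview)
Your proposal is correct and follows essentially the same approach as the paper: the paper's proof simply identifies the isomorphism \eqref{raco} with
\[
\RH_\mathrm{ct}^{d_\infty}(\BR^\times_+\quo G_\infty^0; \Pi_\infty\otimes F^\vee)\xrightarrow{\sim}\Hom_{G(\BA_f)}(\Pi_f, W)=(\Pi_f^\vee\otimes W)_{G(\BA_f)},
\]
which is exactly your $V^E=\Hom_{E[G(\BA_f)]}(\Pi_f^E,W^E)$ construction, and leaves the Schur-type verifications you spell out as implicit. The only minor difference is that the paper also records the coinvariant reformulation $(\Pi_f^\vee\otimes W)_{G(\BA_f)}$, which makes the base-change compatibility $V^E\otimes_E\BC\cong V$ immediate (coinvariants commute with extension of scalars), sidestepping the care you took with $\End_{E[G(\BA_f)]}(\Pi_f^E)=E$.
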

   \begin{proof}
      This is clear by identifying the isomorphism \eqref{raco} with the following isomorphism:
        \begin{eqnarray*}
        \RH_\mathrm{ct}^{d_\infty}(\BR^\times_+\quo G_\infty^0; \Pi_\infty\otimes F^\vee)
         &\xrightarrow{\sim} &\Hom_{G(\BA_f)}(\Pi_f, \RH_\mathrm{ct}^{d_\infty}(\BR^\times_+\quo G_\infty^0; \Pi\otimes F^\vee))\\
         &&=(\Pi_f^\vee\otimes \RH_\mathrm{ct}^{d_\infty}(\BR^\times_+\quo G_\infty^0; \Pi\otimes F^\vee))_{G(\BA_f)}.
         \end{eqnarray*}
   \end{proof}

We remark that the rational structure of Lemma \ref{ratcoh} is determined by $v_F^\vee\in (F^\vee)^{\mathfrak u_\Rk}$ and the Shalika functional $\lambda_f$. Using this rational structure, we view $\RH_\mathrm{ct}^{d_\infty}(\BR^\times_+\quo G_\infty^0; \Pi_\infty\otimes F^\vee)$ as a vector space defined over $\BQ(\Pi, \eta)$.

The modular symbol at infinity map $\CP_{\chi_\infty, j}$, as defined in \eqref{pinfj}, depends on the nonzero functional
$\lambda_{F,j}\in\Hom_{H_\infty}(F^\vee\otimes\xi_{\infty,j}^\vee,\BC)$.
Note that the algebraic character $\xi_{\infty,j}$ is defined over $\BQ(F)$, in other words,
$\BQ(\xi_{\infty,j})\subset \BQ(F)$.
Thus as a one-dimensional vector space, $\xi_{\infty, j}$ has a natural $\BQ(F)$-rational structure.

\begin{lemma}\label{lj}
There exists a unique linear functional $\lambda_{F,j}\in\Hom_{H_\infty}(F^\vee\otimes\xi_{\infty,j}^\vee,\BC)$ such that $\lambda_{F,j}(v_F^\vee\otimes 1)=1$. Moreover, this linear  functional is defined over $\BQ(F)$.
\end{lemma}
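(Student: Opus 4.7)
The plan is to first prove that $\Hom_{H_\infty}(F^\vee \otimes \xi_{\infty,j}^\vee, \BC)$ is one-dimensional, next to show that any nonzero element of this space takes a nonzero value at $v_F^\vee \otimes 1$, and finally to deduce that the uniquely normalized functional $\lambda_{F,j}$ is defined over $\BQ(F)$.

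For the dimension count, I would exploit that $H_\infty$ is the set of real points of the Weil restriction $\mathrm{Res}_{\Rk/\BQ} H$, whose complex points are $H(\Rk \otimes_\BQ \BC) = \prod_{\iota \in \CE_\Rk} H(\BC)$. Since the real points of a connected reductive $\BQ$-group are Zariski dense in its complex points, taking $H_\infty$-invariants coincides with taking $H(\Rk \otimes_\BQ \BC)$-invariants:
\[
\Hom_{H_\infty}(F^\vee \otimes \xi_{\infty,j}^\vee, \BC) = \bigotimes_{\iota \in \CE_\Rk} \Hom_{H(\BC)}(F_\iota^\vee, {\det}^j \otimes {\det}^{-j-w_\iota}).
\]
Schlichtkrull's extension of the Cartan-Helgason theorem, already invoked at \eqref{eq: temp 9998}, together with the $j$-balanced hypothesis, makes each factor one-dimensional.

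For the second step, I would start from the transversality $\fgl_{2n}(\Rk) = \fb_\Rk + \fh_\Rk$, which is the matrix identity \eqref{transvb} read over $\Rk$. Picking $\fn_\Rk \subset \fb_\Rk$ complementary to $\fb_\Rk \cap \fh_\Rk$ and setting $\fn := \fn_\Rk \otimes_\BQ \BC$, the PBW theorem gives
\[
U(\fgl_{2n}(\Rk \otimes_\BQ \BC)) = U(\fh(\Rk \otimes_\BQ \BC)) \cdot U(\fn).
\]
Since the one-dimensional space $(F^\vee)^{\fu_\Rk}$ contains the nonzero pure tensor $\otimes_{\iota \in \CE_\Rk} v_\iota^\vee$ of highest weight vectors of the factors $F_\iota^\vee$, the vector $v_F^\vee$ must itself be a scalar multiple of this pure tensor. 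Hence $v_F^\vee$ is annihilated by the larger nilpotent subalgebra $\fu_\Rk \otimes_\BQ \BC$ and is a weight vector for $\ft_\Rk \otimes_\BQ \BC$. It follows that $U(\fn) \cdot v_F^\vee \subset \BC v_F^\vee$, and the irreducibility of $F^\vee$ yields
\[
F^\vee = U(\fh(\Rk \otimes_\BQ \BC)) \cdot v_F^\vee.
\]
Since $\lambda$ is $\fh_\infty$-equivariant and hence $\fh(\Rk \otimes_\BQ \BC)$-equivariant by $\BC$-linear extension, $\lambda(X \cdot v_F^\vee) = d\xi_{\infty,j}(X) \lambda(v_F^\vee)$ for every $X \in U(\fh(\Rk \otimes_\BQ \BC))$; consequently $\lambda(v_F^\vee) = 0$ would force $\lambda = 0$, contradicting its nonvanishing.

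Finally, for rationality, Lemma \ref{raff} endows $F^\vee$ with a $\BQ(F)$-rational structure as a $\GL_{2n}(\Rk)$-module in which $v_F^\vee$ is $\BQ(F)$-rational, while $\BQ(\xi_{\infty,j}) \subset \BQ(F)$ makes the algebraic character $\xi_{\infty,j}$ defined over $\BQ(F)$. Therefore the one-dimensional space $\Hom_{\mathrm{Res}_{\Rk/\BQ} H}(F^\vee \otimes \xi_{\infty,j}^\vee, \BC)$ carries a natural $\BQ(F)$-rational structure, and the evaluation map $\lambda \mapsto \lambda(v_F^\vee \otimes 1)$ is a $\BQ(F)$-rational linear isomorphism onto $\BC$. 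The preimage of $1 \in \BQ(F)$ under this map is the desired $\BQ(F)$-rational $\lambda_{F,j}$. The main obstacle, in my view, is identifying $v_F^\vee$ with a pure tensor of highest weight vectors for the extended Borel; once this identification is made, the PBW calculation and the rational structure bookkeeping are routine.
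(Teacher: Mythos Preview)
Your argument is correct and rests on the same key ingredient as the paper's one-line proof, namely the transversality $\fg\fl_{2n}(\Rk)=\fh_\Rk+\fb_\Rk$. The paper simply invokes this fact and leaves the rest implicit; your PBW computation is exactly how one unpacks it.

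Two remarks worth noting. First, your opening step via Schlichtkrull's theorem is not needed: once you have shown $F^\vee = U(\fh(\Rk\otimes_\BQ\BC))\cdot v_F^\vee$, the evaluation map $\lambda\mapsto \lambda(v_F^\vee\otimes 1)$ is already injective on $\Hom_{H_\infty}(F^\vee\otimes\xi_{\infty,j}^\vee,\BC)$, which gives uniqueness directly. Existence is already in hand from the balanced hypothesis (this is the content of \eqref{lambdakj}), so the one-dimensionality follows without appealing to Cartan--Helgason. This is why the paper can dispense with a separate dimension count. Second, for the PBW decomposition $U(\fg)=U(\fh)\cdot U(\fn)$ to make literal sense, $\fn$ must be a Lie subalgebra; an arbitrary linear complement of $\fb_\Rk\cap\fh_\Rk$ in $\fb_\Rk$ need not be one. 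Since $\fb_\Rk\cap\fh_\Rk\subset\ft_\Rk$ by \eqref{transvb}, you may take $\fn_\Rk=\ft'_\Rk\oplus\fu_\Rk$ with $\ft'_\Rk$ any complement of $\fb_\Rk\cap\fh_\Rk$ in $\ft_\Rk$, which \emph{is} a subalgebra of $\fb_\Rk$; then $v_F^\vee$ is a $\ft'_\Rk$-eigenvector annihilated by $\fu_\Rk$, and your conclusion $U(\fn)\cdot v_F^\vee\subset\BC v_F^\vee$ goes through cleanly.
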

\begin{proof}
This easily follows from the fact that $\fg\fl_{2n}(\Rk)=\fh_\Rk+\fb_\Rk$, where $\fh_\Rk=\fg\fl_n(\Rk)\times \fg\fl_n(\Rk)$.
\end{proof}

Now we suppose that $\lambda_{F,j}$ is  as in Lemma \ref{lj}. Write 
\begin{equation}\label{epsilonpsi}
\epsilon_{\psi_\infty}:= \prod_{v\textrm{ is a real place of $\Rk$}} \epsilon_{\Pi_v, \psi_v}\in \{\pm 1\},
\end{equation}
where $\epsilon_{\Pi_v, \psi_v}$ is defined as in \eqref{epsilonk}. 

   \begin{prop}\label{prop: period relation at infinity} 
      The linear functional
       \begin{equation}\label{p49}
          \eps_{\psi_\infty}^j\cdot \mathrm{i}^{-jn\cdot [\Rk:\BQ]} \cdot \CP_{\chi_\infty, j}\ \colon\
          \RH_\mathrm{ct}^{d_\infty}(\BR^\times_+ \quo G_\infty^0; \Pi_\infty\otimes F^\vee) \otimes \RH_\mathrm{ct}^0(\BR^\times_+\quo H_\infty^0; \xi_{\chi_\infty,j}\otimes\xi_{\infty,j}^\vee)\rightarrow \BC
       \end{equation}
     only depends on the quadratic character $\xi_{\chi_\infty,j}\otimes\xi_{\infty,j}^\vee$ of $H_\infty$ and is nonzero.
   \end{prop}
   \begin{proof}

Write
$F^\vee=\otimes_{v\mid \infty} F_v^\vee$ and and $v_F^\vee=\otimes_{v\mid \infty} v_{F_v}^\vee$,
where $F_v:=\otimes_{\iota\in \CE_{\Rk_v}\subset \CE_\Rk} F_\iota$ and $v_{F_v}^\vee\in F_v^\vee$.
Then
$\lambda_{F,j}=\otimes_{v\mid \infty} \lambda_{F_v,j}$,
where $   \lambda_{F_v,j}\in\Hom_{H_\BC}(F_v^\vee\otimes \xi_{\Rk_v,j}^\vee,\BC)$ is the element such that $\lambda_{F_v,j}(v_{F_v}^\vee\otimes 1)=1$, and
$\xi_{\Rk_v,j}:=\otimes_{\iota\in \CE_{\Rk_v}\subset \CE_\Rk}({\det}^{j}\otimes {\det}^{-j-w_\iota})$ as before.
       Using $\lambda_{F_v,j}$, we have the archimedean modular symbol
      $\CP_{\Rk_v,j}:  \RH_\mathrm{ct}^{d_{\Rk_v}}(\BR_+^\times\quo G_v^0; \Pi_v\otimes F_v^\vee)\rightarrow  \RH^{d_{\Rk_v}}_\mathrm{ct}(\BR_+^\times\quo H_v^0;  \BC)$
       as in \eqref{cpkj}. By Theorem \ref{thm: archi period relations}, the linear map 
       $\eps_{\Pi_v, \psi_v}^j\cdot \mathrm{i}^{-jn\cdot [\Rk_v:\BR]} \cdot \CP_{\Rk_v,j}$ 
      is independent of the critical place $\frac{1}{2}+j$. Here $\eps_{\Pi_v, \psi_v}=1$ if $v$ is a complex place. 

Similar to \eqref{cpkj}, we have a liner map  \begin{eqnarray}
 \label{cpkj2} \CP_{\infty,j}\ \colon\ \RH^{d_\infty}_\mathrm{ct}(\BR^\times_+\backslash G_\infty^0; \Pi_\infty\otimes F^\vee)
 \nonumber &\rightarrow & \RH^{d_{\infty}}_\mathrm{ct}(\BR^\times_+\backslash H_\infty^0; \Pi_\infty\otimes F^\vee)\\
  &&=\RH^{d_{\infty}}_\mathrm{ct}(\BR^\times_+\backslash H_\infty^0; (\Pi_\infty\otimes \xi_{\infty, j})\otimes (F^\vee\otimes\xi_{\infty, j}^\vee ))\\
   \nonumber  &\rightarrow & \RH^{d_{\infty}}_\mathrm{ct}(\BR^\times_+\backslash H_\infty^0;  \BC),
\end{eqnarray}
where the first arrow is the cohomology restriction map, and the last arrow is the map induced by the linear functional
$(\Pi_\infty\otimes \xi_{\infty, j})\otimes (F^\vee\otimes\xi_{\infty, j}^\vee )\rightarrow  \BC$
that equals the tensor product of the functionals as in \eqref{zcirc0} over all  archimedean places  of $\Rk$. 

       We have Lie group decompositions
   \begin{equation}\label{eq: subsection: proof of global period relation 01}
       \BR_+^\times\quo G_\infty = \left(\prod_{v\mid\infty} \BR_+^\times\quo G_v\right) \times A_G' \quad\textrm{and}\quad \BR_+^\times\quo H_\infty =\left(\prod_{v\mid\infty}  \BR_+^\times\quo H_v \right)\times A_G',
   \end{equation}
   where $A_G':=\BR^\times_+\backslash (\BR^\times_+)^r$ and $r$ is the number of archimedean places of $\Rk$.
 By the K\"{u}nneth formula, we have that
  $\RH_\mathrm{ct}^{d_\infty}(\BR_+^\times\quo G_\infty^0; \Pi_\infty\otimes F^\vee) =\RH'\oplus \RH''$,
where
                         \[
                          \RH':= \left(\otimes_{v\mid \infty} \RH_\mathrm{ct}^{d_{\Rk_v}}(\BR_+^\times\quo G_v^0; \Pi_v\otimes F_v^\vee)\right)\otimes \RH_\mathrm{ct}^{r-1}(A_G'; \BC)
                         \]
                         and
               \[
               \RH'':= \bigoplus_{l\neq r-1, \textrm{ or $a_v\neq d_{\Rk_v}$ for some $v\mid \infty$}}\left(\otimes_{v\mid \infty} \RH_\mathrm{ct}^{a_v}(\BR_+^\times\quo G_v^0; \Pi_v\otimes F_v^\vee)\right)\otimes\RH_\mathrm{ct}^{l}(A_G'; \BC).
               \]
           It is easy to see that $\CP_{\infty, j}$ vanishes on $\RH''$, and $\CP_{\infty, j}|_{\RH'}$ equals the composition of
               \begin{eqnarray*}
               \RH'&=&\left(\otimes_{v\mid \infty}  \RH_\mathrm{ct}^{d_{\Rk_v}}(\BR_+^\times\quo G_v^0; \Pi_v\otimes F_v^\vee)\right)\otimes \RH_\mathrm{ct}^{r-1}(A_G'; \BC)\\
               &\xrightarrow{\left(\otimes_{v\mid \infty}\CP_{\Rk_v,j}\right)\otimes 1} &\left(\otimes_{v\mid \infty} \RH_\mathrm{ct}^{d_{\Rk_v}}(\BR_+^\times\quo H_v^0; \BC)\right)\otimes  \RH_\mathrm{ct}^{r-1}(A_G'; \BC)\\
               &\xrightarrow{\sim}&\BC,
                              \end{eqnarray*}
               where the last arrow is a linear isomorphism independent of $j$. Thus the linear map $\CP_{\infty,j}$ is independent of $j$. 
               Replacing $\Pi_\infty$ by the rerepsentation $\Pi_\infty\otimes((\chi_\infty\cdot \sgn^j)\circ \det)$ (which is isomrohic to $\Pi_\infty$), this result easily implies  the first assertion of the proposition. The second assertion clearly follows from Theorem \ref{thm: Non-vanishing hypothesis}.
   \end{proof}

      For every   quadratic character $\epsilon_{\infty}$  of $\BA_\infty^\times$, define a character 
      $\tilde \epsilon_\infty:=(\epsilon_\infty\circ \det)\otimes (\epsilon_\infty \circ \det)$ of $H(\BA_\infty)$.
       For example, if $\epsilon_\infty= \sgn^j\cdot \chi|_\infty$, then $\tilde \epsilon_\infty= \xi_{\chi,j}\otimes\xi_{\infty,j}^\vee$. 
By Proposition \ref{prop: period relation at infinity}, there exists a $\BQ(\Pi,\eta)$-rational cohomological class $[\omega]_{\epsilon_\infty}\in\RH_\mathrm{ct}^{d_\infty}(\BR^\times_+\quo G_\infty^0; \Pi_\infty\otimes F^\vee)$ such that the functional \eqref{p49} has nonzero value at $[\omega]_{\epsilon_\infty}\otimes 1$ whenever $\sgn^j\cdot \chi_\infty =\epsilon_\infty$. 
We fix $[\omega]_{\epsilon_\infty}$ as above and define the {\sl Shalika period} of $\Pi$ to be
       \begin{equation}\label{eq: def of Omega Pi}
          \omega^{\epsilon_\infty}(\Pi) := \omega_{\psi}^{\epsilon_\infty}(\Pi):=\left(\eps_{\psi_\infty}^j\cdot \mathrm{i}^{-jn\cdot [\Rk:\BQ]} \cdot \CP_{\chi_\infty, j}([\omega]_{\epsilon_\infty}\otimes1)\right)^{-1},
       \end{equation}
which is independent of $j$, by Proposition \ref{prop: period relation at infinity}.

\begin{rk}
We expect that the functional $\eps_{\psi_\infty}^j\cdot \mathrm{i}^{-jn\cdot [\Rk:\BQ]} \cdot \CP_{\chi_\infty, j}$ as given in \eqref{p49} 
maps the $\BQ(\Pi,\eta)$-rational structure of $\RH_\mathrm{ct}^{d_\infty}(\BR^\times_+\quo G_\infty^0; \Pi_\infty\otimes F^\vee)$ to a $\BQ(\Pi,\eta)$-rational structure of $\BC$ (here $\RH_\mathrm{ct}^0(\BR^\times_+\quo H_\infty^0; \xi_{\chi_\infty,j}\otimes\xi_{\infty,j}^\vee)$ is obviously identified with $\BC$).
This holds at least when the number field $\Rk$ is totally real, or $L(\frac{1}{2}+j', \Pi\otimes \chi')\neq 0$ for some critical place $\frac{1}{2}+j'$ and some finite order character $\chi'$ of $\Rk^\times \quo \BA^\times$ such that $\sgn^{j'} \cdot \chi'|_{\BA_\infty^\times}=\sgn^j \cdot \chi|_{\BA_\infty^\times}$ and $\BQ(\chi')\subset \BQ(\Pi, \eta)$. When our expectation holds, up to a scalar multiplication in $\BQ(\Pi,\eta)^\times$, the Shalika period $\omega^{\epsilon_\infty}(\Pi)$  is independent of the choice of the rational cohomology class $[\omega]_{\epsilon_\infty}$.  Januszewski informed us that in many other cases, our expectation follows from \cite[Theorem 6.4]{Jan18}.

\end{rk}

\subsection{Proof of Theorem \ref{thm: global period relation}}

We need the following lemma, whose proof is simple and is left to the reader.
   \begin{lemma}\label{lemma: compare rational functional}
       Suppose that $E$ is a subfield of $\BC$ and $V$ is a complex vector space with an $E$-rational structure. Let $\lambda_1$, $\lambda_2$ be two linear functionals on $V$ defined over $E$ and let $c\in \BC$. If $\lambda_1\ne 0$ and $c\cdot \lambda_1 = \lambda_2$, then $c\in E.$
   \end{lemma}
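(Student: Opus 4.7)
The plan is to work directly from the definition of an $E$-rational structure on a complex vector space. Let $V_E \subset V$ denote the fixed $E$-rational structure, so that the natural map $V_E \otimes_E \BC \to V$ is an isomorphism. Saying that a linear functional $\lambda : V \to \BC$ is defined over $E$ means precisely that $\lambda(V_E) \subset E$, i.e.\ the restriction $\lambda|_{V_E} : V_E \to E$ is an $E$-linear map whose $\BC$-linear extension recovers $\lambda$.

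First I would use the hypothesis $\lambda_1 \neq 0$ to produce a witness vector. Since $V = V_E \otimes_E \BC$ and $\lambda_1$ is the $\BC$-linear extension of $\lambda_1|_{V_E}$, if $\lambda_1|_{V_E}$ vanished identically on $V_E$ it would force $\lambda_1 = 0$. Hence there exists $v \in V_E$ with $a := \lambda_1(v) \in E^\times$, in particular $a \neq 0$ and $a \in E$.

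Next I would evaluate the relation $c \cdot \lambda_1 = \lambda_2$ at this vector $v$. On the one hand $(c \cdot \lambda_1)(v) = c a$, and on the other hand $\lambda_2(v) \in E$ because $\lambda_2$ is defined over $E$ and $v \in V_E$. Therefore $c = \lambda_2(v) / a$ is a quotient of two elements of $E$ with nonzero denominator, so $c \in E$, as required.

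There is really no obstacle here; the only thing to be careful about is the pedantic point that ``defined over $E$'' is interpreted in the sense above (restriction lands in $E$), rather than merely as ``commutes with the natural Galois action on $V$,'' but these two formulations are equivalent for an $E$-rational structure, so either way the argument goes through unchanged.
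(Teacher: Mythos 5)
Your argument is correct: extracting a vector $v \in V_E$ with $\lambda_1(v) \in E^\times$ (possible since $V = V_E \otimes_E \BC$ and $\lambda_1$ is $\BC$-linear) and then dividing $\lambda_2(v) \in E$ by $\lambda_1(v) \in E^\times$ is exactly the standard and evidently intended argument. The paper explicitly leaves the proof to the reader, so there is nothing to compare against, but your write-up is a complete and accurate filling-in of that gap.
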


Finally, we are ready to prove Theorem \ref{thm: global period relation}. Suppose that $\epsilon_\infty= \sgn^j \cdot \chi_\infty$.
With the fixed $[\omega]_{\epsilon_\infty}$, we consider the following commutative diagram
$$\CD
           \Pi_f\otimes\xi_{\chi_f,j} @>\CP_{\chi_\infty, j}([\omega]_{\epsilon_\infty}\otimes 1)\,\cdot \,Z^\circ(\,\cdot\,, s, \chi_f)>> \BC \\
           @V ([\omega]_{\epsilon_\infty}\otimes 1)\otimes(\,\cdot\,) VV @V L(\frac{1}{2}+j, \Pi\otimes\chi) VV  \\
            \RH_\mathrm{ct}^{d_\infty}(\BR^\times_+\quo G_\infty^0; \Pi\otimes F^\vee)\otimes \RH_\mathrm{ct}^0(\BR^\times_+\quo H_\infty^0, \xi_{\chi,j}\otimes\xi_{\infty,j}^\vee) @>\CP_{\chi,j}>> \BC
         \endCD$$
as in Proposition \ref{prop: main diagram}.
Note that  the functional
$\CP_{\chi,j} \circ (([\omega]_{\epsilon_\infty} \otimes 1)\otimes(\cdot) )\colon \Pi_f\otimes\xi_{\chi_f,j}\rightarrow\BC$
is defined over $\BQ(\Pi,\eta,\chi)$. By Proposition \ref{rpif}, the nonzero functional
$\CG(\chi_f)^n\cdot Z^\circ(\cdot, s, \chi_f)$ on $\Pi_f\otimes\xi_{\chi_f,j}$ 
is defined over $\BQ(\Pi,\eta,\chi)$. Thus Lemma \ref{lemma: compare rational functional} implies that
\[
   L(\frac{1}{2}+j, \Pi\otimes\chi)\cdot \CP_{\chi_\infty, j}([\omega]_{\epsilon_\infty} \otimes 1)\cdot\CG(\chi_f)^{-n}\in \BQ(\Pi,\eta,\chi),
\]
or equivalently,
\[
    \frac{L(\frac{1}{2}+j,\Pi\otimes \chi)}{\mathrm{i}^{ jn\cdot  [\Rk\, :\, \BQ]}\cdot \CG(\chi_f)^n \cdot \omega^{\epsilon_\infty}(\Pi)}\in \BQ(\Pi,\eta,\chi).
\]
This proves Theorem \ref{thm: global period relation}.


\section{Proofs of Theorems \ref{thm: Non-vanishing hypothesis} and  \ref{thm: archi period relations}}\label{section: Proof of Critical place and Non-vanishing Modular Symbol}

Assume that $\BK$ is archimedean in this section.
As before,  $\fg_\BK$, $\fh_\BK$, $\fk_\BK$ and $\fc_\BK$ are the Lie algebras of $G_\BK$, $H_\BK$, $K_\BK$ and $C_\BK$, respectively.
\subsection{Some Borel subalgebras}
As a slight modification of the embedding $\gamma_\BK$ defined in  \eqref{eq: embedding no i}, we define an embedding
           \begin{equation}\label{eq: embedding i-1}
               \gamma': \BC^{2n}\hookrightarrow \fgl_{2n}(\BC)=\fg_\BC
           \end{equation}
     which sends $(a_1,a_2,\cdots,a_{2n})$ to the matrix
    \begin{equation}
    \left[
  \begin{array}{cccccccc}
    \frac{a_1+a_{2n}}{2} & 0 & \cdots & 0 & 0 &\cdots & 0 & \frac{a_1-a_{2n}}{-2\mathrm i}  \\
     0&\frac{a_2+a_{2n-1}}{2}  & \cdots & 0 & 0 &\cdots & \frac{a_2-a_{2n-1}}{-2\mathrm i}  & 0 \\
     \cdots & \cdots & \cdots & \cdots & \cdots &\cdots & \cdots &\cdots  \\
   0 & 0 & \cdots & \frac{a_n+a_{n+1}}{2} & \frac{a_n-a_{n+1}}{-2\mathrm i} &\cdots & 0 & 0  \\
   0 & 0 & \cdots &\frac{a_n-a_{n+1}}{2\mathrm i}   & \frac{a_{n+1}+a_n}{2} &\cdots & 0 & 0  \\
  \cdots & \cdots & \cdots & \cdots & \cdots &\cdots & \cdots &\cdots  \\
   0& \frac{a_2-a_{2n-1}}{2\mathrm i} & \cdots & 0 & 0 &\cdots & \frac{a_{2n-1}+a_2}{2}  & 0 \\
   \frac{a_{1}-a_{2n}}{2\mathrm i} & 0 & \cdots & 0 & 0 &\cdots &  & \frac{a_{2n}+a_1}{2}  \\
  \end{array}
   \right].
   \end{equation}
   This embedding identifies $\BC^{2n}$ as a Cartan subalgebra $\ft'$ of $\fg_\BC$. With the so obtained Cartan subalgebra $\ft'$, the root system of $\fg_\BC$ is
$\Phi(\fg_\BC, \ft') = \set{\pm(\eps_i-\eps_j)}{1\leq i< j \leq 2n}.$
   Here $\eps_1, \eps_2, \cdots, \eps_{2n}$ is the standard basis of $\BC^{2n}$.
The positive root system
$\Phi^+(\fg_\BC, \ft') = \set{\eps_i-\eps_j}{1\leq i< j \leq 2n}$
defines a Borel subalgebra $\fb'$ of $\fg_\BC$. Write $\fu'$ for its nilpotent radical so that $\fb'=\ft'\ltimes \fu'$.


For each $a\in \BC^\times$, put
$J_a= \mtrtwo{a \cdot 1_n}{0}{0}{1_n}\in  H_\BC$.
Then $\gamma'$ equals the composition of
\[
  \BC^{2n}\xrightarrow{\gamma_\BC} \fg_\BC\xrightarrow{\Ad_{J_\mathrm i}} \fg_\BC\qquad(\textrm{$\Ad$ stands for the adjoint action}).
\]
Hence $\ft'=\Ad_{J_\mathrm i}(\ft_\BC)$, $\fb'=\Ad_{J_\mathrm i}(\fb_\BC)$ and $\fu'=\Ad_{J_\mathrm i}(\fu_\BC)$.
Now we define
        $\ft'_\BK :=\prod_{\iota\in \CE_\BK} \ft'$, $\fb'_\BK :=\prod_{\iota\in \CE_\BK} \fb'$ and 
        $\fu'_\BK :=\prod_{\iota\in \CE_\BK} \fu'. $
        Put
        $$J_{a,\BK} := \prod_{\iota\in \CE_\BK} J_{a}\in H(\BK\otimes_\BR \BC)=\prod_{\iota\in \CE_\BK} H_\BC,\qquad (a\in \BC^\times). $$
 Then we have that
$\ft'_\BK=\Ad_{J_{\mathrm i,\BK}}(\ft_\BK^\BC)$, $\fb'_\BK=\Ad_{J_{\mathrm i, \BK}}(\fb_\BK^\BC)$ and
$\fu'_\BK=\Ad_{J_{\mathrm i, \BK}}(\fu_\BK^\BC)$,
and that
$\Ad_{J_{-1, \BK}}(\ft'_\BK)=\ft_\BK'$ and the Borel subalgebra $\Ad_{J_{-1, \BK}}(\fb_\BK')$ is opposite to $\fb_\BK'$. 
We remark that $\ft_\BK'$, $\fb_\BK'$, $\fu_\BK'$, $\Ad_{J_{-1, \BK}}(\fb_\BK')$, $\Ad_{J_{-1, \BK}}(\fu_\BK')$ and $\fh_\BK$ are all $\theta_\BK$-stable, where  $\theta_\BK : \fg_\BK^\BC\rightarrow \fg_\BK^\BC$ is the complexified differential of the Cartan involution corresponding to the maximal compact subgroup $K_\BK$ of $G_\BK$.

For each $\iota\in \CE_\BK$, define a linear embedding
\begin{equation}\label{eqpr: embedding i-2}
  \gamma'_\iota: \BC^{n\cdot [\BK: \BR]}\hookrightarrow \fg_\BK^\BC
\end{equation}
by
$\gamma'_\iota(a_1, a_2, \cdots, a_n):=\gamma'(a_1, a_2, \cdots, a_n, -a_n, \cdots, -a_2, -a_1)$ 
if $\BK\cong\BR$, and
\[
\gamma'_\iota(a_1, a_2, \cdots, a_{2n}):=(\iota\mapsto \gamma'(a_1, a_2, \cdots, a_{2n}), \, \bar \iota\mapsto \gamma'(-a_{2n}, \cdots, -a_2, -a_1))
\]
if $\BK\cong\BC$.
Here all $a_i$'s a complex numbers, and $\fg_\BK^\BC=\prod_{\iota\in \CE_\BK} \fg_\BC$ is identified with the set of all maps from $\CE_\BK$ to $\fg_\BC$. Then $\gamma'_\iota$ identifies $\BC^{n\cdot [\BK: \BR]}$ with
 $\ft'_\BK\cap\fk_\BK^\BC$, which is a Cartan subalgebra of $\fk_\BK^\BC$.
Using this identification, we write $\tau_{\BK,0}$ for the irreducible representation of $K_\BK$ with extremal weight
$(2n, 2n-2,\cdots, 2)$  or $(2n-1, 2n-3,\cdots, 3-2n, 1-2n)$,
when $\BK\cong\BR$ or $\BC$, respectively. We remark that in the complex case, the representation $\tau_{\BK,0}$ is independent of the choice of $\iota\in\CE_\BK$.


Write
$$r_\BK:=\dim \fu'_\BK/(\fu'_\BK\cap \fk_\BK^\BC)=\begin{cases}
                                                      n^2, & \hbox{if $\BK\cong\BR$;} \\
                                                      2n^2-n, & \hbox{if $\BK\cong\BC$.}
                                                    \end{cases}
$$
Set $\fs_\BK:=\fg_\BK/(\BR\oplus\fk_\BK)$.


\begin{lemma}\label{tauk0}
     The representation  $\tau_{\BK,0}$ of $K_\BK$ occurs in $\wedge^{d_\BK} \fs_\BK^\BC$ with multiplicity
\[
  \dim \wedge^{d_\BK-r_\BK} \, \ft'_\BK/(\ft'_\BK\cap \fk_\BK^\BC\oplus \BC)=
                                                    \begin{cases}
                                                      1, & \hbox{if $\BK\cong\BR$;} \\
                                                    {2n-1 \choose n-1}, & \hbox{if $\BK\cong\BC$.}
                                                    \end{cases}
                                                  \]
\end{lemma}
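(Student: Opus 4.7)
The plan is to reduce the multiplicity calculation to an explicit weight-space count for the Cartan subalgebra $\ft'_\BK\cap\fk_\BK^\BC$ of $\fk_\BK^\BC$, and then carry out a greedy combinatorial argument in $\wedge^{d_\BK}\fs_\BK^\BC$. Combining the triangular decomposition
\[
\fg_\BK^\BC \;=\; \fu'_\BK \,\oplus\, \ft'_\BK \,\oplus\, \Ad_{J_{-1,\BK}}(\fu'_\BK)
\]
with the fact that each summand is $\theta_\BK$-stable, I obtain a $(\ft'_\BK\cap\fk_\BK^\BC)$-stable decomposition
\[
\fs_\BK^\BC \;=\; U \,\oplus\, T \,\oplus\, \bar{U},
\]
where $U$, $T$, $\bar U$ are the images in $\fs_\BK^\BC = \fg_\BK^\BC/(\fk_\BK^\BC+\BC)$ of $\fu'_\BK$, $\ft'_\BK$ and $\Ad_{J_{-1,\BK}}(\fu'_\BK)$, respectively. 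A direct dimension count gives $\dim U = \dim\bar U = r_\BK$ and $\dim T = \dim\bigl(\ft'_\BK/(\ft'_\BK\cap\fk_\BK^\BC\oplus\BC)\bigr)$, while the exponent $d_\BK - r_\BK$ equals $n-1$ in both the real and the complex case. Since $\ft'_\BK$ commutes with the Cartan $\ft'_\BK\cap\fk_\BK^\BC$, the subspace $T$ is exactly the zero-weight space of $\fs_\BK^\BC$, and the weights of $U$ (resp.\ $\bar U$) are positive (resp.\ negative) with respect to the Borel $\fb'_\BK\cap\fk_\BK^\BC$ of $\fk_\BK^\BC$.

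Next, I will identify the weights of $U$ using the matrix formula for $\gamma'$. In the real case these are $\{f_i\pm f_j : 1\le i<j\le n\}\cup\{2f_i : 1\le i\le n\}$, where $f_1,\ldots,f_n$ are the coordinate functionals on $\ft'_\BK\cap\fk_\BK^\BC\cong\BC^n$; in the complex case they are $\{e_i-e_j : 1\le i<j\le 2n\}$, with $e_1,\ldots,e_{2n}$ the coordinate functionals on $\ft'_\BK\cap\fk_\BK^\BC\cong\BC^{2n}$. A direct summation shows that in each case the sum of these weights coincides with the extremal weight $\lambda$ defining $\tau_{\BK,0}$; since $\lambda$ is dominant with respect to $\fb'_\BK\cap\fk_\BK^\BC$, it is the highest weight of $\tau_{\BK,0}$.

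The heart of the argument is a greedy analysis of the $\lambda$-weight space of $\wedge^{d_\BK}\fs_\BK^\BC$. Examining the first coordinate, the maximum value attainable as a sum of $d_\BK$ distinct weights of $\fs_\BK^\BC$ equals $\lambda_1$, realised only by selecting every weight vector in $U$ with positive first coordinate and excluding every weight vector in $\bar U$ with negative first coordinate. Iterating coordinate by coordinate forces, in any pure wedge contributing to the $\lambda$-weight space, every weight vector of $U$ to be chosen and none from $\bar U$; the remaining $d_\BK - r_\BK$ factors must then carry weight zero, and hence lie in $T$. This yields the isomorphism
\[
\bigl(\wedge^{d_\BK}\fs_\BK^\BC\bigr)_{\lambda} \;\cong\; \wedge^{r_\BK}U \,\otimes\, \wedge^{d_\BK-r_\BK}T \;\cong\; \wedge^{d_\BK-r_\BK}T,
\]
using that $\wedge^{r_\BK}U$ is one-dimensional. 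The same greedy bound shows that no dominant weight strictly greater than $\lambda$ appears in $\wedge^{d_\BK}\fs_\BK^\BC$, so the multiplicity of $\tau_{\BK,0}$ equals $\dim(\wedge^{d_\BK}\fs_\BK^\BC)_\lambda = \dim \wedge^{d_\BK-r_\BK}T$, as claimed.

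The main technical obstacle will be the verification of the precise list of weights of $U$ from the matrix description of $\gamma'$, in particular matching up the two halves of the Cartan of $\fg_\BK^\BC$ under restriction to $\ft'_\BK\cap\fk_\BK^\BC$, and handling the real and complex cases in a unified manner; once these structural identifications are in place, the greedy coordinate-by-coordinate argument is essentially a bookkeeping exercise.
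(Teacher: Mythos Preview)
Your proposal is correct. The paper itself gives no argument here: its proof reads in full ``This is easy to check and is left to the reader.'' Your highest-weight-vector count via the $\theta_\BK$-stable triangular decomposition $\fs_\BK^\BC = U \oplus T \oplus \bar U$ and the coordinate-by-coordinate greedy bound is exactly the kind of verification the authors have in mind, and all the numerical checks (the weight lists for $U$, the identification of their sum with the highest weight of $\tau_{\BK,0}$, the dimensions $d_\BK - r_\BK = n-1$ and $\dim T = n-1$ resp.\ $2n-1$) go through as you describe.

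Two very minor points of polish, not gaps. First, your assertion that ``$T$ is exactly the zero-weight space'' is only half-justified by the commutation observation; equality also uses that none of the weights in your explicit list for $U$ (and hence for $\bar U$) is zero, which is immediate once the list is written down. Second, in the real case $\tau_{\BK,0}$ is a representation of $K_\BK = \mathrm O(2n)$ while your weight-space argument naturally computes the multiplicity of the $K_\BK^0 = \mathrm{SO}(2n)$-constituent with highest weight $\lambda = (2n,2n-2,\ldots,2)$; since $\lambda_n \neq 0$, this $\mathrm{SO}(2n)$-irreducible occurs only inside the $\mathrm O(2n)$-irreducible $\tau_{\BK,0}$ (cf.\ the decomposition $\tau_{\BK,0} = \tau_{\BK,0}^+ \oplus \tau_{\BK,0}^-$ in the paper's Lemma~\ref{dt1}), so the two multiplicities agree. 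With these remarks your argument is complete.
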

     \begin{proof}
     This is easy to check and is left to the reader.
     \end{proof}

\subsection{Archimedean modular symbols}
We follow the notation and assumptions as in Section \ref{subsection: Critical Place and The Non-Vanishing Hypothesis at Infinity}. More precisely, the irreducible Casselman-Wallach representation $\Pi_\BK$ of $G_\BK$ is essentially tempered, cohomological, and of symplectic type. The coefficient system $F_\BK=\otimes_{\iota\in \CE_\BK} F_\iota$ of  $\Pi_\BK$ is assumed to be balanced. The highest weight $\{\nu_\iota=(\nu^\iota_1 \geq  \nu^\iota_2\geq \cdots \geq \nu^\iota_{2n})\}_{\iota\in \CE_\BK}$ satisfies \eqref{eq2: global purity lemma}   and \eqref{eq: def of w iota}.
As before, we fix a nonzero element $v_{F_\BK}^\vee$ of the one-dimensional space $(F_\BK^\vee)^{\mathfrak u_\BK}$.  Let $\frac{1}{2}+j\in \frac{1}{2}+\BZ$ be a critical place of $\Pi_\BK$.
Recall that we have  fixed a nonzero Shalika functional
$\lambda_{\BK}\in \Hom_{S_\BK}(\Pi_\BK, \eta_\BK\otimes\psi_\BK)$,
where $\psi_\BK$ is a non-trivial unitary character of $\BK$, and $\eta_\BK$ is  the character
     \begin{equation}\label{eq: choice of eta}
             \eta_\BK=\prod_{\iota \in \CE_\BK} (\iota|_{\BK^\times})^{w_\iota}: \BK^\times \rightarrow \BC^\times.
     \end{equation}
 Depending on $\lambda_\BK$ and a fixed Haar measure on $\GL_n(\BK)$, we have the normalized Friedberg-Jacquet integral
$Z^\circ(\,\cdot\,,\frac{1}{2}+j, \sgn_\BK^j)\in \Hom_{H_\BK}(\Pi_\BK\otimes\xi_{\BK,j}, \BC)$
 as in \eqref{nz-lf2}, where
$\xi_{\BK,j}:=\otimes_{\iota\in \CE_\BK} ({\det}^j\otimes {\det}^{-j-w_\iota})$ 
is an algebraic character of $H(\BK\otimes_\BR \BC)=\prod_{\iota\in \CE_\BK} H_\BC$.
Using Proposition \ref{prop: critical place}, we  fix a nonzero element
     \begin{equation}\label{eq: linear function coeffi system} \lambda_{F_\BK, j} \in  \Hom_{H_\BK}(F_\BK^\vee\otimes\xi_{\BK, j}^\vee, \BC).\end{equation}
As in Lemma \ref{lj}, we assume without loss of generality that
$\lambda_{F_\BK,j}(v_{F_\BK}^\vee\otimes 1)=1$.

  By \cite[Theorem 6.1]{HochshildMostowCohomologyofLieGroups} and \cite[Proposition 9.4.3]{WallachRealReductiveGroups1}, we have identifications
    \begin{eqnarray*}
    \RH^{d_\BK}_\ct(\BR^\times_+\backslash G_\BK^0; \Pi_\BK\otimes F_\BK^\vee) 
    =\RH^{d_\BK}(\fg_\BK^\BC/\BC, K_\BK^0; \Pi_\BK\otimes F_\BK^\vee)
    = \Hom_{K_\BK^0}(\wedge^{d_\BK} \fs_\BK^\BC, \Pi_\BK\otimes F_\BK^\vee)
    \end{eqnarray*}
     and
         \begin{eqnarray*}
         \RH^{d_\BK}_\ct(\BR^\times_+\backslash H_\BK^0; \BC)
         =\RH^{d_\BK}(\fh_\BK^\BC/\BC, C_\BK^0; \BC) 
         = \Hom_{C_\BK^0}(\wedge^{d_\BK}\fq_\BK^\BC,  \BC),
             \end{eqnarray*}
    where $\fq_\BK:=\fh_\BK/(\fc_\BK\oplus \BR)$ as before.
     Moreover, the archimedean modular symbol map $\CP_{\BK,j}$, which is defined in \eqref{cpkj}, is identical to the map
      \begin{equation}\label{cpbkj2}
        \begin{array}{rcl}
          \CP_{\BK,j}: \Hom_{K_\BK^0}(\wedge^{d_\BK} \fs_\BK^\BC, \Pi_\BK\otimes F_\BK^\vee)&\rightarrow& \Hom_{C_\BK^0}(\wedge^{d_\BK}\fq_\BK^\BC,  \BC)\\
        \end{array}
      \end{equation}
     which sends an element $\phi$ to the composition of
        \begin{eqnarray*}
     \wedge^{d_\BK} \fq_{\BK}^\BC &\hookrightarrow &\wedge^{d_\BK} \fs_\BK^\BC\\
     &\xrightarrow{\phi} &\Pi_\BK\otimes F_\BK^\vee
     = (\Pi_\BK\otimes \xi_{\BK,j})\otimes (F_\BK^\vee\otimes \xi_{\BK,j}^\vee) \xrightarrow{Z^\circ(\cdot,\frac{1}{2}+j,\sgn_{\BK}^j)\otimes \lambda_{F_\BK,j}} \BC.
         \end{eqnarray*}

     Now a more explicit  description of the first space in \eqref{cpbkj2} is in order. Recall that $\tau_\BK\subset \Pi_\BK$ is the minimal $K_\BK$-type.
     The space $F_\BK^{\fu_\BK'}\subset F_\BK$ generates an irreducible representation of $K_\BK$, which we denote by $\delta_\BK$. It occurs in $F_\BK$ with multiplicity one. Thus we get that
     $\delta_\BK^\vee\subset F_\BK^\vee$. 
Recall from  \eqref{eqpr: embedding i-2} that when $\BK\cong\BR$, $\BC^n$ is identified with a Cartan subalgebra of $\fk_\BK^\BC$.
     \begin{lemma}\label{dt1}
     If $\BK\cong\BR$ with $\CE_\BK=\{\iota\}$, then
      \begin{equation}\label{dectau1}
        \delta_\BK=\delta_\BK^+ + \delta_\BK^-, \quad \tau_{\BK,0}=\tau_{\BK,0}^+\oplus \tau_{\BK,0}^-\quad \textrm{and} \quad  \tau_\BK=\tau_\BK^+\oplus \tau_\BK^-,
      \end{equation}
       where $\delta_\BK^+$, $\delta_\BK^-$, $\tau_{\BK,0}^+$, $\tau_{0,\BK}^-$, $\tau_\BK^+$ and $\tau_\BK^-$ are the irreducible representations of $K_\BK^0$ with extremal weights
\[
   \left\{
     \begin{array}{l}
       \mu_\BK^+:=(\nu_1^\iota-\nu_{2n}^\iota, \nu_{2}^\iota-\nu_{2n-1}^\iota,\cdots, \nu_{n-1}^\iota-\nu_{n+2}^\iota , \nu_{n}^\iota-\nu_{n+1}^\iota ), \medskip\\
       \mu_\BK^-:=(\nu_1^\iota-\nu_{2n}^\iota, \nu_{2}^\iota-\nu_{2n-1}^\iota,\cdots, \nu_{n-1}^\iota-\nu_{n+2}^\iota, -(\nu_{n}^\iota-\nu_{n+1}^\iota)), \medskip\\
    (2n, 2n-2,\cdots, 4, 2),\medskip\\
 (2n, 2n-2,\cdots, 4, -2),\medskip\\
  \mu_\BK^+ +(2n, 2n-2,\cdots, 4, 2),\  \textrm{ and}\medskip\\
\mu_\BK^- +(2n, 2n-2,\cdots, 4, -2),
           \end{array}
   \right.
\]
respectively. Moreover, the first sum in \eqref{dectau1} is a direct sum if and only if $\nu_n^\iota>\nu_{n+1}^\iota$.
     \end{lemma}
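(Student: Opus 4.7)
The plan is to identify each of $\delta_\BK$, $\tau_{\BK,0}$ and $\tau_\BK$ as an irreducible representation of $K_\BK=\mathrm O(2n)$ by computing its extremal weight, and then invoke the standard branching rule from $\mathrm O(2n)$ to $K_\BK^0=\mathrm{SO}(2n)$: an irreducible $\mathrm O(2n)$-representation with extremal weight $(m_1,\dots,m_n)$ ($m_1\geq\cdots\geq m_n\geq 0$) restricts irreducibly to $\mathrm{SO}(2n)$ if $m_n=0$, and splits as a direct sum of the two non-isomorphic $\mathrm{SO}(2n)$-irreducibles with highest weights $(m_1,\dots,m_{n-1},\pm m_n)$ if $m_n>0$. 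This at once yields the final assertion about when the first sum is direct.

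For $\delta_\BK$, note that $F_\BK^{\fu_\BK'}$ is the highest weight line of $F_\BK=F_\iota$ with respect to the Borel $\fb_\BK'=\Ad_{J_{\mathrm i,\BK}}(\fb_\BK^\BC)$; since conjugation by $J_{\mathrm i,\BK}\in\GL_{2n}(\BC)$ preserves highest weights, the corresponding weight relative to $\ft_\BK'$ is still $\nu_\iota$. Because $\fu_\BK'\cap\fk_\BK^\BC\subset\fu_\BK'$, this vector is also a highest weight vector for $\fk_\BK^\BC=\fso(2n,\BC)$ with Cartan $\ft_\BK'\cap\fk_\BK^\BC$. Through the embedding $\gamma'_\iota$ this Cartan is identified with vectors of the form $(a_1,\dots,a_n,-a_n,\dots,-a_1)\in\BC^{2n}$, so pairing $\nu_\iota$ against such vectors produces exactly $\mu_\BK^+=(\nu_1^\iota-\nu_{2n}^\iota,\dots,\nu_n^\iota-\nu_{n+1}^\iota)$. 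Thus the $K_\BK^0$-subrepresentation generated by the highest weight vector is $\delta_\BK^+$, and the other component of $K_\BK/K_\BK^0\cong\BZ/2\BZ$ conjugates it to $\delta_\BK^-$, giving the required description.

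The representation $\tau_{\BK,0}$ is specified by the extremal weight $(2n,2n-2,\dots,2)$ whose last coordinate is $2>0$, so the branching rule immediately produces $\tau_{\BK,0}^+\oplus\tau_{\BK,0}^-$. For $\tau_\BK$ I would determine the minimal $K_\BK$-type of the induced representation \eqref{Eq: cohomomogical repn parabolic induction parameter real}: each relative discrete series $D_{\hat\nu_r^\iota,\hat\nu_{2n+1-r}^\iota}$ of $\GL_2(\BR)$ has minimal $\mathrm O(2)$-type of weight $\hat\nu_r^\iota-\hat\nu_{2n+1-r}^\iota+1$, and a standard application of Vogan's minimal $K$-type theory for parabolic induction from the block-diagonal $(\GL_2(\BR))^n$ yields an extremal weight of the form $(\hat\nu_1^\iota-\hat\nu_{2n}^\iota+1,\dots,\hat\nu_n^\iota-\hat\nu_{n+1}^\iota+1)$. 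Using the relation $\hat\nu_r^\iota-\hat\nu_{2n+1-r}^\iota=\nu_r^\iota-\nu_{2n+1-r}^\iota+(2n+1-2r)$ (a direct consequence of \eqref{eq: infinitesmal char}) this simplifies to $\mu_\BK^++(2n,2n-2,\dots,2)$. Since the last coordinate equals $\nu_n^\iota-\nu_{n+1}^\iota+2\geq 2>0$, the branching rule produces the stated direct sum $\tau_\BK=\tau_\BK^+\oplus\tau_\BK^-$.

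The main obstacle I anticipate is the explicit minimal $K$-type calculation for $\tau_\BK$: one must verify that the $\mathrm O(2n)$-type with the predicted highest weight actually occurs in $\Pi_\BK$ and that no smaller one does. This can be addressed either by invoking Vogan's classification of minimal $K$-types directly, or alternatively by realizing $\Pi_\BK$ as a cohomologically induced $A_\fq(\lambda)$-module and using the formula of Vogan-Zuckerman for its lowest $K$-type; either approach should yield the required weight unambiguously.
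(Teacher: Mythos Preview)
Your proposal is correct and supplies precisely the standard computation that the paper omits: the paper's own proof reads in full ``This is well known and elementary to check. See \cite[Section 5.2]{SunCohomologicalDistinguishedRepn} for example.'' The approach you outline---identifying the extremal weights via the Borel $\fb_\BK'$ and the embedding $\gamma_\iota'$, invoking the $\mathrm O(2n)\downarrow\mathrm{SO}(2n)$ branching rule, and computing the minimal $K_\BK$-type of $\Pi_\BK$ from its parabolic induction data (or equivalently from its $A_\fq(\lambda)$ realization via Vogan--Zuckerman)---is exactly the content of that reference, so there is no divergence in method.
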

 \begin{proof}
     This is  well known and elementary to check. See \cite[Section 5.2]{SunCohomologicalDistinguishedRepn} for example.
     \end{proof}

When $\BK\cong\BR$, let
$\delta_\BK=\delta_\BK^+ + \delta_\BK^-$, $\tau_{\BK,0}=\tau_{\BK,0}^+\oplus \tau_{\BK,0}^-$ and 
$\tau_\BK=\tau_\BK^+\oplus \tau_\BK^-$
     be  as in \eqref{dectau1}. By taking the contragredients of the first equality, we have that
     $\delta_\BK^\vee=\delta_\BK^{+,\vee} + \delta_\BK^{-,\vee}\subset F_\BK^\vee$.

 \begin{lemma}\label{dt2}
     If $\BK\cong\BC$ with $\CE_\BK=\{\iota, \bar \iota\}$, then the representations
$\delta_\BK$ and $\tau_\BK$ have extremal weights
          \[
    \mu_\BK^\iota:= (\nu_1^\iota-\nu_{2n}^{\bar \iota} ,\nu_{2}^\iota-\nu_{2n-1}^{\bar \iota},\cdots, \nu_{2n-1}^\iota-\nu_{2}^{\bar \iota}, \nu_{2n}^\iota-\nu_{1}^{\bar \iota})
     \]
and
\[
    \mu_\BK^\iota+ (2n-1,2n-3,\cdots,3-2n, 1-2n)
     \]
   respectively.  Here $\BC^{2n}$ is identified with
 a Cartan subalgebra of $\fk_\BK^\BC$ by $\gamma'_\iota$.

     \end{lemma}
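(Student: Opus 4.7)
The plan is to compute, in each case, the highest weight of the $K_\BK$-representation with respect to the Borel subalgebra $\fb_\BK' \cap \fk_\BK^\BC$ of $\fk_\BK^\BC$, expressed in the coordinates on $\ft_\BK' \cap \fk_\BK^\BC$ provided by $\gamma'_\iota$. Both computations reduce to a bookkeeping exercise comparing weights on $\ft_\BK'$ with their restrictions to $\ft_\BK' \cap \fk_\BK^\BC$.

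For $\delta_\BK$, I would first note that since $\fu_\BK' = \Ad_{J_{\mathrm i,\BK}}(\fu_\BK^\BC)$, the one-dimensional space $F_\BK^{\fu_\BK'} = F_\iota^{\fu'} \otimes F_{\bar\iota}^{\fu'}$ is obtained by applying $J_{\mathrm i,\BK}$ to the usual $\fb_\BK^\BC$-highest weight line of $F_\BK = F_\iota \otimes F_{\bar\iota}$. Using the identity $\Ad_{J_{\mathrm i}^{-1}} \circ \gamma' = \gamma_\BC$, the resulting vector carries weight $\nu_\iota$ on the $\iota$-factor and $\nu_{\bar\iota}$ on the $\bar\iota$-factor when $\ft_\BK'$ is parametrized by $\gamma'$. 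Substituting into $\gamma'_\iota(a_1,\dots,a_{2n}) = (\gamma'(a_1,\dots,a_{2n}),\, \gamma'(-a_{2n},\dots,-a_1))$, this combined weight evaluates on $\gamma'_\iota(a_1,\dots,a_{2n})$ to $\sum_k (\nu_k^\iota - \nu_{2n+1-k}^{\bar\iota})\, a_k$, which is exactly $\mu_\BK^\iota$. Since $F_\BK^{\fu_\BK'}$ is \emph{a fortiori} annihilated by $\fu_\BK' \cap \fk_\BK^\BC$, it serves as a highest weight vector for $\delta_\BK$ with respect to the chosen Borel of $\fk_\BK^\BC$, and dominance of $\mu_\BK^\iota$ follows immediately from the dominance of $\nu_\iota$ and $\nu_{\bar\iota}$.

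For $\tau_\BK$, I would invoke the standard minimal $K$-type formula for principal series of $\GL_{2n}(\BC)$, in the form recorded in \cite[Proposition 1.1]{LinTianExplicitCohomologicalComplex}: for the induced representation \eqref{Eq: cohomomogical repn parabolic induction parameter complex} built from the characters $\chi_r = \iota^{\hat\nu_r^\iota}\bar\iota^{\hat\nu_{2n+1-r}^{\bar\iota}}$, the minimal $\mathrm U(2n)$-type has highest weight $(a_r - b_r)_{r=1}^{2n}$ with $a_r = \hat\nu_r^\iota$ and $b_r = \hat\nu_{2n+1-r}^{\bar\iota}$, provided this tuple is dominant. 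Substituting $\hat\nu_r^\iota = \nu_r^\iota + \tfrac{2n+1-2r}{2}$, the $r$-th coordinate simplifies to $\nu_r^\iota - \nu_{2n+1-r}^{\bar\iota} + (2n+1-2r)$, which is precisely the $r$-th entry of $\mu_\BK^\iota + (2n-1, 2n-3, \dots, 1-2n)$. The dominance inequality reduces to $(\nu_r^\iota - \nu_{r+1}^\iota) + (\nu_{2n-r}^{\bar\iota} - \nu_{2n+1-r}^{\bar\iota}) + 2 \geq 0$, which is clear.

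The main obstacle, as in the real case of Lemma \ref{dt1}, is not conceptual but combinatorial: keeping straight the three Cartan identifications $\gamma_\BC$ for $\ft$, $\gamma'$ for $\ft'$, and $\gamma'_\iota$ for $\ft_\BK' \cap \fk_\BK^\BC$, and verifying that the abstract highest weight expressed in each coordinate system matches the claimed formula. Once those identifications are aligned, the two extremal-weight computations are essentially immediate from the definitions and from the already-established induction data for $\Pi_\BK$.
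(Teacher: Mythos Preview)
Your proposal is correct and is precisely the kind of direct verification the paper has in mind: the paper's own proof reads in its entirety ``This is also well known and elementary to check,'' so you have simply supplied the elementary check. Your two ingredients---computing the $\ft_\BK'\cap\fk_\BK^\BC$-weight of the line $F_\BK^{\fu_\BK'}$ via the coordinate map $\gamma'_\iota$, and invoking the minimal $K_\BK$-type formula from \cite[Proposition 1.1]{LinTianExplicitCohomologicalComplex} for the principal series \eqref{Eq: cohomomogical repn parabolic induction parameter complex}---are exactly the expected ones, and your arithmetic with $\hat\nu_r^\iota - \hat\nu_{2n+1-r}^{\bar\iota}$ is right.
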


 \begin{proof}
     This is also well known and elementary to check. 
     \end{proof}

 \begin{lemma}\label{dt2222}
   If $\BK\cong\BR$, then
$\dim \Hom_{K_\BK^0}(\tau_{\BK,0}^+, \delta_\BK^{+,\vee}\otimes \tau_\BK^+)=\dim \Hom_{K_\BK^0}(\tau_{\BK,0}^-, \delta_\BK^{-,\vee}\otimes \tau_\BK^-)=1$.
If $\BK\cong\BC$, then
$\dim \Hom_{K_\BK}(\tau_{\BK,0}, \delta_\BK^{\vee}\otimes \tau_\BK)=1$.
     \end{lemma}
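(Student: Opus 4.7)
The plan is to reduce the lemma, via Frobenius reciprocity, to showing that the representation $\tau_\BK$ (respectively $\tau_\BK^\pm$) occurs in the tensor product $\delta_\BK \otimes \tau_{\BK,0}$ (respectively $\delta_\BK^\pm \otimes \tau_{\BK,0}^\pm$) with multiplicity exactly one, and then to recognize each such occurrence as a Cartan component.

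First I would use the isomorphism
\[
\Hom_{K_\BK^0}(\tau_{\BK,0}^\pm,\, \delta_\BK^{\pm,\vee}\otimes \tau_\BK^\pm) \;\simeq\; \Hom_{K_\BK^0}(\delta_\BK^\pm\otimes \tau_{\BK,0}^\pm,\, \tau_\BK^\pm),
\]
and the analogous identity in the complex case. Next I would read off the extremal weights from Lemmas~\ref{dt1} and \ref{dt2}: in every case the extremal weight attached to $\tau_\BK^{(\pm)}$ is exactly the sum of the extremal weights of $\delta_\BK^{(\pm)}$ and $\tau_{\BK,0}^{(\pm)}$ — in the complex case this is $\mu_\BK^\iota + (2n-1, 2n-3, \ldots, 1-2n)$, and in the real case this is $\mu_\BK^\pm + (2n, 2n-2, \ldots, \pm 2)$. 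Dominance of each of these weights with respect to the positive system on $\fk_\BK^\BC$ fixed through $\gamma'_\iota$ is an elementary check, using \eqref{eq2: global purity lemma} to see that $\nu_i^\iota - \nu_{2n+1-i}^\iota$ (respectively $\nu_i^\iota - \nu_{2n+1-i}^{\bar\iota}$) is a weakly decreasing function of $i$.

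Finally I would invoke the classical Cartan component multiplicity-one theorem: for a connected compact Lie group $L$ and dominant integral weights $\lambda,\mu$, the irreducible representation $V_{\lambda+\mu}$ occurs in $V_\lambda \otimes V_\mu$ with multiplicity exactly one. The lower bound is realized by the image of the tensor product of highest weight vectors; the upper bound follows because $\lambda+\mu$ is the unique maximal weight appearing in $V_\lambda \otimes V_\mu$ under the usual partial order, so any irreducible summand with highest weight $\lambda+\mu$ must account for the one-dimensional $\lambda+\mu$-weight space. Applied to $(\lambda,\mu)$ equal to the pair of extremal weights of $(\delta_\BK^{(\pm)}, \tau_{\BK,0}^{(\pm)})$, this yields precisely the desired multiplicity one.

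The main technical obstacle lies in the $D_n$-dominance check for the ``minus'' piece in the real case: one has to verify $\nu_{n-1}^\iota - \nu_{n+2}^\iota \ge |\nu_n^\iota - \nu_{n+1}^\iota|$ so that $\mu_\BK^-$, $(2n, 2n-2, \ldots, -2)$, and their sum are all dominant for $\mathfrak{so}(2n)$. This is immediate from $\nu_{n-1}^\iota \ge \nu_n^\iota \ge \nu_{n+1}^\iota \ge \nu_{n+2}^\iota$. Aside from this bookkeeping, the proof is a direct application of the Cartan component theorem and requires no further input.
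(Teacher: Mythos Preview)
Your proposal is correct and follows essentially the same route as the paper's own proof. The paper applies the identical tensor--Hom adjunction $\Hom_{K_\BK}(\tau_{\BK,0}, \delta_\BK^{\vee}\otimes \tau_\BK)=\Hom_{K_\BK}(\tau_{\BK,0}\otimes \delta_\BK, \tau_\BK)$ and then simply says the result ``follows from'' Lemmas~\ref{dt1} and~\ref{dt2}; you have made explicit the Cartan-component multiplicity-one argument that the paper leaves to the reader.
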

\begin{proof}
Note that
$\Hom_{K_\BK}(\tau_{\BK,0}, \delta_\BK^{\vee}\otimes \tau_\BK)=\Hom_{K_\BK}(\tau_{\BK,0}\otimes \delta_\BK, \tau_\BK)$.
The lemma follows from Lemma \ref{dt2} in the complex case and from Lemma \ref{dt1} in the real case.
\end{proof}

Finally, we have the following description of the cohomology space.
      \begin{prop}\label{propdcoh}
       If  $\BK\cong\BR$, then the natural map
         \begin{eqnarray}
     \label{idh1}   \Hom_{K_\BK^0}(\wedge^{d_\BK} \fs_\BK^\BC, (\tau_\BK^+\otimes \delta_\BK^{+,\vee})\oplus (\tau_\BK^-\otimes \delta_\BK^{-, \vee}))
        &\rightarrow &\Hom_{K_\BK^0}(\wedge^{d_\BK} \fs_\BK^\BC, \Pi_\BK\otimes F_\BK^\vee)
          \end{eqnarray}
      is a linear isomorphism.
If $\BK\cong\BC$, then the natural map
         \begin{eqnarray}\label{idh2}
\Hom_{K_\BK^0}(\wedge^{d_\BK} \fs_\BK^\BC, \tau_\BK\otimes \delta_\BK^\vee)
       &\rightarrow &\Hom_{K_\BK^0}(\wedge^{d_\BK} \fs_\BK^\BC, \Pi_\BK\otimes F_\BK^\vee)
          \end{eqnarray}
      is a linear isomorphism.
      \end{prop}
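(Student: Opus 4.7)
My plan is to carry out a direct $K_\BK^0$-type analysis on both sides of the natural map. Injectivity is immediate: the inclusions $\tau_\BK^\pm\hookrightarrow\Pi_\BK$ and $\delta_\BK^{\pm,\vee}\hookrightarrow F_\BK^\vee$ (respectively $\tau_\BK\hookrightarrow\Pi_\BK$ and $\delta_\BK^\vee\hookrightarrow F_\BK^\vee$ in the complex case) exhibit the source as a $K_\BK^0$-subrepresentation of $\Pi_\BK\otimes F_\BK^\vee$, so the induced map on Hom spaces is injective. The content lies in surjectivity.

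For surjectivity, I would first make explicit the $K_\BK^0$-type structure of each factor. Using Corollary \ref{cor: coh repn symp type induced parameter}, the representation $\Pi_\BK$ is parabolically induced from (relative) discrete series or principal series components; combined with Lemmas \ref{dt1} and \ref{dt2}, this identifies $\tau_\BK$ (or $\tau_\BK^+\oplus\tau_\BK^-$ in the real case) as the minimal $K_\BK^0$-type, with all other $K_\BK^0$-types of $\Pi_\BK$ having extremal weights that strictly dominate that of $\tau_\BK$ in the positive system determined by $\fb'_\BK\cap\fk_\BK^\BC$. Because $\fb'_\BK$ is $\theta$-stable, the space $F_\BK^{\fu'_\BK}$ is a $\ft'_\BK\cap\fk_\BK^\BC$-weight space and generates $\delta_\BK$ as the extremal $K_\BK^0$-type of $F_\BK$; dually, $\delta_\BK^\vee\subset F_\BK^\vee$ is extremal in the opposite sense.

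The key step is then to show that any $K_\BK^0$-equivariant map $\phi:\wedge^{d_\BK}\fs_\BK^\BC\to\Pi_\BK\otimes F_\BK^\vee$ factors through the relevant subspace. I would combine Lemma \ref{tauk0}, which pinpoints the $K_\BK^0$-type $\tau_{\BK,0}$ appearing in $\wedge^{d_\BK}\fs_\BK^\BC$ with its multiplicity, with a weight-matching argument: if a summand $\sigma'\otimes\sigma''\subset\Pi_\BK\otimes F_\BK^\vee$ (with $\sigma'\subset\Pi_\BK$, $\sigma''\subset F_\BK^\vee$) receives a nontrivial map from $\wedge^{d_\BK}\fs_\BK^\BC$, then balancedness of $F_\BK$ (aligning central characters) together with the tight minimality/extremality of $\tau_\BK$ and $\delta_\BK^\vee$ forces $\sigma'=\tau_\BK^\pm$ and $\sigma''=\delta_\BK^{\pm,\vee}$ (respectively $\sigma'=\tau_\BK$, $\sigma''=\delta_\BK^\vee$). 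Lemma \ref{dt2222} then gives the branching multiplicity $\dim\Hom_{K_\BK^0}(\tau_{\BK,0},\delta_\BK^\vee\otimes\tau_\BK)=1$ (with appropriate $\pm$ variants), matching the source count and completing surjectivity.

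The main obstacle will be the complex case, where by Lemma \ref{tauk0} the $K_\BK^0$-type $\tau_{\BK,0}$ occurs in $\wedge^{d_\BK}\fs_\BK^\BC$ with multiplicity $\binom{2n-1}{n-1}$ rather than $1$. One must carefully track the $\ft'_\BK\cap\fk_\BK^\BC$-weights coming from $\wedge^{d_\BK-r_\BK}\,\ft'_\BK/(\ft'_\BK\cap\fk_\BK^\BC\oplus\BC)$ and confirm that each corresponding copy of $\tau_{\BK,0}$ in $\wedge^{d_\BK}\fs_\BK^\BC$ actually lands in $\tau_\BK\otimes\delta_\BK^\vee$ and not in any other $K_\BK^0$-isotype of $\Pi_\BK\otimes F_\BK^\vee$. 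The balancedness assumption on $F_\BK$ is essential here, as it pins down the weight of $\delta_\BK$ compatibly with that of $\tau_\BK$ and rules out stray contributions.
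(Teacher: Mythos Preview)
Your injectivity argument is fine, and you correctly identify Lemmas \ref{tauk0} and \ref{dt2222} as giving a lower bound on the dimension of the source. The gap is in surjectivity. Your proposed ``weight-matching argument'' --- that any $K_\BK^0$-equivariant $\phi:\wedge^{d_\BK}\fs_\BK^\BC\to\Pi_\BK\otimes F_\BK^\vee$ must factor through the indicated subspace because of minimality/extremality constraints --- is not justified and is not straightforward to carry out. The wedge power $\wedge^{d_\BK}\fs_\BK^\BC$ contains many $K_\BK^0$-types besides $\tau_{\BK,0}$, and $\Pi_\BK\otimes F_\BK^\vee$ contains many tensor summands $\sigma'\otimes\sigma''$ besides $\tau_\BK\otimes\delta_\BK^\vee$; ruling out all other pairings by hand amounts to recomputing the relative Lie algebra cohomology from scratch. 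Your invocation of balancedness here is also a red herring: it plays no role in this particular statement.

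The paper avoids this entirely by computing the dimension of the \emph{target} directly. Realizing $\Pi_\BK$ via cohomological induction (as in \cite{Su17}) and invoking the Vogan--Zuckerman/Wallach computation of the relative Lie algebra cohomology of such modules (\cite[Theorem 9.6.6]{WallachRealReductiveGroups1} or \cite[Theorem 5.5]{VoganZuckermanUnitaryRepresentationNonzeroCohomology}) gives
\[
\dim \Hom_{K_\BK^0}(\wedge^{d_\BK}\fs_\BK^\BC,\Pi_\BK\otimes F_\BK^\vee)=
\begin{cases}
2, & \BK\cong\BR,\\
\dim \wedge^{d_\BK-r_\BK}\,\ft'_\BK/(\ft'_\BK\cap\fk_\BK^\BC\oplus\BC), & \BK\cong\BC.
\end{cases}
\]
Since Lemmas \ref{tauk0} and \ref{dt2222} show the source has dimension \emph{at least} this, injectivity finishes the proof by a pure dimension count. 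This is the missing idea: you need an independent computation of $\dim\RH^{d_\BK}_{\mathrm{ct}}(\BR_+^\times\backslash G_\BK^0;\Pi_\BK\otimes F_\BK^\vee)$, and the standard machinery supplies it.
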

     \begin{proof}
     Realize the representation $\Pi_\BK$ via the cohomological induction as in \cite[Section 3]{Su17} (the real case) and  \cite[Section 6.2]{Su17} (the complex case). By using the explicit calculation of the relative Lie algebra cohomologies of these  cohomological induced representations (see \cite[Theorem 9.6.6]{WallachRealReductiveGroups1} or \cite[Theorem 5.5]{VoganZuckermanUnitaryRepresentationNonzeroCohomology}), we know that
     \[
       \dim \Hom_{K_\BK^0}(\wedge^{d_\BK} \fs_\BK^\BC, \Pi_\BK\otimes F_\BK^\vee)
        =\begin{cases}
        2, & \hbox{if $\BK\cong\BR$;} \\
        \dim \wedge^{d_\BK-r_\BK} \, \ft'_\BK/(\ft'_\BK\cap \fk_\BK^\BC\oplus \BC), & \hbox{if $\BK\cong\BC$.}
        \end{cases}
     \]
 By Lemmas \ref{tauk0} and \ref{dt2222},  the dimensions of the domains of the maps \eqref{idh1} and \eqref{idh2} are respectively at least the above numbers. Hence the proposition follows by noting that the maps \eqref{idh1} and \eqref{idh2} are both injective.
     \end{proof}

\subsection{Proof of  Theorem \ref{thm: archi period relations} in the real case}\label{section: realProof of Critical place and Non-vanishing Modular Symbol}

 In this subsection we assume that $\BK\cong\BR$.
We have the following period relation for $\lambda_{F_\BK,j}$.

\begin{lemma}\label{lems0}
The composition functionals
\begin{equation}\label{ff1}
  \delta_\BK^{+, \vee}\xrightarrow{v\mapsto v\otimes 1} \delta_\BK^{+, \vee}\otimes \xi_{\BK,j}^\vee\subset F_\BK^\vee\otimes \xi_{\BK,j}^\vee\xrightarrow{\mathrm i^{-jn}\cdot \lambda_{F_\BK,j}}\BC
\end{equation}
and
\begin{equation}\label{ff2}
  \delta_\BK^{-, \vee}\xrightarrow{v\mapsto v\otimes 1} \delta_\BK^{-, \vee}\otimes \xi_{\BK,j}^\vee\subset F_\BK^\vee\otimes \xi_{\BK,j}^\vee\xrightarrow{(-1)^j\cdot\mathrm i^{-jn}\cdot \lambda_{F_\BK,j}}\BC
\end{equation}
 are independent of the critical place $\frac{1}{2}+j$ of $\Pi_\BK$.
\end{lemma}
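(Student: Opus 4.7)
The plan is to extend the equivariance of $\lambda_{F_\BK,j}$ holomorphically to the complexification, then use specific elements of $H_\BC$ to transport the normalizing vector $v_{F_\BK}^\vee$ into $\delta_\BK^{\pm,\vee}$ and read off the $j$-dependence from the character $\xi_{\BK,j}$.

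First, since $F_\BK^\vee$ is an algebraic representation of $H_\BC := H(\BK \otimes_\BR \BC) = \GL_n(\BC)\times \GL_n(\BC)$, since $\xi_{\BK,j}$ is an algebraic character of $H_\BC$, and since $H_\BK$ is a real form of $H_\BC$, the $H_\BK$-equivariance of $\lambda_{F_\BK,j}$ extends by holomorphic continuation to the identity
\[
\lambda_{F_\BK,j}(h \cdot v \otimes 1) = \xi_{\BK,j}(h)\cdot \lambda_{F_\BK,j}(v \otimes 1),\qquad h \in H_\BC,\ v \in F_\BK^\vee.
\]
Applied to $J_{\mathrm i,\BK} = \diag(\mathrm i\cdot 1_n, 1_n) \in H_\BC$, which satisfies $\xi_{\BK,j}(J_{\mathrm i,\BK}) = \mathrm i^{jn}$, this gives $\lambda_{F_\BK,j}(J_{\mathrm i,\BK} v_{F_\BK}^\vee \otimes 1) = \mathrm i^{jn}$ for every critical place.

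Next, I would reduce \eqref{ff2} to \eqref{ff1} via an outer-involution argument. The element $c_1 := \diag(\iota_n, 1_n) \in C_\BK$, with $\iota_n \in O(n)$ of determinant $-1$, lies in $K_\BK \setminus K_\BK^0$ and implements the outer involution of $SO(2n)$ that exchanges the $K_\BK^0$-types $\delta_\BK^+$ and $\delta_\BK^-$ (they differ by the sign of the last coordinate of $\mu_\BK^\pm$). Hence $c_1:\delta_\BK^{+,\vee}\xrightarrow{\sim}\delta_\BK^{-,\vee}$; combined with $\xi_{\BK,j}(c_1)=(-1)^j$ this yields
\[
(-1)^j \mathrm i^{-jn}\lambda_{F_\BK,j}(c_1 v\otimes 1) = \mathrm i^{-jn}\lambda_{F_\BK,j}(v\otimes 1),\qquad v \in \delta_\BK^{+,\vee},
\]
showing the $j$-independence of \eqref{ff2} is equivalent to that of \eqref{ff1} (the case $\nu_n^\iota = \nu_{n+1}^\iota$ is degenerate and handled directly).

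For \eqref{ff1}, I would exploit that $\xi_{\BK,j}|_{C_\BK^0}$ is trivial (because $\det$ is trivial on $SO(n)$), so the restriction of $\lambda_{F_\BK,j}(\,\cdot\,\otimes 1)$ to $\delta_\BK^{+,\vee}$ lies in $\Hom_{C_\BK^0}(\delta_\BK^{+,\vee},\BC)$. By the Cartan--Helgason / Schlichtkrull theorem for the spherical pair $(SO(2n), SO(n)\times SO(n))$, this space is at most one-dimensional. Therefore it suffices to pin down the scalar by evaluating on one distinguished vector. Using Step 1, the vector $J_{\mathrm i,\BK} v_{F_\BK}^\vee$, which is $\fu'_\BK$-invariant in $F_\BK^\vee$, is a $\ft'_\BK$-weight vector whose $\ft'_\BK \cap \fk_\BK^\BC$-weight matches an extremal weight of $\delta_\BK^{\pm,\vee}$; its projection onto $\delta_\BK^{+,\vee}$ (composed with $c_1$ if necessary) gives a nonzero $j$-independent vector $v_+$ with $\lambda_{F_\BK,j}(v_+\otimes 1) = \mathrm i^{jn}\cdot(j\text{-independent constant})$. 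Dividing by $\mathrm i^{jn}$ yields the claim.

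The main obstacle will be the explicit weight matching: verifying that the projection of $J_{\mathrm i,\BK}v_{F_\BK}^\vee$ onto the $\delta_\BK^{+,\vee}$ summand of $F_\BK^\vee|_{K_\BK^0}$ is nonzero and $j$-independent. This requires a careful comparison between the $\fb_\BK$-highest weight $-w_0 \nu_\iota$ of $F_\BK^\vee$ (which determines the $\ft_\BC$-weight of $v_{F_\BK}^\vee$) and the branching of $F_\BK^\vee$ under $K_\BK^0 = SO(2n)$ restricted to $C_\BK^0 = SO(n)\times SO(n)$, using the transport $\ft_\BC \rightsquigarrow \ft'_\BK$ effected by $\Ad_{J_{\mathrm i,\BK}}$ and the explicit embedding $\gamma'_\iota$ defining the Cartan of $\fk_\BK^\BC$.
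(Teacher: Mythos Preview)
Your overall strategy---extend $\lambda_{F_\BK,j}$ to $H_\BC$, transport $v_{F_\BK}^\vee$ by an element of $H_\BC$, use that a $C_\BK^0$-invariant functional on an irreducible $K_\BK^0$-module is determined by its value on a single extremal weight vector---is the same as the paper's. The reduction of \eqref{ff2} to \eqref{ff1} via an element of $C_\BK\setminus C_\BK^0$ is also essentially what the paper does (it uses $J^-=\diag(1_n^-,1_n)$, which is a specific choice of your $c_1$, and treats the two cases in parallel rather than reducing one to the other).

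However, there is a genuine gap at the key step: your choice of $J_{\mathrm i,\BK}$ is wrong, and the ``projection'' you propose to fix it does not work. The vector $J_{\mathrm i,\BK}v_{F_\BK}^\vee$ is invariant under $\fu'_\BK=\Ad_{J_{\mathrm i,\BK}}(\fu_\BK^\BC)$, so it is the \emph{$\fb'_\BK$-highest} weight vector of $F_\BK^\vee$. That vector generates the top $K_\BK^0$-type of $F_\BK^\vee$ for the Borel $\fb'_\BK\cap\fk_\BK^\BC$, which is in general \emph{not} $\delta_\BK^{+,\vee}$. Even if its projection onto $\delta_\BK^{+,\vee}$ were nonzero, $\lambda_{F_\BK,j}$ has no reason to kill the complementary $K_\BK^0$-summands (it is only $H_\BK$-equivariant, not $K_\BK^0$-equivariant), so the value on the projection will not simply be the value $\mathrm i^{jn}$ you computed on $J_{\mathrm i,\BK}v_{F_\BK}^\vee$. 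The ``main obstacle'' you flag is therefore not a bookkeeping detail but a real obstruction.

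The paper's fix is to use $J_{-\mathrm i}$ instead. Since $\Ad_{J_{-\mathrm i}}(\fu_\BK^\BC)=\Ad_{J_{-1}}(\fu'_\BK)$ is the \emph{opposite} nilpotent radical, the vector $J_{-\mathrm i}v_{F_\BK}^\vee$ is the $\fb'_\BK$-\emph{lowest} weight vector of $F_\BK^\vee$. Now $\delta_\BK^+\subset F_\BK$ is by definition generated by the $\fb'_\BK$-highest weight vector of $F_\BK$, so under the canonical pairing its dual $\delta_\BK^{+,\vee}\subset F_\BK^\vee$ contains precisely this lowest weight line. Thus $J_{-\mathrm i}v_{F_\BK}^\vee\in\delta_\BK^{+,\vee}$ with no projection needed, and the computation $\mathrm i^{-jn}\lambda_{F_\BK,j}(J_{-\mathrm i}v_{F_\BK}^\vee\otimes 1)=\mathrm i^{-jn}\xi_{\BK,j}^{-1}(J_{-\mathrm i})=1$ finishes the argument. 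Replacing $J_{\mathrm i}$ by $J_{-\mathrm i}$ (and dropping the projection) turns your sketch into a correct proof identical to the paper's.
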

\begin{proof}
Recall that $\Ad_{J_{-1}}(\ft'_\BK)=\ft_\BK'$ and the Borel subalgebra $\Ad_{J_{-1}}(\fb_\BK')$ is opposite to $\fb_\BK'$.
Note that the representation $\delta_\BK^+\subset F_\BK$ of $K_\BK^0$ is generated by $F_\BK^{\fu_\BK'}\subset F_\BK$. Hence  its contragredient representation $\delta_\BK^{+,\vee}\subset F_\BK^\vee$ is  generated by
$( F_\BK^\vee)^{\Ad_{J_{-1}}(\fu_\BK')}= ( F_\BK^\vee)^{\Ad_{J_{-\mathrm i}}(\fu_\BK^\BC)}=\BC\cdot ( J_{-\mathrm i}. v_{F_{\BK}}^\vee)$.
Consequently,
\begin{equation}\label{gdp1}
 (\delta_\BK^{+,\vee})^{(\Ad_{J_{-1}}(\fu_\BK'))\cap \fk_\BK^\BC}=\BC\cdot ( J_{-\mathrm i}. v_{F_{\BK}}^\vee).
\end{equation}
The linear functional \eqref{ff1} sends $J_{-\mathrm i}. v_{F_{\BK}}^\vee$ to
${\mathrm i}^{-jn}\cdot \lambda_{F_\BK,j}(J_{-\mathrm i}. v_{F_{\BK}}^\vee\otimes 1)$, which is equal to ${\mathrm i}^{-jn}\cdot \xi_{\BK,j}^{-1}(J_{-\mathrm i})\cdot \lambda_{F_\BK,j}( v_{F_{\BK}}^\vee\otimes 1)=1$.
This implies that \eqref{ff1} is independent of the critical place $\frac{1}{2}+j$ of $\Pi_\BK$, since
$(\Ad_{J_{-1}}(\fu_\BK'))\cap \fk_\BK^\BC+\fc_\BK^\BC=\fk_\BK^\BC $ (as \eqref{transvb}) 
and the linear functional \eqref{ff1} is $C_\BK^0$-invariant. (See the proof of \cite[Lemma A.6]{SunCohomologicalDistinguishedRepn}.)

Put $J^-:=\mtrtwo{1_n^-}{0}{0}{1_n}\in  H_\BC$,
where $1_n^-:= \mtrtwo{1_{n-1}}{0}{0}{-1}$. One may easily deduce from \eqref{gdp1} that
$(\delta_\BK^{-,\vee})^{((\Ad_{J^-}\circ\Ad_{J_{-1}})(\fu_\BK'))\cap \fk_\BK^\BC}=\BC\cdot ( (J^-\cdot J_{-\mathrm i}). v_{F_{\BK}}^\vee)$.
The linear functional \eqref{ff2} sends $(J^-\cdot J_{-\mathrm i}). v_{F_{\BK}}^\vee$ to
\[
(-1)^j\cdot {\mathrm i}^{-jn}\cdot \lambda_{F_\BK,j}((J^-\cdot J_{-\mathrm i}). v_{F_{\BK}}^\vee\otimes 1)=(-1)^j\cdot {\mathrm i}^{-jn}\cdot \xi_{\BK,j}^{-1}(J^-\cdot J_{-\mathrm i})\cdot \lambda_{F_\BK,j}( v_{F_{\BK}}^\vee\otimes 1)=1.
\]
As before, this implies that the linear functional \eqref{ff2} is independent of the critical place $\frac{1}{2}+j$.

\end{proof}

We have a decomposition
$\Pi_\BK=\Pi_\BK^+\oplus \Pi_\BK^-$,
where $\Pi_\BK^\pm$ is the irreducible $G_\BK^0$-subrepresentation of $\Pi_\BK$ containing $\tau_\BK^\pm$. By uniqueness of Shalika functionals, we have that
\[
  \dim \Hom_{S_\BK}(\Pi_\BK^+, \eta_\BK\otimes \psi_\BK)+\dim \Hom_{S_\BK}(\Pi_\BK^-, \eta_\BK\otimes \psi_\BK)=\dim \Hom_{S_\BK}(\Pi_\BK, \eta_\BK\otimes \psi_\BK)=1.
\]
Hence we obtain that either $\lambda_{\BK}|_{\Pi_\BK^+}\neq 0$  or $\lambda_{\BK}|_{\Pi_\BK^-}\neq  0$.
Define
\begin{equation}\label{epsilonk}
\epsilon_\BK:=\epsilon_{\Pi_\BK, \psi_\BK}:=
                  \begin{cases}
                    1, & \hbox{if  $\lambda_{\BK}|_{\Pi_\BK^+}\neq 0$;} \\
                   -1, & \hbox{if  $\lambda_{\BK}|_{\Pi_\BK^-}\neq 0$.} 
                  \end{cases}
\end{equation}
We remark that $\epsilon_\BK$ depends not only on the representation $\Pi_\BK$, but also on the additive character $\psi_\BK$.

\begin{lemma}\label{lems1}
For all $s\in\BC$, we have that 
\[
Z^\circ(\,\cdot \,,s, \sgn_\BK)|_{\Pi_\BK^+}=\epsilon_\BK Z^\circ(\,\cdot \,,s, \sgn_\BK^0)|_{\Pi_\BK^+}\ {\rm and}\ 
Z^\circ(\,\cdot \,,s, \sgn_\BK)|_{\Pi_\BK^-}=-\epsilon_\BK Z^\circ(\,\cdot \,,s, \sgn_\BK^0)|_{\Pi_\BK^-}.
\]
\end{lemma}
\begin{proof}
If $\epsilon_\BK=1$ and $v\in \Pi_\BK^+$, then
$\left \langle\lambda_\BK, \mtrtwo{g}{0}{0}{1_n}.v\right \rangle=0$ for all $g\in \GL_n(\BK)\setminus \GL_n(\BK)^0$.
Hence for all $i\in \BZ$, and all $s\in \BC$ with  sufficiently large real part, we have that
\begin{eqnarray*}
   Z^\circ(v,s, \sgn_\BK^i)&=&\frac{1}{L(s, \Pi_\BK\otimes \sgn_\BK^i)}\int_{\GL_n(\BK)} \left \langle\lambda_\BK, \mtrtwo{g}{0}{0}{1_n}.v\right \rangle\cdot \abs{\det g}_\BK^{s-\frac{1}{2}}\cdot \sgn^i_\BK(\det g)\, dg\\
&=& \frac{1}{L(s, \Pi_\BK)}\int_{\GL_n(\BK)^0} \left \langle\lambda_\BK, \mtrtwo{g}{0}{0}{1_n}.v\right \rangle\cdot \abs{\det g}_\BK^{s-\frac{1}{2}}\,dg.\\
\end{eqnarray*}
This proves the equality for this case. The remaining cases are proved by the same argument. 
\end{proof}

\begin{lemma}\label{lems2}
The functional $Z^\circ(\,\cdot \,,s, \sgn_\BK^0)|_{\tau_\BK}$ 
is independent of $s\in \BC$.
\end{lemma}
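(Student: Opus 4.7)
The plan is to show that the restriction $\Phi_s := Z^\circ(\,\cdot\,,s,\sgn_\BK^0)|_{\tau_\BK}$ takes values in a fixed one-dimensional space, and then to pin it down using a uniform cohomological test vector. First, I would observe that for $(g_1, g_2) \in C_\BK = \mathrm{O}(n) \times \mathrm{O}(n)$ one has $\abs{\det g_i}_\BK = 1$, and hence the character $\xi_{\sgn_\BK^0, s-\frac{1}{2}}$ of $H_\BK$ defined in \eqref{eq: xichi} restricts to the $s$-independent character $(g_1,g_2) \mapsto \eta_\BK(\det g_2)^{-1}$ of $C_\BK$. By the $H_\BK$-equivariance of $Z^\circ(\,\cdot\,,s,\sgn_\BK^0)$, the functional $\Phi_s$ takes values in the fixed finite-dimensional space
\[
  V := \Hom_{C_\BK}(\tau_\BK, \xi_C), \qquad \xi_C : (g_1,g_2) \mapsto \eta_\BK(\det g_2),
\]
which does not depend on $s$.

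If $V = \{0\}$, then $\Phi_s \equiv 0$ and the claim is trivial, so I would assume $V \neq \{0\}$. Unwinding the definitions shows that this is precisely the hypothesis \eqref{btau} of Theorem \ref{thm: uniform coh test vector} in the case $\chi_\BK = \sgn_\BK^0$; applying that theorem produces a vector $v_0 \in \tau_\BK$ satisfying $Z^\circ(v_0, s, \sgn_\BK^0) = 1$ for all $s \in \BC$. In particular, $\Phi_s(v_0) = 1$ is a fixed non-zero scalar independent of $s$.

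The crucial remaining step is to show that $\dim V = 1$. The pair $(K_\BK, C_\BK) = (\mathrm{O}(2n), \mathrm{O}(n) \times \mathrm{O}(n))$ is a compact symmetric pair (arising as the fixed points of the involution $g \mapsto JgJ$, where $J = \diag(1_n, -1_n)$), and by the Schlichtkrull extension of the Cartan--Helgason theorem---the same tool invoked in the proof of Proposition \ref{prop: critical place}---any character of $C_\BK$ appears with multiplicity at most one in the restriction to $C_\BK$ of any irreducible $K_\BK$-representation. Combined with $V \neq \{0\}$, this yields $\dim V = 1$. Since $V$ is one-dimensional and $\Phi_s(v_0) = 1$ is a fixed non-zero value, $\Phi_s$ is uniquely determined by this single datum, and hence independent of $s$, as required.

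The hard part will be executing the Schlichtkrull-type multiplicity-one verification in the precise form needed here, paying due attention to the disconnectedness of $K_\BK = \mathrm{O}(2n)$ and to the explicit shape $\tau_\BK = \tau_\BK^+ \oplus \tau_\BK^-$ described in Lemma \ref{dt1}; this should nonetheless reduce to a concrete branching calculation from $\mathrm{O}(2n)$ down to $\mathrm{O}(n) \times \mathrm{O}(n)$ for the specific character $\xi_C$.
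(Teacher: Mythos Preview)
Your proposal is correct and follows essentially the same approach as the paper: establish that the restriction lands in a one-dimensional space of $C_\BK$-equivariant functionals on $\tau_\BK$ and then pin it down via the uniform cohomological test vector from Theorem~\ref{thm: uniform coh test vector}. The paper's proof is simply terser --- it asserts $\dim \Hom_{C_\BK}(\tau_\BK\otimes \xi_{\BK,0},\BC)=1$ directly by the Cartan--Helgason theorem (citing \cite{He00}) and concludes immediately, without separating the $V=\{0\}$ case or worrying explicitly about disconnectedness; your caution about the latter is not misplaced, but in practice the multiplicity-one statement for the symmetric pair $(\mathrm O(2n),\mathrm O(n)\times\mathrm O(n))$ goes through without difficulty.
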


\begin{proof}
 Applying the Cartan-Helgason theorem (see \cite[Chapter V, Theorem 4.1]{He00}), we know that
$\dim \Hom_{C_\BK}(\tau_\BK\otimes \xi_{\BK,0}, \BC)=1$.
The lemma follows from Theorem \ref{thm: uniform coh test vector}.
\end{proof}

\begin{lemma}\label{lems3}
The composition functionals
\begin{equation}\label{ff11}
  \tau_\BK^+\xrightarrow{v\mapsto v\otimes 1} \tau_\BK^+\otimes \xi_{\BK,j}\subset \Pi_\BK\otimes \xi_{\BK,j}\xrightarrow
  {\epsilon_\BK^j\cdot Z^\circ(\,\cdot \,,\frac{1}{2}+j, \sgn_\BK^j)}\BC
\end{equation}
and
\begin{equation}\label{ff22}
  \tau_\BK^-\xrightarrow{v\mapsto v\otimes 1} \tau_\BK^-\otimes \xi_{\BK,j}\subset \Pi_\BK\otimes \xi_{\BK,j}\xrightarrow
  {(-\epsilon_\BK)^j\cdot Z^\circ(\,\cdot \,,\frac{1}{2}+j, \sgn_\BK^j)}\BC
\end{equation}
 are independent of the critical place $\frac{1}{2}+j$ of $\Pi_\BK$.
\end{lemma}

\begin{proof}
This directly follows from Lemmas \ref{lems1} and \ref{lems2}.
\end{proof}

Because $(\tau_\BK^+\otimes \delta_\BK^{+,\vee})\oplus (\tau_\BK^-\otimes \delta_\BK^{-, \vee})\subset\Pi_\BK\otimes F_\BK^\vee
=(\Pi_\BK\otimes \xi_{\BK,j})\otimes (F_\BK^\vee\otimes \xi_{\BK,j}^\vee)$, 
by combining Lemma \ref{lems0} and Lemma \ref{lems3}, the restriction of the functional 
\[
(\Pi_\BK\otimes \xi_{\BK,j})\otimes (F_\BK^\vee\otimes \xi_{\BK,j}^\vee)
\xrightarrow{\epsilon_\BK^j \cdot \mathrm i^{-jn} \cdot Z^\circ(\cdot,\frac{1}{2}+j,\sgn_{\BK}^j)\otimes \lambda_{F_\BK,j}} \BC
\]
to the subspace $(\tau_\BK^+\otimes \delta_\BK^{+,\vee})\oplus (\tau_\BK^-\otimes \delta_\BK^{-, \vee})$
is independent of the critical place $\frac{1}{2}+j$. 
Therefore the first assertion  of Proposition \ref{propdcoh}, together with the formulation of the archimedean modular symbol map $\CP_{\BK,j}$ in \eqref{cpbkj2}, implies that $\epsilon_\BK^j \cdot \mathrm i^{-jn}\cdot \CP_{\BK,j}$ is  independent of the critical place $\frac{1}{2}+j$. This finishes the proof of
 Theorem \ref{thm: archi period relations} in the real case.

\subsection{Proof of  Theorem \ref{thm: archi period relations} in the complex case}\label{section: comProof of Critical place and Non-vanishing Modular Symbol}

We take $\BK\cong\BC$. In this case, $\sgn_\BK$ is the trivial character of $\BK^\times$ and take $\epsilon_\BK=1$. 
The following is similar to Lemma \ref{lems0}.

\begin{lemma}\label{clems0}
The composition functional
\begin{equation}\label{cff1}
  \delta_\BK^\vee\xrightarrow{v\mapsto v\otimes 1} \delta_\BK^\vee\otimes \xi_{\BK,j}^\vee\subset F_\BK^\vee\otimes \xi_{\BK,j}^\vee\xrightarrow{\mathrm i^{-2jn}\cdot \lambda_{F_\BK,j}}\BC
\end{equation}
is independent of the critical place $\frac{1}{2}+j$ of $\Pi_\BK$.
\end{lemma}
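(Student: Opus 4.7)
My plan is to adapt the strategy of the real-case Lemma \ref{lems0} to the complex setting $\BK\cong\BC$, where $\CE_\BK = \{\iota, \bar \iota\}$. The key structural simplification is that the representation $\delta_\BK$ of $K_\BK^0$ is already irreducible (Lemma \ref{dt2}), so there is no $\pm$ decomposition to track; there is only one composition functional to handle, and only one overall twist $\mathrm i^{-2jn}$ to absorb. The first step is to locate a canonical generator of the relevant $C_\BK^0$-invariant line in $\delta_\BK^\vee$. Since $\delta_\BK \subset F_\BK$ is generated by the highest-weight line $F_\BK^{\fu_\BK}$ dual to $\BC \cdot v_{F_\BK}^\vee$, passing to contragredients and using $\Ad_{J_{-1,\BK}} \circ \Ad_{J_{\mathrm i,\BK}} = \Ad_{J_{-\mathrm i,\BK}}$ together with $\fu_\BK' = \Ad_{J_{\mathrm i,\BK}}(\fu_\BK^\BC)$ will identify the opposite extremal-weight line of $\delta_\BK^\vee$ as $\BC \cdot (J_{-\mathrm i, \BK}. v_{F_\BK}^\vee)$, so that
\[
 (\delta_\BK^\vee)^{(\Ad_{J_{-1,\BK}}(\fu_\BK')) \cap \fk_\BK^\BC} = \BC \cdot (J_{-\mathrm i, \BK}. v_{F_\BK}^\vee).
\]

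Next I would check the complex analogue of the transversality condition
\[
 (\Ad_{J_{-1,\BK}}(\fu_\BK')) \cap \fk_\BK^\BC + \fc_\BK^\BC = \fk_\BK^\BC,
\]
obtained by applying the second line of \eqref{transvb} in each of the two factors of $\fg_\BK^\BC = \fg_\BC \times \fg_\BC$ and then taking $\theta_\BK$-fixed points. Combined with the $C_\BK^0$-invariance of the functional in \eqref{cff1}, this forces any nonzero $C_\BK^0$-invariant linear functional on $\delta_\BK^\vee$ to be determined by its value on the generator $J_{-\mathrm i, \BK}. v_{F_\BK}^\vee$ (again in exact parallel with the argument below \eqref{gdp1}).

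The remaining value computation is then direct. Using the $H(\BK \otimes_\BR \BC)$-equivariance of $\lambda_{F_\BK, j}$ (with trivial action on $\BC$) and the normalization $\lambda_{F_\BK, j}(v_{F_\BK}^\vee \otimes 1) = 1$, I would obtain
\[
 \mathrm i^{-2jn}\cdot \lambda_{F_\BK, j}(J_{-\mathrm i, \BK}.v_{F_\BK}^\vee \otimes 1) = \mathrm i^{-2jn} \cdot \xi_{\BK, j}^{-1}(J_{-\mathrm i, \BK}) \cdot 1 = 1,
\]
where $J_{-\mathrm i, \BK} = (J_{-\mathrm i}, J_{-\mathrm i}) \in H_\BC \times H_\BC$ and the product character $\xi_{\BK,j} = (\det^j \otimes \det^{-j-w_\iota}) \otimes (\det^j \otimes \det^{-j-w_{\bar \iota}})$ gives $\xi_{\BK,j}(J_{-\mathrm i, \BK}) = (-\mathrm i)^{jn}\cdot (-\mathrm i)^{jn} = \mathrm i^{-2jn}$. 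This is the precise cancellation encoded by the prefactor $\mathrm i^{-2jn}$ in the statement, exactly mirroring the role played by $\mathrm i^{-jn}$ in the real case with $[\BK:\BR]=1$.

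The main obstacle I anticipate is the verification of the transversality identity in the complex case: \eqref{transvb} is stated for $\fg_\BK$ and $\fh_\BK$ over $\BR$, and using it after complexification and then intersecting with $\fk_\BK^\BC$ requires careful bookkeeping of the $\theta_\BK$-action which swaps the $\iota$- and $\bar \iota$-factors of $\fg_\BK^\BC$. Once this is dispatched, the rest of the proof is a verbatim translation of the real case, with the single functional \eqref{cff1} playing the combined role of \eqref{ff1} and \eqref{ff2}, and with no sign ambiguity since $\delta_\BK$ no longer splits.
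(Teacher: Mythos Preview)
Your proposal is correct and follows essentially the same approach as the paper's proof: identify the extremal-weight line $(\delta_\BK^\vee)^{(\Ad_{J_{-1,\BK}}(\fu_\BK'))\cap \fk_\BK^\BC}=\BC\cdot(J_{-\mathrm i,\BK}.v_{F_\BK}^\vee)$, evaluate the functional there to get $1$ via the cancellation $\mathrm i^{-2jn}\cdot\xi_{\BK,j}^{-1}(J_{-\mathrm i,\BK})=1$, and conclude using the transversality $(\Ad_{J_{-1,\BK}}(\fu_\BK'))\cap\fk_\BK^\BC+\fc_\BK^\BC=\fk_\BK^\BC$.

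One small correction: the functional \eqref{cff1} is not $C_\BK^0$-invariant in general. A short computation shows that for $(c_1,c_2)\in C_\BK=\mathrm U(n)\times\mathrm U(n)$ the functional transforms by $\xi_{\BK,j}|_{C_\BK}(c_1,c_2)=\det(c_2)^{w_{\bar\iota}-w_\iota}$, which need not be trivial when $w_\iota\neq w_{\bar\iota}$. The paper handles this by observing that $\xi_{\BK,j}|_{C_\BK}$ is \emph{independent of $j$}, so the functional lies in the fixed one-dimensional space $\Hom_{C_\BK}(\delta_\BK^\vee,\xi_{\BK,j}|_{C_\BK})$ and is pinned down by its value on the extremal-weight generator. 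Your argument goes through verbatim once ``$C_\BK^0$-invariance'' is replaced by ``$C_\BK$-equivariance with target character independent of $j$''; nothing else changes.
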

\begin{proof}
Recall that $\Ad_{J_{-1, \BK}}(\ft'_\BK)=\ft_\BK'$ and the Borel subalgebra $\Ad_{J_{-1, \BK}}(\fb_\BK')$ is opposite to $\fb_\BK'$.
As in the proof of Lemma \ref{lems0}, we know that
$(\delta_\BK^\vee)^{(\Ad_{J_{-1,\BK}}(\fu_\BK'))\cap \fk_\BK^\BC}=\BC\cdot ( J_{-\mathrm i, \BK}. v_{F_{\BK}}^\vee)$.
The linear functional \eqref{cff1} belongs to
  $\Hom_{C_\BK}(\delta_\BK^\vee, \xi_{\BK,j})$ and sends $J_{-\mathrm i, \BK}. v_{F_{\BK}}^\vee$ to
${\mathrm i}^{-2jn}\cdot \lambda_{F_\BK,j}(J_{-\mathrm i, \BK}. v_{F_{\BK}}^\vee\otimes 1)$ which is eual to ${\mathrm i}^{-2jn}\cdot \xi_{\BK,j}^{-1}(J_{-\mathrm i,\BK})\cdot \lambda_{F_\BK,j}( v_{F_{\BK}}^\vee\otimes 1)=1$.
This implies that \eqref{cff1} is independent of the critical place $\frac{1}{2}+j$ of $\Pi_\BK$, because the character $\xi_{\BK,j}|_{C_\BK}$ is independent of $j$ and
  $(\Ad_{J_{-1, \BK}}(\fu_\BK'))\cap \fk_\BK^\BC+\fc_\BK^\BC=\fk_\BK^\BC.$
\end{proof}

\begin{lemma}\label{clems2}
The composition functionals
\begin{equation}\label{ff11}
  \tau_\BK\xrightarrow{v\mapsto v\otimes 1} \tau_\BK\otimes \xi_{\BK,j}\subset \Pi_\BK\otimes \xi_{\BK,j}\xrightarrow{Z^\circ(\,\cdot \,,\frac{1}{2}+j, \sgn_\BK^j)}\BC
\end{equation}
is independent of the critical place $\frac{1}{2}+j$ of $\Pi_\BK$.
\end{lemma}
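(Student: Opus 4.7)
The plan is to adapt the argument of Lemma \ref{lems2} to the complex setting, exploiting the fact that for $\BK \cong \BC$ the quadratic character $\sgn_\BK$ is trivial. Thus $Z^\circ(\,\cdot\,,s,\sgn_\BK^j)$ is literally the same linear functional $Z^\circ(\,\cdot\,,s,\mathbf{1})$ for every integer $j$, and the family of functionals $v \mapsto Z^\circ(v,\tfrac12+j,\sgn_\BK^j)$ on $\tau_\BK$ that needs to be compared is simply the restriction to $\tau_\BK$ of the holomorphic family $s \mapsto Z^\circ(\,\cdot\,,s,\mathbf{1})$ evaluated at the critical places $s=\tfrac12+j$.

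The key observation is that on the compact group $C_\BK = \mathrm{U}(n) \times \mathrm{U}(n)$, the absolute value character $\abs{\det}_\BK^t$ is identically $1$, so the restriction $\xi_{\mathbf{1},t}|_{C_\BK}$ of the character $\xi_{\mathbf{1},t}$ defined in \eqref{eq: xichi} is independent of $t \in \BC$; an explicit computation gives $\xi_{\mathbf{1},t}|_{C_\BK}(h_1,h_2) = \eta_\BK^{-1}(\det h_2)$, which matches the restriction of $\xi_{\BK,j}$ to $C_\BK$. Consequently, restricting to $\tau_\BK$ yields a holomorphic map
\[
 s \mapsto Z^\circ(\,\cdot\,,s,\mathbf{1})|_{\tau_\BK} \in W := \Hom_{C_\BK}(\tau_\BK \otimes \xi_{\mathbf{1},0}|_{C_\BK}, \BC).
\]
By Theorem \ref{thm: uniform coh test vector}, applied with $\chi_\BK=\mathbf{1}$ (the branching condition \eqref{btau} being supplied by the balancedness of the coefficient system, as noted in the remark following that theorem), there exists a uniform cohomological test vector $v_0 \in \tau_\BK$ with $Z^\circ(v_0,s,\mathbf{1})=1$ for every $s \in \BC$. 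In particular each member of the above family is nonzero.

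To conclude, I would establish the multiplicity-one statement $\dim W = 1$; granting it, any two members of $W$ are proportional, and pairing with $v_0$ forces the proportionality constant to equal $1$. Hence $Z^\circ(\,\cdot\,,s,\mathbf{1})|_{\tau_\BK}$ is independent of $s$, and specialization to $s=\tfrac12+j$ yields Lemma \ref{clems2}. The main obstacle is the verification that $\dim W = 1$. Unlike the real case, where this is an immediate consequence of the classical Cartan-Helgason theorem for the Riemannian symmetric pair $(\GL_{2n}(\BR),\mathrm{O}(2n))$, the pair $(\mathrm{U}(2n),\mathrm{U}(n)\times\mathrm{U}(n))$ is not Riemannian symmetric, so a more refined branching analysis is needed. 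I would appeal to Schlichtkrull's generalization \cite[Theorem 7.2]{Sch} applied to the highest weight of $\tau_\BK$ computed in Lemma \ref{dt2}, or equivalently compute via Littlewood-Richardson the multiplicity of the character $\xi_{\mathbf{1},0}^{-1}|_{C_\BK}$ in $\tau_\BK|_{C_\BK}$, using the balancedness of the coefficient system to control the combinatorics of the weights $\mu_\BK^\iota$ and force the multiplicity to equal one.
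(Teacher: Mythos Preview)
Your proposal is correct and follows essentially the same approach as the paper: establish $\dim \Hom_{C_\BK}(\tau_\BK\otimes\xi_{\BK,0},\BC)=1$ via Schlichtkrull's generalization of Cartan--Helgason, then invoke Theorem~\ref{thm: uniform coh test vector} to conclude that $Z^\circ(\,\cdot\,,s,\sgn_\BK)|_{\tau_\BK}$ is independent of $s$. One small correction: the pair $(\RU(2n),\RU(n)\times\RU(n))$ \emph{is} a compact Riemannian symmetric pair (the complex Grassmannian); Schlichtkrull's extension is needed not because the pair fails to be symmetric but because the sought $C_\BK$-type is a nontrivial one-dimensional character rather than the trivial representation.
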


\begin{proof}
 As an instance of  H. Schlichtkrull's
generalization of the Cartan-Helgason Theorem (\cite[Theorem 7.2]{Sch}, see also \cite[Theorem 2.1]{KnappBranchingRuleCptSymmetricSpace}), we know that
\begin{equation}\label{chc}
  \dim \Hom_{C_\BK}(\tau_\BK\otimes \xi_{\BK,0}, \BC)=1.
\end{equation}
Then it follows from Theorem \ref{thm: uniform coh test vector} that the functional
$Z^\circ(\,\cdot \,,s, \sgn_\BK)|_{\tau_\BK}$ 
is independent of $s\in \BC$. This implies the lemma.
\end{proof}

Because $\tau_\BK\otimes \delta_\BK^\vee \subset \Pi_\BK\otimes F_\BK^\vee
=(\Pi_\BK\otimes \xi_{\BK,j})\otimes (F_\BK^\vee\otimes \xi_{\BK,j}^\vee)$, 
by combining Lemma \ref{clems0} and Lemma \ref{clems2}, the restriction of the functional 
\[
(\Pi_\BK\otimes \xi_{\BK,j})\otimes (F_\BK^\vee\otimes \xi_{\BK,j}^\vee)
\xrightarrow{ \mathrm i^{-2jn} \cdot Z^\circ(\cdot,\frac{1}{2}+j,\sgn_{\BK}^j)\otimes \lambda_{F_\BK,j}}\BC
\]
to the subspace $\tau_\BK\otimes \delta_\BK^\vee$ is independent of the critical place $\frac{1}{2}+j$. This implies
 Theorem \ref{thm: archi period relations} in the complex case.

\subsection{Proof of Theorem \ref{thm: Non-vanishing hypothesis} in the complex case}\label{section: ncProof of Critical place and Non-vanishing Modular Symbol}

 In the real case, Theorem \ref{thm: Non-vanishing hypothesis} has already been proved in \cite[Theorem 5.5]{SunCohomologicalDistinguishedRepn}. We consider the case that $\BK\cong\BC$.

Recall that $\tau_{\BK,0}$ has its extreme weight
$(2n-1, 2n-3,\cdots, 3-2n, 1-2n)$ as an irreducible representation of $K_\BK$. 
The Cartan-Helgason theorem (see \cite[Chapter V, Theorem 4.1]{He00})  implies that
\begin{equation} \label{d1}
\dim (\tau_{\BK,0})^{C_\BK}=1.
\end{equation}

\begin{lemma}\label{cprv00}
There exists $\phi_0\in \Hom_{K_\BK}(\wedge^{d_\BK} \fs_\BK^\BC , \tau_{\BK,0})$ such that the composition of
\[
 \wedge^{d_\BK}\fq_\BK^\BC\hookrightarrow \wedge^{d_\BK} \fs_\BK^\BC \xrightarrow{\phi_0} \tau_{\BK,0}
\]
is nonzero. Moreover, the image of this composition map equals $(\tau_{\BK,0})^{C_\BK}$.
\end{lemma}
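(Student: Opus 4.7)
First I would fix a $C_\BK$-equivariant complement $\fm$ to $\fq_\BK^\BC$ in $\fs_\BK^\BC$ (identifying $\fs_\BK^\BC\cong\fsl_{2n}(\BC)$ as a $K_\BK$-module, $\fm$ may be taken to be the off-block-diagonal subspace). Then one has the $C_\BK$-equivariant decomposition
\[
\wedge^{d_\BK}\fs_\BK^\BC \;=\; \bigoplus_{a+b=d_\BK} \wedge^a \fq_\BK^\BC \otimes \wedge^b \fm,
\]
in which the summand with $(a,b)=(d_\BK,0)$ is $\wedge^{d_\BK}\fq_\BK^\BC$, a one-dimensional $C_\BK$-trivial subspace. (The adjoint action of each factor $\GL_n(\BC)$ of $C_\BK^\BC$ on $\fgl_n(\BC)$ has trivial determinant, and the trace defect between $\fgl_n\oplus\fgl_n$ and $\fq_\BK^\BC$ is also $C_\BK$-trivial.) Combined with \eqref{d1}, this shows that for any $\phi_0\in\Hom_{K_\BK}(\wedge^{d_\BK}\fs_\BK^\BC,\tau_{\BK,0})$, the restriction $\phi_0|_{\wedge^{d_\BK}\fq_\BK^\BC}$ is $C_\BK$-equivariant from the trivial representation, hence lands in the one-dimensional space $(\tau_{\BK,0})^{C_\BK}$. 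So the ``moreover'' clause will follow automatically once non-vanishing is proven for some $\phi_0$.

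For the existence of such a $\phi_0$, I would use the explicit identification
\[
\Hom_{K_\BK}(\wedge^{d_\BK}\fs_\BK^\BC,\tau_{\BK,0})\;\cong\;\wedge^{n-1}\!\bigl(\ft_\BK'/(\ft_\BK'\cap\fk_\BK^\BC\oplus\BC)\bigr)
\]
implicit in the proof of Lemma \ref{tauk0} via Vogan--Zuckerman theory (cf.~\cite[Theorem 9.6.6]{WallachRealReductiveGroups1}). The plan is to realize $\tau_{\BK,0}$ as the irreducible constituent of the appropriate wedge power of a $\theta_\BK$-stable nilradical inside $\fk_\BK^\BC$ generated by an extremal weight vector, and to choose $\phi_0$ as the $K_\BK$-equivariant map corresponding to a suitable element of $\wedge^{n-1}$ of the ``anti-diagonal'' Cartan quotient. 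With this choice, the image of the top form on $\fq_\BK^\BC$ under $\phi_0$ can be matched up with the $C_\BK$-invariant vector in $\tau_{\BK,0}$. A weight computation is already consistent: the generator of $\wedge^{d_\BK}\fq_\BK^\BC$ has zero $(\ft_\BK'\cap\fk_\BK^\BC)$-weight, as does the $C_\BK$-invariant vector in $\tau_{\BK,0}$ (since $\tau_{\BK,0}$ has trivial central character).

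The main obstacle will be verifying the non-vanishing of the coefficient relating these two zero-weight vectors, which reduces to a Clebsch--Gordan/PRV-type multiplicity computation inside $\wedge^{d_\BK}\fsl_{2n}(\BC)\otimes\tau_{\BK,0}^\vee$. An alternative, more geometric approach is to construct $\phi_0$ as (the algebraic avatar of) the pullback to the cohomology of $G_\BK^0/K_\BK$ with coefficients in the $K_\BK$-homogeneous vector bundle attached to $\tau_{\BK,0}$ of the fundamental class on the totally geodesic submanifold $H_\BK^0/C_\BK^0$; the restriction to that submanifold then realizes the $C_\BK$-invariant line by construction, bypassing the explicit weight calculation.
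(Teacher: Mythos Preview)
Your handling of the ``moreover'' clause is fine and essentially the same as the paper's: once non-vanishing is established, $C_\BK$-equivariance and $\dim(\tau_{\BK,0})^{C_\BK}=1$ force the image to be exactly that line.

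The gap is in the non-vanishing itself. You correctly isolate it as the crux, but neither of your two suggested routes actually closes it. Weight consistency (both vectors have zero $(\ft_\BK'\cap\fk_\BK^\BC)$-weight) is necessary but far from sufficient: the zero-weight space of $\wedge^{d_\BK}\fs_\BK^\BC$ is large, and knowing the multiplicity $\binom{2n-1}{n-1}$ of $\tau_{\BK,0}$ in $\wedge^{d_\BK}\fs_\BK^\BC$ says nothing about whether the restriction map $\phi_0\mapsto\phi_0|_{\wedge^{d_\BK}\fq_\BK^\BC}$ from this $\binom{2n-1}{n-1}$-dimensional space to a one-dimensional space is nonzero. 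Your ``Clebsch--Gordan/PRV'' remark does not point to a computation that decides this. The geometric alternative via fundamental classes is also only a sketch; turning it into a proof would require exhibiting a harmonic representative and checking its $\tau_{\BK,0}$-component, which is precisely the issue at hand.

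The paper bypasses all of this with a single transversality argument. Fix a $K_\BK$-invariant Hermitian form on $\fs_\BK^\BC$. Using that $\fb_\BK'$ is transversal to $\fh_\BK^\BC$ (equation \eqref{transvb}), one checks that $\fq_\BK^\BC$ meets the subspace $\fa'\oplus(\Ad_{J_{-1,\BK}}(\fu_\BK'))_{\mathrm n}$ trivially, where $\fa'$ is the part of $(\ft_\BK')_{\mathrm n}$ orthogonal to $\fq_\BK^\BC$. Since $\dim\fq_\BK^\BC=\dim(\fa\oplus(\fu_\BK')_{\mathrm n})$ and these two are complementary to the same subspace in the orthogonal decomposition $\fs_\BK^\BC=(\fa\oplus(\fu_\BK')_{\mathrm n})\oplus(\fa'\oplus(\Ad_{J_{-1,\BK}}(\fu_\BK'))_{\mathrm n})$, the Hermitian pairing between them is non-degenerate. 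Hence the one-dimensional spaces $\wedge^{d_\BK}\fq_\BK^\BC$ and $\wedge^{d_\BK}(\fa\oplus(\fu_\BK')_{\mathrm n})$ are \emph{not} orthogonal. But $\wedge^{d_\BK}(\fa\oplus(\fu_\BK')_{\mathrm n})$ is an extremal weight line generating a copy $\tau_{\BK,0}'\subset\wedge^{d_\BK}\fs_\BK^\BC$, so the orthogonal projection onto $\tau_{\BK,0}'$ is the desired $\phi_0$. This is the idea you are missing: rather than choosing $\phi_0$ abstractly and then computing a coefficient, build $\phi_0$ as a projection after first exhibiting a concrete highest-weight line that visibly pairs with $\wedge^{d_\BK}\fq_\BK^\BC$.
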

\begin{proof}
The proof is a modification of  that of \cite[Lemma 2.10]{Su17}. Fix a $K_\BK$-invariant positive definite Hermitian form $\langle\,,\rangle$ on $\fs_\BK^\BC$. This induces a $K_\BK$-invariant positive definite Hermitian form $\langle\,,\rangle_\wedge$ on $\wedge^{d_\BK} \fs_\BK^\BC$.

For every subspace $\fm$ of $\fg_\BK^\BC$, we define
$\fm_{\mathrm n}:=\fm/(\fm\cap (\fk_\BK^\BC\oplus \BC))\subset \fs_\BK^\BC$.
In particular, $(\fh_\BK^\BC)_{\mathrm n}=\fq_\BK^\BC$.
Write
$(\ft_\BK')_{\mathrm n}=\fa\oplus \fa'$,
where $\fa:=(\ft_\BK')_{\mathrm n}\cap (\fh_\BK^\BC)_{\mathrm n}$, and $\fa'$ is its orthogonal complement in $(\ft_\BK')_{\mathrm n}$. Then we have an orthogonal decomposition
\begin{equation}\label{dimeq0}
   \fs_\BK^\BC=\left(\fa\oplus (\fu'_\BK)_{\mathrm n}\right)\oplus \left(\fa'\oplus(\Ad_{J_{-1, \BK}}(\fu_\BK'))_{\mathrm n}\right).
\end{equation}
The second equality in \eqref{transvb} implies that
$\Ad_{J_{-1, \BK}}(\fb_\BK') \cap \fh_\BK\subset \ft_\BK'$.
This further implies that
\begin{equation}\label{dimeq1}
  \fq_\BK^\BC \cap \left(\fa'\oplus(\Ad_{J_{-1, \BK}}(\fu_\BK'))_{\mathrm n}\right)=\{0\}.
\end{equation}
Note that
\begin{equation}\label{dimeq2}
  \dim \fq_\BK^\BC=\dim \left(\fa\oplus (\fu'_\BK)_{\mathrm n}\right).
\end{equation}

Using \eqref{dimeq1}, \eqref{dimeq2} and the orthogonal decomposition \eqref{dimeq0}, we conclude that the  paring
\[
  \langle\,,\,\rangle\ \colon\ \fq_\BK^\BC\times \left(\fa\oplus (\fu'_\BK)_{\mathrm n}\right)\rightarrow \BC
\]
is non-degenerate. Therefore the one-dimensional subspaces $\wedge^{d_\BK} \fq_\BK^\BC$ and $\wedge^{d_\BK}(\fa\oplus (\fu'_\BK)_{\mathrm n})$ of $\wedge^{d_\BK}\fs_\BK^\BC$ are not orthogonal to each other. The space $\wedge^{d_\BK}(\fa\oplus (\fu'_\BK)_{\mathrm n})$ generates an irreducible $K_\BK$-subrepresentation of $\wedge^{d_\BK}\fs_\BK^\BC$, which is isomorphic to $\tau_{\BK,0}$.
We write $\tau_{\BK,0}'$ for this representation. Let $\phi_0': \fs_\BK^\BC \rightarrow \tau_{\BK,0}'$ denote the orthogonal projection. Then the
 composition of
\[
 \wedge^{d_\BK}\fq_\BK^\BC\hookrightarrow \wedge^{d_\BK} \fs_\BK^\BC \xrightarrow{\phi_0'} \tau_{\BK,0}'
\]
is nonzero. This proves the first assertion of the lemma. The second assertion then follows by \eqref{d1} and by considering the $C_\BK$-actions.
\end{proof}

\begin{lemma}\label{lcprv}
The composition of
\begin{equation}\label{cprv}
(\tau_{\BK,0})^{C_\BK}\hookrightarrow \tau_{\BK,0}\xrightarrow{\phi_1} \tau_\BK\otimes \delta_\BK^\vee=(\tau_\BK\otimes \xi_{\BK,j})\otimes (\delta_\BK^\vee\otimes \xi_{\BK,j}^\vee)\xrightarrow{Z^\circ(\,\cdot\,, \frac{1}{2}+j, \sgn_\BK^j)\otimes \lambda_{F_\BK,j}}\BC
\end{equation}
is a linear isomorphism, where $\phi_1$ is a nonzero element of $\Hom_{K_\BK}(\tau_{\BK,0}, \tau_\BK\otimes\delta_\BK^\vee)$.
\end{lemma}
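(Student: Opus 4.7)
The source $(\tau_{\BK,0})^{C_\BK}$ is one-dimensional by Schlichtkrull's generalization of the Cartan--Helgason theorem applied to the symmetric pair $(K_\BK, C_\BK) = (\mathrm U(2n), \mathrm U(n) \times \mathrm U(n))$, since the highest weight $(2n-1, 2n-3, \ldots, 1-2n)$ of $\tau_{\BK,0}$ has the requisite antisymmetric form; the target is $\BC$; and $\phi_1$ is unique up to scalar by Lemma \ref{dt2222}. Hence the composition is either zero or an isomorphism, and it suffices to prove nonvanishing.

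My first observation is that the functional $Z^\circ(\cdot, \frac{1}{2}+j, \sgn_\BK^j) \otimes \lambda_{F_\BK,j}$ on $\tau_\BK \otimes \delta_\BK^\vee$ is automatically $C_\BK$-invariant, since the two factors transform under $H_\BK$ by the mutually inverse characters $\xi_{\BK,j}^{-1}$ and $\xi_{\BK,j}$. Moreover, a direct calculation on $C_\BK = \mathrm U(n) \times \mathrm U(n)$ shows that $\xi_{\BK,j}|_{C_\BK}$ is independent of $j$ (explicitly, $(h_1, h_2) \mapsto \det(h_2)^{w_{\bar\iota} - w_\iota}$). Hence, by \eqref{chc}, the space $\Hom_{C_\BK}(\tau_\BK \otimes \xi_{\BK,j}|_{C_\BK}, \BC)$ is one-dimensional, and by Theorem \ref{thm: uniform coh test vector} --- whose hypothesis \eqref{btau} is supplied by \eqref{chc} together with the balancedness assumption on $F_\BK$ --- the restriction $Z^\circ(\cdot, \frac{1}{2}+j, \sgn_\BK^j)|_{\tau_\BK}$ is a nonzero generator of this line. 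Similarly, the normalization $\lambda_{F_\BK,j}(v_{F_\BK}^\vee \otimes 1) = 1$ together with $v_{F_\BK}^\vee \in \delta_\BK^\vee$ ensures that $\lambda_{F_\BK,j}|_{\delta_\BK^\vee}$ is nonzero.

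The main obstacle is to combine these separate nonvanishings into the nonvanishing of $Z^\circ \otimes \lambda_{F_\BK,j}$ when evaluated on $\phi_1((\tau_{\BK,0})^{C_\BK})$. My strategy is to use the Frobenius reciprocity $\Hom_{K_\BK}(\tau_{\BK,0}, \tau_\BK \otimes \delta_\BK^\vee) = \Hom_{K_\BK}(\tau_{\BK,0} \otimes \delta_\BK, \tau_\BK)$, which, via the multiplicity-one statement of Lemma \ref{dt2222}, realizes $\phi_1$ explicitly through an intertwining operator acting on highest-weight vectors. Combined with the uniqueness of the $C_\BK$-eigenvectors of the relevant weight in both $\tau_\BK$ and $\delta_\BK^\vee$, this reduces the problem to a single concrete branching identity inside $\tau_\BK \otimes \delta_\BK^\vee \supset \tau_{\BK,0}$. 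The technical core, which I expect to be the hardest step, is the verification of this identity; I plan to carry it out by exploiting the cohomological-induction realization of $\Pi_\BK$ and the explicit description of its minimal $K_\BK$-type, so that $\phi_1$, the uniform cohomological test vector in $\tau_\BK$ provided by Theorem \ref{thm: uniform coh test vector}, and the Cartan--Helgason vector $v_{F_\BK}^\vee \in \delta_\BK^\vee$ can all be evaluated against a common generator of $(\tau_{\BK,0})^{C_\BK}$ to produce an explicit nonzero number.
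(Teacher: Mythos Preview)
Your setup is correct and matches the paper's: the Frobenius reciprocity rewriting you propose is exactly the paper's equation \eqref{cprv2}, and your separate nonvanishings of $Z^\circ|_{\tau_\BK}$ and $\lambda_{F_\BK,j}|_{\delta_\BK^\vee}$ are also what the paper establishes. The divergence is in how to bridge these to the nonvanishing on $\phi_1((\tau_{\BK,0})^{C_\BK})$.

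You identify this bridging as the ``technical core'' and propose an explicit computation via the cohomological-induction model of $\Pi_\BK$. The paper avoids this entirely with one structural observation: by Lemma~\ref{dt2}, the highest weight of $\tau_\BK$ is the \emph{sum} of the highest weights of $\tau_{\BK,0}$ and $\delta_\BK$, so $\tau_\BK$ is the Cartan component of $\tau_{\BK,0}\otimes\delta_\BK$ and $\phi_1:\tau_{\BK,0}\otimes\delta_\BK\to\tau_\BK$ is the Cartan-product projection. Such a projection sends every nonzero \emph{decomposable} tensor to a nonzero vector (see \cite[Section 2.1]{Ya} and \cite[Lemma A.5]{SunCohomologicalDistinguishedRepn}). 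Since $v_{\BK,0}\otimes\lambda_{F_\BK,j}$ is decomposable and nonzero (here $\lambda_{F_\BK,j}$ is viewed as an element of $\delta_\BK\otimes\xi_{\BK,j}$), one gets $(\phi_1\otimes 1)(v_{\BK,0}\otimes\lambda_{F_\BK,j})\neq 0$ in $(\tau_\BK\otimes\xi_{\BK,j})^{C_\BK}$, and then $Z^\circ$ is injective on that one-dimensional space. No explicit computation is needed.

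Your proposed route through cohomological induction is not obviously wrong, but it is vague at the crucial point: Theorem~\ref{thm: uniform coh test vector} gives only the existence of a test vector $v_0\in\tau_\BK$, not a formula for it, so ``evaluating against a common generator to produce an explicit nonzero number'' would require substantial additional work (essentially redoing the contents of \cite{JiangLinTianExplicitCohomologicalVectorReal} and \cite{LinTianExplicitCohomologicalComplex}). The Cartan-product lemma short-circuits all of this.
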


\begin{proof}
Recall that the space $(\tau_{\BK,0})^{C_\BK}$ is one dimensional. Fix a generator $v_{\BK,0}$ of this space.
We only need to show that the composition of \eqref{cprv} does not vanish on $v_{\BK,0}$.

It is easy to see that this composition map equals the composition of
\begin{equation}\label{cprv2}
(\tau_{\BK,0})^{C_\BK}\hookrightarrow  \tau_{\BK,0} \xrightarrow{v\mapsto v\otimes \lambda_{F_\BK,j}} \tau_{\BK,0}\otimes \delta_{\BK}\otimes \xi_{\BK,j}\xrightarrow{\phi_1\otimes 1} \tau_\BK\otimes \xi_{\BK,j}\xrightarrow{Z^\circ(\,\cdot\,, \frac{1}{2}+j, \sgn_\BK^j)}\BC.
\end{equation}
Since $\Hom_{\BC}(\delta_\BK^\vee\otimes \xi_{\BK,j}^\vee, \BC)= \delta_{\BK}\otimes \xi_{\BK,j}$, we may view $\lambda_{F_\BK,j}$ 
as an element of $ \delta_{\BK}\otimes \xi_{\BK,j}$. Similarly, we view $\phi_1$ as an element of $\Hom_{K_\BK}(\tau_{\BK,0}\otimes \delta_\BK, \tau_\BK)$ because of the identification
\[
  \Hom_{K_\BK}(\tau_{\BK,0}\otimes \delta_\BK, \tau_\BK)=\Hom_{K_\BK}(\tau_{\BK,0}, \tau_\BK\otimes\delta_\BK^\vee).
\]
From the proof of Lemma \ref{clems0}, we know that the functional
$\lambda_{F_\BK,j}: \delta_\BK^\vee\otimes \xi_{\BK,j}^\vee\rightarrow \BC$ 
is nonzero. Hence $\lambda_{F_\BK,j}$ is nonzero as an element of $ \delta_{\BK}\otimes \xi_{\BK,j}$.
Note that the map
$\phi_1: \tau_{\BK,0}\otimes \delta_{\BK}\rightarrow \tau_\BK$
sends nonzero decomposable vectors to nonzero vectors (see \cite[Section 2.1]{Ya} and \cite[Lemma A.5]{SunCohomologicalDistinguishedRepn}). Consequently, we obtain that $(\phi_1\otimes 1)(v_{\BK,0}\otimes \lambda_{F_\BK,j})\neq 0$.
Therefore, the composition of the first three arrows in \eqref{cprv2} sends $v_{\BK,0}$ to a nonzero element of $(\tau_\BK\otimes \xi_{\BK,j})^{C_\BK}$.

By \eqref{chc} and Theorem \ref{thm: uniform coh test vector},  we know that the functional
\[
  Z^\circ(\,\cdot\,, \frac{1}{2}+j, \sgn_\BK^j): \tau_\BK\otimes \xi_{\BK,j}\rightarrow \BC
\]
is nonzero. By the $C_\BK$-invariant of this functional, it is injective  when restricted to the one dimensional space  $(\tau_\BK\otimes \xi_{\BK,j})^{C_\BK}$. Therefore the composition of \eqref{cprv2} sends $v_{\BK,0}$ to a nonzero number. This finishes the proof of the lemma.
\end{proof}

Finally, we are ready to prove Theorem \ref{thm: Non-vanishing hypothesis} in the complex case. Let $\phi_0$ be as in Lemma \ref{cprv00}, and let $\phi_1$ be as in Lemma \ref{lcprv}. Then the composition of
\[
\wedge^{d_\BK}\fq_\BK^\BC\hookrightarrow \wedge^{d_\BK} \fs_\BK^\BC \xrightarrow{\phi_0} \tau_{\BK,0}\xrightarrow{\phi_1} \tau_\BK\otimes \delta_\BK^\vee=(\tau_\BK\otimes \xi_{\BK,j})\otimes (\delta_\BK^\vee\otimes \xi_{\BK,j}^\vee)\xrightarrow{Z^\circ(\,\cdot\,, \frac{1}{2}+j, \sgn_\BK^j)\otimes \lambda_{F_\BK,j}}\BC
\]
is nonzero. Hence $\CP_{\BK,j}$ sends 
$\phi_1\circ \phi_0\in \Hom_{K_\BK}(\wedge^{d_\BK} \fs_\BK^\BC, \tau_\BK\otimes \delta_\BK^\vee)= \RH^{d_\BK}_\ct(\BR^\times_+\backslash G_\BK^0; \Pi_\BK\otimes F_\BK^\vee)$
to a nonzero element of $\Hom_{C_\BK}(\wedge^{d_\BK} \fq_\BK^\BC, \BC)= \RH^{d_\BK}_\ct(\BR^\times_+\backslash H_\BK^0; \BC)$. This finishes the proof of Theorem \ref{thm: Non-vanishing hypothesis} in the complex case.

\section{Proof of Theorem \ref{thm: uniform coh test vector}}\label{section: Identification}

Assume that $\BK$ is archimedean. Recall that $\psi_\BK:\BK\rightarrow \BC^\times$ is a non-trivial unitary  character, and $\chi_\BK: \BK^\times \rightarrow \BC^\times$ is a character. In order to prove Theorem \ref{thm: uniform coh test vector}, it is enough to prove 
the coincidence of the twisted linear functionals defined via the archimedean Friedberg-Jacquet
integrals and those constructed in  \cite{JiangLinTianExplicitCohomologicalVectorReal} and \cite{LinTianExplicitCohomologicalComplex}.
These constructions will be briefly recalled in Section \ref{ssec-FJI-LF} and then Theorem \ref{thm: uniform coh test vector} follows from Proposition \ref{thm: FJ-L} and  Lemma \ref{lem: coh repn symp type induced parameter}.

Let $\kappa:\BK^\times \rightarrow \BC^\times$ be a character.
Let $\sigma_i$ ($i=1, 2, \cdots, n$) be a generic irreducible Casselman-Wallach representation of $\GL_2(\BK)$ of central character $\kappa$. We further assume that the induced representation
 \begin{equation}\label{eq: parabolically induction pi data}
     \Xi:=    \sigma_1\dot \times\sigma_2\dot \times\cdots \dot \times\sigma_n
        \end{equation}
    of $G_\BK:=\GL_{2n}(\BK)$ is irreducible.

\subsection{Friedberg-Jacquet integrals and twisted linear functionals}\label{ssec-FJI-LF}
 By the uniqueness of local Whittaker functionals, one must have that
$\dim  \Hom_\BK(\sigma_i, \psi_\BK)=1$ for $i=1,2,\cdots, n$.
Here $\BK$ is identified with a subgroup of $\GL_2(\BK)$ via the embedding
$
x\mapsto \left[
          \begin{smallmatrix}
             1 & x \\
             0 & 1 \\
          \end{smallmatrix}
         \right].
 $
Fix a nonzero element
$\lambda_i\in \Hom_\BK(\sigma_i, \psi_\BK)$, which is a Whittaker functional and also a $(\kappa,\psi_\BK)$-Shalika functional for $\GL_2(\BK)$.

    \begin{lemma}
    The space of $(\kappa, \psi_\BK)$-Shalika functionals on the irreducible induced representation $\Xi$ of $G_\BK$ as given in 
    \eqref{eq: parabolically induction pi data} is one dimensional, that is,
    $\dim \Hom_{S_\BK}(\Xi, \kappa\otimes\psi_\BK)=1$.
    \end{lemma}
\begin{proof}
In view of the uniqueness of Shalika functionals (Lemma \ref{uniquesh}), this is precisely \cite[Theorem 2.1]{JiangLinTianExplicitCohomologicalVectorReal}.
\end{proof}

Fix a nonzero $(\kappa, \psi_\BK)$-Shalika functional
$\lambda_\Xi\in \Hom_{S_\BK}(\Xi, \kappa\otimes\psi_\BK)$.
For each $t\in \BC$, we define, as in \eqref{eq: xichi}, a character of $H_\BK$:
    \begin{equation}\label{eq222: xichi}
         \xi_{\kappa, \chi_\BK, t}:= \xi_{n,\kappa, \chi_\BK, t}:=\left( (\chi_\BK\circ\det)\cdot \abs{\det }_\BK^{t}\right)\otimes\left(((\chi_\BK^{-1}\cdot \kappa^{-1})\circ\det) \cdot \abs{\det }_\BK^{-t} \right).
    \end{equation}
Unless otherwise mentioned, we assume in the rest of this paper that $s\in \BC$ has sufficiently large real part.  Using $\lambda_\Xi$, the local Friedberg-Jacquet integral defined in \eqref{eq: F-J integral} yields a nonzero element
    \begin{equation}\label{eq: space of linear models}
   Z(\cdot,s,\chi_\BK)=Z_\Xi(\cdot,s,\chi_\BK)\in \Hom_{H_\BK}(\Xi,  \xi_{\kappa, \chi_\BK, s-\frac{1}{2}}^{-1}).
    \end{equation}
Using $\lambda_i$ and specializing  the linear functional \eqref{eq: space of linear models} to the case of $n=1$, we get the archimedean Hecke integral
 \begin{equation}\label{eqh: space of linear models}
   Z_i(\cdot,s,\chi_\BK)\in \Hom_{\BK^\times \times \BK^\times }(\sigma_i,  \xi_{1, \kappa, \chi_\BK, s-\frac{1}{2}}^{-1}).
    \end{equation}

Recall the construction of the linear functionals from \cite[Section 3.2]{JiangLinTianExplicitCohomologicalVectorReal} and \cite[Section 2.2]{LinTianExplicitCohomologicalComplex}.
Take a  permutation matrix
\[
  w_n:=[\epsilon_1\,\, \epsilon_3\, \, \cdots\,\,  \epsilon_{2n-1}\, \, \epsilon_2\,\,  \epsilon_4\,\, \cdots\,\,\epsilon_{2n}]\in G_\BK,
\]
where $\epsilon_1, \epsilon_2, \cdots, \epsilon_{2n}$ is the standard basis of $\BK^{2n}$, and $\BK^{2n}$ is viewed as a space of column vectors.
%
Define a continuous linear functional
\[
  \Lambda_{s, \chi_\BK, \Xi}' : \Xi\rightarrow \BC, \quad f\mapsto \langle \otimes_{i=1}^n Z_i(\cdot,s,\chi_\BK), f(w_n)\rangle.
\]
 Let  $B_n:=B_n(\BK)\subset \GL_n(\BK)$ denote the Borel subgroup of all the upper triangular matrices. As checked in \cite[Section 3.2]{JiangLinTianExplicitCohomologicalVectorReal} and \cite[Section 2.2]{LinTianExplicitCohomologicalComplex}, for all $(b_1,b_2)\in B_n\times B_n\subset G_\BK$, and all $f\in \Xi$, one has that
\begin{equation}\label{eql}
\langle \Lambda_{s, \chi_\BK, \Xi}', (b_1,b_2).f\rangle=\delta_{B_n}(b_1)\cdot \delta_{B_n}(b_2)\cdot \xi^{-1}_{\kappa,\chi_\BK,s-\frac{1}{2}}(b_1,b_2)\cdot \langle \Lambda_{s, \chi_\BK, \Xi}', f\rangle,
\end{equation}
where $\delta_{B_n}$ denotes the modulus character of $B_n$ that is explicitly given by
\[
  (a_{i,j})_{1\leq i,j\leq n}\mapsto \prod_{i=1}^n |a_{i,i}|_\BK^{n+1-2i}.
\]

\begin{lemma}\label{lemintg}
Let $H_0$ be a real reductive group with a maximal compact subgroup $K_0$. Let $P_0$ and $P_0^-$ be two parabolic subgroups of $H_0$ which are opposite to each other.  Write $\delta_{P_0}$ for the modulus character of $P_0$, and write $U_0^-$ for the unipotent radical of $P_0^-$. Then for every $f\in \Ind_{P_0}^{H_0} \delta_{P_0}^{\frac{1}{2}}$, the integral
$\int_{U_0^-} f(x)\,dx$ is absolutely convergent and equals $\int_{K_0} f(k)\,dk$, where $dk$ is the normalized Haar measure on $K_0$, and $dx$ is a Haar measure on $U_0^-$ which is independent of $f$. Moreover,
\[
  \Ind_{P_0}^{H_0} \delta_{P_0}^{\frac{1}{2}}\rightarrow \BC, \quad f\mapsto \int_{U_0^-} f(x)\,dx
\]
is a nonzero $H_0$-invariant continuous linear functional.

\end{lemma}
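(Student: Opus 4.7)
The plan is to recognize Lemma \ref{lemintg} as the classical integration formula comparing the Iwasawa and Bruhat realizations of the flag variety $H_0/P_0$, and to deduce the three assertions (absolute convergence, equality of the two integrals, and $H_0$-invariance together with non-vanishing) from this geometric interpretation.

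First I would fix the geometry. Let $P_0 = M_0 U_0$ be the Levi decomposition of $P_0$ opposite to $U_0^-$, and put $L_0 := K_0 \cap P_0 = K_0 \cap M_0$, a maximal compact subgroup of $M_0$. The Iwasawa decomposition $H_0 = K_0 P_0$ yields an $H_0$-equivariant diffeomorphism $K_0/L_0 \xrightarrow{\sim} H_0/P_0$. Simultaneously, the open Bruhat cell $U_0^- P_0$ is open and dense in $H_0$, so the assignment $x \mapsto x P_0$ realizes $U_0^-$ as an open dense subset of $H_0/P_0$ whose complement has measure zero. For $x \in U_0^-$, one writes $x = k(x) p(x)$ uniquely modulo $L_0$, with $k(x)\in K_0$ and $p(x)\in P_0$.

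Under the normalized-induction convention, elements of $\Ind_{P_0}^{H_0} \delta_{P_0}^{\frac{1}{2}}$ transform on the right by $\delta_{P_0}$ --- equivalently, they are smooth sections of the density bundle on the compact manifold $H_0/P_0$. Consequently, each such $f$ defines a smooth density on $H_0/P_0$ whose total integral over $H_0/P_0$ is a continuous $H_0$-invariant linear functional, and this integral may be evaluated in two ways. Via the Iwasawa chart $K_0/L_0 \xrightarrow{\sim} H_0/P_0$, together with the fact that $\delta_{P_0}|_{L_0}$ is trivial (since $L_0$ is compact), the integral equals $\int_{K_0} f(k)\,dk$. Via the Bruhat chart $U_0^- \hookrightarrow H_0/P_0$, with a specific Haar measure $dx$ on $U_0^-$ pulled back from the invariant density and independent of $f$, the integral equals $\int_{U_0^-} f(x)\,dx$. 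The equality of the two expressions in the lemma is then automatic.

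Absolute convergence of the Bruhat-side integral follows by applying the same argument to $|f|$, which also transforms by $\delta_{P_0}$ and yields a finite density on the compact $H_0/P_0$. The $H_0$-invariance of the functional is inherent in the density interpretation, and non-vanishing is immediate by choosing $f$ to be (the lift of) a non-negative bump function supported on the Bruhat chart with positive integral. The main obstacle will be careful bookkeeping of the normalization conventions --- specifically, confirming that the right-equivariance of elements of $\Ind_{P_0}^{H_0}\delta_{P_0}^{\frac{1}{2}}$ is precisely by $\delta_{P_0}$, so that the density interpretation is exact, and explicitly computing the pullback of the invariant density on $H_0/P_0$ to $U_0^-$ to identify the Haar measure $dx$. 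Both points are standard and may be handled via the integration formulas on semisimple Lie groups as developed, for instance, in Knapp's \emph{Representation Theory of Semisimple Groups}.
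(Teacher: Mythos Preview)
Your proposal is correct and follows essentially the same approach as the paper: the paper's proof is the single sentence ``This is well known and easily follows by identifying $\Ind_{P_0}^{H_0} \delta_{P_0}^{\frac{1}{2}}$ with the space of smooth densities on $P_0\backslash H_0$,'' and your argument is precisely a fleshed-out version of this density-bundle identification, carried out via the Iwasawa and Bruhat charts on the flag variety.
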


\begin{proof}
This is well known and easily follows by identifying   $\Ind_{P_0}^{H_0} \delta_{P_0}^{\frac{1}{2}}$ with the space of smooth densities on $P_0\backslash H_0$.
\end{proof}

For each positive integer $r$, write $N_r^-:=N_r^-(\BK)\subset \GL_r(\BK)$ for the subgroup of all the unipotent lower triangular matrices. It follows from \eqref{eql} that for all $f\in \Xi$,
\[
 (h\mapsto \xi_{\kappa,\chi_\BK,s-\frac{1}{2}}(h)\cdot \langle \Lambda_{s, \chi_\BK, \Xi}', h.f\rangle )\in \Ind_{B_n\times B_n}^{H_\BK} (\delta_{B_n}^{\frac{1}{2}}\otimes \delta_{B_n}^{\frac{1}{2}}).
\]
 Also note that the character $\xi_{\kappa,\chi_\BK,s-\frac{1}{2}}$ has trivial restriction to $N_n^-\times N_n^-$.
 In view of Lemma \ref{lemintg}, we now define
\begin{equation}\label{eq:LF-Lambda}
\langle \Lambda_{s, \chi_\BK, \Xi}, f\rangle:=
\int_{N_n^-\times N_n^-}  \langle \Lambda_{s, \chi_\BK, \Xi}', a.f\rangle \,da,
\end{equation}
where  $da$ is a Haar measure on $N_n^-\times N_n^-$. Then
 \begin{equation}\label{eqlf: space of linear models}
   \Lambda_{s, \chi_\BK, \Xi} \in \Hom_{H_\BK}(\Xi,  \xi_{\kappa, \chi_\BK, s-\frac{1}{2}}^{-1}).
    \end{equation}

\begin{prop}\label{thm: FJ-L}
Let the notation and the assumptions be as above. Then there is a constant $c_0\in\BC^\times $ such that
       $Z_\Xi(\,\cdot\,,s,\chi_\BK) = c_0\cdot\Lambda_{s,\chi_\BK,\Xi}$
      for all $s\in \BC$ with sufficiently large real part.
    \end{prop}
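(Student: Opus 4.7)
The plan is to show that both $Z_\Xi(\cdot,s,\chi_\BK)$ and $\Lambda_{s,\chi_\BK,\Xi}$ lie in $\Hom_{H_\BK}(\Xi, \xi_{\kappa,\chi_\BK, s-\frac{1}{2}}^{-1})$, that this Hom space is one-dimensional for generic $s$, and that the resulting proportionality constant is independent of $s$.

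First I would establish the one-dimensionality of the Hom space for $\Re(s)$ sufficiently large. This is the archimedean analog of the uniqueness of local linear periods and can be deduced from the uniqueness of Shalika functionals (Lemma \ref{uniquesh}) via the standard correspondence between Shalika functionals on $\GL_{2n}$ and $H_\BK$-equivariant functionals twisted by $\xi_{\kappa,\chi_\BK,s-\frac{1}{2}}$, as in \cite{FriedbergJacquetLinearPeriods} and \cite{AizenbudGourevitchJacquetUniquenessShalika}. Both functionals under consideration are nonzero: for $Z_\Xi(\cdot,s,\chi_\BK)$ this is Proposition \ref{prop: properties of F-J int}(4), and for $\Lambda_{s,\chi_\BK,\Xi}$ it follows from the non-vanishing of each Hecke integral $Z_i(\cdot,s,\chi_\BK)$ together with the fact that the $N_n^-\times N_n^-$-averaging in \eqref{eq:LF-Lambda}, whose convergence and non-triviality are guaranteed by Lemma \ref{lemintg}, does not annihilate $\Lambda'_{s,\chi_\BK,\Xi}$. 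Hence for each such $s$ there is a scalar $c(s)\in\BC^\times$ with $Z_\Xi(\cdot,s,\chi_\BK)=c(s)\cdot\Lambda_{s,\chi_\BK,\Xi}$.

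Next I would show that $c(s)$ is in fact a single constant $c_0$ independent of $s$. The idea is to evaluate both functionals explicitly on a well-chosen test vector $f_0\in\Xi$ supported in a small neighborhood of the open Bruhat cell $P_{2,2,\ldots,2}\cdot w_n\cdot H_\BK$. For such $f_0$, the Friedberg-Jacquet integral $Z_\Xi(f_0,s,\chi_\BK)$ can be unfolded using the Bruhat decomposition $\GL_n(\BK)=N_n^-\cdot B_n$ (up to a negligible closed piece) on the embedded factor, together with the explicit realization of $\lambda_\Xi$ in terms of the Whittaker functionals $\lambda_i$ from \cite{JiangLinTianExplicitCohomologicalVectorReal} and \cite{LinTianExplicitCohomologicalComplex}. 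The resulting expression factors into the tensor product $\otimes_i Z_i(\cdot,s,\chi_\BK)$ applied to $f_0(w_n)$ (this carries all the $s$-dependence), a modular character correction, and an $N_n^-\times N_n^-$-integration, which is precisely the structure of \eqref{eq:LF-Lambda}. Thus $c_0$ emerges as the ratio of the Haar-measure normalization on $\GL_n(\BK)$ to the product of the Iwasawa-type measures; this ratio has no $s$-dependence.

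The main obstacle will be the delicate archimedean convergence analysis required to justify the Fubini-style interchanges in the unfolding. Unlike the non-archimedean case, where compactness of the support of $f_0$ trivializes most interchanges, one must verify absolute convergence of the iterated integral for $\Re(s)\gg 0$ and rule out contributions from the non-open Bruhat cells. I expect these points can be handled either by restricting to a dense subspace of Schwartz-type vectors in $\Xi$ and extending the resulting identity by continuity and meromorphic continuation, or by adapting the estimates developed for Proposition \ref{prop: properties of F-J int}. Once the unfolding identity is established on a dense subspace, the proportionality $Z_\Xi(\cdot,s,\chi_\BK)=c_0\cdot\Lambda_{s,\chi_\BK,\Xi}$ with a single nonzero constant $c_0$ follows on all of $\Xi$ for $\Re(s)$ sufficiently large.
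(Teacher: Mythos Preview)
Your overall strategy is reasonable, but it diverges from the paper's and contains one genuine mis-step. The paper does \emph{not} invoke uniqueness of linear periods at all; instead it proceeds by induction on $n$, proving separately that both constructions are compatible with parabolic induction in two stages (Propositions~\ref{parli} and~\ref{parafj}). Concretely, the paper builds an induced Shalika functional $\lambda_{\Xi_1}\dot\times\lambda_{\Xi_2}$ on $\Xi_1\dot\times\Xi_2$, shows it is nonzero, and then---using only the uniqueness of \emph{Shalika} functionals---relates the Friedberg--Jacquet integral on $\Xi$ to $Z_{\Xi_1}\dot\times Z_{\Xi_2}$; an analogous compatibility for $\Lambda$ is established by a direct change of variables. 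The base case $n=1$ is immediate. The constants that appear are ratios of Haar measures and are visibly independent of $s$, so the induction closes.

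Your step~2 is where you should be careful. The claim that uniqueness of linear periods ``can be deduced from the uniqueness of Shalika functionals via the standard correspondence'' is not correct as stated: the Friedberg--Jacquet integral produces linear periods from Shalika functionals, but does not show that \emph{every} element of $\Hom_{H_\BK}(\Xi,\xi_{\kappa,\chi_\BK,s-\frac{1}{2}}^{-1})$ arises this way, so multiplicity one for linear periods does not follow. One must cite \cite{ChenSunUniquenessLinearModel} directly. More importantly, your step~2 is redundant once step~4 is carried out on a dense subspace. As for step~4, the ``explicit realization of $\lambda_\Xi$ in terms of the Whittaker functionals $\lambda_i$'' that you cite from \cite{JiangLinTianExplicitCohomologicalVectorReal} and \cite{LinTianExplicitCohomologicalComplex} is not quite what those papers provide; they construct $\Lambda_{s,\chi_\BK,\Xi}$, not an integral formula for $\lambda_\Xi$. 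Building that formula and justifying the resulting iterated integrals (the Schwartz estimate of Lemma~\ref{lemshw}, the Fourier-inversion trick of Lemma~\ref{lfi}, and the absolute-convergence Lemma~\ref{leminffj2}) is precisely the technical content of the paper's Sections~6.3--6.4. Your direct unfolding would have to reproduce all of this for the full $n$-fold product at once; the paper's inductive two-block organization makes each of these steps cleaner.
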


The proof of Proposition  \ref{thm: FJ-L} will be given in Sections \ref{subsection: Linear Models and Parabolic Induction}$-$\ref{subsection: Friedberg-Jacquet Integral and Parabolic Induction}.

    \begin{lemma}\label{lem: coh repn symp type induced parameter}
     Assume that the $\sigma_i$'s are as in Corollary \ref{cor: coh repn symp type induced parameter} so that $\kappa=\eta_\BK$ and $\Xi\cong\Pi_\BK$ is an irreducible essentially tempered cohomological representation of $G_\BK$   of symplectic type. Let $\tau\subset \Xi$ denote the minimal $K_\BK$-type. If
     $\Hom_{C_\BK}( \tau\otimes \xi_{\kappa, \chi_\BK,0}, \BC)\neq \{0\}$,
        then  there exists a vector $v_0\in \tau$ such that
$\langle \Lambda_{s,\chi_\BK,\Pi_\BK}, v_0\rangle=L(s, \Pi_\BK\otimes\chi_\BK)$
for all $s\in \BC$ with sufficiently large real part.
   \end{lemma}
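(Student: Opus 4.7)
The plan is to exploit the explicitly factorizable form of $\Lambda_{s,\chi_\BK,\Pi_\BK}$ recorded in \eqref{eq:LF-Lambda} and to build $v_0$ from $\GL_2$ cohomological test vectors attached to each factor $\sigma_j$. By Corollary \ref{cor: coh repn symp type induced parameter}, the representation $\Pi_\BK$ is realized as the normalized induction of $\sigma_1 \widehat\otimes \cdots \widehat\otimes \sigma_n$, and
\[
  \langle \Lambda_{s,\chi_\BK,\Pi_\BK}, f\rangle \;=\; \int_{N_n^- \times N_n^-} \bigl\langle \otimes_j Z_j(\,\cdot\,, s, \chi_\BK),\, (a.f)(w_n)\bigr\rangle\, da,
\]
so the task is to find $v_0 \in \tau$ for which this integral factorizes into $\prod_j Z_j(u_j, s, \chi_\BK)$ for suitable $u_j \in \sigma_j$.

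First, I would produce the $\GL_2$ building blocks. Each $\sigma_j$ is either a discrete series of $\GL_2(\BR)$ or a specific principal series of $\GL_2(\BC)$ listed in Corollary \ref{cor: coh repn symp type induced parameter}, and the classical archimedean theory of Hecke integrals for $\GL_2$ (Jacquet-Langlands in the real case, with its complex counterpart treated in \cite{LinTianExplicitCohomologicalComplex}) produces a vector $u_j$ in the minimal $K_2$-type of $\sigma_j$ with $Z_j(u_j, s, \chi_\BK) = L(s, \sigma_j \otimes \chi_\BK)$ for all $s$ of sufficiently large real part. The $\GL_2$-level branching condition needed to find such $u_j$ is inherited from the hypothesis $\Hom_{C_\BK}(\tau \otimes \xi_{\kappa,\chi_\BK,0}, \BC) \neq \{0\}$ by restricting to the block-diagonal subgroup of $C_\BK$ after conjugating by $w_n$.

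Next, I would lift the pure tensor $u_1 \otimes \cdots \otimes u_n$ to a vector $v_0$ in the minimal $K_\BK$-type $\tau$. Using the explicit description of $\tau$ provided by Lemmas \ref{dt1} and \ref{dt2} and the Shalika-distinguished-vector constructions from \cite{JiangLinTianExplicitCohomologicalVectorReal} and \cite{LinTianExplicitCohomologicalComplex}, the branching hypothesis furnishes a nonzero $(C_\BK, \xi_{\kappa,\chi_\BK,0})$-covariant vector $v_0 \in \tau$ whose value on the open cell containing $w_n$ equals a nonzero scalar multiple of $u_1 \otimes \cdots \otimes u_n$; that scalar can be absorbed by rescaling the $u_j$'s. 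Feeding $v_0$ into \eqref{eq:LF-Lambda} and using the $\xi$-equivariance on $N_n^- \times N_n^-$ so that, in the spirit of Lemma \ref{lemintg}, the $a$-integral collapses onto a compact $C_\BK$-integral that just records the covariance of $v_0$, the result factorizes into $\prod_j Z_j(u_j, s, \chi_\BK) = \prod_j L(s, \sigma_j \otimes \chi_\BK)$, which equals $L(s, \Pi_\BK \otimes \chi_\BK)$ by the multiplicativity of archimedean $L$-factors under parabolic induction (via the archimedean local Langlands correspondence, as already exploited in the proof of Proposition \ref{prop: L-function formula}).

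The main obstacle is the lifting step: $v_0$ is required to lie inside the narrow subspace $\tau$ while also taking a prescribed value at $w_n$. Reconciling these two constraints rests on the uniqueness (up to scalar) of a $(C_\BK, \xi_{\kappa,\chi_\BK,0})$-covariant vector in $\tau$, which follows from the branching hypothesis together with the Cartan-Helgason-type rigidity reflected in Lemma \ref{dt2222}, and on the transversality $\fb_\BK + \fh_\BK = \fg_\BK$ from \eqref{transvb}, which ensures that evaluation at $w_n$ realizes $\tau$ faithfully on the Whittaker-type data carried by the $u_j$'s. Checking this identification and matching all of the normalization constants across the lifting is the most delicate piece of the argument.
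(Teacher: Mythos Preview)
The paper does not give an independent proof of this lemma; it simply invokes \cite[Theorem 5.1]{JiangLinTianExplicitCohomologicalVectorReal} for the real case and \cite[Theorem 4.3]{LinTianExplicitCohomologicalComplex} for the complex case. Your outline is in fact a reasonable summary of the strategy carried out in those references: one does build $v_0$ by starting from classical $\GL_2$ Hecke test vectors $u_j$ in the minimal $K_2$-types of the $\sigma_j$, and one does assemble them into a distinguished vector in $\tau$ via an explicit section of the induced representation whose restriction to the open $H_\BK$-cell through $w_n$ is controlled by $u_1\otimes\cdots\otimes u_n$.

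That said, your description of the final step is too optimistic. The integral over $N_n^-\times N_n^-$ does not ``collapse onto a compact $C_\BK$-integral'' simply by $\xi$-equivariance: Lemma \ref{lemintg} converts the $N_n^-\times N_n^-$-integral into a $K_n\times K_n$-integral for \emph{any} $f$, but to obtain the clean factorization $\prod_j L(s,\sigma_j\otimes\chi_\BK)$ one must know precisely how $v_0$, viewed as a function on $G_\BK$, behaves along the entire $K_n\times K_n$-orbit (equivalently, along the whole open Bruhat cell), not just its value at the single point $w_n$. In the cited papers this is handled by an explicit inductive construction of $v_0$ as a Godement-type section and a direct computation of the resulting iterated integral; the transversality \eqref{transvb} and the branching rigidity you invoke are ingredients, but they do not by themselves pin down $v_0|_{H_\BK w_n}$ well enough to make the integral factor. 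So your architecture is right, but the ``collapsing'' step is where essentially all of the work in those references lives, and your sketch does not yet supply it.
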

   \begin{proof}
   This is proved in
 \cite[Theorem 5.1]{JiangLinTianExplicitCohomologicalVectorReal} for the real case and \cite[Theorem 4.3]{LinTianExplicitCohomologicalComplex} for the complex case.
 \end{proof}

Theorem \ref{thm: uniform coh test vector} then directly follows from Proposition \ref{thm: FJ-L} and  Lemma \ref{lem: coh repn symp type induced parameter}.

\subsection{Twisted linear functionals and parabolic induction}\label{subsection: Linear Models and Parabolic Induction}
Proposition \ref{thm: FJ-L} is obvious when $n=1$. In the rest of this section, we assume that $n\geq 2$.
Let $n_1,n_2$ be two positive integers such that $n_1+n_2=n$.
Let $\Xi_1$ and $\Xi_2$ be respectively irreducible  Casselman-Wallach representation of $\GL_{2n_1}(\BK)$ and $\GL_{2n_2}(\BK)$, with the same central character $\kappa$. Then we have the normalized parabolically induced representation $\Xi_1\dot \times \Xi_2$
 of $G_\BK$. Given  linear functionals
\[
\Lambda_i\in \Hom_{\GL_{n_i}(\BK)\times \GL_{n_i}(\BK)}(\Xi_i, \xi_{n_i, \kappa, \chi_\BK, s-\frac{1}{2}}^{-1}),\quad i=1,2,
\]
we will define a linear functional
$\Lambda_1\dot \times \Lambda_2\in \Hom_{H_\BK}(\Xi_1\dot \times \Xi_2, \xi_{ \kappa, \chi_\BK, s-\frac{1}{2}}^{-1})$
in what follows.

  Recall that $P_{n_1,n_2}$ denote the standard parabolic subgroup of $\GL_n(\BK)$ associated to the partition $n=n_1+n_2$. Its unipotent radical is denoted by
$N_{n_1,n_2}$, and the unipotent radical of the usual opposite parabolic subgroup  is denoted by $N_{n_1, n_2}^-$.
Define a permutation matrix
\begin{equation}\label{eq: wl1l2}
w_{n_1,n_2}
:=
\left[\begin{smallmatrix}
                                                                                   1_{n_1} &0  & 0 & 0 \\
                                                                                   0 & 0 & 1_{n_1} &0  \\
                                                                                   0 & 1_{n_2} & 0 & 0 \\
                                                                                   0 & 0 & 0 & 1_{n_2}
                                                                                 \end{smallmatrix}\right]\in G_\BK.
\end{equation}
 Define a continuous linear functional
\[
  \Lambda': \Xi_1\dot \times \Xi_2\rightarrow \BC, \quad f\mapsto \langle \Lambda_1\otimes \Lambda_2, f(w_{n_1, n_2})\rangle.
\]
It is routine to check, using a similar computation as in \cite[Subsection 3.2]{JiangLinTianExplicitCohomologicalVectorReal}, that for all $p_1, p_2\in P_{n_1,n_2}$, and all $f\in \Xi_0$,
    \begin{equation}\label{eq22: Section: Fun Eq 01}
       \left \langle  \Lambda', \mtrtwo{p_1}{0}{0}{p_2}.f  \right \rangle = \delta_{P_{n_1,n_2}}(p_1)\cdot \delta_{P_{n_1,n_2}}(p_2)\cdot  \xi_{\kappa, \chi_\BK, s-\frac{1}{2}}^{-1}(p_1,p_2)\cdot\langle \Lambda',f\rangle,
\end{equation}
where $\delta_{P_{n_1,n_2}}$ is the modulus character of $P_{n_1, n_2}$ that is explicitly given by
\[
\mtrtwo{a}{b}{0}{c}\mapsto |\det(a)|_\BK^{n_2} \cdot |\det(c)|_\BK^{-n_1}, \qquad a\in \GL_{n_1}(\BK), \, c\in \GL_{n_2}(\BK), \, b\in \Mat_{n_1\times n_2}(\BK).
\]
Here and as usual, $\Mat_{n_1\times n_2}$ indicates the space of $n_1\times n_2$ matrices.
Similar to \eqref{eq:LF-Lambda}, we define
\begin{equation}\label{eq2:LF-Lambda}
(\Lambda_1\dot \times \Lambda_2)(f):=
\int_{N^-_{n_1, n_2}\times N^-_{n_1, n_2}} \Lambda'(a.f)\, da.
\end{equation}
Then
  \begin{equation}\label{eqlf: space of linear models}
   \Lambda_1\dot \times \Lambda_2 \in \Hom_{H_\BK}(\Xi_1\dot \times \Xi_2,  \xi_{\kappa, \chi_\BK, s-\frac{1}{2}}^{-1}).
    \end{equation}

\begin{prop}\label{parli}
Suppose that $\Xi_1=\sigma_1\dot \times \sigma_2\dot \times \cdots \dot \times \sigma_{n_1}$ and $\Xi_2=\sigma_{n_1+1}\dot \times \sigma_{n_1+2}\dot \times \cdots \dot \times \sigma_{n}$. Then there   is a positive real number $c_1$ such that for all $s\in \BC$ with sufficiently large real part, the diagram
\[
\begin{CD}
          \Xi_1\dot \times \Xi_2@>f\mapsto \tilde f >> \Xi \\
            @V \Lambda_{s, \chi_\BK, \Xi_1} \dot \times  \Lambda_{s, \chi_\BK, \Xi_2}V   V           @VV\Lambda_{s, \chi_\BK, \Xi}  V\\
            \xi_{\kappa, \chi_\BK, s-\frac{1}{2}}^{-1} @>\textrm{multiplication by $c_1$}>> \xi_{\kappa, \chi_\BK, s-\frac{1}{2}}^{-1}\\
  \end{CD}
  \]
commutes, where the top horizontal arrow is the $G_\BK$-intertwining topological linear isomorphism given by
$\tilde f(g):=(f(g))(1_{2n_1}, 1_{2n_2})\in \sigma_1\widehat \otimes \sigma_2\widehat \otimes \cdots \widehat \otimes  \sigma_n$,
for $g\in G_\BK$. 

\end{prop}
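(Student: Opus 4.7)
The plan is to prove Proposition~\ref{parli} by directly computing both sides via their defining iterated integral formulas and identifying them through a change of variables, tracking all normalization factors to obtain a single positive constant $c_1$.

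First I would unwind $(\Lambda_{s,\chi_\BK,\Xi_1}\dot\times\Lambda_{s,\chi_\BK,\Xi_2})(f)$ for $f\in\Xi_1\dot\times\Xi_2$ by applying definitions \eqref{eq2:LF-Lambda} and \eqref{eq:LF-Lambda} in succession, while translating $f$ into $\tilde f\in\Xi$ via the natural isomorphism of the proposition. This yields a triple nested integral over
\[
(N^-_{n_1,n_2}\times N^-_{n_1,n_2})\times (N^-_{n_1}\times N^-_{n_1})\times (N^-_{n_2}\times N^-_{n_2})
\]
of the pairing $\langle\otimes_{i=1}^n Z_i(\,\cdot\,,s,\chi_\BK),\tilde f(g)\rangle$, where $g\in G_\BK$ is a product of $w_{n_1,n_2}$, the block-diagonal embedding $\diag(w_{n_1},w_{n_2})$, and the unipotent integration variables suitably embedded in $G_\BK$. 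In contrast, $\Lambda_{s,\chi_\BK,\Xi}(\tilde f)$ is by definition a single integral over $N_n^-\times N_n^-$ of the same Hecke pairing, evaluated at $w_n$ times the unipotent integration variables.

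The key step is then to identify the two expressions via two ingredients: (i) the Weyl-group identity $w_n=\diag(w_{n_1},w_{n_2})\cdot w_{n_1,n_2}$, which can be verified directly by inspecting the action on the standard basis of $\BK^{2n}$; and (ii) the measure-theoretic product decomposition $N_n^-=(N^-_{n_1}\times N^-_{n_2})\cdot N^-_{n_1,n_2}$ (with the first factor embedded block-diagonally as the lower unitriangular part of the Levi $\GL_{n_1}\times\GL_{n_2}$), which realizes Haar measure on $N_n^-$ as a product up to a positive Jacobian constant. Combining these with Fubini's theorem and the equivariance properties \eqref{eql} and \eqref{eq22: Section: Fun Eq 01}---used to move the intermediate unipotent variables past the permutation matrices---the triple integral collapses to the single integral giving $\Lambda_{s,\chi_\BK,\Xi}(\tilde f)$, up to a positive multiplicative constant $c_1$ coming from the ratio of Haar measure normalizations.

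The main obstacle will be the careful bookkeeping of the modular characters $\delta_{B_n}$, $\delta_{P_{n_1,n_2}}$ and the factors of $\xi_{\kappa,\chi_\BK,s-\frac{1}{2}}$ that arise when applying the equivariance relations during the rearrangement; all such $s$-dependent contributions must conspire to cancel, leaving a single positive constant independent of $s$. As a shortcut and independent check, one may invoke uniqueness of linear models on the irreducible representation $\Xi$ (a consequence of the works \cite{FriedbergJacquetLinearPeriods} and \cite{AizenbudGourevitchJacquetUniquenessShalika}) to ensure a priori proportionality of the two functionals for generic $s$, and then pin down the value of $c_1$ together with its $s$-independence by specializing to a well-chosen test vector in $\Xi$ on which both sides reduce to easily computable integrals.
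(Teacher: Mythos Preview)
Your proposal is correct and follows essentially the same approach as the paper: unwind the definitions into a triple integral, use the Weyl identity $w_n = \diag(w_{n_1}, w_{n_2}) \cdot w_{n_1,n_2}$, and collapse via the product decomposition of $N_n^-\times N_n^-$ into a single integral, with $c_1$ arising purely from Haar measure normalizations. Note that your anticipated ``bookkeeping obstacle'' does not actually arise---since all integration variables are unipotent, the modular characters $\delta_{B_n}$, $\delta_{P_{n_1,n_2}}$ and the factor $\xi_{\kappa,\chi_\BK,s-\frac{1}{2}}$ evaluate trivially on them, so the paper's proof needs neither the equivariance relations \eqref{eql}, \eqref{eq22: Section: Fun Eq 01} nor the uniqueness shortcut you propose.
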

\begin{proof}
Let $f\in \Xi_1\dot \times \Xi_2$. Then
\begin{eqnarray*}
 &&(\Lambda_{s, \chi_\BK, \Xi_1} \dot \times  \Lambda_{s, \chi_\BK, \Xi_2})(f)\\
 &=&
\int_{N^-_{n_1, n_2}\times N^-_{n_1, n_2}} \langle \Lambda_{s, \chi_\BK, \Xi_1} \otimes  \Lambda_{s, \chi_\BK, \Xi_2}, f(w_{n_1, n_2} a )\rangle\, da\\
&=&  \int_{N^-_{n_1, n_2}\times N^-_{n_1, n_2}} \int_{N_{n_2}^-\times N_{n_2}^-} \int_{N_{n_1}^-\times N_{n_1}^-} \left \langle \Lambda'_{s, \chi_\BK, \Xi_1} \otimes  \Lambda'_{s, \chi_\BK, \Xi_2}, \mtrtwo{b}{0}{0}{c}.(f(w_{n_1, n_2} a ))\right \rangle\,db\, dc\, da\\
&=&  \int_{N^-_{n_1, n_2}\times N^-_{n_1, n_2}} \int_{N_{n_2}^-\times N_{n_2}^-} \int_{N_{n_1}^-\times N_{n_1}^-} \left \langle\otimes_{i=1}^n Z_i(\cdot,s,\chi_\BK), (f(w_{n_1, n_2} a ))\left(\mtrtwo{w_{n_1}b}{0}{0}{w_{n_2}c}\right)\right \rangle\,db\, dc\, da\\
&=&  \int_{N^-_{n_1, n_2}\times N^-_{n_1, n_2}} \int_{N_{n_2}^-\times N_{n_2}^-} \int_{N_{n_1}^-\times N_{n_1}^-} \left \langle\otimes_{i=1}^n Z_i(\cdot,s,\chi_\BK), \tilde f\left( \mtrtwo{w_{n_1}b}{0}{0}{w_{n_2}c} w_{n_1, n_2} a\right)\right \rangle\,db\, dc\, da\\
&=&  \int_{N^-_{n_1, n_2}\times N^-_{n_1, n_2}} \int_{N_{n_2}^-\times N_{n_2}^-} \int_{N_{n_1}^-\times N_{n_1}^-} \left \langle\otimes_{i=1}^n Z_i(\cdot,s,\chi_\BK), \tilde f\left( w_n w_{n_1, n_2}^{-1}\mtrtwo{b}{0}{0}{c} w_{n_1, n_2} a\right)\right \rangle\,db\, dc\, da.\\
\end{eqnarray*}

It is easily verified that the map
\[
\begin{array}{rcl}
  (N^-_{n_1, n_2}\times N^-_{n_1, n_2})\times (N_{n_2}^-\times N_{n_2}^-)\times (N_{n_1}^-\times N_{n_1}^-)&\rightarrow & N_{n}^-\times N_{n}^-,\\
  (a, c, b)&\mapsto & w_{n_1, n_2}^{-1}\mtrtwo{b}{0}{0}{c} w_{n_1, n_2} a
\end{array}
\]
 is a well-defined  diffeomorphism and preserves the Haar measures. Therefore, there is a positive number $c_1$, which only depends on the choices of the Haar measures, such that
 \[
  c_1 \cdot (\Lambda_{s, \chi_\BK, \Xi_1} \dot \times  \Lambda_{s, \chi_\BK, \Xi_2})(f)=\int_{N_{n}^-\times N_{n}^-} \left \langle\otimes_{i=1}^n Z_i(\cdot,s,\chi_\BK), \tilde f( w_n x) \right \rangle\,dx=\Lambda_{s, \chi_\BK, \Xi}(f).
 \]

\end{proof}

\subsection{Twisted Shalika functionals and parabolic induction}


  Recall that $n_1,n_2$ are two positive integers such that $n_1+n_2=n$, and $\Xi_1$ and $\Xi_2$ are respectively irreducible  Casselman-Wallach representation of $\GL_{2n_1}(\BK)$ and $\GL_{2n_2}(\BK)$, with the same central character $\kappa$. Then we have the normalized parabolically induced representation $\Xi_1\dot \times \Xi_2$
 of $G_\BK$. Suppose that we are given   $(\kappa,\psi_\BK)$-Shalika functional $\lambda_{\Xi_i}$ on $\Xi_i$ ($i=1,2$), namely,
$\lambda_{\Xi_i}\in \Hom_{S_{n_i}(\BK)}(\Xi_i, \kappa\otimes \psi_\BK)$.


The following lemma is proved in \cite[Lemma 3.4]{AizenbudGourevitchJacquetUniquenessShalika}, which in particular implies the convergence of the Friedberg-Jacquet integrals in the archimedean case.
\begin{lemma} \label{lemagj}
Let $M_i$ ($i=1,2$) be a sufficiently large positive real number. Then for every real polynomial function $\varphi_i$ on the real vector space $\Mat_{n_i\times n_i}(\BK)$, there exists a continuous semi-norm $|\,\cdot\,|_{\varphi_i,M_i}$ on $\Xi_i$  such that
\[
  \left |\varphi_i(g)\cdot \left \langle \lambda_{\Xi_1},  \mtrtwo{g}{0}{0}{1_{n_i}}.v \right \rangle \right |\leq |\det(g)|_\BK^{-M_i}\cdot  |v|_{\varphi_i,M_i}
\]
 for all $g \in \GL_{n_i}(\BK)$ and $v\in \Xi_i$.
\end{lemma}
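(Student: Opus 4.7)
The plan is to adapt the argument of Aizenbud--Gourevitch--Jacquet \cite[Lemma 3.4]{AizenbudGourevitchJacquetUniquenessShalika}, combining the Cartan decomposition on $\GL_{n_i}(\BK)$, the Shalika equivariance of $\lambda_{\Xi_i}$, and a Whittaker-type integration-by-parts argument that extracts rapid decay of the Shalika functional in the torus direction.

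First, I would apply the Cartan decomposition $\GL_{n_i}(\BK)=K_{n_i}A_{n_i}^+K_{n_i}$, where $K_{n_i}$ is a standard maximal compact subgroup and $A_{n_i}^+$ is the positive Weyl chamber of diagonal matrices, so that $|\det g|_\BK=|\det a|_\BK$ for $g=k_1ak_2$. The factorization
\[
\mtrtwo{g}{0}{0}{1_{n_i}}=\mtrtwo{k_1}{0}{0}{1_{n_i}}\cdot\mtrtwo{a}{0}{0}{1_{n_i}}\cdot\mtrtwo{k_2}{0}{0}{1_{n_i}},
\]
together with the Casselman--Wallach moderate-growth continuity of the $G_\BK$-action on $\Xi_i$, lets me absorb the compact-group factors into continuous seminorms. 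The polynomial $\varphi_i$ restricted to $K_{n_i}\cdot a\cdot K_{n_i}$ is dominated by a polynomial in the diagonal entries of $a$, so by linearity it suffices to treat the case where $\varphi_i$ is a monomial in these entries.

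Second, I would exploit the Shalika equivariance through the identity
\[
\mtrtwo{a}{0}{0}{1_{n_i}}=\mtrtwo{a}{0}{0}{a}\cdot\mtrtwo{1_{n_i}}{0}{0}{a^{-1}},
\]
giving
\[
\left\langle\lambda_{\Xi_i},\mtrtwo{a}{0}{0}{1_{n_i}}.v\right\rangle=\kappa(\det a)\cdot\left\langle\lambda_{\Xi_i},\mtrtwo{1_{n_i}}{0}{0}{a^{-1}}.v\right\rangle.
\]
Since $\diag(1_{n_i},a^{-1})$ ranges over a bounded subset of $G_\BK$ as $a$ varies in $A_{n_i}^+$, the right-hand side is bounded in modulus by a single power of $|\det a|_\BK$ times a continuous seminorm of $v$; this yields only moderate growth. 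The additional decay is extracted from the unipotent part of the Shalika equivariance: for every $y\in\Mat_{n_i}(\BK)$ one has
\[
\left\langle\lambda_{\Xi_i},\mtrtwo{a}{0}{0}{1_{n_i}}.v\right\rangle=\psi_\BK(\tr(ay))^{-1}\left\langle\lambda_{\Xi_i},\mtrtwo{a}{0}{0}{1_{n_i}}\mtrtwo{1_{n_i}}{y}{0}{1_{n_i}}.v\right\rangle,
\]
and differentiating this identity in $y$ at $y=0$ and iterating along the simple positive roots of $A_{n_i}$ produces arbitrarily many factors of $|a_i/a_j|_\BK^{-1}$ for $i<j$, at the cost of replacing $v$ by a smooth vector obtained by acting with a fixed element of $\CU(\fgl_{2n_i}(\BK))$. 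Iterating sufficiently many times yields rapid decay in $|\det a|_\BK^{-1}$ that dominates the fixed polynomial factor $|\kappa(\det a)|$ and produces the bound $|\det a|_\BK^{-M_i}$ for any prescribed $M_i$.

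Third, the monomial factor $\varphi_i(a)$ is absorbed by observing that it is the eigenvalue of a weight vector in $\CU(\fgl_{2n_i}(\BK))$: there is an element $X_{\varphi_i}\in\CU(\fgl_{2n_i}(\BK))$ so that $\varphi_i(a)\cdot\diag(a,1_{n_i}).v=\diag(a,1_{n_i}).(X_{\varphi_i}.v)$, reducing the polynomial case to the constant-polynomial case by composing the resulting seminorm with the continuous action of $X_{\varphi_i}$. The principal obstacle will be making the differentiation-and-iteration step quantitative---producing decay in each individual ratio $a_i/a_j$ uniformly in $v$, and controlling the seminorms that arise from replacing $v$ by its images under elements of $\CU(\fgl_{2n_i}(\BK))$---which is carried out in \cite{AizenbudGourevitchJacquetUniquenessShalika} by an inductive pairwise reduction over the simple positive roots of $A_{n_i}$.
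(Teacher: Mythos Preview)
The paper does not give its own proof of this lemma; it simply records that it is \cite[Lemma~3.4]{AizenbudGourevitchJacquetUniquenessShalika}. Your plan to reconstruct that argument is therefore exactly the paper's approach, and the three ingredients you isolate (Cartan decomposition, Shalika equivariance, differentiation in the unipotent variable) are indeed the ones used there.

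Two details of your sketch are not correct as written, however. First, $\diag(1_{n_i},a^{-1})$ does \emph{not} range over a bounded subset of $G_\BK$ as $a$ runs over $A_{n_i}^+$: its norm blows up when the smallest entry of $a$ tends to $0$, so that step yields only a moderate-growth estimate, not a uniform bound. Second, your integration-by-parts step is modeled on the Whittaker case and produces the wrong factors for Shalika functionals. The Shalika character is $\psi_\BK(\tr x)$, so for diagonal $a$ one has $\tr(ay)=\sum_k a_k\,y_{kk}$; differentiating the identity in $y_{kk}$ at $y=0$ therefore produces the factor $a_k$ itself, while off-diagonal directions $y_{jk}$ with $j\neq k$ contribute nothing. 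Iterating gives
\[
\left\langle\lambda_{\Xi_i},\mtrtwo{a}{0}{0}{1_{n_i}}.v\right\rangle
= c\,\Bigl(\prod_{k} a_k^{-N_k}\Bigr)\,
\left\langle\lambda_{\Xi_i},\mtrtwo{a}{0}{0}{1_{n_i}}.(X.v)\right\rangle
\]
for suitable $X\in\CU(\fg_\BK^\BC)$, hence decay in each diagonal entry separately and, taking the product, rapid decay in $\abs{\det a}_\BK$. The root ratios $a_i/a_j$ you invoke are what one obtains for the Whittaker character $\psi_\BK\bigl(\sum_k x_{k,k+1}\bigr)$, not here. With these corrections the outline matches \cite{AizenbudGourevitchJacquetUniquenessShalika}.
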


Lemma \ref{lemagj} for $\varphi_i=1$ easily implies the following lemma.
\begin{lemma}\label{lemagj2}
Let $M_0$ be a sufficiently large positive real number. Then  there exists a continuous semi-norm $|\,\cdot\,|_{M_0}$ on $\Xi_1\widehat \otimes \Xi_2$  such that
\[
  \left |  \left \langle \lambda_{\Xi_1}\otimes \lambda_{\Xi_2},
                                                                            \left[
                                                                                 \begin{smallmatrix}
                                                                                   1_{n_1} & 0 & 0 & 0 \\
                                                                                    0&  g_1 & 0 & 0 \\
                                                                                    0& 0  & g_2 & 0 \\
                                                                                     0& 0 & 0 & 1_{n_2} \\
                                                                                 \end{smallmatrix}
                                                                               \right]. v \\
                                                                               \right\rangle \right| \leq |\det(g_1)|_\BK^{M_0}\cdot |\det(g_2)|_\BK^{-M_0}\cdot|v|_{M_0}
\]
 for all $g_1 \in \GL_{n_1}(\BK)$, $g_2 \in \GL_{n_2}(\BK)$ and $v\in \Xi_1\widehat \otimes \Xi_2$.
\end{lemma}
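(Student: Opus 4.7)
The plan is to recognise the block matrix in the statement as an element of $\GL_{2n_1}(\BK) \times \GL_{2n_2}(\BK)$ acting on $\Xi_1 \widehat\otimes \Xi_2$ through the pair
\[
\bigl(\diag(1_{n_1}, g_1),\ \diag(g_2, 1_{n_2})\bigr),
\]
bound each Shalika pairing separately using Lemma \ref{lemagj}, and then extend the resulting estimate from pure tensors to the projective tensor product by continuity. On a pure tensor $v = v_1 \otimes v_2$, the pairing factors as
\[
\langle \lambda_{\Xi_1}, \diag(1_{n_1}, g_1).v_1\rangle \cdot \langle \lambda_{\Xi_2}, \diag(g_2, 1_{n_2}).v_2\rangle,
\]
so the two factors can be treated independently.

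For the $\Xi_2$ factor, Lemma \ref{lemagj} with $\varphi_2 = 1$ and $M_2 = M_0$ directly yields a bound of the shape $|\det g_2|_\BK^{-M_0} \cdot |v_2|_2$ for some continuous semi-norm $|\cdot|_2$ on $\Xi_2$. For the $\Xi_1$ factor, the argument $\diag(1_{n_1}, g_1)$ is not in the standard Friedberg--Jacquet form, so I would first invoke the Shalika equivariance of $\lambda_{\Xi_1}$ via the identity
\[
\diag(1_{n_1}, g_1) = \diag(g_1, g_1) \cdot \diag(g_1^{-1}, 1_{n_1}),
\]
whose first factor lies in $S_{n_1}(\BK)$ with trivial unipotent part, to obtain
\[
\langle \lambda_{\Xi_1}, \diag(1_{n_1}, g_1).v_1\rangle = \kappa(\det g_1) \cdot \langle \lambda_{\Xi_1}, \diag(g_1^{-1}, 1_{n_1}).v_1\rangle.
\]
Then Lemma \ref{lemagj} with $\varphi_1 = 1$, applied to the argument $g_1^{-1}$, produces a positive power $|\det g_1|_\BK^{M_1}$. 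Since $|\kappa(\det g_1)|$ grows at worst like $|\det g_1|_\BK^c$ for a fixed real constant $c$ depending only on $\kappa$, choosing $M_1 := M_0 - c$---admissible whenever $M_0$ is sufficiently large---gives the desired factor $|\det g_1|_\BK^{M_0}$ on this side.

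Combining the two estimates produces, on pure tensors, a bound
\[
|\langle \lambda_{\Xi_1} \otimes \lambda_{\Xi_2}, g.(v_1 \otimes v_2)\rangle| \leq |\det g_1|_\BK^{M_0} \cdot |\det g_2|_\BK^{-M_0} \cdot |v_1|_1 \cdot |v_2|_2,
\]
where $g$ denotes the block matrix in the statement. The jointly continuous bilinear map $(v_1, v_2) \mapsto |v_1|_1 \cdot |v_2|_2$ defines, by the universal property of the projective tensor product, a continuous semi-norm $|\cdot|_{M_0}$ on $\Xi_1 \widehat\otimes \Xi_2$ satisfying $|v_1 \otimes v_2|_{M_0} \leq |v_1|_1 \cdot |v_2|_2$. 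The inequality then propagates by continuity from pure tensors to all of $\Xi_1 \widehat\otimes \Xi_2$. No substantive obstacle arises; the entire argument is routine bookkeeping on top of Lemma \ref{lemagj}, with the only mild subtlety being to absorb the polynomial growth of the possibly non-unitary character $\kappa$ into the choice of $M_1$.
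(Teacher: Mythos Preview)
Your proposal is correct and is precisely the routine verification the paper has in mind when it says ``Lemma \ref{lemagj} for $\varphi_i=1$ easily implies the following lemma.'' The Shalika-equivariance trick $\diag(1_{n_1},g_1)=\diag(g_1,g_1)\cdot\diag(g_1^{-1},1_{n_1})$ to flip the sign of the exponent on the $\Xi_1$ side, followed by passage to the projective tensor semi-norm $|\cdot|_1\otimes|\cdot|_2$, is exactly the intended unpacking; one minor phrasing point is that the inequality propagates from \emph{finite sums} of pure tensors (dense in $\Xi_1\widehat\otimes\Xi_2$) rather than from pure tensors alone, but this is immediate from the definition of the projective cross semi-norm.
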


Define a group
$$
N_{n_1, n_2}':=\left\{ \begin{bmatrix}
1_{n_1}&0&0&0\\
0&1_{n_2}&x&0\\
0&0&1_{n_1}&0\\
0&0&0&1_{n_2}
\end{bmatrix}\mid x\in \Mat_{n_2\times n_1}(\BK)\right\}\subset G_\BK.
$$

Write $\CS(N_{n_1, n_2}')$ for the space of complex valued Schwartz functions on $N_{n_1,n_2}'$, which is a topological vector space as usual.

\begin{lemma}\label{lemshw}
 There is a well-defined continuous linear map given by
\[
\begin{array}{rcl}
  \Xi_1\dot \times \Xi_2&\rightarrow& \CS(N_{n_1, n_2}'),\\
  f&\mapsto&(x \mapsto \langle \lambda_{\Xi_1}\otimes \lambda_{\Xi_2}, f(w_{n_1,n_2} x )\rangle).
  \end{array}
\]
\end{lemma}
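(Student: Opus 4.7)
The plan is to verify the smoothness of the output function on $N_{n_1,n_2}'$, then to establish its Schwartz decay via an Iwasawa decomposition combined with Lemma~\ref{lemagj2}, and finally to derive the continuity of the resulting map $\Xi_1\dot\times \Xi_2 \to \CS(N_{n_1,n_2}')$ from the continuity of the seminorms involved. Smoothness is immediate: for any $f\in \Xi_1\dot\times \Xi_2$, the map $g\mapsto f(g)$ is smooth into the nuclear Fr\'echet space $\Xi_1\widehat\otimes\Xi_2$, and $\lambda_{\Xi_1}\otimes \lambda_{\Xi_2}$ is a continuous linear functional on this space. Derivatives of
\[
 \phi_f(x_0):=\langle \lambda_{\Xi_1}\otimes \lambda_{\Xi_2},\, f(w_{n_1,n_2}\,x(x_0))\rangle
\]
along vector fields on $N_{n_1,n_2}'$ can be re-expressed as $\phi_{f'}$ for suitable right-translates $f'$ of $f$, so the Schwartz property reduces to proving rapid decay of $\phi_f$ (uniformly controlled in a continuous seminorm of $f$) for every $f$.

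For the decay, I would use the Iwasawa decomposition of $G_\BK$ with respect to the parabolic $P_{n_1,n_2}$ whose Levi is $\GL_{2n_1}(\BK)\times \GL_{2n_2}(\BK)$, writing
\[
 w_{n_1,n_2}\,x(x_0) = p(x_0)\cdot k(x_0), \qquad p(x_0)\in P_{n_1,n_2},\ k(x_0)\in K,
\]
where $K$ is the standard maximal compact of $G_\BK$. The equivariance of $f$ then yields
\[
 \phi_f(x_0) = \delta_{P_{n_1,n_2}}^{1/2}(p(x_0))\cdot \langle \lambda_{\Xi_1}\otimes \lambda_{\Xi_2},\, (\Xi_1\otimes \Xi_2)(\ell(x_0))\,f(k(x_0))\rangle,
\]
where $\ell(x_0)\in \GL_{2n_1}(\BK)\times \GL_{2n_2}(\BK)$ is the Levi projection of $p(x_0)$. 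Compactness of $K$ implies that $\{f(k(x_0))\}$ is a bounded subset of $\Xi_1\widehat\otimes\Xi_2$, controlled by continuous seminorms of $f$. The crucial step is an explicit matrix computation, modeled on the case $n_1=n_2=1$ where $w_{1,1}\,x(x_0)\,w_{1,1}$ lies in the opposite unipotent radical and Iwasawa gives Levi factor $\diag(1,-1/x_0)\oplus\diag(x_0,1)$. In general, after absorbing factors that lie in the Shalika subgroups of each $\GL_{2n_i}$ (and that only contribute characters when paired with $\lambda_{\Xi_i}$), one expects the relevant Levi component to take the form $\diag(1_{n_1},g_1(x_0))\oplus\diag(g_2(x_0),1_{n_2})$ with $g_1(x_0), g_2(x_0)$ depending rationally on $x_0$, $|\det g_1(x_0)|_\BK$ tending to $0$ and $|\det g_2(x_0)|_\BK$ tending to $\infty$ polynomially as $\|x_0\|\to\infty$. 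Lemma~\ref{lemagj2}, applied with $M_0$ sufficiently large, then bounds the inner pairing by $|\det g_1(x_0)|_\BK^{M_0}\cdot |\det g_2(x_0)|_\BK^{-M_0}\cdot |f(k(x_0))|_{M_0}$, which dominates any polynomial growth coming from $\delta_{P_{n_1,n_2}}^{1/2}(p(x_0))$ and delivers the desired Schwartz decay.

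Continuity of $f\mapsto \phi_f$ will then follow because the seminorm $|f(k(x_0))|_{M_0}$ is dominated, uniformly in $x_0$ and in $k(x_0)\in K$, by a single continuous seminorm on $\Xi_1\dot\times \Xi_2$ obtained by composing $|\,\cdot\,|_{M_0}$ with the continuous restriction map $f\mapsto (k\mapsto f(k))$ on $K$. The main obstacle will be the explicit matrix decomposition underlying the second step, particularly when $n_1\neq n_2$: the row--column block sizes of $w_{n_1,n_2}\,x(x_0)$ do not align cleanly with the $(2n_1,2n_2)$ Levi partition of $P_{n_1,n_2}$, so the Iwasawa factorization requires a careful block-LU computation to extract the exact form of $\ell(x_0)$ (and hence of $g_1(x_0), g_2(x_0)$) that is needed to invoke Lemma~\ref{lemagj2}.
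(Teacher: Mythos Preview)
Your approach is correct and is essentially the paper's. The one place where you hesitate---the explicit Iwasawa factorization of $w_{n_1,n_2}\,x(x_0)$---is where the paper supplies a clean simplification that removes the ``obstacle'' you anticipate. Rather than decomposing $w_{n_1,n_2}\,x(x_0)$ directly in $G_\BK$, first commute the permutation past $x$: one checks that
\[
  w_{n_1,n_2}\,x(x_0)=\begin{bmatrix}1_{n_1}&0&0&0\\0&1_{n_1}&0&0\\0&x_0&1_{n_2}&0\\0&0&0&1_{n_2}\end{bmatrix}w_{n_1,n_2},
\]
with the left factor written in blocks of sizes $n_1,n_1,n_2,n_2$. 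Since $w_{n_1,n_2}\in K_\BK$, all the work is now in the left factor, whose only nontrivial part is the middle $n\times n$ block $\left[\begin{smallmatrix}1_{n_1}&0\\x_0&1_{n_2}\end{smallmatrix}\right]$. Doing Iwasawa \emph{in $\GL_n(\BK)$} on this block, say $\left[\begin{smallmatrix}1_{n_1}&0\\x_0&1_{n_2}\end{smallmatrix}\right]=\mathrm u(x_0)\left[\begin{smallmatrix}\mathrm a_1(x_0)&0\\0&\mathrm a_2(x_0)\end{smallmatrix}\right]\mathrm c(x_0)$ with $\mathrm u(x_0)\in N_{n_1,n_2}$, $\mathrm a_i$ upper triangular with positive diagonal, and $\mathrm c(x_0)\in K_n$, the embedded $\mathrm u(x_0)$ lands in the unipotent radical of $P_{2n_1,2n_2}$ (hence is absorbed by $f$), the embedded $\mathrm c(x_0)$ lands in $K_\BK$, and the Levi part is exactly $\diag(1_{n_1},\mathrm a_1(x_0))\oplus\diag(\mathrm a_2(x_0),1_{n_2})$, i.e.\ precisely the shape needed for Lemma~\ref{lemagj2} with $g_1=\mathrm a_1(x_0)$, $g_2=\mathrm a_2(x_0)$. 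The standard bound $|\varphi(x_0)|\le(\det\mathrm a_2(x_0))^{M_\varphi}$ for the $\GL_n$-Iwasawa then gives the Schwartz decay exactly as you outlined. So the block sizes \emph{do} align cleanly once you pull the permutation to the right; no block-LU computation in $\GL_{2n}$ is needed.
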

\begin{proof}
    By the Iwasawa decomposition, for every $y\in\Mat_{n_2,n_1}(\BK)$, we have a unique decomposition
      \[
         \mtrtwo{1_{n_1}}{0}{y}{1_{n_2}}=\mathrm u(y)\mtrtwo{\mathrm a_1(y)}{0}{0}{\mathrm a_2(y)}\mathrm c(y)
         \]
     where $\mathrm u(y)\in N_{n_1, n_2}$,  $\mathrm a_i(y)$ is an upper triangular  matrix in $\GL_{n_i}(\BK)$ with positive real diagonal entries ($i=1,2$), and $\mathrm c(y)\in K_n$. Here $K_n$ denotes the standard maximal compact subgroup of $\GL_n(\BK)$. Note that
     $\det( \mathrm a_1(y))= \det( \mathrm a_2(y))^{-1}$.
     It is well-known and easily verified (see \cite[Lemma 2.5]{JiangLinTianExplicitCohomologicalVectorReal} for example) that 
     $\det( \mathrm a_2(y))\geq 1$ for all $y\in\Mat_{n_2\times n_1}(\BK)$,       
      and for every real valued polynomial function $\varphi$ on $\Mat_{n_2\times n_1}(\BK)$, there exists $M_\varphi>0$ such that
     \begin{equation}\label{eqnay}
     |\varphi(y)|\leq (\det(\mathrm a_2(y)))^{M_\varphi}\qquad \textrm{for all } y\in\Mat_{n_2,n_1}(\BK).
     \end{equation}

      For every $f\in\Xi_1\dot \times \Xi_2$ and $y\in\Mat_{n_2\times n_1}(\BK)$, write
      \[
        \phi_f(y):=\left \langle \lambda_{\Xi_1}\otimes \lambda_{\Xi_2}, f\left(\left[
                                                                                 \begin{smallmatrix}
                                                                                   1_{n_1} & 0 & 0 & 0 \\
                                                                                    0& 1_{n_1} & 0 & 0 \\
                                                                                    0& y  & 1_{n_2} & 0 \\
                                                                                     0& 0 & 0 & 1_{n_2} \\
                                                                                 \end{smallmatrix}
                                                                               \right]\right)\right \rangle.
      \]
      The lemma is equivalent to the assertion that
    \[
      \begin{array}{rcl}
  \Xi_1\dot \times \Xi_2&\rightarrow& \CS(\Mat_{n_2\times n_1}(\BK)),\\
  f&\mapsto&\phi_f
  \end{array}
\]
  is a well-defined continuous linear map.

 Let $\varphi$ be a real valued polynomial function  on $\Mat_{n_2\times n_1}(\BK)$ and let $M_\varphi>0$ be as in \eqref{eqnay}. Let $M_0$ and $|\,\cdot\,|_{M_0}$  be as in Lemma \ref{lemagj2} such that $2 M_0\geq M_\varphi-n_1-n_2$. Write
 \[
   |f|_{M_0,K_\BK} :=\max_{k\in K_\BK}|f(k)|_{M_0}.
 \]
 This defines a continuous seminorm on $\Xi_1\dot \times \Xi_2$.
 Then
 \begin{eqnarray}\label{abc}
   &&|  \varphi(y)\cdot\phi_f(y)|\\
  \nonumber &\leq & | (\det(\mathrm a_2(y)))^{M_\varphi}\cdot\phi_f(y)|\\
   \nonumber  &= &\left | (\det(\mathrm a_2(y)))^{M_\varphi-n_1-n_2} \cdot
                                       \left \langle \lambda_{\Xi_1}\otimes \lambda_{\Xi_2},
                                                                            \left[
                                                                                 \begin{smallmatrix}
                                                                                   1_{n_1} & 0 & 0 & 0 \\
                                                                                    0& \mathrm a_1(y) & 0 & 0 \\
                                                                                    0& 0  & \mathrm a_2(y) & 0 \\
                                                                                     0& 0 & 0 & 1_{n_2} \\
                                                                                 \end{smallmatrix}
                                                                               \right]. \left(f\left(\left[
                                                                                 \begin{smallmatrix}
                                                                                   1_{n_1} & 0 & 0  \\
                                                                                            0& \mathrm c(y)  & 0 \\
                                                                                     0& 0 &1_{n_2} \\
                                                                                 \end{smallmatrix}
                                                                               \right]\right)\right)\right\rangle \right| \\
                                                                      \nonumber            &\leq  & (\det(\mathrm a_2(y)))^{M_\varphi-2M_0-n_1-n_2} \cdot
                                        |f|_{M_0,K_\BK}  \\
                                         \nonumber &\leq  &
                                        |f|_{M_0,K_\BK}.  \\
                                        \nonumber   \end{eqnarray}

Let $X$ be a constant coefficient differential operator on  the real vector space $\Mat_{n_2\times n_1}(\BK)$. 
By definition, there exists an element $X'$ in the universal enveloping algebra of $\fg_\BK^\BC$ such that
$X\phi_f=\phi_{X'.f}$ for all $f\in \Xi_1\dot \times \Xi_2$.
By applying \eqref{abc} to $X'.f$, we have that
\[
  |  \varphi(y)\cdot (X\phi_f)(y)|\leq  |X'.f|_{M_0,K_\BK}\qquad \textrm{for all }y\in  \Mat_{n_2\times n_1}(\BK).
\]
This proves the lemma.
\end{proof}

For every $f\in \Xi_1\dot \times \Xi_2$, define
\begin{equation}\label{intx}
 \langle \lambda', f\rangle:=\int_{N_{n_1, n_2}'} \langle \lambda_{\Xi_1}\otimes \lambda_{\Xi_2}, f(w_{n_1,n_2} x )\rangle\, dx,
\end{equation}
where $dx$ is a Haar measure on $N_{n_1, n_2}'$. By Lemma \ref{lemshw},
 the integral \eqref{intx} is absolutely convergent and yields a continuous linear functional on $\Xi_1\dot \times \Xi_2$.
It is easily checked that for all
$g=\mtrtwo{h}{0}{0}{h} \mtrtwo{1_n}{a}{0}{1_n}\in S_\BK$, with $h\in P_{n_1,n_2}$,
 we have that \begin{equation}\label{Eq: Def of Shalika Functional}
       \langle \lambda', g.f\rangle=\delta_{P_{n_1,n_2}}(h)\cdot  (\kappa\otimes\psi_\BK)(g)\cdot \langle \lambda', f\rangle.
    \end{equation}
 As before, in view of  Lemma \ref{lemintg}, we define
\begin{eqnarray*}
\langle  \lambda_{\Xi_1}\dot \times \lambda_{\Xi_2}, f\rangle &:=&\int_{N_{n_1, n_2}^- } \left \langle \lambda', \mtrtwo{h}{0}{0}{h}.f \right \rangle \,dh.\\
\end{eqnarray*}
Then
$\lambda_{\Xi_1}\dot \times \lambda_{\Xi_2}\in \Hom_{S_\BK}(\Xi_1\dot \times \Xi_2, \kappa\otimes \psi_\BK)$.
It is clear that
\begin{eqnarray*}
\langle  \lambda_{\Xi_1}\dot \times \lambda_{\Xi_2}, f\rangle = \int_{\Mat_{n_1\times n_2}(\BK)}  \int_{\Mat_{n_2\times n_1}(\BK)}\left \langle \lambda_{\Xi_1}\otimes \lambda_{\Xi_2}, (w_{n_1,n_2}.f)\left(\left[
                                                                                 \begin{smallmatrix}
                                                                                   1_{n_1} & 0 & 0 & 0 \\
                                                                                    0& 1_{n_1} & 0 & 0 \\
                                                                                    y & x  & 1_{n_2} & 0 \\
                                                                                     0& y & 0 & 1_{n_2} \\
                                                                                 \end{smallmatrix}
                                                                               \right]\right)\right \rangle   \,d x\, dy,\\
\end{eqnarray*}
where $dx$ and $dy$ are suitable Haar measures. This easily implies the following lemma.
\begin{lemma}
If $\lambda_{\Xi_1}\neq 0$ and $\lambda_{\Xi_2}\neq 0$, then $\lambda_{\Xi_1}\dot \times \lambda_{\Xi_2}\neq 0$.
\end{lemma}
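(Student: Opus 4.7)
The strategy is a standard localization on the big Bruhat cell. Since $\lambda_{\Xi_1}\neq 0$ and $\lambda_{\Xi_2}\neq 0$, I would first pick $v_1\in \Xi_1$ and $v_2\in \Xi_2$ with $c:=\langle \lambda_{\Xi_1}\otimes \lambda_{\Xi_2},v_1\otimes v_2\rangle\neq 0$. The plan is then to exhibit a specific $f\in \Xi_1\dot \times \Xi_2$ for which the defining double integral reduces to a strictly positive real multiple of $c$, and in particular is nonzero.

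Let $P$ denote the standard parabolic of $G_\BK$ with Levi $\GL_{2n_1}(\BK)\times \GL_{2n_2}(\BK)$, and let $\bar N$ denote its opposite unipotent radical, so that $\bar N P$ is open and dense in $G_\BK$. Writing $T(x,y)$ for the lower triangular matrix appearing inside the integrand, a direct inspection of its $(2n_1,2n_2)$-block decomposition shows that $T(x,y)\in \bar N$ for every $(x,y)$, that $T(0,0)=1$, and that $(x,y)\mapsto T(x,y)$ is a proper smooth embedding of $\Mat_{n_1\times n_2}(\BK)\times \Mat_{n_2\times n_1}(\BK)$ into $\bar N$. Now pick any non-negative $\phi\in C_c^\infty(\bar N)$ supported in a small neighborhood of $1\in\bar N$ with $\phi(1)>0$, and define $g\in \Xi_1\dot \times \Xi_2$ to be the unique extension of the section $\bar n\mapsto \phi(\bar n)\cdot(v_1\otimes v_2)$ from $\bar N$ to $\bar N P$ via the $P$-covariance of the induced representation, extended by zero on the complement. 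The image of $\mathrm{supp}(\phi)$ in $G_\BK/P$ is a compact subset of the open set $\bar N\subset G_\BK/P$, so $g$ is a smooth section of the induced bundle, and setting $f:=w_{n_1,n_2}^{-1}.g$ yields a vector in $\Xi_1\dot \times \Xi_2$ satisfying $(w_{n_1,n_2}.f)|_{\bar N}=\phi(\cdot)(v_1\otimes v_2)$.

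For this $f$, the integrand of the double integral defining $\langle \lambda_{\Xi_1}\dot \times \lambda_{\Xi_2},f\rangle$ becomes
$$\langle \lambda_{\Xi_1}\otimes \lambda_{\Xi_2},\,\phi(T(x,y))(v_1\otimes v_2)\rangle \;=\; c\cdot \phi(T(x,y)),$$
which is a non-negative compactly supported smooth function of $(x,y)$ (compact support of the pullback follows from properness of $T$), strictly positive on a neighborhood of $(0,0)$. Hence its integral is strictly positive and $\langle \lambda_{\Xi_1}\dot \times \lambda_{\Xi_2},f\rangle$ is a nonzero multiple of $c$.

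The main obstacle will be the verification of the two ingredients needed in paragraph two, namely that $T(x,y)\in \bar N$ for all $(x,y)$ and that the prescribed $g$ really defines a smooth section of the induced bundle. Both amount to linear algebra and standard facts about parabolic induction once the block structure of the permutation $w_{n_1,n_2}$ is unpacked, so the proof should remain short.
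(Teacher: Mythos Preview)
Your proposal is correct and is precisely the standard big-cell localization argument the paper has in mind when it writes ``This easily implies the following lemma'' (no further proof is given there). The only quibble is a convention slip: in the paper's induction convention the covariance is on the left, so the open cell is $P\bar N$ rather than $\bar N P$; with that fix, your construction of $g$ and the verification that $T(x,y)\in \bar N=N_{2n_1,2n_2}^-$ (immediate from its $2\times 2$ block form) go through exactly as you describe.
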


 \subsection{Friedberg-Jacquet integrals and parabolic induction}\label{subsection: Friedberg-Jacquet Integral and Parabolic Induction}

 Assume that $\lambda_{\Xi_1}\neq 0$ and $\lambda_{\Xi_2}\neq 0$ so that $\lambda_{\Xi_1}\dot \times \lambda_{\Xi_2}\neq 0$.
  We further assume that $\Xi_1\dot \times \Xi_2$ is irreducible.
   Fixing a Haar measure on $\GL_{n_i}(\BK)$ and using $\lambda_{\Xi_i}$ ($i=1,2$), we get the Friedberg-Jacquet integral
 \[
 Z_{\Xi_i}(\, \cdot\,,s,\chi_\BK)\in \Hom_{\GL_{n_i}(\BK)\times \GL_{n_i}(\BK)}(\Xi_i, \xi_{n_i, \kappa, \chi_\BK, s-\frac{1}{2}}^{-1})
 \]
 which is given by
 \[
        Z_{\Xi_i}(v,s,\chi_\BK) := \int_{\GL_{n_i}(\BK)} \left \langle\lambda_{\Xi_i}, \mtrtwo{g}{0}{0}{1_n}.v\right \rangle\cdot \abs{\det g}_\BK^{s-\frac{1}{2}}\cdot \chi_\BK(\det g)\,dg, \quad v\in \Xi_i.
  \]
 Likewise, fixing a Haar measure on $\GL_n(\BK)$ and using $\lambda_{\Xi_1}\dot \times \lambda_{\Xi_2}$  we get the Friedberg-Jacquet integral
 $Z(\, \cdot\,,s,\chi_\BK)\in \Hom_{H_\BK}(\Xi_1\dot\times \Xi_2, \xi_{\kappa, \chi_\BK, s-\frac{1}{2}}^{-1})$.

 \begin{prop}\label{parafj}
Let the notation and the assumptions be as above. Then there   is a positive real number $c_2$ such that for all $s\in \BC$ with sufficiently larger real part, the diagram
\[
\begin{CD}
          \Xi_1\dot \times \Xi_2@> = >> \Xi_1\dot \times \Xi_2 \\
            @V Z_{\Xi_1}(\, \cdot\,,s,\chi_\BK) \dot \times Z_{\Xi_i}(\, \cdot\,,s,\chi_\BK)V   V           @VV Z(\, \cdot\,,s,\chi_\BK)  V\\
            \xi_{\kappa, \chi_\BK, s-\frac{1}{2}}^{-1} @>\textrm{multiplication by $c_2$}>> \xi_{\kappa, \chi_\BK, s-\frac{1}{2}}^{-1}\\
  \end{CD}
  \]
commutes.

\end{prop}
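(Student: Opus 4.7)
The plan is to establish the identity by unfolding both sides as iterated integrals and matching them via an explicit change of variables, paralleling the proof of Proposition~\ref{parli}. Both sides lie a priori in $\Hom_{H_\BK}(\Xi_1\dot\times \Xi_2, \xi_{\kappa,\chi_\BK, s-\frac{1}{2}}^{-1})$: the right-hand side $Z(\,\cdot\,,s,\chi_\BK)$ by Proposition~\ref{prop: properties of F-J int} applied to $\Xi_1\dot\times \Xi_2$ with the nonzero Shalika functional $\lambda_{\Xi_1}\dot\times \lambda_{\Xi_2}$, and the left-hand side $Z_{\Xi_1}\dot\times Z_{\Xi_2}$ by the construction of Section~\ref{subsection: Linear Models and Parabolic Induction}.

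First, I would unfold the right-hand side by substituting the definition of $\lambda_{\Xi_1}\dot\times\lambda_{\Xi_2}$ (the double integral over $\Mat_{n_1\times n_2}(\BK)\times \Mat_{n_2\times n_1}(\BK)$ composed with $\lambda_{\Xi_1}\otimes\lambda_{\Xi_2}$ applied at $w_{n_1,n_2}$) into the $\GL_n(\BK)$-integral defining $Z(\,\cdot\,,s,\chi_\BK)$. Then the Iwasawa-type factorization
\[
\GL_n(\BK) = N^-_{n_1,n_2}(\BK)\cdot (\GL_{n_1}(\BK)\times \GL_{n_2}(\BK))\cdot N_{n_1,n_2}(\BK)
\]
splits the integration, and conjugation by $w_{n_1,n_2}$ (together with the $P_{2n_1,2n_2}$-equivariance of $f\in\Xi_1\dot\times\Xi_2$ under normalized induction) moves the $N_{n_1,n_2}$-factor into the Shalika position of $\lambda_{\Xi_1}\otimes\lambda_{\Xi_2}$. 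The Shalika-equivariance $\lambda_{\Xi_i}\bigl(\mtrtwo{1_{n_i}}{u}{0}{1_{n_i}}.v\bigr) = \psi_\BK(\mathrm{tr}\,u)\,\lambda_{\Xi_i}(v)$ then absorbs this $N_{n_1,n_2}(\BK)$-factor together with the inner $\Mat_{n_1\times n_2}$-integration coming from $\lambda_{\Xi_1}\dot\times\lambda_{\Xi_2}$, collapsing the total dimension of integration from $n^2+2n_1n_2$ down to $n^2$.

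In parallel, I would unfold $(Z_{\Xi_1}\dot\times Z_{\Xi_2})(f,s,\chi_\BK)$ directly from \eqref{eq2:LF-Lambda} and the definitions of $Z_{\Xi_i}$, obtaining an integral over $(N^-_{n_1,n_2}\times N^-_{n_1,n_2})\times (\GL_{n_1}(\BK)\times \GL_{n_2}(\BK))$ of total dimension $n^2$. After the unfolding of the right-hand side described above, the two iterated integrals have integrands matching on a common parameter space, modulo an explicit change of variables on the unipotent subgroups: one copy of $N^-_{n_1,n_2}$ on the left-hand side pairs with the $N^-_{n_1,n_2}$-factor produced by the Iwasawa decomposition of $\GL_n$ on the right, while the second copy pairs with the $\Mat_{n_2\times n_1}$-factor hidden inside $\lambda_{\Xi_1}\dot\times\lambda_{\Xi_2}$. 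The resulting Jacobian is a positive real number depending only on the chosen Haar measures, and is manifestly independent of $s$; this defines $c_2$.

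The main obstacle will be the careful bookkeeping of the modulus character $\delta_{P_{2n_1,2n_2}}^{1/2}$ from the normalized parabolic induction defining $\Xi_1\dot\times\Xi_2$, the character $\xi_{\kappa,\chi_\BK,s-\frac{1}{2}}$ of $H_\BK$, and the various conjugations by $w_{n_1,n_2}$ on the unipotent subgroups — all of which must conspire so that the two iterated integrals match up to a unipotent Jacobian. The crucial factorization $|\det g|_\BK^{s-\frac{1}{2}}\chi_\BK(\det g) = |\det g_1|_\BK^{s-\frac{1}{2}}\chi_\BK(\det g_1)\cdot |\det g_2|_\BK^{s-\frac{1}{2}}\chi_\BK(\det g_2)$ along the Levi subgroup $\GL_{n_1}\times\GL_{n_2}$ is what makes the $s$-dependence on the two sides coincide, ensuring that $c_2$ is indeed $s$-independent. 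Absolute convergence of all the iterated integrals for $\mathrm{Re}(s)$ sufficiently large, which justifies all the Fubini-type interchanges in the matching procedure, follows from Lemmas~\ref{lemagj}, \ref{lemagj2} and \ref{lemshw}.
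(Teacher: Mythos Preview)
Your outline is correct and matches the paper's proof in structure: unfold $Z(\,\cdot\,,s,\chi_\BK)$ through the definition of $\lambda_{\Xi_1}\dot\times\lambda_{\Xi_2}$, decompose the Haar measure on $\GL_n(\BK)$ along $N^-_{n_1,n_2}\cdot(\GL_{n_1}\times\GL_{n_2})\cdot N_{n_1,n_2}$, collapse the $N_{n_1,n_2}$-integral against the hidden $N'_{n_1,n_2}$-integral, and identify the remainder with $Z_{\Xi_1}\dot\times Z_{\Xi_2}$. One point to sharpen: the ``collapsing'' step is not merely Shalika equivariance absorbing an integral; the equivariance produces a factor $\psi_\BK(-\mathrm{tr}(yx))$, and it is \emph{Fourier inversion} on $\Mat_{n_2\times n_1}(\BK)$ (this is the paper's Lemma~\ref{lfi}, using Lemma~\ref{lemshw} for the Schwartz decay) that converts the double integral $\int_y\int_x \psi_\BK(-\mathrm{tr}(yx))\phi_f(x)\,dx\,dy$ into $c_2\cdot\phi_f(0)$, whence the constant $c_2$ and its $s$-independence.
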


By induction, Proposition \ref{parli}, Proposition \ref{parafj} and the uniqueness of twisted Shalika functionals obviously imply Proposition \ref{thm: FJ-L}, and thus we complete the proof of
Theorem \ref{thm: uniform coh test vector}. We are going to prove Proposition \ref{parafj} in the rest of this subsection.

It is clear that for all $f\in \Xi_1\dot \times \Xi_2$,
\begin{equation}\label{intfj2}
  Z(f,s,\chi_\BK):=\int_{\GL_n(\BK)}\int_{N_{n_1,n_2}^-} \left \langle \lambda', \mtrtwo{hg}{0}{0}{h}.f \right \rangle\cdot  |\det g|_\BK^{s-\frac{1}{2}} \cdot (\chi_\BK\circ\det)(g)\, dh\, dg.
\end{equation}
\begin{lemma}\label{leminffj2}
When  the real part of $s\in \BC$ is sufficiently large, the integral  \eqref{intfj2} is absolutely convergent for all $f\in \Xi_1\dot \times \Xi_2$.
\end{lemma}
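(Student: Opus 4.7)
The plan is to prove absolute convergence of \eqref{intfj2} by majorizing the integrand via the Aizenbud--Gourevitch--Jacquet type bounds on the individual Shalika functionals (Lemma \ref{lemagj}), and then reducing, through an Iwasawa decomposition with respect to $P_{n_1,n_2}$, to a tensor-product structure in the Levi variables whose absolute integrability for large $\Re(s)$ is governed by the known convergence of the Friedberg--Jacquet integrals $Z_{\Xi_i}$ as stated in Proposition \ref{prop: properties of F-J int}(1).

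First I would unpack $\lambda'$ via \eqref{intx}, so that the integrand of \eqref{intfj2} becomes an iterated integral of the pairing $\langle \lambda_{\Xi_1}\otimes\lambda_{\Xi_2},\,f(w_{n_1,n_2}\,x\cdot\diag(hg,h))\rangle$ against $|\det g|_\BK^{s-\frac12}\chi_\BK(\det g)$, in the variables $(g,h,x)\in\GL_n(\BK)\times N^-_{n_1,n_2}\times N'_{n_1,n_2}$. I then apply the Iwasawa decomposition $\GL_n(\BK)=N^-_{n_1,n_2}\cdot M\cdot K_n$ with Levi $M=\GL_{n_1}(\BK)\times\GL_{n_2}(\BK)$, so that $dg=\delta_{P_{n_1,n_2}}(m)\,du\,dm\,dk$. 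Using the $P_{2n_1,2n_2}$-equivariance of $f$, one may rewrite $w_{n_1,n_2}\,x\cdot\diag(h u m k,h)$ as $p\cdot k'$ with $p$ in the Levi $\GL_{2n_1}(\BK)\times\GL_{2n_2}(\BK)$ composed with an appropriate unipotent, and $k'$ staying in a bounded set, thereby exposing the factorization of the resulting matrix coefficient into Shalika functionals for $\Xi_1$ and $\Xi_2$ applied to $\diag(m_i,1_{n_i})$.

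Second, I invoke Lemma \ref{lemagj} applied to each $\Xi_i$ with exponent $M_i>0$ sufficiently large: this yields a uniform bound of the form $|\det m_1|_\BK^{-M_1}|\det m_2|_\BK^{-M_2}$ times a continuous semi-norm of $f$, multiplied by an arbitrary polynomial weight in the entries of the $(x,h,u)$ parameters; the latter allows the nilpotent integrations to be carried out absolutely, while Lemma \ref{lemshw} confirms that the $x$-integration produces a Schwartz-class function of the remaining variables. Choosing $M_i>\Re(s)-\tfrac12+c_{n_1,n_2}$, where $c_{n_1,n_2}$ accounts for the modular factor $\delta_{P_{n_1,n_2}}$, renders the $m_i$-integration on $\GL_{n_i}(\BK)$ absolutely convergent, and the $K_n$-integration contributes only a bounded factor.

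The main obstacle is to justify the Fubini-type rearrangements without ever invoking conditional convergence: this is accomplished by assembling, in one step, a single majorant of tensor-product form on $M\times K_n\times N^-_{n_1,n_2}\times N'_{n_1,n_2}$ that is integrable once $\Re(s)$ exceeds the threshold produced by Lemma \ref{lemagj}. Once this majorant is in hand, Tonelli's theorem yields the absolute convergence of \eqref{intfj2} claimed in the lemma.
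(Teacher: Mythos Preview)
Your approach diverges from the paper's and contains a genuine gap in the second step. Lemma~\ref{lemagj} bounds $\varphi_i(g)\langle\lambda_{\Xi_i},\diag(g,1_{n_i}).v\rangle$ by $|\det g|_\BK^{-M_i}|v|_{\varphi_i,M_i}$, where $\varphi_i$ is a polynomial in the entries of $g\in\GL_{n_i}(\BK)$. After your proposed factorization (pulling $\diag(m_1,1_{n_1},m_2,1_{n_2})$ to the left through $w_{n_1,n_2}$), the polynomial weights $\varphi_i$ are polynomials in $m_i$ only; they furnish no decay whatsoever in the unipotent variables $(x,h,u)$. What remains after applying Lemma~\ref{lemagj} is a continuous seminorm of $f$ evaluated at $w_{n_1,n_2}\,x''\,\diag(h'',h)\,\diag(k,1_n)$, and by moderate growth of the induced representation this seminorm \emph{grows} polynomially in $(x'',h'',h)$ rather than decays. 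Your majorant therefore fails to be integrable over $N'_{n_1,n_2}\times N^-_{n_1,n_2}\times N^-_{n_1,n_2}$. Lemma~\ref{lemshw} does not rescue this: it controls $x\mapsto\langle\lambda_{\Xi_1}\otimes\lambda_{\Xi_2},f(w_{n_1,n_2}x)\rangle$, not the seminorm of $f(w_{n_1,n_2}x\cdots)$ appearing in your bound.

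The paper's argument sidesteps this entirely. First it uses the equivariance \eqref{Eq: Def of Shalika Functional} together with Lemma~\ref{lemintg} to trade the noncompact $N^-_{n_1,n_2}$-integral for an integral over the compact group $K_n$, so that only the $\GL_n(\BK)$-integral remains to be controlled. Then it invokes the Dixmier--Malliavin lemma to write $f=\int_{N_{n,n}}\phi(x)\,(x.f_1)\,dx$ with $\phi$ Schwartz; since $\lambda'$ transforms under $N_{n,n}$ by $\psi_\BK(\tr\,\cdot\,)$, one gets $\langle\lambda',\diag(kg,k).f\rangle=\hat\phi(g)\,\langle\lambda',\diag(kg,k).f_1\rangle$, and the Schwartz function $\hat\phi$ on $\Mat_{n\times n}(\BK)$ supplies all the decay needed against the merely polynomial growth of the second factor. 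This Dixmier--Malliavin step is precisely the missing ingredient in your outline.
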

\begin{proof}
   By Lemma \ref{lemintg}, for all $f\in \Xi_1\dot \times \Xi_2$,
   \[
       \int_{N_{n_1,n_2}^-} \left|\left \langle \lambda', \mtrtwo{h}{0}{0}{h}.f \right \rangle \right |\, dh
       = \int_{K_n}\left| \left \langle \lambda', \mtrtwo{k}{0}{0}{k}.f \right \rangle \cdot (\kappa\circ\det)^{-1}(k)\right|\,dk,
    \]
    where $K_n$ denotes the standard maximal compact subgroup  of $\GL_n(\BK)$, and $dk$ is a suitable Haar measure on $K_n$.
    Thus, to prove the Lemma, it suffices to prove that the following integral is absolutely convergent:
    \begin{equation}\label{eq: temp 91}
        \int_{\GL_n(\BK)}\int_{K_n}  \left \langle \lambda', \mtrtwo{kg}{0}{0}{k}.f \right \rangle  \cdot (\kappa\circ\det)^{-1}(k)\cdot  |\det g|_\BK^{s-\frac{1}{2}} \cdot (\chi_\BK\circ\det)(g)\, dk\, dg.
    \end{equation}

Using the well-known Dixmier-Malliavin Lemma (\cite{D-M}), we assume without loss of generality that
 \[
   f=\int_{N_{n,n}} \phi(x)\cdot (x.f_1)\, dx,
 \]
 where $\phi\in \CS(N_{n,n})$ and $f_1\in \Xi_1\dot \times \Xi_2$.
 It is easy to check that
    \begin{equation}\label{eq: temp 93}
        \left \langle \lambda', \mtrtwo{kg}{0}{0}{k}.f \right \rangle = \left \langle \lambda', \mtrtwo{kg}{0}{0}{k}.f_1 \right \rangle\cdot \hat{\phi}(g),
    \end{equation}
    where $\hat{\phi}$ is the Fourier transform given by
    \[
      \hat{\phi}(g):= \int_{N_{n,n}} \phi(x)\psi_\BK(\tr(gx)) dx,
    \]
    and $N_{n,n}$ is obviously identified with $\Mat_{n\times n}(\BK)$. Now the lemma easily follows, because the representation $\Xi_1\dot \times \Xi_2$ is of moderate growth.
\end{proof}

\begin{lemma}\label{lfi}
There is a positive real number $c_2$ such that
\[
\int_{N_{n_1,n_2}} \left \langle \lambda',\mtrtwo{y}{0}{0}{1_n}.f \right \rangle\, dy=c_2\cdot \langle\lambda_{\Xi_1}\otimes  \lambda_{\Xi_2}, f(w_{n_1,n_2}) \rangle
\]
for all $f\in \Xi_1\dot \times \Xi_2$.
\end{lemma}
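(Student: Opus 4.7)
My plan is to unwind the LHS, apply a Gauss-type factorization of $w_{n_1,n_2}\cdot(\text{unipotent})$, and recognize the resulting double integral as a Fourier-inversion identity.

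First, I parametrize the integrations by $y=\mtrtwo{1_{n_1}}{z}{0}{1_{n_2}}\in N_{n_1,n_2}$ with $z\in \Mat_{n_1\times n_2}(\BK)$ and by the typical element $u(x)\in N_{n_1,n_2}'$ corresponding to $x\in \Mat_{n_2\times n_1}(\BK)$. A direct block multiplication in the $(n_1,n_2,n_1,n_2)$-partition of $\GL_{2n}$ yields
\begin{equation*}
w_{n_1,n_2}\cdot u(x)\cdot \mtrtwo{y}{0}{0}{1_n} = n_-(x)\cdot n_+(z)\cdot w_{n_1,n_2},
\end{equation*}
where, in the $(2n_1,2n_2)$-partition of $\GL_{2n}$,
\begin{equation*}
n_-(x)=\mtrtwo{1_{2n_1}}{0}{C}{1_{2n_2}} \text{ with } C=\mtrtwo{0}{x}{0}{0},\quad n_+(z)=\mtrtwo{1_{2n_1}}{B}{0}{1_{2n_2}} \text{ with } B=\mtrtwo{z}{0}{0}{0}.
\end{equation*}
A short computation gives $CB=0$ and $BC=\mtrtwo{0}{zx}{0}{0}$, so the LU identity $n_-\cdot n_+=n_+\cdot l\cdot n_-$ holds with $l:=\mathrm{diag}(1_{2n_1}-BC,\,1_{2n_2})$ lying in the Levi $\GL_{2n_1}\times\GL_{2n_2}$ of $P_{2n_1,2n_2}$; combined with the readily verified identity $n_-(x)\cdot w_{n_1,n_2}=w_{n_1,n_2}\cdot u(x)$, I arrive at
\begin{equation*}
w_{n_1,n_2}\cdot u(x)\cdot \mtrtwo{y}{0}{0}{1_n} = n_+(z)\cdot l\cdot w_{n_1,n_2}\cdot u(x).
\end{equation*}

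The crucial observation is that $l_1:=1_{2n_1}-BC=\mtrtwo{1_{n_1}}{-zx}{0}{1_{n_1}}$ lies in the Shalika subgroup $S_{n_1}$ of $\GL_{2n_1}$, with Shalika character value $(\kappa\otimes \psi_\BK)(l_1)=\psi_\BK(-\tr(zx))$, and $\det l_1=1$ so $\delta_{P_{2n_1,2n_2}}^{1/2}(l)=1$. Using left-invariance of $f\in \Xi_1\dot\times \Xi_2=\mathrm{Ind}_{P_{2n_1,2n_2}}^{\GL_{2n}}(\Xi_1\widehat\otimes\Xi_2)$ under $n_+(z)$ in the unipotent radical of $P_{2n_1,2n_2}$ (trivially) and under $l$ in the Levi (producing the twist $\sigma(l)=\Xi_1(l_1)\otimes \mathrm{id}$), together with the Shalika equivariance $\lambda_{\Xi_1}\circ \Xi_1(l_1)=\psi_\BK(-\tr(zx))\cdot \lambda_{\Xi_1}$, I obtain
\begin{equation*}
\bigl\langle \lambda_{\Xi_1}\otimes\lambda_{\Xi_2},\,f(w_{n_1,n_2}\cdot u(x)\cdot \mtrtwo{y}{0}{0}{1_n})\bigr\rangle = \psi_\BK(-\tr(zx))\cdot g(x),
\end{equation*}
where $g(x):=\langle \lambda_{\Xi_1}\otimes \lambda_{\Xi_2},\, f(w_{n_1,n_2}\cdot u(x))\rangle$ depends only on $x$, not on $z$.

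Finally, substituting into the LHS and applying Fubini together with Fourier inversion for the nondegenerate pairing $(z,x)\mapsto \tr(zx)$ on $\Mat_{n_1\times n_2}(\BK)\times \Mat_{n_2\times n_1}(\BK)$ yields
\begin{equation*}
\int_{N_{n_1,n_2}}\int_{N_{n_1,n_2}'}\psi_\BK(-\tr(zx))\,g(x)\,dx\,dz = c_2\cdot g(0) = c_2\cdot \langle \lambda_{\Xi_1}\otimes\lambda_{\Xi_2},\,f(w_{n_1,n_2})\rangle,
\end{equation*}
since $u(0)=1_{2n}$; here $c_2>0$ is the Fourier self-duality constant for $\psi_\BK$ relative to the chosen Haar measures on $N_{n_1,n_2}$ and $N_{n_1,n_2}'$. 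The main technical obstacle is justifying this Fubini/Fourier-inversion step rigorously: the iterated integral is not absolutely convergent, but convergence is recovered from the Schwartz-type decay of $g$ established in Lemma \ref{lemshw}, which forces $\widehat g$ to be Schwartz so that $\int_z \widehat g(z)\,dz$ converges absolutely and equals $c_2\cdot g(0)$ by Fourier inversion.
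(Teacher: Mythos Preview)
Your proof is correct and follows essentially the same approach as the paper: both arguments factor $w_{n_1,n_2}\cdot u(x)\cdot\mtrtwo{y}{0}{0}{1_n}$ so as to pull out an element of $P_{2n_1,2n_2}$ whose Levi part lies in the Shalika subgroup $S_{n_1}\subset\GL_{2n_1}$, extract the character value $\psi_\BK(-\tr(zx))$, and then invoke Fourier inversion together with the Schwartz decay established in Lemma~\ref{lemshw}. The only difference is presentational: you package the factorization as an LU decomposition $n_-n_+=n_+\,l\,n_-$ in the $(2n_1,2n_2)$ block structure, whereas the paper writes out the same identity as a single explicit product of four block matrices.
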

\begin{proof}
Let  $f\in \Xi_1\dot \times \Xi_2$. For every $x\in \Mat_{n_2\times n_1}(\BK)$, write
\[
  \phi_f(x):=\left \langle \lambda_{\Xi_1}\otimes \lambda_{\Xi_2}, f\left(w_{n_1,n_2} \begin{bmatrix}
1_{n_1}&0&0&0\\
0&1_{n_2}&x&0\\
0&0&1_{n_1}&0\\
0&0&0&1_{n_2}
\end{bmatrix} \right)\right \rangle.
\]
By Lemma \ref{lemshw}, $\phi_f$ is a Schwartz function on $\Mat_{n_2\times n_1}(\BK)$.

Note that for all $x\in \Mat_{n_2\times n_1}(\BK)$ and  $y\in \Mat_{n_1\times n_2}(\BK)$, we have that
\begin{eqnarray*}
 && w_{n_1,n_2} \begin{bmatrix}
1_{n_1}&0&0&0\\
0&1_{n_2}&x&0\\
0&0&1_{n_1}&0\\
0&0&0&1_{n_2}
\end{bmatrix} \begin{bmatrix}
1_{n_1}&y&0&0\\
0&1_{n_2}&0&0\\
0&0&1_{n_1}&0\\
0&0&0&1_{n_2}
\end{bmatrix}\\
&=&\begin{bmatrix}
1_{n_1}&0&y&0\\
0&1_{n_1}&0&0\\
0&0&1_{n_2}&0\\
0&0&0&1_{n_2}
\end{bmatrix}
\begin{bmatrix}
1_{n_1}&-yx&0&0\\
0&1_{n_1}&0&0\\
0&0&1_{n_2}&0\\
0&0&0&1_{n_2}
\end{bmatrix}
w_{n_1,n_2}\begin{bmatrix}
1_{n_1}&0&0&0\\
0&1_{n_2}&x&0\\
0&0&1_{n_1}&0\\
0&0&0&1_{n_2}
\end{bmatrix} .
\end{eqnarray*}
Therefore
\begin{eqnarray*}
  &&\left \langle \lambda_{\Xi_1}\otimes \lambda_{\Xi_2}, f\left(w_{n_1,n_2} \begin{bmatrix}
1_{n_1}&0&0&0\\
0&1_{n_2}&x&0\\
0&0&1_{n_1}&0\\
0&0&0&1_{n_2}
\end{bmatrix} \begin{bmatrix}
1_{n_1}&y&0&0\\
0&1_{n_2}&0&0\\
0&0&1_{n_1}&0\\
0&0&0&1_{n_2}
\end{bmatrix}
\right)\right \rangle \\
&=&
 \psi_\BK(-\tr(yx))\left \langle \lambda_{\Xi_1}\otimes \lambda_{\Xi_2}, f\left(w_{n_1,n_2} \begin{bmatrix}
1_{n_1}&0&0&0\\
0&1_{n_2}&x&0\\
0&0&1_{n_1}&0\\
0&0&0&1_{n_2}
\end{bmatrix} \right)\right \rangle\\
&=& \psi_\BK(-\tr(yx))\phi_f(x).
\end{eqnarray*}

Using Fourier inversion formula,  this implies that
\begin{eqnarray*}\label{intfj44}
\int_{N_{n_1,n_2}} \left \langle \lambda',\mtrtwo{y}{0}{0}{1_n}.f \right \rangle\, dy
&=& \int_{\Mat_{n_1\times n_2}(\BK)}\int_{\Mat_{n_2\times n_1}(\BK)}\psi_\BK(-\tr(yx))\phi_f(x)\,dx\,dy\\
&=& c_2 \cdot \phi_f(0)\\
&=&c_2\cdot \langle\lambda_{\Xi_1}\otimes  \lambda_{\Xi_2}, f(w_{n_1,n_2}) \rangle,
\end{eqnarray*}
where $c_2$ is a positive real number which depends only on the choices of the involving Haar measures.
\end{proof}

Now we are ready to prove Proposition  \ref{parafj}.
We have that
\begin{eqnarray*}
  &&Z(f,s,\chi_\BK)\\
  &=&\int_{\GL_n(\BK)} \langle \lambda_{\Xi_1}\dot \times \lambda_{\Xi_2}, g.f\rangle \cdot  |\det g|_\BK^{s-\frac{1}{2}} \cdot (\chi_\BK\circ\det)(g)\,dg\\
   &=&\int_{\GL_n(\BK)}\int_{N_{n_1,n_2}^-} \left \langle \lambda', \mtrtwo{hg}{0}{0}{h}.f \right \rangle\cdot  |\det g|_\BK^{s-\frac{1}{2}} \cdot (\chi_\BK\circ\det)(g)\,dh\,dg \\
  &=&\int_{N_{n_1,n_2}^-} \int_{\GL_n(\BK)}\left \langle \lambda', \mtrtwo{hg}{0}{0}{h}.f \right \rangle\cdot  |\det g|_\BK^{s-\frac{1}{2}} \cdot (\chi_\BK\circ\det)(g)\,dg\,dh \quad (\textrm{by Lemma \ref{leminffj2}})\\
      &=&\int_{N_{n_1,n_2}^-} \int_{\GL_n(\BK)} \left \langle \lambda', \mtrtwo{ g}{0}{0}{h}.f \right \rangle\cdot  |\det g|_\BK^{s-\frac{1}{2}} \cdot (\chi_\BK\circ\det)(g)\,dg\, dh.\\
\end{eqnarray*}
     By decomposing the Haar measure on $\GL_n(\BK)$, the above integral equals
\begin{eqnarray*}
 &&\int_{N_{n_1,n_2}^-\times N_{n_1,n_2}^-} \int_{\GL_{n_1}(\BK)\times \GL_{n_2}(\BK)} \int_{N_{n_1,n_2}}  \left \langle \lambda', \left(\mtrtwo{ y}{0}{0}{1_n}\mtrtwo{ a}{0}{0}{1_n}z\right).f \right \rangle \cdot \delta_{P_{n_1,n_2}}(a)^{-1}\\
      &&\qquad \cdot |\det a|^{s-\frac{1}{2}} \cdot (\chi_\BK\circ\det)(a)\,dy\,da\, dz\\
       &=&c_2\cdot\int_{N_{n_1,n_2}^-\times N_{n_1,n_2}^-} \int_{\GL_{n_1}(\BK)\times \GL_{n_2}(\BK)} \left  \langle\lambda_{\Xi_1}\otimes  \lambda_{\Xi_2}, f\left(w_{n_1,n_2}\mtrtwo{ a}{0}{0}{1_n}z\right) \right \rangle  \\
      &&\qquad \qquad \qquad \qquad \cdot \delta_{P_{n_1,n_2}}(a)^{-1}\cdot |\det a|_\BK^{s-\frac{1}{2}} \cdot (\chi_\BK\circ\det)(a)\,da\, dz \quad (\textrm{by Lemma \ref{lfi}})\\
      &=&c_2\cdot\int_{N_{n_1,n_2}^-\times N_{n_1,n_2}^-} \int_{\GL_{n_2}(\BK)}\int_{\GL_{n_1}(\BK)} \left  \langle\lambda_{\Xi_1}\otimes  \lambda_{\Xi_2},  \begin{bmatrix}
a_1&0&0&0\\
0&1_{n_1}&0&0\\
0&0&a_2&0\\
0&0&0&1_{n_2}
\end{bmatrix}.(f(w_{n_1,n_2}z)) \right \rangle  \\
      &&\qquad \qquad \qquad \qquad \cdot |\det a_1|_\BK^{s-\frac{1}{2}}\cdot |\det a_2|_\BK^{s-\frac{1}{2}} \cdot (\chi_\BK\circ\det)(a_1)\cdot  (\chi_\BK\circ\det)(a_2)\,da_1\,da_2\, dz \\
      &=&c_2\cdot\int_{N_{n_1,n_2}^-\times N_{n_1,n_2}^-}
       \langle Z_{\Xi_1}(\, \cdot\,,s,\chi_\BK) \otimes Z_{\Xi_2}(\, \cdot\,,s,\chi_\BK), f(w_{n_1,n_2}z)\rangle \, dz\\
      &=&c_2 \cdot \langle Z_{\Xi_1}(\, \cdot\,,s,\chi_\BK) \dot \times Z_{\Xi_2}(\, \cdot\,,s,\chi_\BK), f\rangle.
\end{eqnarray*}
This finishes the proof of Proposition  \ref{parafj}.


\bibliographystyle{elsarticle-num}
    
\end{document}